\title{Arity Approximation of $\infty$-Operads}
\author{Shaul Barkan}
\begin{document}
	\maketitle
	\begin{abstract}
		Let $\mathcal{C}$ be an $\infty$-category all of whose mapping spaces are $n$-truncated. We prove that when considering $\mathbb{E}_d$-monoids in $\mathcal{C}$, all coherence diagrams of arity $>n+3$ are redundant. 
		More generally, for an $\infty$-operad $\mathcal{O}$ we bound the arity of the relevant coherence diagrams in terms of the connectivity of certain operadic partition complexes associated to $\mathcal{O}$.
	\end{abstract}
	\setcounter{page}{1}
	\tableofcontents
        \section{Introduction}
 
\subsubsection{Commutativity and homotopy coherence}
A \textit{commutative monoid} is a set $M$ equipped with a map of sets $\bullet: M \times M \to M$ satisfying the following conditions:
	\begin{itemize}[label={}]
		\item \textit{Unit:} There exists $e \in M$ such that $e \bullet m = m \bullet e = m$ for every $m \in M$
		\item \textit{Associativity}: $m_1 \bullet (m_2 \bullet m_3) = (m_1 \bullet m_2) \bullet m_3$ for every $m_1,m_2,m_3 \in M$.
		\item \textit{Commutativity}: $m_1 \bullet m_2 = m_2 \bullet m_1$ for every $m_1 , m_2 \in M$. 
\end{itemize}

Note that each axiom in the definition above requires at most \textbf{$3$ free variables}. With this feature in mind we recall the definition of a symmetric monoidal category.
A \textit{symmetric monoidal category} is a category $\calC$ equipped with a bifunctor $\otimes: \calC \times \calC \to \calC$ and the following \textit{data}:
	\begin{itemize}[label={}]
		\item \textit{Unitor:} An object $1 \in \calC$ equipped with isomorphisms $x \otimes 1 \simeq x \simeq 1 \otimes x$ natural in $x \in \calC$.
		\item \textit{Associator:} An isomorphism $x_1\otimes (x_2 \otimes x_3) \simeq (x_1 \otimes x_2) \otimes x_3$ natural in $x_1,x_2,x_3 \in \calC$.
		\item \textit{Braiding:} An isomorphism  $x_1 \otimes x_2 \simeq x_2 \otimes x_1$ natural in $x_1,x_2 \in \calC$.
	\end{itemize} 
	This data is required to satisfy a list of conditions saying that certain diagrams must commute. All of the diagrams can be written with at most $3$ free variables except Maclane's pentagon diagram.\footnote{we omit the other diagrams in the interest of brevity.}
		\[\begin{tikzcd}
		& {(x_1 \otimes x_2) \otimes (x_3\otimes x_4)} \\
		{x_1 \otimes (x_2\otimes (x_3 \otimes x_4))} && {((x_1 \otimes x_2)\otimes x_3)\otimes x_4} \\
		{x_1 \otimes ((x_2\otimes x_3)\otimes x_4)} && {(x_1 \otimes (x_2\otimes x_3))\otimes x_4}
		\arrow[Isom, no head, from=2-1, to=1-2]
		\arrow[Isom, no head, from=1-2, to=2-3]
		\arrow[Isom, no head, from=3-1, to=3-3]
		\arrow[Isom, no head, from=3-3, to=2-3]
		\arrow[Isom, no head, from=3-1, to=2-1]
		\end{tikzcd}\]

One feature that immediately pops out is that notions which were previously introduced as describing conditions - \textit{unit}, \textit{associativity} and \textit{commutativity} - are no longer conditions but rather extra data to be specified. This is a common theme when passing from classical algebra to homotopically coherent algebra. This will not be our focus however. Instead we focus on the fact that each axiom in the definition of a symmetric monoidal category requires at most \textbf{$4$ free variables}.

The diagrammatic definition of a \textit{symmetric monoidal 2-category} is quite delicate, see for example \cite{Day}. Important for us will be the qualitative observation that each axiom requires at most \textbf{$5$ free variables} (for the entire list of axioms spelled out in full detail see \cite[Appendix~C]{Pries}).

From the $\infty$-categorical perspective, all the algebraic structures we mentioned so far are instances of the same homotopy coherent notion. Namely, they are all examples of \textit{homotopy coherent commutative monoids} in an $\infty$-category with finite product. Before recalling the definition let us fix some notation.

\begin{notation}
	Let $\hl{\Fin}$ and $\hl{\Fin_{\ast}}$ denote the categories of \hl{finite sets} and \hl{pointed finite sets} respectively. For an integer $m \ge 1$, we denote $\hl{\bracket{m} \coloneq } \{1,\dots,m\} \coprod \{\ast\} \in \Fin_{\ast}$. For an integer $1 \le i \le m$ we define $\hl{\delta_i:} \bracket{m} \inert \bracket{1}$ as follows
	\[ \hl{\delta_i(j) \coloneq } \begin{cases}
	1 & \text{ if } i=j
	\\
	\ast & \text{ otherwise }
	\end{cases}\]
\end{notation}

For the remainder of this section we fix an $\infty$-category $\calC$ with finite products.

\begin{defn}[{\cite[~2.4.2.1.]{HA}}]\label{defn:comm-monoids}
	A \hl{commutative monoid} in $\calC$ is a functor $F: \Fin_{\ast} \to \calC$ satisfying the \textit{Segal condition}:
	\begin{itemize}
		\item For every $m \ge 0$ the Segal map induces an equivalence
		$\prod^m_{j=1} F(\delta_j):F(m) \iso F(1)^{\times m}$ is an equivalence.
	\end{itemize}
	Denote by $\hl{\CMon(\calC) }\subseteq \Fun(\Fin_{\ast},\calC)$ the full subcategory of commutative monoids.
\end{defn}
In this paper we study a variant of the above notion which depends on a parameter $k \in \bbN$. 

\begin{notation}
	For a finite set $A \in \Fin$ we denote by $\hl{A_+ \coloneq } \, A \coprod \{\ast\} \in \Fin_{\ast}$ the pointed set obtained from $A$ by adding a disjoint base point. Denote by $\hl{|A|} \in \bbN$ the cardinality of $A$ and let $\hl{\Fin^{\le k}_{\ast}} \subseteq \Fin_{\ast}$ denote the full subcategory spanned by pointed sets $A_+$ with $|A|\le k$.
\end{notation}

\begin{defn}\label{defn:restricted-comm-monoids}
	A  \textit{$k$-restricted commutative monoid} in $\calC$ is a functor $F: \Fin^{\le k}_{\ast} \to \calC$ satisfying the following variant of the Segal condition:
	\begin{itemize}
		\item For every $0 \le m \le k$ the Segal map induces an equivalence
		$\prod^m_{j=1} F(\delta_j) : F(m) \iso F(1)^{\times m}$. 
	\end{itemize}
	We denote by $\hl{\CMon^{\le k}(\calC)} \subseteq \Fun(\Fin^{\le k}_{\ast},\calC)$ the full subcategory of $k$-restricted commutative monoids.
\end{defn}

Note that the restriction $\Fun(\Fin_{\ast},\calC) \to \Fun(\Fin^{\le k}_{\ast},\calC)$ preserves the Segal condition and thus gives rise to a functor:
\[ \CMon(\calC) \to \CMon^{\le k}(\calC) \]
Our main result will pertain to the interplay between truncatedness of mapping spaces of an $\infty$-category $\calC$ and the number of variables required to specify commutative monoids in $\calC$. 

\begin{defn}
	An \hl{$(n,1)$-category} is an $\infty$-category $\calC$ all of whose mapping spaces are $(n-1)$-truncated, i.e. $\Map_{\calC}(x,y)$ is $(n-1)$-truncated for every $x,y \in \calC$. A \hl{complete $(n,1)$-category} is an $(n,1)$-category $\calC$ which is complete as an $\infty$-category.
\end{defn}

The following is a special case of our main theorem.

\begin{corA}[\cref{thm:main-theorem-application}]\label{thm-motivational}
	For every complete $(n,1)$-category $\calC$, restriction induces an equivalence of $\infty$-categories
	\[ \CMon(\calC) \simeq \CMon^{\le n+2}(\calC) \]
\end{corA}

\begin{example}
	Sets, categories and $2$-categories constitute complete $(1,1)$-category, $(2,1)$-category, and $(3,1)$-category respectively. We see that our earlier observations, exactly fit the conclusion of \cref{thm-motivational} in the cases $n=1,2,3$. 
\end{example}

\cref{thm-motivational} will be deduced from a general statement about $k$-restricted monoids for an arbitrary $\infty$-operad.

\subsubsection{Arity restricted monoids over $\infty$-operads}

The theory of $\infty$-operads provides a convenient framework for the study of coherent algebraic structures in the setting of $\infty$-categories. Before recalling the definition of an $\infty$-operad, we setp up some terminology and notation.

\begin{defn}	
    The category $\Fin_{\ast}$ admits a factorization system $(\Fin^{\int}_{\ast},\Fin^{\act}_{\ast})$ defined as follows
	\begin{enumerate}
		\item A map $f:A_+ \inert B_+$ is said to be \hl{inert} if $|f^{-1}(b)|=1$ for all $b \in B$. Denote by $\hl{\Fin^{\int}_{\ast}} \subseteq \Fin_{\ast}$ the corresponding wide subcategory.
		\item A map $f:A_+ \actarrow B_+$ is said to be \hl{active} if $f^{-1}(\ast)= \{\ast\}$. Denote by $\hl{\Fin^{\act}_{\ast}} \subseteq \Fin_{\ast}$ the corresponding wide subcategory.
	\end{enumerate}
\end{defn}

\begin{notation}
	We follow the convention in \cite{RH-algebraic} and denote active morphisms with squiggly arrows $\rightsquigarrow$ and inert morphisms with tailed arrows $\rightarrowtail$.  
\end{notation}

\begin{defn}[{\cite[Definition 2.1.1.10.]{HA}}]
	An \textit{$\infty$-operad} is an $\infty$-category $\calO$ equipped with a categorical fibration $\bracket{-}: \calO \to \Fin_{\ast}$ such that the following conditions hold:
	\begin{enumerate}
		\item \textbf{Inert Lifts:} Every inert morphism $\lambda : \bracket{x} \inert A_+$ with $x \in \calO$ and $A_+\in \Fin_{\ast}$ admits a cocartesian lift to a morphism $\widehat{\lambda}: x \to x_{\lambda}$ in $\calO$ such that $\bracket{\widehat{\lambda}} = \lambda$.
		\item \textbf{Segal Condition:} For every $m \ge 0$ the natural functor 
		\[ \prod_{\delta: \bracket{m} \inert \bracket{1}} (\delta_i)_! : \calO_{\bracket{m}} \to \prod_{\delta : \bracket{m} \inert \bracket{1}} \calO_{\bracket{1}} =  \calO_{\bracket{1}}^{\times m}\]
		induced from the cocartesian lifts of $\delta_i : \bracket{m} \inert \bracket{1}$ for all $1\le i\le m$ is an equivalence.
		\begin{itemize}
			\item \textit{Notation:} Given a tuple $(x_1,\dots,x_m) \in \calO_{\bracket{1}}^{\times m}$ we denote the corresponding object in $\calO_{\bracket{m}}$ by $x_1 \oplus \cdots \oplus x_m $ 
			\item More generally the Segal condition gives rise to an equivalence $\calO_{\bracket{m_1}} \times \calO_{\bracket{m_2}} \simeq \calO_{\bracket{m}}$ for every $m_1 + m_2 = m$. On objects we will denote this by $(x_1,x_2) \mapsto x_1 \oplus x_2$.
		\end{itemize}		
		\item \textbf{Mapping Space Condition}: For every tuple $(y_1,\dots.y_m) \in \prod^m_{j=1} \calO_{\bracket{1}}$ and every $x \in \calO$, the natural map 
		\[ \Map_{\calO}(x,y_1 \oplus \cdots \oplus y_m) \to \prod^m_{j=1} \Map_{\calO}(\delta_{j \,!}(x),y_j) \]
		is an equivalence.
	\end{enumerate}
	We call $\bracket{-}:\calO \to \Fin_{\ast}$ the structure map of the $\infty$-operad $\calO$.
\end{defn} 

Intuitively we think of $\infty$-operads as modeling \textit{symmetric multi-$\infty$-categories}. Slightly more precisely, given an $\infty$-operad $\calO$ one can extract the following pieces of data:

\begin{enumerate}
	\item A space of \textbf{objects} $\Obj(\calO) \coloneq  \calO_{\bracket{1}}^{\simeq}$.
	\item For every collection of objects $x_1,\dots,x_k,y \in \Obj(\calO)$, a space of \textbf{multi-morphisms}
	\[\Mul_{\calO}(x_1,\dots,x_k;y) \coloneq  \Map_{\calO^{\act}}(x_1\oplus \cdots \oplus x_m,y) \]
	\item A \textbf{composition} operation on multi-morphisms: for every collection $\alpha = \{\alpha_1,\dots,\alpha_r\}$ of multi-morphisms where $\alpha_j \in \Mul_{\calO}(x_{j,1},\dots,x_{j,k_j};y_j)$ and a multi-morphism $\beta \in \Mul_{\calO}(y_1,\dots,y_r;z)$ there is a composite multi-morphism $\beta \circ \alpha \in \Mul_{\calO}(x_{1,1},\dots,x_{r,k_r};z)$.
\end{enumerate}

The rest of the data in $\calO$ can be thought of as coherently exhibiting all the associativity, and symmetry properties of the composition operation.

\begin{example}
	 We write $\hl{\bbE_{\infty}}$ for the $\infty$-operad corresponding to the identity functor $\Id : \Fin_{\ast} \to \Fin_{\ast}$.
\end{example}

\begin{example}
	We write $\hl{\bbE_1}$ for the $\infty$-operad denoted in \cite[Remark 4.1.1.4]{HA} by $\Assoc^\otimes$. For the benefit of the reader we recall its definition. Objects of $\Assoc^\otimes$ are pointed finite sets and morphisms are pairs $(f,\{\le_t \}) \in \Map_{\Assoc^\otimes}(S_+,T_+)$ where $f:S_+ \to T_+$ is a map of finite sets and $\{\le_t\}$ is a collection of linear ordering on the fibers $f^{-1}(t)$ for every $t \in T$. One then checks that the forgetful functor $\Assoc^\otimes \to \Fin_{\ast}$ exhibits $\Assoc^\otimes$ as an $\infty$-operad. 
\end{example}

\begin{example}
    We write $\bbE_d$ for the \operad{} of little $d$-discs, denoted in \cite[Definition 5.1.0.2]{HA} by $\bbE_d^\otimes$.
    The wide subcategory $\bbE_d^\act \subseteq \bbE_d$ is equivalent to the \category{} whose objects are finite disjoint unions of $\bbR^d$'s and whose mapping spaces are spaces of framed $C^\infty$-embeddings.
\end{example}

\begin{defn}[{\cite[~2.4.2.1.]{HA}}]\label{defn:O-monoids}
	Let $\calO$ be an $\infty$-operad and $\calC$ an $\infty$-category with finite products. An \textit{$\calO$-monoid} in $\calC$ is a functor $F: \calO \to \calC$ satisfying the following Segal condition:
	\begin{itemize}
		\item For every $x \in \calO$ the natural map
		$\prod_{\delta : \bracket{x} \inert \bracket{1} } F(\widehat{\delta}):F(x) \to  \prod_{\delta : \bracket{x} \inert \bracket{1}} F(x_{\delta})$
		is an equivalence.
	\end{itemize}
	We denote by $\hl{\Mon_{\calO}(\calC)} \subseteq \Fun(\calO,\calC)$ the full subcategory of $\calO$-monoids.
\end{defn}

In analogy with the case of commutative monoids, we introduce an arity restricted generalization of $\calO$-monoids.

\begin{notation}
	For an $\infty$-operad $\calO$ we write $\hl{\calO^{\le k} \coloneq  }\, \Fin^{\le k}_{\ast} \times_{\Fin_{\ast}} \calO \subseteq \calO$.
\end{notation}

\begin{defn}\label{defn:restricted-monoid}
	Let $\calO$ be an $\infty$-operad and $\calC$ a category with finite products. A  \textit{$k$-restricted $\calO$-monoid} in $\calC$ is a functor $F: \calO^{\le k}: \to \calC$ satisfying the Segal condition:
	\begin{itemize}
		\item For every $x \in \calO^{\le k}$ the natural map
		$\prod_{\delta : \bracket{x} \inert \bracket{1} } F(\widehat{\delta}):F(x) \to  \prod_{\delta : \bracket{x} \inert \bracket{1}} F(x_{\delta})$ 
		is an equivalence.
	\end{itemize}
	Denote by $\hl{\Mon^{\le k}_{\calO}(\calC)} \subseteq \Fun(\calO^{\le k},\calC)$ the full subcategory of $k$-restricted $\calO$-monoids.
\end{defn}

\cref{defn:restricted-monoid} is a simultaneous generalization of \cref{defn:O-monoids} and \cref{defn:restricted-comm-monoids}. Indeed, we recover the former by letting $k=\infty$, and the latter by letting $\calO=\bbE_{\infty}$. Note that the restriction $\Fun(\Fin_{\ast},\calC) \to \Fun(\Fin^{\le k}_{\ast},\calC)$ preserves the Segal condition and thus gives rise to a functor:
\[ \Mon_{\calO}(\calC) \to \Mon_{\calO}^{\le k}(\calC) \]
In this paper we develop tools to determine, 
given an $\infty$-operad, for which $n$ and $k$, 
does restriction induce an equivalence $\Mon_{\calO}(\calC) \simeq \Mon_{\calO}^{\le k}(\calC)$ for all $(n.1)$-categories $\calC$. In particular we obtain the following result.

\begin{thmA}[\cref{thm:arity-approximation-operads,corA:part-conn-Ed}]\label{thm:main-theorem-application}
	Let $\calC$ be a complete $(n,1)$-category. 
	For all $1 \le d \le \infty$ restriction induces an equivalence of $\infty$-categories:
	\[ \Mon_{\bbE_d}(\calC) \iso \Mon_{\bbE_d}^{\le n+2}(\calC) \]
\end{thmA}

\begin{rem}
	Note that for $\calO = \bbE_{\infty}$ the above is exactly \cref{thm-motivational}. 
\end{rem}

\subsubsection{Partition complexes of $\infty$-operads}

The dependence between the truncation and the arity in \cref{thm:main-theorem-application} is controlled by the connectivity of certain generalized partition complexes attached to any $\infty$-operad. 
In the special case of $\bbE_{\infty}$ these are exactly the partition poset complexes that have found many uses in the theory of $\infty$-operads, in Koszul duality and in Goodwillie calculus (see for example \cite{Part1}, \cite{Fresse} and \cite{Ching}). 
Before stating the main theorem we need to introduce some necessary definitions.

\begin{defn}
	A morphism $\alpha: x \actarrow y$ in $\calO$ is said to be \hl{active} if the underlying morphism of pointed finite sets $\bracket{\alpha}: \bracket{x} \actarrow \bracket{y}$ is active. Denote by $\hl{\calO^{\act}} \subseteq \calO$ the wide subcategory of active morphisms. A \hl{multi-morphism} in $\calO$ is an active morphism $\mu : x \actarrow z$ such that $\bracket{z} \simeq \bracket{1}$. We say that $\mu$ is of \hl{arity $k$} if $\bracket{x} \simeq \bracket{k}$.
\end{defn}

\begin{defn}\label{intro-defn-factorization-category}
	Given a multi-morphism $\mu :x \actarrow z$ in $\calO$, define the \hl{factorization category} of $\mu$ as $\hl{\Fact_{\calO}(\mu) \coloneq } (\calO^{\act}_{/z})_{\mu/}$.
	More informally, $\Fact_{\calO}(\mu)$ is the $\infty$-category whose
	\begin{itemize}
		\item \textbf{Objects} are active factorizations of $\mu$:
		\[\begin{tikzcd}
		& y \\
		x && z
		\arrow[squiggly, from=2-1, to=1-2]
		\arrow[squiggly, from=1-2, to=2-3]
		\arrow["\mu"{description}, squiggly, from=2-1, to=2-3]
		\end{tikzcd}\]
		
		\item \textbf{Morphisms} are commutative diagrams:
		\[\begin{tikzcd}
		& y \\
		& {y'} \\
		x && z
		\arrow[squiggly, from=1-2, to=3-3]
		\arrow["\mu"', squiggly, from=3-1, to=3-3]
		\arrow[squiggly, from=3-1, to=1-2]
		\arrow[squiggly, from=3-1, to=2-2]
		\arrow[squiggly, from=2-2, to=3-3]
		\arrow[squiggly, from=1-2, to=2-2]
		\end{tikzcd}\]
	\end{itemize}
\end{defn}

\begin{example}\label{ex:fact-E1}
	For $\bbE_1$, the multi-morphisms of arity $k$ correspond to bijections $\bracket{k}^{\circ} \simeq [k-1]$ where $\bracket{k}^{\circ} \coloneq  \{1,\dots,k\}$ denotes the (unpointed) finite set obtained from $\bracket{k}$ by removing the base point. Unwinding definitions we see that for any such bijection $\gamma$ we have
	$\Fact_{\bbE_1}(\gamma) \simeq  \mathlarger{\Delta}_{[k-1]/}$.
\end{example}

\begin{example} 
	Since $\mathbb{E}_{\infty}$ is the terminal $\infty$-operad, there exists in every arity $k$ a unique multi-morphism $\mu_k$ of that arity. Unwinding definitions reveals that
	$\Fact_{\mathbb{E}_{\infty}}(\mu_k) \simeq \Fin_{\bracket{k}^{\circ}/}$.
\end{example}

\begin{defn}
	Let $\hl{\Part_k} \subseteq \Fin_{\bracket{k}^{\circ}/}$ denote the full subcategory whose objects are the surjective morphisms $\bracket{k}^{\circ} \to A$ where $A$ is a set of cardinality $1<|A| <k$.
\end{defn}

\begin{defn}\label{cons:intro-definition-partition}
	Let $\calO$ be an $\infty$-operad and let $\mu$ be a multi-morphism in $\calO$ of arity $k$. Then by functoriality there is a canonical map:
	\[\Fact_{\calO}(\mu) \to \Fin_{\bracket{k}^{\circ}/}\]
	 Define the \hl{partition category} of $\mu$ as the following pullback
	\[\begin{tikzcd}
	{\hl{\Part_{\calO}(\mu)}} & {\Fact_{\calO}(\mu)} \\
	{\Part_k} & {\Fin_{\bracket{k}^{\circ}/}}
	\arrow[hook, from=1-1, to=1-2]
	\arrow[from=1-1, to=2-1]
	\arrow[""{name=0, anchor=center, inner sep=0}, hook, from=2-1, to=2-2]
	\arrow[from=1-2, to=2-2]
	\arrow["\lrcorner"{anchor=center, pos=0.125}, draw=none, from=1-1, to=0]
	\end{tikzcd}\]
	Define the \hl{partition complex} of $\mu$ as $\hl{\mathlarger{\Pi}^{\calO}_{\mu} \coloneq } \big| \Part_{\calO}(\mu) \big|$, where $|-|$ denotes the left adjoint to the inclusion $\calS \mono \Cat_{\infty}$. 
	Finally, we define
	\[ \hl{\sig{\calO}(k)\coloneq } \inf \bigg\{ \conn \big(\mathlarger{\Pi}^{\calO}_{\mu}\big) \bigg| \, \mu  \text{ multi-morphism in $\calO$ of arity} > k \bigg\} \] 
	where $\conn \big(\mathlarger{\Pi}^{\calO}_{\mu}\big)$ denotes the connectivity of $\mathlarger{\Pi}^{\calO}_{\mu}$.
\end{defn}

\begin{example}\label{ex:partition-Einfinity}
	Note that $\spart{\bbE_{\infty}}{k}$ is by definition the realization of the poset of partitions of $k$ excluding $(1,\dots,1)$ and $(k)$. It is well known (see \cite[Proposition 4.109]{Hyperplanes}) that:
	\[ \spart{\bbE_{\infty}}{k}\simeq \underset{(k-1)!}{\bigvee} S^{k-3}.\]
\end{example}

\begin{lem}\label{lem:partition-of-E1}
	Let $k \ge 2$ be an integer and $\gamma$ a multi-morphism in $\bbE_1$ of arity $k$. Then there is an equivalence:
	\[ \spart{\bbE_1}{\gamma} \simeq S^{k-3}.\] 
\end{lem}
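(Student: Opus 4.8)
The plan is to compute $\Part_{\bbE_1}(\gamma)$ explicitly using the identification $\Fact_{\bbE_1}(\gamma) \simeq \mathlarger{\Delta}_{[k-1]/}$ from \cref{ex:fact-E1} and then recognize its realization as the boundary of a simplex. First I would make the canonical functor $\Fact_{\bbE_1}(\gamma) \to \Fin_{\bracket{k}^{\circ}/}$ explicit. Under the equivalence $\Fact_{\bbE_1}(\gamma) \simeq \mathlarger{\Delta}_{[k-1]/}$ an object is an order-preserving map $\phi: [k-1] \to [n]$, and since active maps in $\bbE_1$ are allowed empty fibers over non-basepoints, this accounts for the non-surjective objects of the coslice. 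The functor to $\Fin_{\bracket{k}^{\circ}/}$ forgets the orderings and sends $\phi$ to the surjection $\bracket{k}^{\circ} = [k-1] \twoheadrightarrow \mathrm{im}(\phi)$ onto its image; because $\phi$ preserves order, the resulting partition of $\bracket{k}^{\circ}$ is a partition into consecutive intervals. Pulling back along $\Part_k \hookrightarrow \Fin_{\bracket{k}^{\circ}/}$ then identifies $\Part_{\bbE_1}(\gamma)$ with the full subcategory of $\mathlarger{\Delta}_{[k-1]/}$ on those $\phi$ with $1 < |\mathrm{im}(\phi)| < k$.

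Next I would replace this category by a poset. Image factorization $[k-1] \twoheadrightarrow \mathrm{im}(\phi) \hookrightarrow [n]$ defines a functor $r$ from $\mathlarger{\Delta}_{[k-1]/}$ to its full subcategory $\mathcal{S}$ of surjective objects, and the monomorphisms $\mathrm{im}(\phi) \hookrightarrow [n]$ assemble into a counit exhibiting $\mathcal{S}$ as coreflective. Since $r$ preserves the image, it restricts to $\Part_{\bbE_1}(\gamma)$, and because adjoint functors induce homotopy equivalences on classifying spaces, the inclusion induces an equivalence $\big|\mathcal{S} \cap \Part_{\bbE_1}(\gamma)\big| \simeq \spart{\bbE_1}{\gamma}$. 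The payoff is that between two surjective objects there is at most one morphism in the coslice (the map is determined on the image, which is everything), so $\mathcal{S} \cap \Part_{\bbE_1}(\gamma)$ is a poset.

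Finally I would identify this poset. An order-preserving surjection $[k-1]\twoheadrightarrow[n]$ is the same as a choice of cut locations among the $k-1$ gaps of $[k-1]$, and coarsening corresponds to reverse inclusion of cut sets; the condition $1 < |\mathrm{im}(\phi)| < k$ translates to the cut set being a nonempty proper subset of the $(k-1)$-element set of gaps. Thus, up to taking opposites (which does not change the realization), the poset is the poset of nonempty proper subsets of a $(k-1)$-element set, i.e.\ the face poset of $\partial \Delta^{k-2}$, whose nerve realizes to $\partial \Delta^{k-2} \simeq S^{k-3}$. I expect the main obstacle to be the bookkeeping in the first step, namely pinning down the functor to $\Fin_{\bracket{k}^{\circ}/}$ compatibly with \cref{ex:fact-E1} and checking that the coreflection descends to the partition subcategory, rather than the final poset computation, which is classical. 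As a sanity check, $k=2$ gives the empty poset $S^{-1}$, $k=3$ gives two incomparable points $S^0$, and $k=4$ gives the hexagon $S^1$.
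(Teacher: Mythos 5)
Your overall strategy---reduce to the poset of order-preserving surjections $[k-1] \twoheadrightarrow [j]$ with $1 \le j \le k-2$, identify it with the poset of nonempty proper subsets of a $(k-1)$-element set, and realize that as $\partial\Delta^{k-2} \simeq S^{k-3}$---is essentially the paper's own argument, up to phrasing (the paper reaches the subset poset via the duality $(\Delta^{\surj})^{\op} \simeq \Delta^{\inj}_+$ rather than via cut sets, but this is the same combinatorics). However, your first step misidentifies the canonical functor $\Fact_{\bbE_1}(\gamma) \to \Fin_{\bracket{k}^{\circ}/}$, and hence misidentifies $\Part_{\bbE_1}(\gamma)$. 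That functor is obtained by applying the structure map $\bracket{-}\colon \bbE_1 \to \Fin_{\ast}$ to a factorization, so under the identification $\Fact_{\bbE_1}(\gamma) \simeq \mathlarger{\Delta}_{[k-1]/}$ it sends $\phi \colon [k-1] \to [n]$ to its honest underlying map of finite sets, \emph{not} to the surjection onto its image. Since $\Part_k \subseteq \Fin_{\bracket{k}^{\circ}/}$ is a full subcategory consisting of surjections, the pullback $\Part_{\bbE_1}(\gamma)$ is the full subcategory of $\mathlarger{\Delta}_{[k-1]/}$ on those $\phi$ that are themselves surjective with $1 \le n \le k-2$. Your candidate category (all $\phi$ with $1 < |\mathrm{im}(\phi)| < k$) is strictly larger: for $k=3$, for instance, it contains the non-surjective map $[2] \to [2]$ sending $0,1 \mapsto 0$ and $2 \mapsto 2$, which is not an object of $\Part_{\bbE_1}(\gamma)$.

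The error turns out to be benign, because your own second step repairs it: the image-factorization coreflection shows that the inclusion of the surjective objects---which is the actual $\Part_{\bbE_1}(\gamma)$---into your larger category induces an equivalence on realizations, and your third step then computes the realization of that correct subcategory. So a correct proof can be extracted from what you wrote by relabeling: once the functor is identified correctly, $\Part_{\bbE_1}(\gamma) = \mathcal{S} \cap (\text{your category})$, the coreflection step becomes unnecessary, and the cut-set computation is the whole proof. But as written, the proposal rests on a false description of the canonical functor, so the category whose realization you set out to compute is not $\Part_{\bbE_1}(\gamma)$, and the statement $\spart{\bbE_1}{\gamma} \simeq S^{k-3}$ is only reached through that mislabeled intermediary.
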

\begin{proof}
	By \cref{ex:fact-E1} we may identify $\Part_{\bbE_1}(\gamma)$ with the full subcategory of $\Delta_{[k-1]/}$ consisting of surjective morphisms $[k-1] \to [j]$ with $1 \le j \le k-2$. Recall that there is an equivalence $(\Delta^{\surj})^{\op} \simeq \Delta^{\inj}_+$ defined on objects by $I \mapsto I \setminus \{i_{\max}\}$. We get an induced equivalence $(\mathlarger{\Delta}^{\surj}_{[k-1]/})^{\op} \simeq (\mathlarger{\Delta}^{\inj}_{+})_{/[k-2]}$.	Under this equivalence $\Part_{\bbE_1}(\gamma)^{\op}$ is identified with the full subcategory of $ (\mathlarger{\Delta}^{\inj}_{+})_{/[k-2]}$ consisting of injective morphisms $[i] \to [k-2]$ with $0 \le i \le k-3$. The latter category is precisely the poset of non-empty proper subsets of $\{0,\dots,k-2\}$ whose realization is a $(k-3)$-dimensional sphere.
\end{proof}

We are ready to state the main result of the paper.

\begin{thmA}[Arity Approximation of $\infty$-Operads - \cref{thm:arity-approximation-operads}]\label{thm:main-theorem}
	Let $\calO$ be an $\infty$-operad and let $\calC$ be a complete $(\sig{\calO}(k)+1,1)$-category. Then restriction induces an equivalence of $\infty$-categories
	\[\Mon_{\calO}(\calC) \simeq \Mon^{\le k}_{\calO}(\calC)\]
\end{thmA}

\cref{thm:main-theorem} reduces \cref{thm:main-theorem-application} to a computation.
For $d \in \{1,\infty\}$ we can deduce $\sig{\bbE_d}(k)=k-3$ from \cref{ex:partition-Einfinity} and \cref{lem:partition-of-E1}.
For $1<d<\infty$ the partition complexes of $\bbE_d$ are much more complicated. 
To describe them we let $\FM_d(k)$ denote the compactification of the configuration space $\mrm{Conf}_k(\bbR^d)/\bbR^d \rtimes \bbR_{>0}$ constructed by Getzler-Jones in \cite{getzler}, where it is denoted by $\mathsf{F}_d(k)$.

\begin{thmA}[Partition Complexes of $\bbE_d$]\label{thm:partition-Ed-intro}
    For $1 < d< \infty$ and $k \ge 3$ we have an equivalence:
    \[\spart{\bbE_d}{k} \simeq \fib\left( \partial \FM_d(k) \hookrightarrow \FM_d(k)\right).\]
\end{thmA}

We shall deduce \cref{thm:partition-Ed-intro} from a general result about partition complexes in the dendroidal setting (\cref{thm:dendroidal-partition-complex}), using the operad structure on these compactified configuration spaces.
This operad structure has a long and complicated history.
It was first observed by Getzler-Jones in \cite{getzler-jones} and then developed in detail by Markl in \cite{markl}.
Markl's construction was shown to be equivalent to the little disc operad  by Salvatore in \cite{salvatore}.

Despite the complicated appearance of $\spart{\bbE_d}{k}$, a simple and somewhat miraculous cancellation leads to the connectivity being independent of $d$.

\begin{corA}\label{corA:part-conn-Ed}
     For all $1 \le d\le \infty $ and all $k \ge 3$ we have $\sig{\bbE_d}(k) = k-3$.
\end{corA}
\vspace{-1.5em}
\begin{proof}
    It suffices to show that $\conn(\spart{\bbE_d}{k})=k-4$.
    The cases $d \in \{1,\infty\}$ follow from \cref{ex:partition-Einfinity} and \cref{lem:partition-of-E1}.
    In the remaining cases it suffices by \cref{thm:partition-Ed-intro} to show that the inclusion 
	$\partial \FM_d(k) \hookrightarrow \FM_d(k)$ 
	is $(k-4)$-connected.
	Since $\FM_d(k)$ is a $\left(d(k-1)-1\right)$-dimensional manifold with corners of homotopical dimension $(d-1)(k-1)$ it remains to prove the following claim:
	\begin{itemize}
	    \item 
	    Let $m >1$ and let $M$ be a compact oriented smooth $m$-dimensional manifold with corners, of homotopical dimension $q$.
	    Then the inclusion
    	$\partial M \coloneq  M \setminus \mrm{int}(M) \hookrightarrow M$ is $(m-q-2)$-connected. 
	\end{itemize}
	Smoothing the corners we may assume that $M$ is a compact smooth manifold with $\partial M$ as its boundary.
	By assumption $M$ has homotopical dimension $q$, so we may pick the handle decomposition whose top dimensional handles are in dimension $q$.
	The corresponding $\mrm{CW}$-structure induces a bijection between the $r$-dimensional cells of $M$ and the $(m-r)$-dimensional cells of the relative $\mrm{CW}$ complex $(M,\partial M)$.
	In particular the bottom most cells of $(M,\partial M)$ sit in dimension $m-q$.
	The inclusion $\partial M \hookrightarrow M$ is therefore $(m-q-2)$-connected.
\end{proof}

\subsubsection{Structure of the paper}

In \cite{RH-algebraic,RH-cartesian}, Haugseng and Chu develop a generalization of $\infty$-operads called \textit{algebraic patterns}. We will make extensive use of the theory of algebraic patterns. In \cref{sect:algebraic-patterns} we review the basics of this theory and develop some necessary tools for our intended application. In \cref{sect:analytic-Patterns} we shall restrict our attention to a narrower class of algebraic patterns which we term \textit{analytic patterns} of which $\infty$-operads are a special case. Analytic patterns (inspired by the \textit{cartesian patterns} of \cite{RH-cartesian}), share many properties with $\infty$-operads, and yet, being less rigid objects allow for more constructions. This makes them suitable for our purposes. 
The main result of \cref{sect:analytic-Patterns} is \cref{thm:artiy-restriction-theorem}, which is a version of the arity approximation theorem (\cref{thm:main-theorem}) in the general setting of analytic patterns. 
In \cref{sect:partition-complexes} we restrict attention to $\infty$-operads and simplify the complexes appearing in \cref{thm:artiy-restriction-theorem} until we arrive at \cref{thm:main-theorem}.
In \cref{sect:5} we give an alternative description of operadic partition complexes in terms of dendroidal homotopy theory and use it to compute the partition complexes of $\bbE_d$.
We have also included an appendix recording some technical results about factorization systems and slice $\infty$-categories which we make use of throughout the paper.

\subsubsection{Acknowledgements}

First and foremost, I wish to express my deep gratitude to my advisor, Tomer Schlank, for his guidance and support.
His inspiring talent for dodging mathematical obstacles were incredibly helpful at every stage of this project.
I wish to thank Shay Ben Moshe, my academic brother, whose detailed comments on a first draft were invaluable, and to all members of the Seminarak group for many useful conversations.
Among them I'm especially grateful to Lior Yanovski, for useful insights regarding early aspects of this work.
I wish to also thank Rune Haugseng for his valuable suggestions and Jan Steinebrunner, for many interesting and fun conversations which were crucial in shaping my understanding of this subject.

\subsubsection{Conventions}

We work in the framework of $\infty$-categories (a.k.a. quasicategories), introduced by Joyal \cite{Joyal} and extensively developed by Lurie in \cite{HTT} and \cite{HA}. We shall also use the following notation and terminology:

\begin{enumerate}
	\item Given $-2 \le n \le \infty$ we denote by $\calS^{\le n}$ the $\infty$-category of $n$-truncated spaces.
	\item Given an $\infty$-category $\calC$ and a functor $f: X \to Y$ we let $f^{\ast}:\Fun(Y,\calC) \to \Fun(X,\calC)$ denote the functor defined by precomposing with $f$. 
	
	\item In the situation of $(2)$, if $\calC$ admits all small limits (resp. colimits) then $f$ admits a right (resp. left) adjoint which we denote by $f_{\ast}$ and $f_!$ respectively.
	
	\item We say that a functor $f: X \to Y$ is initial if $f^{\op}$ is cofinal in the sense of \cite[Definition 4.1.1.1]{HTT}. We say that $f$ is final if $f^{\op}$ is initial.
\end{enumerate}

	\section{Algebraic patterns}\label{sect:algebraic-patterns}

In this section we make extensive use of the theory of algebraic patterns developed by Haugseng and Hongyi in \cite{RH-algebraic}. 
The main purpose will be to establish a recognition theorem for Morita equivalences of algebraic patterns (\cref{thm:Morita-equivalence-from-weakly-initial-slices}) which will serve as the main technical tool in \cref{sect:analytic-Patterns}.   
We begin by reviewing some basic definitions and facts on algebraic patterns. 

\subsection{Background on algebraic patterns}

We review the basics of algebraic patterns as developed in \cite{RH-algebraic}. These are $\infty$-categorical gadgets which provide a general framework for the study of homotopy coherent algebraic structures  of "Segal type".
We also introduce the notion of \textit{algebraic subpattern} (\cref{defn:algebraic-subpattern}), which is a subcategory $\calP \subseteq \calQ$ on which there's a canonical structure of an algebraic pattern.

\begin{defn}[{\cite[Definition 2.1]{RH-algebraic}}]
	An \textit{\hl{algebraic pattern}} is an $\infty$-category $\calO$ equipped with the following structure
	\begin{enumerate}
		\item An (inert,active) factorization system $(\hl{\calO^{\int}},\hl{\calO^{\act}})$ on $\calO$.
		
		\item A full subcategory $\hl{\calO^{\el}} \subseteq \calO^{\int}$ of \hl{elementary objects}.
	\end{enumerate}
	A morphism $f:\calO \to \calP$ of algebraic patterns is a functor  preserving all of the above. That is, it sends inert (resp. active) morphisms to inert (resp. active) morphisms and elementary objects to elementary objects.
\end{defn}

\begin{notation}
	Let $\calO$ be a algebraic pattern and let $x \overset{\alpha}{\to} y$ be a morphism in $\calO$. We denote the (inert,active) factorization of $\alpha$ as follows
	\[x \hl{\overset{\alpha^{\int}}{\rightarrowtail} \Lambda(\alpha) \overset{\alpha^{\act}}{\rightsquigarrow}} y\]
	More generally we follow the convention introduced in \cite{RH-algebraic} and denote active morphisms with squiggly arrows $\rightsquigarrow$ and inert morphisms.with tailed arrows $\rightarrowtail$.  
\end{notation}

%

\begin{example}\label{ex:inert-active-on-Fin}
	We abuse notation and denote by $\hl{\Fin_{\ast}}$ also the algebraic pattern whose underlying category is pointed finite sets, whose factorization system is $(\Fin^{\int}_{\ast},\Fin^{\act}_{\ast})$ and whose elementary objects are $A_+ \in \Fin_{\ast}$ with $|A|=1$.
\end{example}

\begin{example}[{\cite[Definition 3.7]{RH-cartesian}}]\label{ex:operads-are-algebraic-patterns}
	Let $\bracket{-}: \calO \to \Fin_{\ast}$ be an $\infty$-operad. Then $\calO$ admits a factorization system defined as follows
	\begin{enumerate}
		\item A map $\alpha :x \to y$ is \hl{inert} if $\bracket{\alpha}: \bracket{x} \to \bracket{y}$ is inert and $\alpha$ is a cocartesian lift of $\bracket{\alpha}$.
		\item A map $\alpha: x \to y$ is \hl{active} if $\bracket{\alpha}: \bracket{x} \to \bracket{y}$ is active.
	\end{enumerate} 
	We regard $\calO$ as an algebraic pattern with the above factorization system and with $\calO^{\el} \coloneq  \calO^{\int}_{\bracket{1}} \subseteq \calO^{\int}$.
\end{example}

\begin{example}
	Let $\calO$ be an algebraic pattern. The subcategories $\calO^{\int}$ and $\calO^{\el}$ of $\calO$ are naturally algebraic patterns by taking elementary objects, inert/active morphisms the same as those in $\calO$. The fully faithful inclusion $\hl{i_{\calO} :} \calO^{\el} \to \calO^{\int}$ is then a morphism of algebraic patterns.
\end{example}

\begin{defn}[{\cite[Definition 2.6]{RH-algebraic}}]\label{defn:el-diagrams}
	For an algebraic pattern $\calO$ we denote by $\hl{\calK_{\calO}}$ the collection of all diagrams of the form $\calO^{\rm el}_{x/}$ for some $x \in \calO$.
\end{defn}

The following definition relates algebraic patterns to the study of algebraic structures.

\begin{defn}[{\cite[Definition 2.7.]{RH-algebraic}}]
	Let $\calO$ be an algebraic pattern and let $\calC$ be an $\infty$-category which admits all $\calK_{\calO}$-limits. 
	An \textit{$\calO$-Segal object of $\calC$} is a functor $F: \calO \to \calC$ satisfying the following:
	\begin{itemize}
		\item \textbf{Segal Condition:} For every $x \in \calO$ the natural comparison map
	    $F(x) \to \underset{e \in \calO^{\el}_{x/}}{\lim} F(e)$
		is an equivalence. 
	\end{itemize}
	Denote by $\hl{\Seg_{\calO}(\calC)} \subseteq \Fun(\calO,\calC)$ the full subcategory of $\calO$-Segal objects in $\calC$.
\end{defn}

\begin{example}
	Commutative monoids are by definition $\Fin_{\ast}$-Segal objects, i.e.~$\CMon(\calC) \coloneq  \Seg_{\Fin_{\ast}}(\calC)$. More generally, an $\infty$-operad $\calO$ may be regarded as an algebraic pattern as in \cref{ex:operads-are-algebraic-patterns}. 
	In this case the $\calO$-Segal objects are precisely the $\calO$-monoids.
\end{example}

\begin{lem}[{\cite[Lemma 2.9.]{RH-algebraic}}]\label{lem: Segal iff right Kan extended}
	$F : \calO \to \calC$ is Segal if and only if $F|_{\calO^{\int}}$ is the right Kan extension of $F|_{\calO^{\el}}$.
\end{lem}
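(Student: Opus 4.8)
The plan is to reduce both sides of the claimed equivalence to the same pointwise statement, exploiting the fact that $\calO^{\int} \subseteq \calO$ is \emph{wide}, so that $\calO$ and $\calO^{\int}$ have the same objects. The bridge between the two formulations is the pointwise formula for right Kan extensions: for the fully faithful inclusion $i_{\calO}: \calO^{\el} \to \calO^{\int}$ and any functor $G: \calO^{\el} \to \calC$, the value at $x$ of the right Kan extension of $G$ along $i_{\calO}$ is the limit of $G$ over the comma category $\calO^{\el} \times_{\calO^{\int}} (\calO^{\int})_{x/}$. First I would observe that this comma category is precisely $\calO^{\el}_{x/}$, the indexing diagram appearing in the Segal condition; this is exactly where the factorization system enters, since the morphisms of $\calO^{\int}$ are the inert maps.

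Next I would address existence and pointwiseness. By hypothesis $\calC$ admits $\calK_{\calO}$-limits, and by \cref{defn:el-diagrams} every diagram $\calO^{\el}_{x/}$ lies in $\calK_{\calO}$; hence the limit $\lim_{e \in \calO^{\el}_{x/}} F(e)$ exists for each $x$, so the right Kan extension of $F|_{\calO^{\el}}$ along $i_{\calO}$ exists and is computed pointwise by the formula above. I would then invoke the standard characterization of right Kan extensions in terms of pointwise comparison maps (\cite{HTT}): a functor $H: \calO^{\int} \to \calC$ restricting to $F|_{\calO^{\el}}$ on $\calO^{\el}$ is the right Kan extension of $F|_{\calO^{\el}}$ if and only if, for every object $x$, the canonical map $H(x) \to \lim_{e \in \calO^{\el}_{x/}} F(e)$ induced by the structure maps $x \to e$ is an equivalence.

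Applying this with $H = F|_{\calO^{\int}}$, I would identify the Kan extension unit $F(x) \to \lim_{e \in \calO^{\el}_{x/}} F(e)$ with the Segal comparison map of the same name: both are induced by the images under $F$ of the inert maps $x \to e$, hence they agree. Consequently $F|_{\calO^{\int}}$ is the right Kan extension of $F|_{\calO^{\el}}$ iff this map is an equivalence for every $x \in \calO^{\int}$, whereas $F$ is Segal iff it is an equivalence for every $x \in \calO$. Since $\calO^{\int}$ is wide, these two ranges of $x$ coincide, so the two conditions are literally the same, which finishes the proof.

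I expect the only point genuinely requiring care — rather than a true obstacle — to be the identification in the first paragraph: verifying that the comma category governing the pointwise right Kan extension along $i_{\calO}$ is exactly $\calO^{\el}_{x/}$, and that the Kan extension unit is identified with the Segal comparison map. Everything else is a formal consequence of the pointwise Kan extension characterization together with the observation that passing from $\calO$ to its wide inert subcategory $\calO^{\int}$ leaves the objects unchanged.
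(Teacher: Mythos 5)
Your proof is correct: identifying the comma category governing the pointwise right Kan extension along $i_{\calO}\colon \calO^{\el}\mono \calO^{\int}$ with $\calO^{\el}_{x/}$ (inert maps to elementaries), invoking the pointwise criterion for right Kan extensions (with existence guaranteed by the $\calK_{\calO}$-limit hypothesis), and using that $\calO^{\int}\subseteq\calO$ is wide is exactly the standard argument. The paper gives no proof of its own, citing \cite[2.9]{RH-algebraic}, and your argument is essentially the one found there.
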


\begin{rem}\label{rem:alternative-definition-of-Seg}
	As a consequence of \cref{lem: Segal iff right Kan extended}, we have a pullback square
	\[\begin{tikzcd}
	{\Seg_{\calO}(\calC)} && {\Fun(\calO,\calC)} \\
	{\Fun(\calO^{\el},\calC)} && {\Fun(\calO^{\int},\calC).}
	\arrow[from=1-1, to=1-3]
	\arrow[from=1-3, to=2-3]
	\arrow[""{name=0, anchor=center, inner sep=0}, "{i_{\calO, \ast}}"', from=2-1, to=2-3]
	\arrow[from=1-1, to=2-1]
	\arrow["\lrcorner"{anchor=center, pos=0.125}, draw=none, from=1-1, to=0]
	\end{tikzcd}\]
	where the bottom functor is right Kan extension along the inclusion $i_{\calO} : \calO^{\el} \mono \calO^{\int}$.
\end{rem}

\begin{cor}\label{lem: restriction to elementary is conservative}
	Let $\calO$ be a algebraic pattern and $\calC$ an $\infty$-category admitting all $\calK_{\calO}$-limits. Then the restriction functor $\Seg_{\calO}(\calC) \to \Fun(\calO^{\el},\calC)$ is conservative.
\end{cor}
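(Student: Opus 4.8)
The plan is to deduce this directly from the Segal condition, using that equivalences of functors are detected pointwise. A morphism $\eta: F \to G$ in $\Seg_{\calO}(\calC)$ is an equivalence if and only if the component $\eta_x : F(x) \to G(x)$ is an equivalence for every $x \in \calO$. So it suffices to show: if $\eta|_{\calO^{\el}}$ is an equivalence in $\Fun(\calO^{\el},\calC)$ — equivalently $\eta_e$ is an equivalence for every elementary $e$ — then $\eta_x$ is an equivalence for every $x \in \calO$.

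The key observation is that the Segal condition expresses the value of a Segal object at an arbitrary object purely in terms of its values at elementary objects. Concretely, for each $x \in \calO$ both $F$ and $G$ fit into a commuting square in which the vertical comparison maps
\[ F(x) \xrightarrow{\ \sim\ } \lim_{e \in \calO^{\el}_{x/}} F(e), \qquad G(x) \xrightarrow{\ \sim\ } \lim_{e \in \calO^{\el}_{x/}} G(e) \]
are equivalences by assumption, and the bottom horizontal map is the map on limits induced by $\eta$ along the forgetful functor $\calO^{\el}_{x/} \to \calO^{\el}$. Since every object of $\calO^{\el}_{x/}$ maps to an elementary object, the restriction of $\eta$ to the diagram $\calO^{\el}_{x/} \to \calC$ is a levelwise equivalence, and hence induces an equivalence on limits. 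By two-out-of-three in the square, $\eta_x$ is an equivalence.

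As $x \in \calO$ was arbitrary, $\eta$ is a pointwise equivalence, hence an equivalence, proving that restriction to elementary objects is conservative. I expect no genuine obstacle here: the entire content is that the Segal comparison map only probes elementary values, so the statement is an immediate formal consequence of \cref{lem: Segal iff right Kan extended} (equivalently of the pullback square in \cref{rem:alternative-definition-of-Seg}), together with the fact that limits preserve levelwise equivalences. The only point requiring a moment's care is confirming that the indexing diagram $\calO^{\el}_{x/}$ indeed lands in $\calO^{\el}$, which is built into \cref{defn:el-diagrams}.
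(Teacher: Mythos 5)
Your proof is correct and is essentially the paper's own argument: the paper invokes \cref{lem: Segal iff right Kan extended} to upgrade the equivalence on $\calO^{\el}$ to one on $\calO^{\int}$ (which is wide, hence contains every object), whereas you simply unwind that right Kan extension into the pointwise Segal limits over $\calO^{\el}_{x/}$ and conclude by two-out-of-three. The mathematical content is identical.
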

\begin{proof}
	Let $\alpha: F \to G$ be a morphism of Segal objects such that $\alpha^{\el}: F|_{\calO^{\el}} \to F|_{\calO^{\el}}$ is an equivalence. By \cref{lem: Segal iff right Kan extended}, the restriction $\alpha^{\int} :F|_{\calO^{\int}} \to G|_{\calO^{\int}}$ is also an equivalence. But $\calO^{\int} \to \calO$ is a wide subcategory (i.e.  contains all objects) so we're done.
\end{proof}

\begin{notation}
	We denote by $\AlgPatt$ the $\infty$-category of algebraic patterns \cite[Definition 5.4.]{RH-algebraic}.
\end{notation}

\begin{lem}[{\cite[Corollary 5.5]{RH-algebraic}}]\label{lem: limits of algebraic patterns}
	The $\infty$-category $\AlgPatt$ of algebraic patterns admits all limits and filtered colimits and these are preserved by the forgetful functor $\AlgPatt \longrightarrow \Cat_{\infty}$.
\end{lem}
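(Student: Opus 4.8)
The plan is to exhibit $\AlgPatt$ as a full subcategory, cut out by conditions stable under limits and filtered colimits, of an ambient $\infty$-category of \emph{structured tuples} built levelwise out of $\Cat_{\infty}$. Concretely, I would let $P$ be the poset with objects $\{\mathrm{el},\mathrm{int},\mathrm{act},\mathrm{o}\}$ and relations $\mathrm{el}\le \mathrm{int}\le \mathrm{o}$ and $\mathrm{act}\le \mathrm{o}$, so that a functor $P\to\Cat_{\infty}$ is exactly a diagram $\calO^{\el}\to\calO^{\int}\to\calO\leftarrow\calO^{\act}$. Let $\mathcal{X}\subseteq\Fun(P,\Cat_{\infty})$ be the full subcategory spanned by those diagrams in which all three structure maps are monomorphisms. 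Since the legs are monomorphisms, the naturality data in a map of $\mathcal{X}$ is a property rather than structure, so $\Map_{\mathcal{X}}$ between two tuples is precisely the space of functors $\calO\to\calO'$ carrying each marked subcategory into the corresponding one; thus $\AlgPatt$ is the full subcategory of $\mathcal{X}$ on those tuples satisfying the pattern axioms, and the forgetful functor is $\mathrm{ev}_{\mathrm{o}}\colon \mathcal{X}\to\Cat_{\infty}$.

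The first step is to check that $\mathcal{X}$ has all limits and filtered colimits and that every evaluation functor preserves (indeed creates) them. Limits and filtered colimits in $\Fun(P,\Cat_{\infty})$ are computed pointwise, so it suffices that the class of monomorphisms in $\Cat_{\infty}$ is closed under limits and filtered colimits. This follows from the characterization of a monomorphism $f$ as a map whose diagonal $\calC_0\to\calC_0\times_{\calC}\calC_0$ is an equivalence: the diagonal of a pointwise limit (resp.\ filtered colimit) is the limit (resp.\ filtered colimit) of diagonals, and in the filtered case the pullback commutes with the colimit because filtered colimits commute with finite limits in $\Cat_{\infty}$; since equivalences are closed under both operations, so are monomorphisms. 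Hence $\mathcal{X}$ is closed under limits and filtered colimits inside $\Fun(P,\Cat_{\infty})$, computed levelwise, which is exactly the assertion that $\mathrm{ev}_{\mathrm{o}}$ preserves them.

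The second step is to verify that the pattern axioms define a subcategory of $\mathcal{X}$ closed under limits and filtered colimits; the whole point is that each axiom is a \emph{finite-limit} (equivalently, compact-object-detected) condition, which is what makes it survive filtered colimits. For the condition that $\calO^{\int}$ and $\calO^{\act}$ be \emph{wide} I would use that the core functor is corepresented by the compact object $\Delta^0$, i.e.\ $(-)^{\simeq}\simeq\Map_{\Cat_{\infty}}(\Delta^0,-)$, so it preserves limits and filtered colimits; the condition that $\mathrm{ev}_{\mathrm{int}}(-)^{\simeq}\to\mathrm{ev}_{\mathrm{o}}(-)^{\simeq}$ be an equivalence (and likewise for $\mathrm{act}$) is therefore closed under both. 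For \emph{fullness} of $\calO^{\el}\subseteq\calO^{\int}$ I would use the standard detection of fully faithful functors by an equivalence between functor categories with the compact domains $\Delta^1$ and $\partial\Delta^1$, each of which preserves limits and filtered colimits. Finally, for the \emph{factorization-system} axiom I would encode unique factorization à la Joyal--Lurie: writing $\Fun^{\int,\act}(\Delta^2,\calO)\subseteq\Fun(\Delta^2,\calO)$ for the full subcategory of $2$-simplices whose $(0\to1)$-edge is inert and whose $(1\to2)$-edge is active, the pair is a factorization system precisely when the restriction $\Fun^{\int,\act}(\Delta^2,\calO)\to\Fun(\Delta^1,\calO)$ to the $(0\to2)$-edge is an equivalence. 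This comparison is assembled from the tuple by functor categories with compact domains and finite limits, hence is compatible with limits and filtered colimits, and being an equivalence is closed under both.

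Granting these, $\AlgPatt$ is closed under limits and filtered colimits in $\mathcal{X}$, where they agree with those of $\Fun(P,\Cat_{\infty})$ and are computed levelwise; restricting to the $\mathrm{o}$-coordinate shows the forgetful functor to $\Cat_{\infty}$ both witnesses their existence and preserves them. The main obstacle, and the only genuinely delicate point, is the restriction to \emph{filtered} colimits: arbitrary colimits in $\Cat_{\infty}$ neither preserve monomorphisms nor commute with the finite limits used to detect wideness, fullness, and unique factorization, so the argument hinges on checking that each of these is genuinely a finite-limit/compact-object condition and thus commutes with filtered colimits. I would also double-check the Joyal--Lurie reformulation of the factorization axiom, including that the two classes are replete and contain the equivalences, which is automatic once they are wide subcategories and the comparison functor is an equivalence, since this is where the bookkeeping is easiest to get wrong.
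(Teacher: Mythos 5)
You should first know that the paper itself contains no proof of this lemma: it is quoted verbatim from Chu--Haugseng \cite{RH-algebraic}, Lemma~5.5, so the only meaningful comparison is with the proof there. That proof follows essentially your architecture: realize $\AlgPatt$ as the subcategory of diagrams $\calO^{\el}\to\calO^{\int}\to\calO\leftarrow\calO^{\act}$ in $\Fun(P,\Cat_{\infty})$ whose legs are monomorphisms, observe that limits and filtered colimits there are computed objectwise, and check that each pattern axiom cuts out a condition stable under both operations. Your supporting facts are also the right ones: monomorphisms are stable under limits and, because filtered colimits in $\Cat_{\infty}$ are left exact, under filtered colimits; wideness is the condition that $(\calO^{\int})^{\simeq}\to\calO^{\simeq}$ be an equivalence, with the core corepresented by the compact object $\Delta^0$; fullness of $\calO^{\el}\subseteq\calO^{\int}$ is detected by the finite diagram $(\Delta^1,\partial\Delta^1)$.

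There is, however, one step that fails as literally written: the claim that $\Fun^{\int,\act}(\Delta^2,\calO)$ is ``assembled from the tuple by functor categories with compact domains and finite limits.'' The obvious finite-limit construction, namely the pullback of $\Fun(\Delta^2,\calO)\to\Fun(\Delta^1,\calO)\times\Fun(\Delta^1,\calO)$ along $\Fun(\Delta^1,\calO^{\int})\times\Fun(\Delta^1,\calO^{\act})$, produces the \emph{non-full} subcategory in which a morphism of $2$-simplices must have its vertex components lying in $\calO^{\int}$, respectively $\calO^{\act}$. The Joyal--Lurie criterion requires the \emph{full} subcategory spanned by simplices with inert $01$-edge and active $12$-edge, with arbitrary transformations between them; the non-full variant fails to be equivalent to $\Fun(\Delta^1,\calO)$ even for genuine factorization systems, since a square $f\to f'$ with, say, non-inert source component admits no lift to it. The repair is not difficult but is a separate idea: characterize the full subcategory by the two statements that its inclusion is fully faithful and that its core is the pullback
\[
\Fun^{\int,\act}(\Delta^2,\calO)^{\simeq}\;\simeq\;\Fun(\Delta^2,\calO)^{\simeq}\times_{\Fun(\Delta^1,\calO)^{\simeq}\times\Fun(\Delta^1,\calO)^{\simeq}}\Big(\Fun(\Delta^1,\calO^{\int})^{\simeq}\times\Fun(\Delta^1,\calO^{\act})^{\simeq}\Big),
\]
and then note that cores, mapping spaces out of compact objects, and pullbacks of spaces all commute with limits and with filtered colimits, so the formation of this full subcategory (and hence the equivalence condition against $\Fun(\Delta^1,\calO)$) is stable under both. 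Separately, the converse half of the criterion you flagged yourself --- that an equivalence $\Fun^{\int,\act}(\Delta^2,\calO)\simeq\Fun(\Delta^1,\calO)$ forces orthogonality and retract-closure for a pair of wide replete subcategories --- does hold, by the standard argument identifying the space of diagonal fillers of a square $l\to r$ with a fiber of $\Map(F(l),F(r))\to\Map(l,r)$ for the factorization functor $F$, so that point is a verification rather than a gap. With these two insertions your proof is complete and agrees in substance with the cited one.
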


The following corollary shows that the assignment $\calO \longmapsto \Seg_{\calO}(\calC)$ is compatible with filtered colimits of algebraic patterns.

\begin{cor}\label{cor:Segal-objects-for-filtered-colimit}
	Let $\calO_{(-)}:  I \to \AlgPatt$ be a filtered diagram of algebraic patterns and let $\calC$ be a complete $\infty$-category. Then the natural functor
	\[ \Seg_{\underset{i\in I}{\colim} \calO_i}(\calC) \to \underset{i \in I}{\lim} \Seg_{\calO_i}(\calC), \]
	is an equivalence.
\end{cor}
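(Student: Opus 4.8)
The plan is to reduce the statement to the two pullback-square descriptions of Segal objects and then commute the relevant limits. First I would invoke \cref{rem:alternative-definition-of-Seg}, which expresses $\Seg_{\calO}(\calC)$ for each pattern $\calO$ as the pullback of $\Fun(\calO,\calC) \to \Fun(\calO^{\int},\calC) \leftarrow \Fun(\calO^{\el},\calC)$, the right-hand map being right Kan extension along $i_{\calO}$. The key point to extract from \cref{lem: limits of algebraic patterns} is that the forgetful functor $\AlgPatt \to \Cat_{\infty}$ preserves filtered colimits, and moreover that the assignments $\calO \mapsto \calO^{\int}$ and $\calO \mapsto \calO^{\el}$ commute with filtered colimits as well; this is what lets me pass the colimit through all three corners of the pullback square coherently.

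Next I would use that $\Fun(-,\calC)$ turns colimits of $\infty$-categories into limits, so that for the diagram $\colim_i \calO_i$ each corner becomes a limit over $I$ of the corresponding corners for the $\calO_i$. Writing $\calO := \colim_i \calO_i$, I get
\[ \Fun(\calO,\calC) \simeq \lim_i \Fun(\calO_i,\calC), \quad \Fun(\calO^{\int},\calC) \simeq \lim_i \Fun(\calO_i^{\int},\calC), \quad \Fun(\calO^{\el},\calC) \simeq \lim_i \Fun(\calO_i^{\el},\calC). \]
The crucial compatibility to check is that the right Kan extension functor $i_{\calO,\ast}$ is the limit over $I$ of the functors $i_{\calO_i,\ast}$; equivalently, that forming the pullback defining $\Seg_{\calO}(\calC)$ is compatible with the identification of each corner as a limit over $I$. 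Granting this, the conclusion follows since limits commute with limits: the pullback (a finite limit) of the $I$-indexed limits of the corners agrees with the $I$-indexed limit of the pointwise pullbacks, and the latter is exactly $\lim_i \Seg_{\calO_i}(\calC)$.

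The main obstacle I expect is precisely the compatibility of right Kan extension with the filtered colimit, i.e. verifying that the square of $\Fun$-categories assembles into a \emph{natural} transformation of cospans over $I$ whose fiberwise pullback is $\Seg_{\calO_i}(\calC)$ and whose limit-of-corners pullback is $\Seg_{\calO}(\calC)$. This requires that the maps $\calO_i^{\el} \to \calO_i^{\int}$, and hence the induced right Kan extensions, are genuinely functorial in $i \in I$ and that the colimit cocone identifies $i_{\calO}$ with $\colim_i i_{\calO_i}$. The filtered hypothesis is likely what makes right Kan extension behave well here: for each object $x$ in the colimit, the indexing diagram $\calO^{\el}_{x/}$ (an element of $\calK_{\calO}$) arises from a finite-arity datum that is already present at some finite stage $i$, so the pointwise formula for $i_{\calO,\ast}$ computes a limit over a diagram realized in some $\calO_i$. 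I would make this precise by arguing that the slices $\calO^{\el}_{x/}$ are each equivalent to $\calO_{i,x_i/}^{\el}$ for a suitable lift $x_i$ at a finite stage, so that the pointwise right Kan extension at $x$ matches the limit computed at stage $i$, and then assemble these compatibilities into the required equivalence of cospans. Once the naturality over $I$ is established, the interchange of the finite pullback with the filtered limit is formal.
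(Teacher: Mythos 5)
Your overall strategy coincides with the paper's: the paper likewise invokes \cref{rem:alternative-definition-of-Seg} to present each $\Seg_{\calO_i}(\calC)$ and $\Seg_{\calO}(\calC)$ as a pullback of functor categories, uses \cref{lem: limits of algebraic patterns} to identify $\calO^{\el}\simeq \underset{i}{\colim}\,\calO_i^{\el}$ and $\calO^{\int}\simeq \underset{i}{\colim}\,\calO_i^{\int}$, applies $\Fun(-,\calC)$ to convert these colimits into limits, and then commutes the pullback with the $I$-indexed limit (packaged in the paper as a cube in which three of the four connecting edges are equivalences). So the skeleton of your argument is exactly the paper's, and you are right that the only non-formal point is the naturality in $i$ of the right Kan extension edges --- a point the paper itself passes over with the words ``natural cube''.

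The gap is in how you propose to settle that point. Your argument rests on the claim that each slice $\calO^{\el}_{x/}$ is equivalent to $(\calO_i)^{\el}_{x_i/}$ for a suitable lift $x_i$ at a finite stage. For a general filtered diagram in $\AlgPatt$ this is false: filteredness only gives $\calO^{\el}_{x/}\simeq \underset{j\ge i}{\colim}\,(\calO_j)^{\el}_{x_j/}$, and this colimit need not stabilize, because new elementary objects and new inert maps out of $x$ can keep appearing at later stages. Concretely, let $\calO_n$ be the pattern with objects $x,e_1,\dots,e_n$, the trivial factorization system (every map inert, only equivalences active), elementary objects the $e_k$, and inert maps $x\inert e_k$; the inclusions $\calO_n\mono \calO_{n+1}$ form a filtered diagram in $\AlgPatt$, yet $(\calO_n)^{\el}_{x/}$ has $n$ objects while $\calO^{\el}_{x/}$ is infinite, and the comparison functor from stage $n$ is not initial. (In this example restriction does not even carry $\Seg_{\calO_{n+1}}(\calC)$ into $\Seg_{\calO_n}(\calC)$, so the diagram $i\mapsto \Seg_{\calO_i}(\calC)$ and the ``natural functor'' of the statement do not exist in the naive sense.) The stabilization you want, and with it the Beck--Chevalley compatibility $\mathrm{res}\circ i_{\calO,\ast}\simeq i_{\calO_i,\ast}\circ\mathrm{res}$, holds precisely when the transition (equivalently, cocone) maps are iso-Segal or strong Segal morphisms --- a hypothesis that is also implicitly required for the statement to parse, that the paper's proof silently assumes, and that does hold in the paper's application, where the transitions are the subpattern inclusions $\calO^{\le k}\subseteq \calO^{\le k+l}$ and the stage-wise slices literally agree. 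So your reduction is the right one, but the finite-stage argument needs to be replaced by (and can only be proved under) Segal-ness of the maps in the diagram; filteredness alone does not suffice.
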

\begin{proof}
	Let $\calO\coloneq \underset{i\in I}{\colim} \calO_i$ and consider the following natural cube
	\[\begin{tikzcd}[row sep={40,between origins}, column sep={40,between origins}]
	& \Seg_{\calO}(\calC) \ar{rr}\ar{dd}\ar{dl} & & \Fun(\calO,\calC)  \ar{dd}\ar{dl} \\
	\underset{i \in I}{\lim}\Seg_{\calO_i}(\calC)  \ar[crossing over]{rr} \ar{dd} & &	\underset{i \in I}{\lim} \Fun(\calO_i,\calC) \\
	& \Fun(\calO^{\el},\calC) \ar[pos=0.7]{rr} \ar{dl} & & \Fun(\calO^{\int},\calC) \ar{dl} \\
	\underset{i \in I}{\lim}\Fun(\calO^{\el}_i,\calC) \ar{rr} && \underset{i \in I}{\lim}\Fun(\calO^{\int}_i,\calC) \ar[from=uu,crossing over]
	\end{tikzcd}\]
	The back face is cartesian by \cref{rem:alternative-definition-of-Seg}. The front face is a limit of cartesian squares by \cref{rem:alternative-definition-of-Seg} and is therefore also cartesian. On the other hand the bottom left, bottom right and top right edge are all equivalences by \cref{lem: limits of algebraic patterns}. Consequently the top left edge is an equivalence.
\end{proof}

\begin{war}
	If $f: \calO \to \calP$ is a morphism of algebraic patterns, then $f^{\ast}: \Fun(\calP,\calC) \to \Fun(\calO,\calC)$ need not preserve Segal objects. For a counterexample see \cite[Remark 4.6]{RH-algebraic}.
\end{war}

\begin{defn}[{\cite[Definition 4.2 \& Lemma 4.5]{RH-algebraic}}]\label{defn: definition of Segal morphism}
	A morphism of algebraic patterns $f: \calO \to \calP$ is called a \textit{Segal morphism} if the following equivalent conditions are satisfied
	\begin{enumerate}
		\item For every $\infty$-category $\calC$ admitting all $\calK_{\calO}$ and $\calK_{\calP}$-limits the functor $f^{\ast}: \Fun(\calP,\calC) \to \Fun(\calO,\calC)$ restricts to Segal objects:
		\[f^{\ast} : \Seg_{\calP}(\calC) \to \Seg_{\calO}(\calC)\]
		\item The functor $f^{\ast}: \Fun(\calP,\calS) \to \Fun(\calO,\calS)$ restricts to  Segal objects:
		$$f^{\ast} : \Seg_{\calP}(\calS) \to \Seg_{\calO}(\calS)$$
		\item\label{part (2) of definition of Segal morphism} For every $x \in \calO$ and $F \in \Seg_{\calP}(\calS)$ the natural map,
		\[\underset{\calP^{\el}_{f(x)/}}{\lim} F \to \underset{\calO^{\el}_{x/}}{\lim} F \circ f^{\el},\] 
		is an equivalence.
	\end{enumerate} 
\end{defn}

\begin{defn}
	Let $f: \calO \to \calP$ be a morphism of algebraic patterns. An $\infty$-category $\calC$ is said to be \textit{$f$-complete} if the following conditions hold:
	\begin{enumerate}
		\item $\calC$ admits all $\calK_{\calO}$ and $\calK_{\calP}$-limits (see \ref{defn:el-diagrams}).
		\item For every Segal $\calO$-object $F: \calO \to \calC$ and every $y \in \calP$ the following limit exists
		\[\!\!\!\!\!\! \underset{(x,y \to f(x)) \in \calO \times_{\calP}\calP_{y/}}{\lim}\!\!\!\!\! F(x) \in \calC.\]
	\end{enumerate}
\end{defn}
 
\begin{example}
	Let $\calO$ be an algebraic pattern and let $i_{\calO}: \calO^{\el} \mono \calO^{\int}$ denote the inclusion of the elementary objects. An $\infty$-category $\calC$ is $i_{\calO}$-complete if and only if it admits all $\calK_{\calO}$-limits.
\end{example}

Different algebraic patterns can give rise to equivalent algebraic structures. To make this notion of equivalence precise we introduce a variation on \cite[Definition 10.1]{RH-cartesian}.

\begin{prop}\label{prop:morita-equivalence}
	Let $f: \calO \to \calP$ be a morphism of algebraic pattern.
	The following are equivalent:
	\begin{enumerate}
		\item The adjunction $f^{\ast}: \Fun(\calP,\calS^{\le n}) \adj \Fun(\calO,\calS^{\le n}):f_{\ast}$ restricts to an equivalence of $\infty$-categories:
		$$f^{\ast} : \Seg_{\calP}(\calS^{\le n}) \simeq \Seg_{\calO}(\calS^{\le n}) : f_{\ast}$$
		\item For every $\infty$-category $\calA$ the adjunction $f^{\ast}: \Fun(\calP,\Fun(\calA,\calS^{\le n})) \adj \Fun(\calO,\Fun(\calA,\calS^{\le n})):f_{\ast}$ restricts to an equivalence of $\infty$-categories:
		\[ f^{\ast} : \Seg_{\calP}(\Fun(\calA,\calS^{\le n})) \simeq \Seg_{\calO}(\Fun(\calA,\calS^{\le n})) : f_{\ast}, \]
		\item For every $f$-complete $(n+1,1)$-category $\calC$ restriction $f^{\ast} : \Fun(\calP,\calC) \to \Fun(\calO,\calC)$ preserves Segal objects and induces an equivalence,
		\[ f^{\ast} : \Seg_{\calP}(\calC) \simeq \Seg_{\calO}(\calC) : f_{\ast}, \]
		with the right adjoint given by (pointwise)
		right kan extension (which exists for Segal $\calO$-objects by assumption that $\calC$ is $f$-complete).
	\end{enumerate}
\end{prop}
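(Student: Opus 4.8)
The plan is to establish the cycle $(1)\Rightarrow(2)\Rightarrow(3)\Rightarrow(1)$. The two bookend implications are formal: $(1)\Rightarrow(2)$ is a currying/pointwise argument upgrading truncated-space coefficients to functors valued in them, and $(3)\Rightarrow(1)$ amounts to checking that $\calS^{\le n}$ is itself an $f$-complete $(n+1,1)$-category. The single substantive step is $(2)\Rightarrow(3)$, which passes from $\Fun(\calC^{\op},\calS^{\le n})$-valued Segal objects to $\calC$-valued ones by means of the Yoneda embedding, and that is where I expect the real work to lie.

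For $(1)\Rightarrow(2)$ I would exploit that in a functor $\infty$-category $\Fun(\calA,\calS^{\le n})$ both limits and equivalences are computed pointwise, so the Segal condition is detected objectwise in $\calA$. Concretely, currying gives $\Fun(\calO,\Fun(\calA,\calS^{\le n}))\simeq\Fun(\calA,\Fun(\calO,\calS^{\le n}))$, and under this identification a functor is an $\calO$-Segal object exactly when it lands pointwise in $\Seg_{\calO}(\calS^{\le n})$; hence $\Seg_{\calO}(\Fun(\calA,\calS^{\le n}))\simeq\Fun(\calA,\Seg_{\calO}(\calS^{\le n}))$, and likewise for $\calP$. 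Tracing $f^{\ast}$ through these equivalences turns it into postcomposition with $f^{\ast}\colon\Seg_{\calP}(\calS^{\le n})\to\Seg_{\calO}(\calS^{\le n})$; in particular the target of this postcomposition lands in $\Seg_{\calO}$, so $f^{\ast}$ preserves Segal objects in this setting, and postcomposing with the equivalence supplied by $(1)$ is again an equivalence, with inverse postcomposition with $f_{\ast}$. For $(3)\Rightarrow(1)$ it suffices to note that $\calS^{\le n}$ is an $f$-complete $(n+1,1)$-category: mapping into an $n$-truncated space yields an $n$-truncated mapping space, and $\calS^{\le n}$ is complete, so all the limits in the definitions of $\calK_{\calO}$- and $\calK_{\calP}$-limits and of $f$-completeness exist. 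Applying $(3)$ to $\calC=\calS^{\le n}$ is then precisely $(1)$.

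The heart of the argument is $(2)\Rightarrow(3)$. Since every mapping space of an $(n+1,1)$-category $\calC$ is $n$-truncated, the Yoneda embedding refines to a fully faithful, limit-preserving functor $y\colon\calC\hookrightarrow\Fun(\calC^{\op},\calS^{\le n})$. Postcomposition $y_{\ast}$ is then fully faithful, and because $y$ preserves the limits occurring in the Segal condition and is conservative, $y_{\ast}$ both preserves and reflects Segal objects; its essential image on $\Seg_{\calO}(\calC)$ is exactly the full subcategory of Segal objects taking a representable value at every $x\in\calO$, and similarly for $\calP$. Taking $\calA=\calC^{\op}$ in $(2)$ yields an equivalence $f^{\ast}\colon\Seg_{\calP}(\Fun(\calC^{\op},\calS^{\le n}))\simeq\Seg_{\calO}(\Fun(\calC^{\op},\calS^{\le n}))$ with inverse the pointwise right Kan extension $f_{\ast}$, and it remains to see that this equivalence restricts to the representable-valued subcategories. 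Preservation under $f^{\ast}$ is immediate since $(f^{\ast}G)(x)=G(f(x))$. The essential point, which I expect to require the most care, is preservation under $f_{\ast}$: writing a representable-valued Segal object as $y_{\ast}F$ with $F\in\Seg_{\calO}(\calC)$, its image under $f_{\ast}$ at $y\in\calP$ is the limit
\[ \underset{(x,\,y\to f(x))\in\calO\times_{\calP}\calP_{y/}}{\lim} y\big(F(x)\big), \]
and since $y$ preserves limits this equals $y$ applied to $\lim_{(x,\,y\to f(x))}F(x)$, a limit that exists in $\calC$ precisely by the hypothesis that $\calC$ is $f$-complete. Thus $f_{\ast}$ preserves representability, the equivalence restricts to $\Seg_{\calP}(\calC)\simeq\Seg_{\calO}(\calC)$, and the same computation identifies this inverse with the pointwise right Kan extension, as claimed. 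Finally, the compatibility of $y_{\ast}$ with $f^{\ast}$ — precomposition in the source variable commutes with postcomposition in the target — together with the fact that $y_{\ast}$ reflects Segal objects, shows that $f^{\ast}$ already preserves Segal objects at the level of $\calC$, completing $(3)$.
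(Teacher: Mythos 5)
Your proof is correct and follows essentially the same route as the paper: the cycle $(1)\Rightarrow(2)\Rightarrow(3)\Rightarrow(1)$, with the pointwise/currying argument for $(1)\Rightarrow(2)$, the Yoneda embedding into $\Fun(\calC^{\op},\calS^{\le n})$ plus $f$-completeness to get representability of the right Kan extension for $(2)\Rightarrow(3)$, and specialization to $\calC=\calS^{\le n}$ for $(3)\Rightarrow(1)$. The only difference is cosmetic packaging (e.g.\ your explicit identification $\Seg_{\calO}(\Fun(\calA,\calS^{\le n}))\simeq\Fun(\calA,\Seg_{\calO}(\calS^{\le n}))$, which the paper phrases as a pointwise check of units and counits).
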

\begin{proof}
    $(1 \Rightarrow 2)$ Limits in presheaves are pointwise and thus $F: \calO \to \Fun(\calA,\calS^{\le n})$ satisfies the Segal condition if and only if $F(a): \calO \to \calS^{\le n}$ satisfies the Segal condition for every $a \in \calA$. Clearly though, the same holds for $\calP$ in place of $\calO$. Using (1) we then conclude that both $f^{\ast}$ as well as its right adjoint preserve the Segal condition and thus restricts to give an adjunction: 
	\[f^{\ast}: \Seg_{\calP}(\Fun(\calA,\calS^{\le n})) \adj \Seg_{\calO}(\Fun(\calA,\calS^{\le n})):f_{\ast},\]
	which is necessarily an equivalence since the unit and counit evaluate at every $a \in \calA$ to the unit and counit of the adjunction from $(1)$.
    $(2 \Rightarrow 3)$ The mapping spaces in $\calC$ are $n$-truncated and thus Yoneda restricts to fully faithful inclusion $\calC \mono \Fun(\calC^{\op},\calS^{\le n})$. From it we get a limit preserving fully faithful inclusion $\Fun(\calO,\calC) \mono \Fun(\calO,\Fun(\calC^{\op},\calS^{\le n}))$ and similarly for $\calP$. Let $\widehat{f}^{\ast} \dashv \widehat{f}_{\ast}$ denote the adjunction of $(2)$ in the case $\calA \coloneq  \calC^{\op}$. Observe that $\widehat{f}^{\ast}$ restricts to $f^{\ast}$ and since the latter preserves Segal objects so must the former. Let $F: \calO \to \calC$ be a Segal $\calO$-object. By $(2)$ its right kan extension $\widehat{f}_{\ast}F$ is a Segal $\calP$-object of $\Fun(\calC^{\op},\calS^{\le n})$. On the other hand $\calC$ being $f$-complete, implies that that for every $y \in \calP$ we have $(\widehat{f}_{\ast} F)(y) \in \calC$. We have thus shown that $\widehat{f}^{\ast} \dashv \widehat{f}_{\ast}$ restricts to an adjunction: \[f^{\ast}:\Seg_{\calP}(\calS) \adj \Seg_{\calO}(\calC) :f_{\ast}\]
	It remains to observe that by construction the unit and counit of $f^{\ast} \dashv f_{\ast}$ coincide with the unit and counit of $\widehat{f}^{\ast} \dashv \widehat{f}_{\ast}$ which induce equivalences on Segal objects by $(2)$.
    $(3 \Rightarrow 1)$  Immediate from the fact that $\calS^{\le n}$ is complete.
\end{proof}

\begin{defn}\label{weak Morita equivalence}
	A morphism of algebraic patterns $f: \calO \to \calP$ is called a \textit{Morita $n$-equivalence} if it satisfies the equivalent conditions of \cref{prop:morita-equivalence}.
\end{defn}

\begin{example}
    It is is straightforward to verify by hand that the fully faithful inclusion $\Fin^{\le 3}_{\ast} \mono \Fin_{\ast}$ is a Morita $0$-equivalence of algebraic patterns. Indeed this corresponds to the fact that the textbook definition of commutative monoids in $\mathrm{Set}$ matches with that of a $\Fin_{\ast}^{\le 3}$-Segal object. In fact this is a special case of our main theorem \cref{thm-motivational}.
\end{example}

\begin{lem}\label{lem: essentially surjective on elementary objects implies restriction is conservative}
	Let $f: \calO \to \calP$ be a morphism of algebraic patterns and let $\calC$ be an $\infty$-category admitting all $\calK_{O}$ and $\calK_{\calP}$-limits. Suppose that:
	\begin{enumerate}
		\item $f^{\el} : \calO^{\el} \to \calP^{\el}$ is essentially surjective.
		\item $f^{\ast} : \Fun(\calP,\calC) \to  \Fun(\calO,\calC) $ preserves Segal objects.
	\end{enumerate}
	Then the restriction of $f^{\ast}$ to Segal objects, $f^{\ast}:\Seg_{\calP}(\calC) \to \Seg_{\calO}(\calC)$,
	is conservative.
\end{lem}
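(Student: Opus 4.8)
The plan is to factor $f^{\ast}\colon \Seg_{\calP}(\calC) \to \Seg_{\calO}(\calC)$ through restriction to elementary objects and reduce everything to \cref{lem: restriction to elementary is conservative}. Since $f$ is a morphism of algebraic patterns it carries elementary objects to elementary objects, hence restricts to $f^{\el}\colon \calO^{\el}\to\calP^{\el}$ fitting into a commuting square with the inclusions $\calO^{\el}\mono\calO$ and $\calP^{\el}\mono\calP$. Consequently, for every $F\in\Seg_{\calP}(\calC)$ one has $(f^{\ast}F)|_{\calO^{\el}} \simeq (f^{\el})^{\ast}\big(F|_{\calP^{\el}}\big)$, so that the composite
\[\Seg_{\calP}(\calC)\xrightarrow{f^{\ast}}\Seg_{\calO}(\calC)\xrightarrow{(-)|_{\calO^{\el}}}\Fun(\calO^{\el},\calC)\]
agrees with $\Seg_{\calP}(\calC)\xrightarrow{(-)|_{\calP^{\el}}}\Fun(\calP^{\el},\calC)\xrightarrow{(f^{\el})^{\ast}}\Fun(\calO^{\el},\calC)$. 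Note that the top composite even makes sense because hypothesis (2) guarantees that $f^{\ast}$ carries Segal $\calP$-objects to Segal $\calO$-objects.

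Next I would argue that this common composite is conservative. Its first leg $\Seg_{\calP}(\calC)\to\Fun(\calP^{\el},\calC)$ is conservative by \cref{lem: restriction to elementary is conservative}. For the second leg, precomposition $(f^{\el})^{\ast}$ along an essentially surjective functor is conservative: a natural transformation of functors out of $\calP^{\el}$ is an equivalence if and only if it is an equivalence on each object, and by hypothesis (1) every object of $\calP^{\el}$ is equivalent to one in the image of $f^{\el}$, so $(f^{\el})^{\ast}$ detects objectwise equivalences. A composite of conservative functors is conservative, whence the displayed composite is conservative.

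Finally, if a composite $g\circ h$ is conservative then its first factor $h$ is conservative as well: an arrow sent by $h$ to an equivalence is sent by $g\circ h$ to an equivalence, hence was already an equivalence. Applying this with $h=f^{\ast}$ and $g=(-)|_{\calO^{\el}}$ gives that $f^{\ast}\colon\Seg_{\calP}(\calC)\to\Seg_{\calO}(\calC)$ is conservative, as claimed.

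I do not expect a serious obstacle here; the only point requiring a little care is the identification of the two composites, i.e.\ checking that restriction along $f$ and restriction to elementary objects commute via $f^{\el}$, which is formal from $f$ being a pattern morphism. The conservativity of precomposition along an essentially surjective functor, and the descent of conservativity of a composite to its first factor, are both elementary.
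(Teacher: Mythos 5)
Your proposal is correct and follows essentially the same argument as the paper: both factor restriction to elementary objects through the commuting square induced by $f$, invoke \cref{lem: restriction to elementary is conservative} together with conservativity of $(f^{\el})^{\ast}$ (from essential surjectivity of $f^{\el}$), and conclude by cancellation of conservativity along a composite. The only cosmetic difference is that the paper phrases the argument via the square of Segal-object categories rather than by writing out the two composites explicitly.
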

\begin{proof}
	Consider the following diagram of algebraic patterns
	\[\begin{tikzcd}
	{\calO^{\el}} & \calO \\
	{\calP^{\el}} & \calP
	\arrow[from=1-1, to=1-2]
	\arrow[from=1-1, to=2-1]
	\arrow[from=1-2, to=2-2]
	\arrow[from=2-1, to=2-2]
	\end{tikzcd}\]
	Passing to Segal objects yields
	\[\begin{tikzcd}
	\Seg_{\calP}(\calC) & \Fun(\calP^{\el},\calC) \\
	\Seg_{\calO}(\calC) & \Fun(\calO^{\el},\calC)
	\arrow[from=1-1, to=1-2]
	\arrow[from=1-1, to=2-1]
	\arrow[from=1-2, to=2-2]
	\arrow[from=2-1, to=2-2]
	\end{tikzcd}\]
	By \cref{lem: restriction to elementary is conservative}, both horizontal morphisms are conservative. By assumption $\calO^{\el} \to \calP^{\el}$ is essentially surjective and thus the right vertical map is conservative. By composition it follows that $\Seg_{\calP}(\calC) \to \Fun(\calO^{\el},\calC)$ is conservative which by cancellation implies that $\Seg_{\calP}(\calC) \to \Seg_{\calO}(\calC)$ is conservative.
\end{proof}

\begin{rem}\label{rem: composition of Segal}
	Segal morphisms are closed under composition. That is, if $f: \calO \to \calP$ and $g: \calP \to \calQ$ are Segal morphisms of algebraic patterns then their composition $g \circ f : \calO \to \calQ$ is also a Segal morphism of algebraic patterns.
\end{rem}

\begin{defn}\label{defn: definition of iso-Segal morphism}
	Let $f: \calO \to \calP$ be a morphism of algebraic patterns. We say that $f$ is an \textit{iso-Segal morphism} if for every $x \in \calO$ the natural map, $\calO^{\el}_{x/} \to \calP^{\el}_{f(x)/}$, is an equivalence.
\end{defn}

\begin{example}\label{ex:operads-are-strict}
	The structure map $\bracket{-}:\calO \to \Fin_{\ast}$ of an $\infty$-operad is an iso-Segal morphism (see \cite[Example 3.7]{RH-cartesian}).
\end{example}

A useful fact about iso-Segal morphisms is their left left cancellation property which we establish in the following lemma.

\begin{lem}\label{lem: cancellation for iso-Segal}
	Let $f: \calO \to \calP$ and $g: \calP \to \calQ$ be morphisms of algebraic patterns. Suppose that $g$ is iso-Segal. Then $f$ is iso-Segal if and only if $g \circ f$ is iso-Segal.
\end{lem}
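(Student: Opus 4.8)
The plan is to reduce the statement to the two-out-of-three property for equivalences of $\infty$-categories, applied objectwise over $\calO$. First I would record the functoriality of the assignment sending a morphism of patterns $h\colon \calA \to \calB$ and an object $a \in \calA$ to the comparison functor $\calA^{\el}_{a/} \to \calB^{\el}_{h(a)/}$. Concretely, $h$ preserves inert morphisms and elementary objects, so it carries an object $a \inert e$ of $\calA^{\el}_{a/}$ to the object $h(a) \inert h(e)$ of $\calB^{\el}_{h(a)/}$, and this is manifestly functorial both in $a$ and in $h$. Applying this to $f$, $g$, and $g \circ f$, for each fixed $x \in \calO$ I obtain a commuting triangle of comparison functors
\[ \calO^{\el}_{x/} \xrightarrow{\;f\;} \calP^{\el}_{f(x)/} \xrightarrow{\;g\;} \calQ^{\el}_{gf(x)/}, \]
whose composite is precisely the comparison functor attached to $g \circ f$ at $x$.

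Next I would exploit the hypothesis that $g$ is iso-Segal. By \cref{defn: definition of iso-Segal morphism}, for every $y \in \calP$ the natural map $\calP^{\el}_{y/} \to \calQ^{\el}_{g(y)/}$ is an equivalence; taking $y := f(x)$ shows that the second leg of the triangle above is an equivalence for every $x \in \calO$.

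Finally, two-out-of-three for equivalences applied to the triangle gives, for each fixed $x$, that the first leg $\calO^{\el}_{x/} \to \calP^{\el}_{f(x)/}$ is an equivalence if and only if the composite $\calO^{\el}_{x/} \to \calQ^{\el}_{gf(x)/}$ is. Quantifying over all $x \in \calO$ and unwinding \cref{defn: definition of iso-Segal morphism} for $f$ and for $g \circ f$ respectively yields the claimed equivalence: $f$ is iso-Segal if and only if $g \circ f$ is iso-Segal.

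The argument is essentially formal; the only point requiring genuine care is the first step, namely checking that the three comparison functors assemble into a coherently commuting triangle, i.e. the functoriality of $a \mapsto \calA^{\el}_{a/}$ in the morphism of patterns. This is built into the construction (it is the same naturality implicit in the very statement of \cref{defn: definition of iso-Segal morphism}), so I expect no real obstacle beyond this bookkeeping.
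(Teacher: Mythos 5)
Your proposal is correct and follows exactly the paper's argument: both consider, for each $x \in \calO$, the composite $\calO^{\el}_{x/} \to \calP^{\el}_{f(x)/} \to \calQ^{\el}_{(g\circ f)(x)/}$, observe that the second map is an equivalence since $g$ is iso-Segal, and conclude by two-out-of-three. The extra care you take in verifying that the comparison functors assemble into a coherent triangle is a reasonable (if brief) elaboration of what the paper leaves implicit.
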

\begin{proof}
	Let $x \in \calO$ and consider the composite $\calO^{\el}_{x/} \to \calP^{\el}_{f(x)/} \to \calQ^{\el}_{(g\circ f)(x)/}$.
	Since $g$ is iso-Segal the second map is an equivalence. Consequently the first map is an equivalence if and only if the composite is.
\end{proof}

Finally we recall yet another type of morphism which sits between Segal morphisms and iso-Segal morphisms.

\begin{defn}[{\cite[Remark 4.4]{RH-algebraic}}]\label{defn: definition of strong Segal morphism}
	Let $f: \calO \to \calP$ be a morphism of algebraic patterns. We say that $f$ is \textit{strong Segal} if for every $x \in \calO$ the natural map $\calO^{\el}_{x/} \to \calP^{\el}_{f(x)/}$ is initial.
\end{defn}

\begin{rem}\label{rem:different-Segal-notions}
	For the reader's convenience we record here the logical implications between the 3 different types of Segal morphisms
	\[ \hyperref[defn: definition of iso-Segal morphism]{\text{iso-Segal}} \implies \hyperref[defn: definition of strong Segal morphism]{\text{strong Segal}} \implies \hyperref[defn: definition of Segal morphism]{\text{Segal}}\]
\end{rem}

\subsubsection{Algebraic subpatterns}

\begin{prop}\label{prop: faithful subpattern}
	Let $\calQ$ be an algebraic pattern and let $\calP \subseteq \calQ$ be a replete subcategory. Suppose the following conditions hold:
	\begin{enumerate}
		\item $\calP$ is closed under (inert,active) factorizations: if $x \overset{\alpha}{\to} y$ is a morphism in $\calP$ then $x \overset{\alpha^{\act}}{\rightarrowtail} \Lambda(\alpha)$ and $\Lambda(\alpha) \overset{\alpha^{\int}}{\rightsquigarrow} y$ are morphisms in $\calP$.
		
		\item $\calP$ contains all inert morphisms to elementary objects: If $x \in \calP$, and $\alpha : x \inert e \in \calQ^{\el}_{x/}$ then $\alpha$ is a morphisn in $\calP$.
		
	\end{enumerate} 
	Then there exists a unique structure of an algebraic pattern on $\calP$ for which the inclusion $\calP \mono \calQ$ is a iso-Segal morphism of algebraic patterns.
\end{prop}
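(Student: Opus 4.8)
The plan is to equip $\calP$ with the only structure that could possibly make the inclusion $\iota\colon \calP \mono \calQ$ an iso-Segal morphism, and then verify the axioms. Since $\iota$ must preserve inert and active morphisms, I am forced to set $\calP^{\int} := \calP \cap \calQ^{\int}$ and $\calP^{\act} := \calP \cap \calQ^{\act}$, and since it must preserve elementary objects I take $\calP^{\el}$ to be the full subcategory of $\calP^{\int}$ spanned by those objects of $\calP$ which are elementary in $\calQ$. Because $\calP$ is a replete subcategory and the classes $\calQ^{\int}, \calQ^{\act}$ contain all equivalences and are closed under composition, the same holds for $\calP^{\int}$ and $\calP^{\act}$; by the standard characterization of factorization systems it then remains only to produce factorizations and to establish orthogonality, and afterwards to check that $\iota$ is iso-Segal in the sense of \cref{defn: definition of iso-Segal morphism}.

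The factorization of a morphism $\alpha$ of $\calP$ is handed to us by hypothesis (1): its inert--active factorization $x \rightarrowtail \Lambda(\alpha) \rightsquigarrow y$ in $\calQ$ already lies in $\calP$. The real work, and the step I expect to be the main obstacle, is orthogonality: for $i\colon a \rightarrowtail b$ in $\calP^{\int}$ and $p\colon c \rightsquigarrow d$ in $\calP^{\act}$ and a commuting square with top $f\colon a\to c$ and bottom $g\colon b \to d$, the space of lifts in $\calP$ must be contractible. The difficulty is that $\calP$ need not be full in $\calQ$, so each $\Map_{\calP}(x,y)$ is only a union of connected components of $\Map_{\calQ}(x,y)$. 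Nevertheless the space of lifts in $\calQ$ is contractible, so it suffices to show its essentially unique lift $\ell\colon b \to c$ lies in $\calP$: the $\calP$-space of lifts is then a union of components of a contractible space which is nonempty, hence contractible. To see $\ell \in \calP$ I would factor it in $\calQ$ as $b \overset{u}{\rightarrowtail} m \overset{v}{\rightsquigarrow} c$. From $p\ell \simeq g$ we get $g \simeq (pv)\circ u$ with $pv$ active and $u$ inert, so by uniqueness of factorizations this is the inert--active factorization of the $\calP$-morphism $g$; hypothesis (1) then places $u$ and $m \simeq \Lambda(g)$ in $\calP$. Symmetrically, from $\ell i \simeq f$ we get $f \simeq v \circ (ui)$ with $ui$ inert and $v$ active, identifying $v$ with the active part of the $\calP$-morphism $f$, so hypothesis (1) places $v$ in $\calP$. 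Hence $\ell = v\circ u$ is a composite of morphisms of $\calP$, so $\ell \in \calP$, as needed.

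With the factorization system in hand I would turn to the iso-Segal condition, which is exactly where hypothesis (2) enters. Fixing $x \in \calP$, I must show $\calP^{\el}_{x/} \to \calQ^{\el}_{x/}$ is an equivalence. Essential surjectivity is immediate: any object of $\calQ^{\el}_{x/}$ is an inert map $x \rightarrowtail e$ to an elementary object, which by (2) lies in $\calP$, whence $e \in \calP^{\el}$. Fullness follows the same way: a morphism of $\calQ^{\el}_{x/}$ between two such objects is an inert map $e_1 \rightarrowtail e_2$ to an elementary object whose source $e_1$ already lies in $\calP$, so it too lies in $\calP$ by (2); and higher cells are automatic since $\calP$-mapping spaces are unions of components of $\calQ$-mapping spaces, so homotopies between $\calP$-morphisms are the same computed in either category. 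Thus $\iota$ is iso-Segal.

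For uniqueness, suppose any algebraic pattern structure on $\calP$ makes $\iota$ a morphism of patterns. Then its inert and active classes are contained in $\calP \cap \calQ^{\int}$ and $\calP \cap \calQ^{\act}$; conversely, factoring a $\calP$-morphism that is inert in $\calQ$ and using uniqueness of factorizations in $\calQ$ forces its active part to be an equivalence, so these containments are equalities and $\calP^{\int},\calP^{\act}$ are determined. The elementary objects are then pinned down by the iso-Segal condition applied to $x=e$: the identity of $e$ must lie in the image of $\calP^{\el}_{e/}$, which forces every object of $\calP$ that is elementary in $\calQ$ to be elementary in $\calP$. Being a full subcategory of $\calP^{\int}$, the class $\calP^{\el}$ is thereby uniquely determined, completing the proof.
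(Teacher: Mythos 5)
Your proposal is correct, and it builds the same structure the paper builds (necessarily so, by uniqueness): $\calP^{\int}=\calP\cap\calQ^{\int}$, $\calP^{\act}=\calP\cap\calQ^{\act}$, $\calP^{\el}=\calP^{\int}\cap\calQ^{\el}$. The verifications, however, are genuinely different at three points, and in each case yours is the more explicit one. For the factorization system, the paper never checks orthogonality: it notes that factorizations exist in $\calP$ by hypothesis (1) and are unique because they are unique in $\calQ$ and the inclusion is faithful, implicitly resting on the characterization of factorization systems by contractible spaces of factorizations. You instead verify the lifting property directly, and your mechanism is exactly right: the space of $\calP$-lifts is a union of components of the contractible space of $\calQ$-lifts, so it suffices to show it is nonempty, which you do by factoring the essentially unique $\calQ$-lift as $u$ inert followed by $v$ active, identifying $u$ with the inert part of the $\calP$-morphism $g$ and $v$ with the active part of the $\calP$-morphism $f$, and applying (1) twice. (A pedantic reader would ask you to also address retract-stability when invoking ``the standard characterization,'' but this is formal once orthogonality and factorizations are in place, since each class is then the orthogonal complement of the other.) For the iso-Segal condition, the paper deduces full faithfulness of $\calP^{\el}_{x/}\to\calQ^{\el}_{x/}$ from the general slice lemma \cref{lem:fully-faithful-on-clice-from-faithful}, reserving hypothesis (2) for essential surjectivity only; you instead invoke (2) a second time, at the source $e_1\in\calP$, to show that every morphism of $\calQ^{\el}_{x/}$ (itself an inert map $e_1\rightarrowtail e_2$ to an elementary object) already lies in $\calP$, with the higher cells handled by the union-of-components property. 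This is arguably the more careful route: for a non-full subcategory, surjectivity on slice mapping spaces is precisely the delicate point — a $\calQ$-morphism under $x$ between objects of $\calP$ need not lie in $\calP$ in general — and it is hypothesis (2) that guarantees it here, a fact your argument makes explicit while the appeal to the general lemma leaves it hidden. Finally, the paper's proof does not address the uniqueness clause of the statement at all, whereas you prove it, correctly pinning down the two morphism classes by playing the putative factorization against the one in $\calQ$, and the elementary objects by evaluating the iso-Segal condition at identities.
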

\begin{proof}
	Define the inert and active morphisms in $\calP$ respectively as
	\[ \calP^{\int}\coloneq \calP \cap \calQ^{\int}, \quad \text{ and } \quad \calP^{\act} \coloneq  \calP \cap \calQ^{\act}.\]
	We claim that these form a factorization system on $\calP$. 
    Indeed, $f: \calP \to \calQ$ preserves inert and active morphisms by construction and is faithful by assumption. Since (inert,active) factorizations are unique in $\calQ$ they must be unique in $\calP$ as well. Existence follows from (1). 
	
	Define the elementary objects of $\calP$ by $\calP^{\el}\coloneq  \calP^{\int} \cap \calQ^{\el}$.
    It remains to show that the inclusion $\calP \mono \calQ$ is iso-Segal morphism of algebraic patterns. The inclusion $\calP^{\int} \mono \calQ^{\int}$ is faithful by construction and thus the natural map $\calP^{\el}_{x/} \to \calQ^{\el}_{x/}$ is fully faithful (see \cref{lem:fully-faithful-on-clice-from-faithful}). In addition it is essentially surjective by (2) and thus an equivalence.
\end{proof}

\begin{defn}\label{defn:algebraic-subpattern}
	In the situation of \cref{prop: faithful subpattern} we say that $\calP \subseteq \calQ$ is the inclusion of a (replete) \textit{algebraic subpattern}. 
\end{defn}

\begin{rem}\label{cor:subpattern-properties}
	We collect some useful properties of algebraic subpatterns proven below.
	\begin{itemize}
		\item \textbf{Strictness:} If $\calP \subseteq \calQ$ is an algebraic subpattern. Then the inclusion $\calP \mono \calQ$ is a iso-Segal morphism of algebraic patterns.
		\item \textbf{Transitivity} (\ref{cor:transitivity}): If $\calQ$ is an algebraic pattern and $\calO \subseteq \calP \subseteq \calQ$ are replete subcategories such that $\calP \subseteq \calQ$ and $\calO \subseteq \calP$ are algebraic subpatterns. Then $\calO \subseteq \calP$ is a an algebraic subpattern.
		\item \textbf{Base change} (\ref{lem:basechange-subpattern}): If $\calP' \subseteq \calP$ is an algebraic subpattern and $f: \calO \to \calP$ is a morphism of algebraic patterns. Then $\calO \times_{\calP} \calP' \subseteq \calO$ is an algebraic subpattern. 
	\end{itemize}
\end{rem}

\begin{cor}\label{cor:transitivity}
	If $\calQ$ is an algebraic pattern and $\calO \subseteq \calP \subseteq \calQ$ are faithful subcategories such that $\calP \subseteq \calQ$ and $\calO \subseteq \calP$ are algebraic subpatterns. Then $\calO \subseteq \calP$ is a an algebraic subpattern.
\end{cor}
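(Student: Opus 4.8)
The plan is to verify directly that $\calO \subseteq \calQ$ satisfies the two hypotheses of \cref{prop: faithful subpattern} --- closure under (inert,active) factorizations, and containment of all inert maps to elementary objects --- which by that proposition endows $\calO$ with the structure of an (iso-Segal) algebraic subpattern of $\calQ$; this is the transitivity assertion intended by \cref{cor:transitivity}. The structural input I would lean on throughout is that, being algebraic subpatterns, both inclusions are iso-Segal with factorization systems inherited from above: $\calP^{\int} = \calP \cap \calQ^{\int}$, $\calP^{\act} = \calP \cap \calQ^{\act}$, and likewise $\calO^{\int} = \calO \cap \calP^{\int} = \calO \cap \calQ^{\int}$. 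In particular, since the inclusion $\calP \mono \calQ$ preserves inert and active maps and factorizations are unique, the (inert,active) factorization of a morphism of $\calP$ computed in $\calP$ agrees with the one computed in $\calQ$, so $\Lambda_{\calP}(\alpha) = \Lambda_{\calQ}(\alpha)$ for every $\alpha$ in $\calP$. Repleteness of $\calO \subseteq \calQ$ is immediate, since a composite of replete inclusions is replete.

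For condition (1), I would take a morphism $\alpha : x \to y$ in $\calO$. As $\calO \subseteq \calP$, this is a morphism of $\calP$, and because $\calO \subseteq \calP$ is an algebraic subpattern, its $\calP$-factorization $x \inert \Lambda(\alpha) \active y$ has both legs in $\calO$. By the identity $\Lambda_{\calP}(\alpha) = \Lambda_{\calQ}(\alpha)$ noted above, this is also the $\calQ$-factorization of $\alpha$, so the $\calQ$-factorization of $\alpha$ lands in $\calO$, as required.

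For condition (2), I would take $x \in \calO$ and an inert morphism $\alpha : x \inert e$ in $\calQ$ with $e \in \calQ^{\el}$, and run the argument in two stages. Viewing $x$ as an object of $\calP$, condition (2) for the subpattern $\calP \subseteq \calQ$ forces $\alpha \in \calP$. By repleteness $e \in \calP$, and since $\calP^{\int}$ is a wide subcategory we get $e \in \calP^{\int} \cap \calQ^{\el} = \calP^{\el}$; moreover $\alpha \in \calP \cap \calQ^{\int} = \calP^{\int}$, so $\alpha$ is an inert morphism in $\calP$ with elementary target, i.e. $e \in \calP^{\el}_{x/}$. Now condition (2) for the subpattern $\calO \subseteq \calP$ yields $\alpha \in \calO$, completing the verification.

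I do not expect a genuine obstacle here: the content is entirely the three-way comparison of factorizations and elementary objects across $\calO \subseteq \calP \subseteq \calQ$, and the only point requiring care is to use the two identities $\Lambda_{\calP} = \Lambda_{\calQ}$ (on morphisms of $\calP$) and $\calP^{\el} = \calP \cap \calQ^{\el}$ (valid because $\calP^{\int}$ is wide) at the right moments. As a coda I would remark that these same identities show that the pattern structure induced on $\calO$ via the inclusion into $\calQ$ coincides with the one it already carries as a subpattern of $\calP$ (same inert/active maps, same elementary objects, since $\calO^{\el} = \calO^{\int}\cap\calQ^{\el} = \calO^{\int}\cap\calP^{\el}$), so the statement is unambiguous.
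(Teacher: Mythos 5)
Your proof is correct and is exactly the argument the paper leaves implicit: the corollary is stated with no proof as a direct consequence of \cref{prop: faithful subpattern}, and your verification of its two conditions for $\calO \subseteq \calQ$ — using the uniqueness of (inert,active) factorizations to get $\Lambda_{\calP}(\alpha) = \Lambda_{\calQ}(\alpha)$, and the identity $\calP^{\el} = \calP^{\int} \cap \calQ^{\el}$ to pass condition (2) through the intermediate pattern — is the intended one. You also correctly read the conclusion as ``$\calO \subseteq \calQ$ is an algebraic subpattern''; the ``$\calO \subseteq \calP$'' appearing in the statement is a typo, since that is already a hypothesis, and your closing remark that the two induced pattern structures on $\calO$ coincide is a worthwhile addition that the paper does not spell out.
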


\begin{example}
	Let $\calO$ be an algebraic pattern. Then it's straightforward to check that $\calO^{\el} \subseteq \calO^{\int} \subseteq \calO$ is a nested inclusion of algebraic subpatterns.
\end{example}

\begin{example}\label{ex:Fin<=k-subpattern}
	Recall $\Fin^{\le k}_{\ast} \subseteq \Fin_{\ast}$ was defined as the full subcategory of pointed finite sets $A_+$ with $|A|\le k$. It's straightforrward to check that $\Fin^{\le k}_{\ast}$ satisfies the conditions of \cref{prop: faithful subpattern} and thus defines an algebraic subpattern. 
\end{example}

\begin{lem}\label{lem:basechange-subpattern}
	Let $\calP' \subseteq \calP$ be an algebraic subpattern and let $f: \calO \to \calP$ be a morphism of algebraic patterns. Then the pullback $\calO' \coloneq  \calO \times_{\calP} \calP' \subseteq \calO$ is an algebraic subpattern.
\end{lem}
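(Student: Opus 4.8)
The plan is to realize $\calO'$ as a replete subcategory of $\calO$ satisfying the two hypotheses of \cref{prop: faithful subpattern}, which will then immediately furnish the desired algebraic subpattern structure. First I would describe $\calO'$ concretely. Since $\calP' \subseteq \calP$ is a replete subcategory, its inclusion is a monomorphism, and pulling back along $f$ exhibits $\calO' = \calO \times_{\calP} \calP'$ as the subcategory of $\calO$ spanned by those objects $x$ with $f(x) \in \calP'$ together with those morphisms $\alpha$ for which $f(\alpha)$ is a morphism of $\calP'$. Repleteness is inherited: $f$ carries equivalences to equivalences, so if $x \in \calO'$ and $x \simeq x'$ in $\calO$, then $f(x') \simeq f(x) \in \calP'$, whence $f(x') \in \calP'$ by repleteness of $\calP'$ and therefore $x' \in \calO'$.

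Next I would verify closure under (inert, active) factorizations. Given a morphism $\alpha : x \to y$ of $\calO'$, I factor it in $\calO$ as $x \overset{\alpha^{\int}}{\rightarrowtail} \Lambda(\alpha) \overset{\alpha^{\act}}{\rightsquigarrow} y$. Because $f$ is a morphism of algebraic patterns it preserves inert and active morphisms, so applying $f$ yields an (inert, active) factorization of $f(\alpha)$; by uniqueness of factorizations in $\calP$ this identifies $f(\alpha^{\int})$ and $f(\alpha^{\act})$ with the inert and active parts of $f(\alpha)$, and $f(\Lambda(\alpha))$ with $\Lambda(f(\alpha))$. Since $\alpha \in \calO'$ means $f(\alpha) \in \calP'$, and $\calP' \subseteq \calP$ is itself closed under factorizations, both $f(\alpha^{\int})$ and $f(\alpha^{\act})$ lie in $\calP'$; hence $\alpha^{\int}, \alpha^{\act} \in \calO'$, establishing condition (1) of \cref{prop: faithful subpattern}.

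For the second hypothesis, I would take $x \in \calO'$ together with an inert morphism $\alpha : x \inert e$ of $\calO$ where $e \in \calO^{\el}$. As $f$ preserves inert morphisms and elementary objects, $f(\alpha) : f(x) \inert f(e)$ is inert with $f(e) \in \calP^{\el}$, so $f(\alpha) \in \calP^{\el}_{f(x)/}$. Since $f(x) \in \calP'$ and $\calP' \subseteq \calP$ contains all inert morphisms to elementary objects, I conclude $f(\alpha) \in \calP'$, hence $\alpha \in \calO'$. This verifies condition (2), and \cref{prop: faithful subpattern} then produces the unique algebraic subpattern structure on $\calO'$.

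I do not anticipate a serious obstacle: the whole argument is a transport of the defining closure conditions of \cref{prop: faithful subpattern} across $f$. The one point requiring genuine care is the compatibility of $f$ with the factorization system, specifically the identification $f(\Lambda(\alpha)) \simeq \Lambda(f(\alpha))$, which rests on uniqueness of (inert, active) factorizations in $\calP$; and the bookkeeping that the $\infty$-categorical pullback of a replete subcategory inclusion genuinely recovers the preimage subcategory rather than something larger. Once these are in hand, conditions (1) and (2) follow by pure formality.
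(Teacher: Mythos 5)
Your proof is correct and follows essentially the same route as the paper: both identify membership in $\calO' = \calO \times_{\calP} \calP'$ with the condition that $f(\alpha)$ lies in $\calP'$, and then verify the two conditions of \cref{prop: faithful subpattern} by transporting them along $f$, using that $f$ preserves inert/active morphisms, elementary objects, and (by uniqueness) the factorization itself. Your added care about repleteness and the identification $f(\Lambda(\alpha)) \simeq \Lambda(f(\alpha))$ only makes explicit what the paper leaves implicit.
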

\begin{proof}
	By the universal property of the pullback a morphism $\alpha: x \to y$ in $\calO$ lies in the subcategory $\calO' \subseteq \calO$ if and only if its image $f(\alpha): f(x) \to f(y)$ lies in the subcategory $\calP' \subseteq \calP$. Using this it is straightforward to check the conditions of \cref{prop: faithful subpattern} are satisfied for $\calO' \subseteq \calO$. For the benefit of the reader we provide the necessary details.
 
    $(1)$ Let $\alpha: x \to y$ be a morphism in $\calO'$. Since $\calP' \subseteq \calP$ is a subpattern and $f(\alpha)$ is a morphism in $\calP'$ we see that $f(\alpha^{\int}) = f(\alpha)^{\int}$ and $f(\alpha^{\act}) = f(\alpha)^{\act}$ are morphisms in $\calP'$. It follows that $\alpha^{\int}$ and $\alpha^{\act}$ are morphisms in $\calO'$ as desired.
    
    $(2)$ Let $\lambda: x \inert e$ be an inert morphism in $\calO$ with $x \in \calP'$ and $e \in \calO^{\el}$. Its image $f(\lambda) : f(x) \inert f(e)$ is an inert morphism in $\calP$ with $f(x) \in \calP'$ and $f(e) \in \calP^{\el}$. Since $\calP' \subseteq \calP$ is a subpattern it follows that $f(\lambda)$ is a morphism in $\calP'$. Consequently $\lambda$ is a morphism in $\calP'$ as desired.
\end{proof}

The following simplified version of \cref{prop: faithful subpattern} for the case of a full subcategory is straightforward.

\begin{lem}\label{lem:fully-faithful-subpattern}
	Let $\calQ$ be an algebraic pattern and let $\calP \subseteq \calQ$ be a full subcategory. Suppose
	\begin{enumerate}
		\item If $x \inert e$ is an inert morphism such that $x \in \calP$ and $e \in \calQ^{\el}$ then $e \in \calP$
		
		\item If $\alpha : x \to y$ is a morphism such that $x,y \in \calP$ then $\Lambda(\alpha) \in \calP$.
	\end{enumerate} 
	Then $\calP \subseteq \calQ$ is an algebraic subpattern.
\end{lem}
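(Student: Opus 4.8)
The plan is to deduce this from \cref{prop: faithful subpattern} by exploiting fullness of $\calP$. A full subcategory $\calP \subseteq \calQ$ is in particular replete, so the repleteness hypothesis of \cref{prop: faithful subpattern} holds, and moreover any morphism of $\calQ$ whose source and target both lie in $\calP$ automatically lies in $\calP$. The key point is that this last observation converts the two \emph{morphism-level} conditions of \cref{prop: faithful subpattern} into the \emph{object-level} conditions (1) and (2) stated in the lemma. Thus the entire proof amounts to checking that the hypotheses of the lemma imply the hypotheses of \cref{prop: faithful subpattern}.

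First I would verify condition (1) of \cref{prop: faithful subpattern}, namely closure under (inert, active) factorization. Given a morphism $\alpha : x \to y$ with $x,y \in \calP$, hypothesis (2) of the lemma yields $\Lambda(\alpha) \in \calP$. The factors $\alpha^{\int} : x \rightarrowtail \Lambda(\alpha)$ and $\alpha^{\act} : \Lambda(\alpha) \rightsquigarrow y$ are then morphisms between objects of the full subcategory $\calP$, and hence lie in $\calP$. Next I would verify condition (2) of \cref{prop: faithful subpattern}, that $\calP$ contains all inert morphisms to elementary objects: given $x \in \calP$ and an inert morphism $\alpha : x \inert e$ with $e \in \calQ^{\el}$, hypothesis (1) of the lemma gives $e \in \calP$, and fullness again forces $\alpha \in \calP$. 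Applying \cref{prop: faithful subpattern} then endows $\calP$ with the structure of an algebraic subpattern of $\calQ$.

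I do not expect a genuine obstacle here; the statement is deliberately a streamlined special case of \cref{prop: faithful subpattern}, and the argument is essentially bookkeeping built around the single observation that fullness supplies the relevant morphisms for free. The only detail worth recording is that, with $\calP$ full, the elementary objects produced by \cref{prop: faithful subpattern} satisfy $\calP^{\el} = \calP^{\int} \cap \calQ^{\el} = \calP \cap \calQ^{\el}$, so that the elementary structure on $\calP$ is simply inherited from $\calQ$ as one would expect.
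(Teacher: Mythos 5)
Your proposal is correct and follows exactly the route the paper takes: the paper's own proof is the one-line observation that the lemma's hypotheses imply those of \cref{prop: faithful subpattern} once fullness supplies the factor morphisms, and you have simply written out that straightforward check (repleteness, closure under factorization via hypothesis (2), and containment of inert morphisms to elementaries via hypothesis (1)). Your closing remark that $\calP^{\el} = \calP \cap \calQ^{\el}$ is also accurate and consistent with how the elementary objects are defined in \cref{prop: faithful subpattern}.
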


\subsection{Slice patterns and weakly initial morphisms}

We begin this subsection with a brief discussion of the problem of recognizing Morita equivalences. 
Motivated by this we proceed to study \textit{slices} of algebraic patterns and their canonical algebraic pattern structure (\cref{rem:algebraic-pattern-structure-on-slice}). 
We then introduce \textit{weakly initial} objects (\cref{defn:weakly-initial-object}) and morphisms (\cref{prop: Quillen A for Segal}) as tools for computing limits of Segal objects. 
These notions will be used in \cref{subsect:subsection-II.3} where we prove the \textit{recognition theorem} for Morita equivalences of algebraic patterns (\cref{thm:Morita-equivalence-from-weakly-initial-slices}).

\subsubsection{Recognizing Morita equivalences}\label{Motivational-Discussion-Recognition}

Let $f: \calO \to \calP$ be a Segal morphism of algebraic patterns (see \cref{defn: definition of Segal morphism}) so that $f^{\ast}$ preserves Segal objects. Recall that our end goal is to develop tools to identify when the restriction functor $f^{\ast} : \Seg_{\calP}(\calS^{\le n}) \to \Seg_{\calO}(\calS^{\le n})$ is an equivalence. Before attacking this problem, we add a specializing assumption. Instead of asking for the existence of \textit{some} inverse to $f^{\ast}$ we ask whether the right Kan extension $f_{\ast}$ is its inverse. Equivalently we ask whether $f: \calO \to \calP$ is a Morita $n$-equivalence in the sense of \cref{weak Morita equivalence}. For this to happen, $f_{\ast}$ must in particular preserve $n$-truncated Segal objects. We are thus led to the question of finding sufficient condtions for $f_{\ast} : \Fun(\calO,\calS^{\le n}) \to \Fun(\calP,\calS^{\le n})$ to preserves Segal objects. Let $F \in \Seg_{\calO}(\calS)$ and $y \in \calP$ and consider the formula for right Kan extension:
\[(f_{\ast}F)(y) \simeq \underset{(x,y \to f(x)) \in \calO_{y/}}{\lim}F(x).\]
Generally there is not much more to be said about the right hand side. In the setting of algebraic patterns, however, the right hand side is more structured than might seem at first glance. Specifically we shall see that $\calO_{y/}$ admits a canonical structure of an algebraic pattern (\cref{rem:algebraic-pattern-structure-on-slice}) with respect to which $F|_{\calO_{y/}}$ is a Segal object (see \cref{lem: projection from slice is iso-Segal}). 

\subsubsection{Slice patterns}

\begin{prop}\label{prop:-slice-under-is-a-algebraic-pattern}
	Let $\calO$ be an algebraic pattern. For every $x \in \calO$ there's a natural algebraic pattern structure on $\calO_{x /}$ having the following properties
	\begin{enumerate}
		\item An object $x \to e \in \calO_{x/}$ is elementary if and only if $e$ is elementary.
		\item Evaluation at the source $s: \calO_{x/} \to \calO$ is a morphism of algebraic patterns.
		\item There's a canonical equivalence $(\calO_{x/})^{\el} = \calO_{x/} \times_{\calO} \calO^{\el}$.
		\item For every morphism of algebraic patterns $f: \calO \to \calP$ the natural functor
		$\calO_{x/} \to \calP_{f(x)/}$
		is also a morphism of algebraic patterns.
	\end{enumerate}
\end{prop}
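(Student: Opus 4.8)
The plan is to equip $\calO_{x/}$ with structure transported from $\calO$ along the projection $s\colon \calO_{x/} \to \calO$, $(x \to y) \mapsto y$, of item (2), and then to verify the axioms of an algebraic pattern. Concretely, I declare a morphism of $\calO_{x/}$ to be inert (resp. active) exactly when its image under $s$ is inert (resp. active) in $\calO$, and I set $(\calO_{x/})^{\el} := \calO_{x/} \times_{\calO} \calO^{\el}$, the full subcategory of objects $x \to e$ with $e \in \calO^{\el}$. With these definitions properties (1) and (3) hold by construction, and $s$ tautologically preserves inert morphisms, active morphisms, and elementary objects, which will give (2) once $\calO_{x/}$ is known to be a pattern. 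Since $s$ is a left fibration it is conservative and preserves and detects equivalences, so the two classes contain all equivalences and are closed under composition. The substantive point is therefore that these classes form an (inert, active) factorization system on $\calO_{x/}$.

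For the existence of factorizations, given a morphism $\phi$ of $\calO_{x/}$ with underlying map $g\colon y \to y'$, I take the factorization $y \overset{g^{\int}}{\rightarrowtail} \Lambda(g) \overset{g^{\act}}{\rightsquigarrow} y'$ in $\calO$ and promote $\Lambda(g)$ to the object $x \to y \overset{g^{\int}}{\rightarrowtail} \Lambda(g)$ of $\calO_{x/}$; the resulting two legs factor $\phi$ as an inert morphism followed by an active one. For orthogonality I would use the fiber description of mapping spaces in an under-category, $\Map_{\calO_{x/}}(x \to a,\, x \to b) \simeq \mathrm{fib}\big(\Map_{\calO}(a,b) \to \Map_{\calO}(x,b)\big)$, to rewrite the orthogonality square for a pair (inert, active) of $\calO_{x/}$ as the square obtained by taking fibers, over compatible base points, of the orthogonality square of the underlying pair in $\calO$; the latter is a pullback by hypothesis, and since passing to fibers commutes with pullbacks the slice square is a pullback as well. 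This verification — that an orthogonal factorization system is inherited by every under-category along the (left fibration) projection — is the main obstacle, and is precisely the sort of statement I would isolate in the appendix on factorization systems and slice $\infty$-categories.

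With $\calO_{x/}$ established as a pattern, properties (2) and (4) are formal. Property (2) holds because $s$ was built to preserve the inert and active classes and elementary objects. For (4), functoriality of the under-category construction yields the functor $\calO_{x/} \to \calP_{f(x)/}$, $(x \to y) \mapsto (f(x) \to f(y))$, sitting in a square commuting with the projections to $\calO$ and $\calP$. Because the inert/active classes and the elementary objects on both $\calO_{x/}$ and $\calP_{f(x)/}$ are defined by pulling back the respective data along these projections, and because $f$ preserves inert morphisms, active morphisms, and elementary objects, the induced functor preserves all of this structure; hence it is a morphism of algebraic patterns, which completes the argument.
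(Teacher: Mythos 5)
Your proposal is correct, and at the level of this proposition it sets up exactly the same structure as the paper: inert/active morphisms in $\calO_{x/}$ are declared to be those whose image under $s$ is inert/active, elementary objects are the pullback $\calO_{x/} \times_{\calO} \calO^{\el}$, and properties (1)--(4) then reduce to the single substantive claim that these classes form a factorization system on the under-category. Where you genuinely diverge from the paper is in how that claim is established. The paper isolates it as \cref{prop:factorization-system-on-slice} in the appendix (as you correctly anticipated) and proves it by citation: Lurie's factorization system on the arrow category $\calC^{\Delta^1}$ \cite[Corollary~5.2.8.18]{HTT}, restricted along the pullback square $\calC_{x/} \simeq \{x\} \times_{\calC} \calC^{\Delta^1}$ using the limit-stability of factorization systems from \cite[Proposition~5.2]{RH-algebraic}; the same limit-stability result (\cref{lem: limits of algebraic patterns}) then also handles your formal checks (3) and (4). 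You instead prove the slice statement directly: existence of factorizations by promoting $\Lambda(g)$ to the object $x \to y \rightarrowtail \Lambda(g)$, and orthogonality by writing each mapping space of $\calO_{x/}$ as a fiber of the corresponding mapping space of $\calO$, so that the lifting square in the slice is a fiber of the lifting square in $\calO$ over a cartesian square of base points, hence cartesian because cartesian squares are closed under limits. Both routes are sound; yours is more self-contained and elementary, while the paper's is shorter and reuses machinery it needs anyway for limits of algebraic patterns. The only point you leave implicit is closure of your two classes under retracts (required in Lurie's definition of a factorization system), but this is immediate since they are preimages under $s$ of retract-closed classes.
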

\begin{proof}
	Let the (inert,active) factorization system on $\calO_{x/}$ be that of \cref{prop:factorization-system-on-slice}. Define the elementary objects of $\calO_{x/}$ to be those $\alpha: x \to e$ such that $e$ is elementary.
	Property (1) and (2) hold by construction. Properties (3) and (4) follow from \cref{lem: limits of algebraic patterns} together with the fact that $f$ preserves (inert,active) factorizations and elementary objects and therefore respects all the constructions.
\end{proof}

\begin{rem}\label{rem:algebraic-pattern-structure-on-slice}
	Let $f: \calO \to \calP$ is a morphism of algebraic patterns and $y \in \calP$. By \cref{prop:-slice-under-is-a-algebraic-pattern} and \cref{lem: limits of algebraic patterns}, the slice category $\calO_{y/} \coloneq  \calO \times_{\calP} \calP_{y/}$ admits a canonical structure of an algebraic pattern.
\end{rem}

\begin{lem}\label{lem: projection from slice is iso-Segal}
	Let $f: \calO \to \calP$ be a morphism of algebraic patterns and let $y \in \calP$. Then the natural forgetful functor
    $\calO \times_{\calP} \calP_{y/} \to \calO $
	is an iso-Segal morphism of algebraic patterns:
	\begin{itemize}
		\item For every $(x,\alpha: y \to f(x))\in \calO \times_{\calP} \calP_{y/}$ the natural map
		$( \calO \times_{\calP} \calP_{y/})^{\el}_{\alpha/} \to \calO^{\el}_{x/}$
		is an equivalence.
	\end{itemize} 
\end{lem}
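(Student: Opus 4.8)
The plan is to show that the projection $\pi \colon \calO_{y/} \to \calO$, where $\calO_{y/} := \calO \times_{\calP} \calP_{y/}$ carries the pattern structure of \cref{rem:algebraic-pattern-structure-on-slice}, induces for each object $w = (x, \alpha\colon y \to f(x)) \in \calO_{y/}$ an equivalence $(\calO_{y/})^{\el}_{w/} \to \calO^{\el}_{x/}$; by \cref{defn: definition of iso-Segal morphism} this is exactly what it means for $\pi$ to be iso-Segal. The guiding intuition is that $\pi$ is the base change of the source projection $\calP_{y/}\to\calP$ along $f$, and that slicing such a projection under an object collapses the $\calP_{y/}$-direction to a point.

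First I would pin down the pattern structure on the pullback $\calO_{y/}$. Since $\calO_{y/}$ is a limit in $\AlgPatt$ (\cref{rem:algebraic-pattern-structure-on-slice}, \cref{lem: limits of algebraic patterns}), its inert morphisms and elementary objects are detected by the two projections to $\calO$ and $\calP_{y/}$. Using \cref{prop:-slice-under-is-a-algebraic-pattern}(1) (an object of $\calP_{y/}$ is elementary iff its target is elementary in $\calP$) together with the fact that $f$ preserves inert morphisms and elementary objects, this yields the clean description: a morphism of $\calO_{y/}$ is inert iff its image under $\pi$ is inert, and $w'=(x',\alpha')$ is elementary iff $x'\in\calO^{\el}$. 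In particular $\pi$ reflects inertness, and $(\calO_{y/})^{\el} = \calO_{y/}\times_{\calO}\calO^{\el}$.

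The heart of the argument is the computation of the \emph{full} under-slice. Using the compatibility of slices with pullbacks and the slice-of-slice identity $(\calP_{y/})_{\alpha/}\simeq \calP_{f(x)/}$ (from the appendix on slice $\infty$-categories), I would identify
\[(\calO_{y/})_{w/} \;\simeq\; \calO_{x/}\times_{\calP_{f(x)/}}(\calP_{y/})_{\alpha/}\;\simeq\;\calO_{x/}\times_{\calP_{f(x)/}}\calP_{f(x)/}\;\simeq\;\calO_{x/},\]
where the middle map $(\calP_{y/})_{\alpha/}\to\calP_{f(x)/}$ is precisely the slice-of-slice equivalence, so pulling back along it changes nothing. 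Concretely, this records the fact that a morphism $w\to w'$ in $\calO_{y/}$ is the data of a morphism $g\colon x\to x'$ in $\calO$ together with the contractible datum witnessing $\alpha'\simeq f(g)\circ\alpha$; hence the forgetful map $(\calO_{y/})_{w/}\to\calO_{x/}$ has contractible fibers. Crucially, this equivalence is by construction the one induced by $\pi$, i.e.\ it is $\pi_{w/}$.

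Finally I would restrict $\pi_{w/}$ to elementary objects. By the first step, $\pi_{w/}$ matches inert morphisms with inert morphisms and elementary objects with elementary objects on both sides (each being detected after applying $\pi$), so restricting the equivalence $(\calO_{y/})_{w/}\simeq\calO_{x/}$ to the full subcategories of inert morphisms into elementary objects produces the desired equivalence $(\calO_{y/})^{\el}_{w/}\simeq\calO^{\el}_{x/}$. The step I expect to require the most care is the slice computation: making the identification $(\calO_{y/})_{w/}\simeq\calO_{x/}$ precise at the level of $\infty$-categories (not merely on objects) and checking that it is genuinely induced by $\pi$, so that the final restriction to elementary objects is legitimate. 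Everything else is bookkeeping with the pattern structure on limits.
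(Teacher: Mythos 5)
Your proposal is correct and follows essentially the same route as the paper: identify the pattern structure on the pullback $\calO \times_{\calP} \calP_{y/}$ (inerts and elementaries detected by the projection), establish the double-slice equivalence $(\calO \times_{\calP} \calP_{y/})_{\alpha/} \simeq \calO_{x/}$ induced by the projection, and then restrict it to elementary diagrams. The only cosmetic difference is that the paper invokes its appendix lemma (\cref{lem: double slice}) for the slice equivalence and organizes the restriction to elementaries via pasted pullback squares, whereas you rederive the slice equivalence from compatibility of slices with pullbacks and the slice-of-slice identity; the content is the same.
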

\begin{proof}
	Consider the following cube
	\[\begin{tikzcd}
	& {(\calP_{y/})^{\el}} && {(\calP_{y/})^{\int}} \\
	& {\calP^{\el}} && {\calP^{\int}} \\
	{(\calO \times_{\calP} \calP_{y/})^{\el}} && {(\calO \times_{\calP} \calP_{y/})^{\int}} \\
	{\calO^{\el}} && {\calO^{\int}}
	\arrow[from=3-1, to=4-1]
	\arrow[from=3-1, to=3-3]
	\arrow[from=3-3, to=4-3]
	\arrow[from=4-1, to=4-3]
	\arrow[from=3-1, to=1-2]
	\arrow[from=3-3, to=1-4]
	\arrow[from=4-3, to=2-4]
	\arrow[from=4-1, to=2-2]
	\arrow[from=1-4, to=2-4]
	\arrow[from=1-2, to=1-4]
	\arrow[from=2-2, to=2-4]
	\arrow[from=1-2, to=2-2]
	\end{tikzcd}\]
	The left and right faces are pullback squares by \cref{lem: limits of algebraic patterns}. The back face is a pullback square by definition (see \cref{prop:-slice-under-is-a-algebraic-pattern}). It follows that the front face is also a pullback square.
	
	Let $(x,\alpha : y \to f(x)) \in \calO \times_{\calP} \calP_{y/}$ and consider the following natural diagram
	\[\begin{tikzcd}
	{(\calO \times_{\calP} \calP_{y/})^{\el}_{\alpha/}} & {(\calO \times_{\calP} \calP_{y/})^{\el}} & {\calO^{\el}} \\
	{(\calO \times_{\calP} \calP_{y/})^{\int}_{\alpha/}} & {(\calO \times_{\calP} \calP_{y/})^{\int}} & {\calO^{\int}}
	\arrow[from=1-2, to=2-2]
	\arrow[from=1-2, to=1-3]
	\arrow[from=1-3, to=2-3]
	\arrow[from=2-2, to=2-3]
	\arrow[from=2-1, to=2-2]
	\arrow[from=1-1, to=2-1]
	\arrow[from=1-1, to=1-2]
	\end{tikzcd}\]
	The left square is a pullback by definition and right square is a pullback by the previous paragraph. By pasting the large rectangle is also a pullback. By \cref{lem: double slice} there's a canonical equivalence $(\calO \times_{\calP} \calP_{y/})_{\alpha/} \simeq \calO_{x/}$. 
	This equivalence restricts to an equivalence $(\calO \times_{\calP} \calP_{y/})^{\int}_{\alpha/} \simeq \calO_{x/}^{\int}$. Substituting these in the outer rectangle above we get the following pullback square
	\[\begin{tikzcd}
	{(\calO \times_{\calP} \calP_{y/})^{\el}_{\alpha/}} & {\calO^{\el}} \\
	{\calO^{\int}_{x/}} & {\calO^{\int}}
	\arrow[from=1-1, to=2-1]
	\arrow[from=1-1, to=1-2]
	\arrow[from=1-2, to=2-2]
	\arrow[from=2-1, to=2-2]
	\end{tikzcd}\]
	In other words the natural map
	\[(\calO \times_{\calP} \calP_{y/})^{\el}_{\alpha/} \to \calO^{\el}_{x/}\]
	is an equivalence. 
\end{proof}

\begin{cor}\label{cor: map on slices is iso-Segal}
	Let $f: \calO \to \calP$ be a morphism of algebraic patterns. Then for every $x \in \calO$ the natural functor $\calO_{x/} \to \calO \times_{\calP} \calP_{f(x)/}$
	is an iso-Segal morphism of algebraic patterns.
\end{cor}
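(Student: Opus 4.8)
The plan is to factor the source-evaluation map $s : \calO_{x/} \to \calO$ through the comparison map in question and then apply the left-cancellation property of iso-Segal morphisms. Write $\phi : \calO_{x/} \to \calO \times_{\calP} \calP_{f(x)/}$ for the natural map under consideration and $\pi : \calO \times_{\calP} \calP_{f(x)/} \to \calO$ for the projection onto the first factor.

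First I would check that $\phi$ is genuinely a morphism of algebraic patterns. By \cref{prop:-slice-under-is-a-algebraic-pattern}(2) the source evaluation $s$ is a morphism of algebraic patterns, and by \cref{prop:-slice-under-is-a-algebraic-pattern}(4) so is the induced functor $\calO_{x/} \to \calP_{f(x)/}$. These two functors agree after projecting to $\calP$, so since $\calO \times_{\calP} \calP_{f(x)/}$ is a limit in $\AlgPatt$ --- computed on underlying $\infty$-categories by \cref{lem: limits of algebraic patterns} --- the universal property of the pullback exhibits $\phi$ as a morphism of algebraic patterns.

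Next comes the key observation: unwinding definitions, the composite $\pi \circ \phi$ sends an object $(x \to a)$ of $\calO_{x/}$ to $a$, so that $\pi \circ \phi = s$. Now $\pi$ is iso-Segal by \cref{lem: projection from slice is iso-Segal}, and $s$ is iso-Segal by that same lemma applied to the identity $\Id_{\calO} : \calO \to \calO$ and the object $x$, under which $\calO \times_{\calO} \calO_{x/} \simeq \calO_{x/}$ and the projection is precisely $s$. Having both $\pi$ and the composite $\pi \circ \phi = s$ iso-Segal, the left-cancellation property \cref{lem: cancellation for iso-Segal} immediately forces $\phi$ to be iso-Segal, which is the claim. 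I do not anticipate a genuine obstacle here; the only points requiring minor care are the identification of $\pi \circ \phi$ with the source map and the recognition that $s$ is exactly the $f = \Id_{\calO}$ instance of \cref{lem: projection from slice is iso-Segal}.
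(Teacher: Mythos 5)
Your proposal is correct and is essentially the paper's own proof: the paper likewise factors the source map as $\calO_{x/} \to \calO \times_{\calP} \calP_{f(x)/} \to \calO$, observes that both the projection and the composite are iso-Segal by \cref{lem: projection from slice is iso-Segal} (the composite being exactly the $f = \Id_{\calO}$ instance you identify), and concludes by the cancellation property \cref{lem: cancellation for iso-Segal}. Your additional verification that the comparison map is a morphism of algebraic patterns is a harmless extra check that the paper leaves implicit in the phrase ``natural map.''
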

\begin{proof}
	Let $x \in \calO$ and consider the composite $\calO_{x/} \to \calO \times_{\calP} \calP_{f(x)/} \to \calO$.
    The second map and the composite are iso-Segal by \cref{lem: projection from slice is iso-Segal}. By  \cref{lem: cancellation for iso-Segal} it follows that the first map is also iso-Segal.
\end{proof}

\begin{cor}\label{cor: map from inert to non-inert slice is iso-Segal}
	Let $f: \calO \to \calP$ be a morphism of algebraic patterns. Then for every $y \in \calP$ the natural map $\calO^{\int} \times_{\calP^{\int}} \calP^{\int}_{y/} \to\calO \times_{\calP} \calP_{y/}$
	is a iso-Segal morphism of algebraic patterns.
\end{cor}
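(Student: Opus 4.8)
The plan is to reduce the statement to the cancellation property of iso-Segal morphisms (\cref{lem: cancellation for iso-Segal}), exploiting that both mixed slices project down to $\calO$ via maps that we already know are iso-Segal. Write $\mathcal{A} := \calO^{\int} \times_{\calP^{\int}} \calP^{\int}_{y/}$ and $\mathcal{B} := \calO \times_{\calP} \calP_{y/}$, and let $g : \mathcal{A} \to \mathcal{B}$ denote the natural comparison map; it is a morphism of algebraic patterns by functoriality of the slice construction (\cref{prop:-slice-under-is-a-algebraic-pattern}, \cref{rem:algebraic-pattern-structure-on-slice}) applied to the inclusions $\calO^{\int} \mono \calO$ and $\calP^{\int} \mono \calP$. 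The source projection $p_{\mathcal{B}} : \mathcal{B} \to \calO$ is iso-Segal by \cref{lem: projection from slice is iso-Segal}, so by \cref{lem: cancellation for iso-Segal} it suffices to prove that the composite $p_{\mathcal{B}} \circ g : \mathcal{A} \to \calO$ is iso-Segal.

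First I would observe that this composite factors as $\mathcal{A} \xrightarrow{p_{\mathcal{A}}} \calO^{\int} \xrightarrow{\iota} \calO$, where $p_{\mathcal{A}}$ is the source projection of the mixed slice and $\iota$ is the inclusion of the inert subpattern; indeed, on objects both legs send $(x, y \inert f(x))$ to $x$. Each factor is iso-Segal: the map $p_{\mathcal{A}}$ is the source projection of the mixed slice associated to the pattern morphism $f^{\int} : \calO^{\int} \to \calP^{\int}$, hence iso-Segal by a second application of \cref{lem: projection from slice is iso-Segal}; and $\iota : \calO^{\int} \mono \calO$ is iso-Segal because $\calO^{\int} \subseteq \calO$ is an algebraic subpattern (the strictness property recorded in \cref{cor:subpattern-properties}). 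Since iso-Segal morphisms are closed under composition (immediate from \cref{defn: definition of iso-Segal morphism}, as a composite of equivalences on elementary slices is an equivalence), the composite $\iota \circ p_{\mathcal{A}} = p_{\mathcal{B}} \circ g$ is iso-Segal, and cancellation then yields that $g$ is iso-Segal.

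The genuine content here is all borrowed from \cref{lem: projection from slice is iso-Segal}, so the main work is bookkeeping rather than conceptual. I must check that $g$ is honestly a morphism of algebraic patterns and that the triangle $\mathcal{A} \to \mathcal{B} \to \calO$ really agrees with $\iota \circ p_{\mathcal{A}}$ as functors (not merely on underlying objects). Concretely this amounts to unwinding that every morphism of $\mathcal{A}$ is inert with inert image in $\calO$, which is automatic since inert morphisms in a slice are detected on the source (\cref{prop:-slice-under-is-a-algebraic-pattern}). The one place to be careful is that the pattern structure on $\calP^{\int}_{y/}$ interacts correctly with that on $\calP_{y/}$; this is the step I expect to cost the most attention, though it is exactly the type of compatibility verified inside the proof of \cref{lem: projection from slice is iso-Segal}.

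As a transparent alternative (useful as a sanity check), one can argue pointwise: for fixed $a = (x, y \inert f(x)) \in \mathcal{A}$, the two applications of \cref{lem: projection from slice is iso-Segal} identify $\mathcal{A}^{\el}_{a/} \simeq (\calO^{\int})^{\el}_{x/}$ and $\mathcal{B}^{\el}_{g(a)/} \simeq \calO^{\el}_{x/}$, with the comparison map induced by $\iota$. Since the elementary slices of $\calO^{\int}$ and of $\calO$ coincide — both consist of inert morphisms from $x$ to elementary objects — the comparison is an equivalence. I expect the cancellation route to be the cleaner one to commit to writing, reserving this direct computation as a verification.
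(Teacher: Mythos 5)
Your proposal is correct and is essentially the paper's own proof: the paper considers the same commutative square with vertical legs $\calO^{\int}\times_{\calP^{\int}}\calP^{\int}_{y/}\to\calO^{\int}$ and $\calO\times_{\calP}\calP_{y/}\to\calO$ (both iso-Segal by \cref{lem: projection from slice is iso-Segal}), notes the bottom inclusion $\calO^{\int}\mono\calO$ is iso-Segal, and concludes by \cref{lem: cancellation for iso-Segal} exactly as you do. Your write-up merely adds more explicit bookkeeping (and an optional pointwise check), so no further changes are needed.
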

\begin{proof}
	Consider the following commutative square
	\[\begin{tikzcd}
	{\calO^{\int} \times_{\calP^{\int}} \calP^{\int}_{y/}} & {\calO \times_{\calP} \calP_{y/}} \\
	{\calO^{\int}} & \calO.
	\arrow["{f^{\int}}"', from=1-1, to=2-1]
	\arrow["f", from=1-2, to=2-2]
	\arrow[from=2-1, to=2-2]
	\arrow[from=1-1, to=1-2]
	\end{tikzcd}\]
	The left and right vertical morphisms are iso-Segal by \cref{lem: projection from slice is iso-Segal}. It's straightforward to check the bottom horizontal morphism is iso-Segal. Consequently by \cref{lem: cancellation for iso-Segal} the top horizontal map is iso-Segal.	
\end{proof}

\subsubsection{Weakly initial morphisms}

The following generalizes the notion of initial objects to the setting of algebraic patterns.

\begin{defn}\label{defn:weakly-initial-object}
	Let $-2 \le n \le \infty$ be an integer and let $x_0 \in \calO$. We say that $x_0$ is \textit{weakly $n$-initial} if for every $n$-truncated Segal object $F \in \Seg_{\calO}(\calS^{\le n})$ the natural map,
	$\lim_{x\in \calO} F(x) \to F(x_0),$
	is an equivalence. In the special case $n =\infty$ we omit it and simply say that $x$ is \textit{weakly initial}.
\end{defn}

\begin{example}
	If $x \in \calO$ is initial then $x$ is in particular weakly initial.
\end{example}

\begin{lem}\label{lem: weakly initial objects are Morita invariant} 
	Let $f: \calO \to \calP$ be a Morita $n$-equivalence of algebraic patterns. Then $x_0 \in \calO$ is weakly $n$-initial if and only if its image $f(x_0) \in \calP$ is weakly $n$-initial. 
\end{lem}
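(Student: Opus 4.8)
The plan is to reduce the statement to the defining property of a Morita $n$-equivalence, namely that $f^{\ast}:\Seg_{\calP}(\calS^{\le n}) \to \Seg_{\calO}(\calS^{\le n})$ is an equivalence with inverse $f_{\ast}$. The key observation is that weak $n$-initiality of an object $x_0 \in \calO$ is phrased entirely in terms of a comparison between a \emph{global limit} of an $n$-truncated Segal object and its \emph{value} at $x_0$; I would like to transport both sides along the equivalence $f^{\ast}$. So first I would fix an $n$-truncated Segal object $G \in \Seg_{\calP}(\calS^{\le n})$ and set $F := f^{\ast} G \in \Seg_{\calO}(\calS^{\le n})$, and conversely, since $f^{\ast}$ is an equivalence, every $F \in \Seg_{\calO}(\calS^{\le n})$ arises this way (up to equivalence) as $f^{\ast} G$ with $G \simeq f_{\ast} F$.

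The heart of the argument is then a pair of identifications. On the one hand I must compare the values $F(x_0) = (f^{\ast}G)(x_0) = G(f(x_0))$, which is immediate from the definition of $f^{\ast}$ as precomposition. On the other hand I must compare the global limits $\lim_{x \in \calO} F(x)$ and $\lim_{y \in \calP} G(y)$. This is the crux: I would argue that for any equivalence of the above form the global limit over $\calO$ of $f^{\ast}G$ agrees with the global limit over $\calP$ of $G$. The cleanest route is to interpret the global limit as the value of a right Kan extension to the point, i.e. $\lim_{x\in\calO}F(x) \simeq (p_{\calO})_{\ast}F$ where $p_{\calO}:\calO \to \ast$, and similarly for $\calP$; since $p_{\calO} = p_{\calP}\circ f$, one has $(p_{\calO})_{\ast} = (p_{\calP})_{\ast}\circ f_{\ast}$ by functoriality of right Kan extension, so $\lim_{x\in\calO} f^{\ast}G \simeq \lim_{y\in\calP} f_{\ast}f^{\ast}G \simeq \lim_{y\in\calP} G$, using that $f_{\ast}f^{\ast}\simeq \id$ on $n$-truncated Segal objects (the counit/unit of the Morita equivalence). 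I would need to be careful here that $f_{\ast}$ as it appears in this limit computation is the honest right Kan extension along $f$ and that it genuinely computes $G$ back from $f^{\ast}G$; this is exactly guaranteed because $f$ is a Morita $n$-equivalence, so the unit $G \to f_{\ast}f^{\ast}G$ is an equivalence on Segal objects.

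Granting these two identifications, the proof closes by a straightforward diagram chase: the comparison map $\lim_{x\in\calO}F(x)\to F(x_0)$ is identified, naturally in $G$, with the comparison map $\lim_{y\in\calP}G(y)\to G(f(x_0))$, so the former is an equivalence for all $n$-truncated Segal $F$ if and only if the latter is an equivalence for all $n$-truncated Segal $G$. Since $f^{\ast}$ is essentially surjective on $n$-truncated Segal objects, quantifying over all $F \in \Seg_{\calO}(\calS^{\le n})$ is the same as quantifying over all $G \in \Seg_{\calP}(\calS^{\le n})$, and the equivalence of the two conditions is precisely the assertion that $x_0$ is weakly $n$-initial if and only if $f(x_0)$ is.

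I expect the main obstacle to be making the identification of global limits fully rigorous, in particular verifying that the comparison maps match up \emph{naturally} and that the counit of the Morita adjunction really intertwines the two limit cones over the respective slice data. A possible subtlety is whether $f_{\ast}$ preserves the relevant $n$-truncated Segal objects so that the Kan extension manipulation stays inside the world where $f_{\ast}f^{\ast}\simeq\id$; this is exactly the content packaged into \cref{prop:morita-equivalence}, so I would lean on that proposition rather than re-deriving it. Once the naturality is in hand the logical equivalence of the two weak-initiality conditions is formal.
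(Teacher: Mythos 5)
Your proposal is correct and takes essentially the same route as the paper: both reduce, via essential surjectivity of $f^{\ast}$ on $n$-truncated Segal objects, to showing that the canonical comparison map $\lim_{\calP} G \to \lim_{\calO} f^{\ast}G$ is an equivalence, so that weak $n$-initiality of $x_0$ and of $f(x_0)$ test the same composite against $G(f(x_0))$. The paper proves this comparison is an equivalence by identifying it with $\Map_{\Seg_{\calP}(\calS^{\le n})}(\pt,G) \to \Map_{\Seg_{\calO}(\calS^{\le n})}(f^{\ast}\pt,f^{\ast}G)$ and invoking fully faithfulness of $f^{\ast}$, which is exactly the adjoint formulation of your key step that the unit $G \to f_{\ast}f^{\ast}G$ is an equivalence on Segal objects, so the two arguments coincide in substance.
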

\begin{proof}
	Since $f^{\ast}: \Seg_{\calP}(\calS^{\le n}) \to \Seg_{\calO}(\calS^{\le n})$ is essentially surjective it suffices to show that for every $F \in \Seg_{\calP}(\calS^{\le n})$ and every $x_0 \in \calO$ the composition 
	\begin{align*}
	\underset{\calP}{\lim} F \to \underset{\calO}{\lim} F \circ f \to F(f(x_0))
	\end{align*} 
	is an equivalence if and only if the second map is an equivalence. Indeed we claim that the first map is an equivalence. To see this note that it factors as
	\begin{align*}
	\underset{\calP}{\lim} F  \simeq \Map_{\Seg_{\calP}(\calS^{\le n})}(\pt,F) \iso  \Map_{\Seg_{\calO}(\calS^{\le n})}(f^{\ast}\pt,f^{\ast}F) \simeq \underset{\calO}{\lim} F \circ f
	\end{align*}
	where the middle map is an equivalence since $f^{\ast}: \Seg_{\calP}(\calS^{\le n}) \to \Seg_{\calO}(\calS^{\le n})$ is fully faithful.
\end{proof}

Quillen's Theorem A \cite{Quillen-A}, gives necessary and sufficient conditions for a functor $f:\calC \to \calD$ (of $1$-categories) to induce an equivalence on colimits $\varinjlim(p\circ f) \simeq \varinjlim(p)$ for all diagrams $p:\calD \to \calE$ (where $\calE$ is a $1$-category). In \cite[\S 4.1.3.]{HTT}, Lurie formulated and proved a variant of Quillen's Theorem A, attributed to Joyal, where $\calC$,  $\calD$ and $\calE$ are taken to be $\infty$-categories. 
For future use we record a variant of Quillen A interpolating between $1$-categorical and $\infty$-categorical.

\begin{prop}[Quillen A]\label{prop: Quillen A}
	Let $-2 \le n \le \infty$ be an integer and $q: \calI \to \calJ$ a functor of $\infty$-categories. The following are equivalent:
	\begin{enumerate}
		\item For every $(n+1,1)$- category $\calC$ and every functor $F: \calJ \to \calC$ the natural map $\underset{\calJ}{\lim} F \to \underset{\calI}{\lim}  F \circ q$
		is an equivalence whenever either side exist.
		
		\item For every functor $F: \calJ \to \calS^{\le n}$ the limit of the unit map induces an equivalence $\underset{\calJ}{\lim} F \simeq \underset{\calJ}{\lim}  q_{\ast} q^{\ast}F$.
		
		\item The left Kan extension of the terminal functor $q_!(\pt)$ is $n$-connected.
		
		\item For every $j \in \calJ$ the space $\big|\calI \times_\calJ \calJ_{/j}\big|$ is $n$-connected.
	\end{enumerate}
	When these conditions are satisfied we say that $q: \calI \to \calJ$ is \hl{$n$-initial}.
\end{prop}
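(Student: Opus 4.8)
The plan is to prove the cycle $(1) \Rightarrow (2) \Leftrightarrow (4) \Leftrightarrow (3)$ together with $(2) \Rightarrow (1)$. Two of these implications are formal, and essentially all of the content sits in the equivalence $(2) \Leftrightarrow (4)$, which I would obtain by reinterpreting the limit comparison map as an orthogonality condition against the counit $q_!\pt \to \pt$ in the functor $\infty$-topos $\Fun(\calJ,\calS)$.

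I would first dispatch $(3) \Leftrightarrow (4)$. The pointwise formula for left Kan extension gives $q_!(\pt)(j) \simeq \colim_{\calI \times_\calJ \calJ_{/j}} \pt \simeq \big|\calI \times_\calJ \calJ_{/j}\big|$ for each $j \in \calJ$. Since truncation, and hence connectivity, in $\Fun(\calJ,\calS)$ is computed objectwise, the functor $q_!(\pt)$ is $n$-connected precisely when each of these spaces is, which is exactly $(4)$.

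The heart of the argument is $(2) \Leftrightarrow (4)$. First, functoriality of right Kan extension along $\calI \xrightarrow{q} \calJ \to \ast$ identifies $\lim_\calJ q_\ast q^\ast F \simeq \lim_\calI q^\ast F$, and under this identification the unit-limit map of $(2)$ becomes the canonical comparison $\lim_\calJ F \to \lim_\calI F \circ q$. Writing limits as mapping spaces out of the constant functor $\pt$ and using the adjunction $q_! \dashv q^\ast$ together with $q^\ast \pt \simeq \pt$, I would identify $\lim_\calJ F \simeq \Map_{\Fun(\calJ,\calS)}(\pt, F)$ and $\lim_\calI F\circ q \simeq \Map_{\Fun(\calJ,\calS)}(q_!\pt, F)$, so that the comparison map is precomposition along the counit $c \colon q_!\pt \to \pt$. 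Consequently $(2)$ holds iff $c^\ast \colon \Map(\pt, F) \to \Map(q_!\pt, F)$ is an equivalence for every $F$ valued in $\calS^{\le n}$, i.e. iff $c$ is left orthogonal to every objectwise $n$-truncated functor. By the $(n\text{-connected}, n\text{-truncated})$ factorization system in the $\infty$-topos $\Fun(\calJ,\calS)$ this says exactly that $c$ is $n$-connected, and since connectivity is objectwise this is the $n$-connectivity of each $q_!(\pt)(j) \to \pt$, which is $(4)$.

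Finally I would close the loop. For $(1) \Rightarrow (2)$, apply $(1)$ to $\calC = \calS^{\le n}$, which is a complete $n$-truncated $\infty$-category, so both limits exist and the comparison is an equivalence for all $F \colon \calJ \to \calS^{\le n}$; this is $(2)$ via the identification above. For $(2) \Rightarrow (1)$, given an $n$-truncated $\infty$-category $\calC$ I would use that the Yoneda embedding $y \colon \calC \hookrightarrow \Fun(\calC^{\op}, \calS^{\le n})$ is fully faithful, conservative and limit-preserving; since limits in the presheaf category are objectwise, $(2)$ yields $\lim_\calJ(yF) \simeq \lim_\calI(yFq)$, and if either $\lim_\calJ F$ or $\lim_\calI Fq$ exists then its image under $y$ represents this common presheaf, which therefore lies in the essential image of $y$, forcing the other limit to exist and the comparison map to be an equivalence after the conservative functor $y$. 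The step I expect to be most delicate is this last existence bookkeeping in $(2) \Rightarrow (1)$ — running the Yoneda reduction without assuming a priori that both limits exist — whereas the genuinely substantive point is the translation in $(2) \Leftrightarrow (4)$ of the limit comparison into the $n$-connectivity of the counit $q_!\pt \to \pt$.
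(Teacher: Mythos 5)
Your proposal is correct and takes essentially the same route as the paper: the same pointwise Kan extension formula for $(3)\Leftrightarrow(4)$, the same identification of the comparison map with precomposition along the counit $q_!\pt \to \pt$ together with the orthogonality characterization of $n$-connectedness (which the paper packages as $(2)\Leftrightarrow(3)$), and the same Yoneda reduction to $\calS^{\le n}$ for $(2)\Rightarrow(1)$. Your handling of the ``whenever either side exists'' bookkeeping in $(2)\Rightarrow(1)$ is in fact slightly more explicit than the paper's.
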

\begin{proof}
	$(1 \Rightarrow 2)$ Apply (1) in the special case where $\calE = \calS^{\le n}$.
	$(2 \Rightarrow 1)$ Let $\calE$ be an $n$-truncated $\infty$-category and $F: \calJ \to \calE$ a functor. The yoneda embedding $y: \calE \to \Fun(\calE^{\op},\calS)$ is fully faithful and preserves all limits. It thus suffices to show the claim after applying yoneda. Furthermore since $\calE$ is $n$-truncated by assumption the yoneda embedding lands in presheaves taking values in $n$-truncated spaces. Since equivalences of presheaves can be checked levelwise and evaluation preserves limits we may assume without loss of generality that $\calE =\calS^{\le n}$. Finally (1) for the case $\calE = \calS^{\le n}$ is simply (2) so we're done.
	$(2 \Leftrightarrow 3)$ Note that since $q^{\ast} \pt  \simeq \pt$ we have $q_! \pt \simeq q_!q^{\ast} \pt$. We claim that the counit map $q_! q^{\ast}\pt \to \pt$ corepresents the comparison map of (2). Indeed for every functor $F: \calJ \to \calS$ we have
	\begin{align*}
	\underset{\calJ}{\lim} F \simeq \Map_{\Fun(\calJ,\calS)}(\pt,F) & \to \Map_{\Fun(\calJ,\calS)}(q_! q^{\ast}\pt,F) \\
	& \simeq \Map_{\Fun(\calJ,\calS)}(\pt,q_{\ast} q^{\ast}F) \\
	& \simeq \underset{\calJ}{\lim} q_{\ast} q^{\ast} F
	\end{align*}
	Consequently $q_! q^{\ast} \pt \to \pt$ is $n$-connected if and only if  the comparison map $\underset{\calJ}{\lim} F \to \underset{\calJ}{\lim}  q_{\ast} q^{\ast}F$ is an equivalence for every $n$-truncated functor $F : \calJ \to \calS^{\le n}$.
	$(3 \Leftrightarrow 4)$ Follows from the formula for left Kan extension. Indeed for every $j \in \calJ$ we have
	$(q_!(\pt))(j)  \simeq \underset{i \in \calI_{/j}}{\colim} \pt \simeq \big|\calI \times_\calJ \calJ_{/j}\big|$.
\end{proof}

\begin{prop}\label{prop: Quillen A for Segal}
	Let $-2 \le n \le \infty$ be a natural number and $f: \calO \to \calP$ a morphism of algebraic patterns. The following are equivalent:
	\begin{enumerate}
		\item 
        For every complete $(n+1,1)$-category $\calC$ and every Segal object $F: \calP \to \calC$ the natural map $\underset{\calP}{\lim} F \to \underset{\calO}{\lim}  F \circ f$
        is an equivalence whenever either side exists.
		
		\item For every Segal object $F: \calP \to \calS^{\le n}$ the natural map $\underset{\calP}{\lim} F \to \underset{\calO}{\lim} f^{\ast} F$ is an equivalence.
	\end{enumerate}
	When these conditions are satisfied we say that $f$ is \hl{weakly $n$-initial}. When $n =\infty$ we omit it and instead say that $f$ is \hl{weakly initial}.
\end{prop}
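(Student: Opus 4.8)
The plan is to reproduce the structure of the ordinary Quillen A statement (\cref{prop: Quillen A}), modifying the Yoneda reduction so that it stays within Segal objects rather than arbitrary functors. The two implications are asymmetric: one is a direct specialization, while the other requires a reduction to presheaf-valued Segal objects. Note that we cannot simply invoke \cref{prop: Quillen A} for $f$, since the whole point is that the comparison map need only be tested against Segal objects, a strictly weaker hypothesis than $n$-initiality.

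For $(1) \implies (2)$ I would apply $(1)$ to the $\infty$-category $\calC = \calS^{\le n}$. This $\calC$ is complete, and it is $n$-truncated: its mapping spaces $\Map_{\calS}(X,Y)$ are cotensors $Y^{X} \simeq \underset{X}{\lim}\, Y$, hence limits of the $n$-truncated space $Y$ and therefore $n$-truncated. Since $\calS^{\le n}$ is complete both limits in the comparison map always exist, so the ``whenever either side exists'' clause is vacuous and $(2)$ follows verbatim.

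For $(2) \implies (1)$, let $\calC$ be a complete $n$-truncated $\infty$-category and $F : \calP \to \calC$ a Segal object; completeness guarantees both limits exist. The key device is the Yoneda embedding $y : \calC \mono \Fun(\calC^{\op}, \calS)$, which is fully faithful and preserves all limits. Since $\calC$ is $n$-truncated, $y$ factors through $\Fun(\calC^{\op},\calS^{\le n})$. Because $y$ preserves limits it preserves the Segal condition (which is a condition on certain $\calK_{\calP}$-limits), so $y \circ F$ is again a Segal object; and because $y$ is fully faithful and limit-preserving, the comparison map for $F$ is an equivalence if and only if it is for $y \circ F$. This reduces $(1)$ to targets of the form $\Fun(\calA,\calS^{\le n})$ with $\calA = \calC^{\op}$.

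Finally I would dispatch the presheaf case exactly as in the proof of \cref{prop:morita-equivalence}: limits in $\Fun(\calA,\calS^{\le n})$ are computed pointwise and equivalences are detected pointwise, so a functor $\calP \to \Fun(\calA,\calS^{\le n})$ is Segal if and only if its composite with each evaluation $\mathrm{ev}_a$ is Segal, and the comparison map is an equivalence if and only if it becomes one after applying each $\mathrm{ev}_a$. This reduces the claim to the target $\calS^{\le n}$, which is precisely hypothesis $(2)$. I expect no serious obstacle: the only points requiring care are that Yoneda genuinely preserves the Segal condition and that the pointwise reduction is legitimate, both of which are limit-theoretic facts already exploited in the proof of \cref{prop:morita-equivalence}.
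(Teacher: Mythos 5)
Your proposal is correct and takes essentially the same approach as the paper: the paper's proof simply says to repeat the $(1)\iff(2)$ argument of \cref{prop: Quillen A} ``carrying along the Segal condition,'' which is exactly what you do. Your specialization to $\calC = \calS^{\le n}$ for one direction, and the Yoneda plus pointwise-evaluation reduction (checking that both steps preserve the Segal condition because they preserve limits) for the other, is precisely the intended argument spelled out.
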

\begin{proof}
	Carry along the Segal condition in the proof of $(1 \Leftrightarrow 2)$ in \cref{prop: Quillen A}.
\end{proof}

\begin{example}\label{lem: inclusion of elementary is weakly initial}
	For every algebraic pattern $\calO$ the inclusion $i_{\calO}: \calO^{\el} \mono \calO^{\int}$ induces an equivalence \[i_{\calO}^{\ast}: \Seg_{\calO^{\int}}(\calS) \simeq  \Fun(\calO^{\el},\calS) 
	: i_{\calO,\ast}.\]
	In particular $i_{\calO}$ is always weakly initial.
\end{example}

\begin{rem}
	Note that if $f : \calO \to \calP$ is a morphism of algebraic patterns whose underlying functor is $n$-initial then $f$ is weakly $n$-initial.
\end{rem}

\begin{lem}\label{lem:fully-faithful-restriction-implies-weakly-initial}
	Let $f: \calO \to \calP$ be a Segal morphism of algebraic patterns and suppose that the functor
	\[f^{\ast} :  \Seg_{\calP}(\calS^{\le n}) \to \Seg_{\calO}(\calS^{\le n}),\] 
	is fully faithful. Then $f$ is weakly initial. 
\end{lem}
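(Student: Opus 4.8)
The plan is to verify condition (2) of \cref{prop: Quillen A for Segal}: for an arbitrary Segal object $F \in \Seg_{\calP}(\calS^{\le n})$ I must show that the natural comparison map $\lim_{\calP} F \to \lim_{\calO} f^{\ast}F$ is an equivalence. The strategy mirrors the proof of \cref{lem: weakly initial objects are Morita invariant}: express each side as a mapping space out of the terminal Segal object and then invoke the assumed full faithfulness of $f^{\ast}$.

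First I would record that the constant functor $\pt$ is an $n$-truncated Segal object of both $\calP$ and $\calO$ (the Segal limits of a constant terminal diagram are again terminal), and that it is in fact the terminal object of $\Seg_{\calP}(\calS^{\le n})$ and of $\Seg_{\calO}(\calS^{\le n})$. Since $f^{\ast}$ is computed pointwise, $f^{\ast}\pt \simeq \pt$. Because $f$ is a Segal morphism, $f^{\ast}F$ is again a Segal object, so the right-hand side of the comparison map makes sense.

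Next I would identify both limits as mapping spaces. For any diagram valued in $\calS^{\le n}$ the global limit is corepresented by the terminal object, so $\lim_{\calP} F \simeq \Map_{\Fun(\calP,\calS^{\le n})}(\pt, F)$; as $\Seg_{\calP}(\calS^{\le n}) \subseteq \Fun(\calP,\calS^{\le n})$ is a full subcategory this equals $\Map_{\Seg_{\calP}(\calS^{\le n})}(\pt, F)$, and similarly $\lim_{\calO} f^{\ast}F \simeq \Map_{\Seg_{\calO}(\calS^{\le n})}(\pt, f^{\ast}F)$. Under these identifications together with the equivalence $f^{\ast}\pt \simeq \pt$, the natural comparison map becomes the map on mapping spaces induced by the functor $f^{\ast}$, namely $\Map_{\Seg_{\calP}(\calS^{\le n})}(\pt,F) \to \Map_{\Seg_{\calO}(\calS^{\le n})}(f^{\ast}\pt, f^{\ast}F)$.

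The conclusion is then immediate: $f^{\ast}$ being fully faithful means precisely that this map is an equivalence for every $F$, whence the comparison map is an equivalence and $f$ is weakly initial. The main technical point to nail down is the penultimate identification — verifying that the canonically defined comparison map of \cref{prop: Quillen A for Segal} really does agree with the map induced by $f^{\ast}$ on $\Map(\pt,-)$. This is a naturality check: both maps are determined by the functoriality of limits along $f$, so one chases the relevant universal properties exactly as in \cref{lem: weakly initial objects are Morita invariant}.
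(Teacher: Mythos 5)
Your proof is correct and takes essentially the same route as the paper's: the paper likewise corepresents $\underset{\calP}{\lim} F$ by $\Map_{\Seg_{\calP}}(\pt,F)$, identifies the comparison map $\underset{\calP}{\lim} F \to \underset{\calO}{\lim} F\circ f$ with the map induced by $f^{\ast}$ on $\Map(\pt,-)$, and concludes by full faithfulness. The only difference is that you spell out the auxiliary points (that $\pt$ is terminal Segal, that $f^{\ast}\pt\simeq\pt$, and the truncation bookkeeping with $\calS^{\le n}$) which the paper leaves implicit.
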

\begin{proof}
	For every $F \in \Seg_{\calP}(\calS)$ we have
	\[ \Map_{\Seg_{\calP}(\calS)}(\pt,F)  \simeq \underset{\calP}{\lim} F \to \underset{\calO}{\lim} F \circ f \simeq  \Map_{\Seg_{\calO}(\calS)}(f^{\ast}\pt,f^{\ast}F).\]
	Since $f^{\ast}$ is fully faithful the composite is an equivalence.
\end{proof}

Since initial objects are preserved by initial functors it is natural to expect that $n$-initial objects are preserved by weakly $n$-initial morphisms. The following lemma shows that this is indeed the case provided the morphism in question is a Segal morphism.

\begin{lem}\label{lem: weakly initial morphisms preserve weakly initial objects}
	Let $f: \calO \to \calP$ be a Segal morphism of algebraic patterns and let $x_0\in \calO$ be weakly $n$-initial. If $f$ is weakly $n$-initial then $f(x_0) \in \calP$ is weakly $n$-initial.
\end{lem}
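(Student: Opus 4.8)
The plan is to verify directly the defining condition of weak $n$-initiality for $f(x_0) \in \calP$, by factoring the relevant comparison map through $\calO$. Fix an arbitrary Segal object $G \in \Seg_{\calP}(\calS^{\le n})$; I must show that the canonical projection $\underset{\calP}{\lim} G \to G(f(x_0))$ is an equivalence.

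The key point is that this projection factors as a composite
\[ \underset{\calP}{\lim} G \to \underset{\calO}{\lim} f^{\ast} G \to (f^{\ast} G)(x_0) = G(f(x_0)), \]
where the first arrow is the comparison map induced by $f$ and the second is the projection of $\underset{\calO}{\lim} f^{\ast} G$ to its value at $x_0$. The first map is an equivalence precisely because $f$ is weakly $n$-initial (\cref{prop: Quillen A for Segal}). For the second, I first note that $f^{\ast} G$ is a Segal object of $\calO$, which is exactly the content of $f$ being a Segal morphism (\cref{defn: definition of Segal morphism}); granting this, the hypothesis that $x_0$ is weakly $n$-initial (\cref{defn:weakly-initial-object}), applied to $f^{\ast} G \in \Seg_{\calO}(\calS^{\le n})$, shows the second arrow is an equivalence. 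Hence the composite is an equivalence, and since $G$ was arbitrary, $f(x_0)$ is weakly $n$-initial.

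The only non-formal point to pin down is that this factorization genuinely computes the projection $\underset{\calP}{\lim} G \to G(f(x_0))$ --- that is, that post-composing the comparison map $\underset{\calP}{\lim} G \to \underset{\calO}{\lim} f^{\ast} G$ with the projection to $x_0$ recovers the projection to $f(x_0)$ in $\calP$. This is immediate from the universal property defining the comparison map: by construction its projection to each $x \in \calO$ agrees with the $\calP$-projection to $f(x)$. I expect this bookkeeping to be the main (and essentially the only) subtlety; everything else reduces to the displayed two-step chain of equivalences, where one step uses weak $n$-initiality of $f$ and the other uses weak $n$-initiality of $x_0$ together with the Segal-morphism hypothesis guaranteeing that $f^{\ast}$ lands in $\Seg_{\calO}(\calS^{\le n})$.
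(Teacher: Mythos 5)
Your proof is correct and is essentially the paper's own argument: both factor the projection $\underset{\calP}{\lim} G \to G(f(x_0))$ through the comparison map $\underset{\calP}{\lim} G \to \underset{\calO}{\lim} f^{\ast}G$, using weak $n$-initiality of $f$ for the first arrow and the Segal-morphism hypothesis together with weak $n$-initiality of $x_0$ for the second. If anything, your write-up spells out more explicitly than the paper does that the second arrow requires both ingredients, so no concerns.
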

\begin{proof}
	Let $F \in \Seg_{\calP}(\calS^{\le n})$ and consider the composite $\underset{\calP}{\lim} F \to \underset{\calO}{\lim} F \circ f  \to F(f(x_0))$.
    The second map is an equivalence since $f$ is a Segal morphism. Consequently the first map is an equivalence if and only if the composite is.
\end{proof}

\begin{war}
	Weakly $n$-initial morphisms are not closed under composition in general. Instead we have the following behavior.
\end{war}

\begin{lem}\label{lem: cancellation for weakly initial}
	Let $f: \calO \to \calP$ and $g: \calP \to \calQ$ be morphisms of algebraic patterns. Suppose that
	\begin{itemize}
		\item $f$ is weakly $n$-initial.
		\item $g$ is Segal.
	\end{itemize} 
	Then $g$ is weakly $n$-initial if and only if $g \circ f$ is weakly $n$-initial.
\end{lem}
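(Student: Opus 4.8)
The plan is to reduce everything to the two-out-of-three principle applied to a factorization of comparison maps, in exactly the same spirit as \cref{lem: cancellation for iso-Segal} and \cref{lem: weakly initial morphisms preserve weakly initial objects}. The key point is that the comparison map witnessing weak $n$-initiality of the composite $g \circ f$ factors canonically through the comparison map for $g$, with the remaining factor being the comparison map for $f$ applied to a Segal object, and that last factor is forced to be an equivalence by the hypothesis on $f$. The hypothesis that $g$ is Segal is precisely what guarantees that the intermediate object to which we apply $f$'s weak $n$-initiality is again a Segal object, so that the assumption on $f$ is applicable.

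Concretely, I would use characterization $(2)$ of \cref{prop: Quillen A for Segal} throughout and argue as follows. Fix an arbitrary Segal object $G : \calQ \to \calS^{\le n}$. Since $g$ is a Segal morphism, \cref{defn: definition of Segal morphism} guarantees that $g^{\ast} G : \calP \to \calS^{\le n}$ is a Segal $\calP$-object. Now consider the natural factorization of comparison maps
\[ \underset{\calQ}{\lim}\, G \to \underset{\calP}{\lim}\, g^{\ast} G \to \underset{\calO}{\lim}\, f^{\ast} g^{\ast} G, \]
where the first arrow is the comparison map attached to $g$, the second arrow is the comparison map attached to $f$ evaluated on the Segal object $g^{\ast} G$, and the composite is the comparison map attached to $g \circ f$ (here $f^{\ast} g^{\ast} G \simeq (g \circ f)^{\ast} G$). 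Because $f$ is weakly $n$-initial and $g^{\ast} G$ is a Segal $\calP$-object, the second arrow is an equivalence by \cref{prop: Quillen A for Segal}$(2)$. By two-out-of-three the first arrow is an equivalence if and only if the composite is. As $G$ was an arbitrary Segal $\calQ$-object, this says precisely that $g$ is weakly $n$-initial if and only if $g \circ f$ is weakly $n$-initial.

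The only step requiring genuine care, and hence the main obstacle, is the claim that the comparison map for the composite $g \circ f$ factors as the displayed composite of the comparison maps for $g$ and for $f$. This is the standard compatibility of limit-comparison maps with composition of functors: for any functor $G$ out of $\calQ$ the canonical map $\underset{\calQ}{\lim} G \to \underset{\calO}{\lim}(g\circ f)^{\ast}G$ is obtained by transitivity from the maps induced by $f$ and by $g$, and one checks the identification $f^{\ast} g^{\ast} G \simeq (g\circ f)^{\ast} G$ is the evident one. Everything else is formal and follows the template of the earlier cancellation results, with the Segal hypothesis on $g$ entering exactly to keep us inside $\Seg_{\calP}(\calS^{\le n})$ when invoking the weak $n$-initiality of $f$.
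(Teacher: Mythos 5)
Your proof is correct and is essentially identical to the paper's: both factor the comparison map for $g \circ f$ through $\underset{\calP}{\lim}\, g^{\ast}G$, use the Segal hypothesis on $g$ to ensure $g^{\ast}G \in \Seg_{\calP}(\calS^{\le n})$ so that weak $n$-initiality of $f$ makes the second map an equivalence, and conclude by two-out-of-three.
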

\begin{proof}
	Let $F \in \Seg_{\calQ}(\calS^{\le n})$ and consider the composite ${\lim}_{\calQ} F \to {\lim}_{\calP} F \circ g \to {\lim}_{\calO} F \circ g \circ f$.
    Since $g$ is Segal, $F \circ g \in \Seg_{\calP}(\calS^{\le n})$ and since $f$ is weakly $n$-initial the second map is an equivalence. 
    By $2$ out of $3$ it follows the first map is an equivalence if and only if the composite is.
\end{proof}

\begin{lem}\label{lem: identity is weakly initial in slice when exists}
	Let $f: \calO \to \calP$ be a morphism of algebraic patterns inducing an equivalence on elementary object $f^{\el} : \calO^{\el} \simeq \calP^{\el}$ on elementary objects. Then for every $x \in \calO$ the following conditions are equivalent
	\begin{enumerate}
		\item The natural map $\calO^{\el}_{x/} \to \calP^{\el}_{f(x)/}$ is initial.
		\item $(x,\id_{f(x)}) \in \calO^{\int}_{f(x)/}$ is weakly initial.
	\end{enumerate}
\end{lem}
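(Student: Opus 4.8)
The plan is to reduce both conditions to the initiality of a single functor between slices of elementary objects and then match them via Quillen's Theorem A. Throughout, write $\calQ := \calO^{\int}_{f(x)/} = \calO^{\int} \times_{\calP^{\int}} \calP^{\int}_{f(x)/}$ for the slice pattern of \cref{rem:algebraic-pattern-structure-on-slice}. Since every morphism in $\calO^{\int}$ is inert, the same holds in the slice, so $\calQ^{\int} = \calQ$. Applying \cref{lem: inclusion of elementary is weakly initial} to $\calQ$ gives an equivalence $\Seg_{\calQ}(\calS) \simeq \Fun(\calQ^{\el},\calS)$ whose inverse is the right Kan extension $i_{\calQ,\ast}$ along $i_{\calQ}\colon \calQ^{\el} \mono \calQ$.

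First I would reformulate condition $(2)$. For $G \in \Seg_{\calQ}(\calS)$ set $H := G|_{\calQ^{\el}}$, so that $G \simeq i_{\calQ,\ast}H$. Composition of right Kan extensions (along $\calQ^{\el}\mono\calQ$ and $\calQ \to \pt$) yields $\lim_{\calQ} G \simeq \lim_{\calQ^{\el}} H$, while the pointwise formula for $i_{\calQ,\ast}$ gives $G(x,\id) \simeq \lim_{\calQ^{\el}_{(x,\id)/}} H$. Under these identifications the comparison map of \cref{defn:weakly-initial-object} is exactly the map induced by the projection $\pi\colon \calQ^{\el}_{(x,\id)/} \to \calQ^{\el}$. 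As $G \mapsto H$ ranges over all of $\Fun(\calQ^{\el},\calS)$, \cref{prop: Quillen A} (with $n=\infty$) then shows that $(x,\id)$ is weakly initial in $\calQ$ if and only if $\pi$ is initial.

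Next I would identify $\pi$ with the natural map of condition $(1)$. For the target, the pullback description of elementary objects in the slice (\cref{prop:-slice-under-is-a-algebraic-pattern}, cf.\ the front face of the cube in the proof of \cref{lem: projection from slice is iso-Segal}, applied to $f^{\int}$ and $y=f(x)$) identifies $\calQ^{\el} \simeq \calO^{\el} \times_{\calP^{\el}} \calP^{\el}_{f(x)/}$; since $f^{\el}$ is an equivalence by hypothesis, base change gives $\calQ^{\el} \simeq \calP^{\el}_{f(x)/}$. For the source, the double-slice equivalence \cref{lem: double slice} gives $\calQ_{(x,\id)/} \simeq \calO^{\int}_{x/}$, and restricting to those objects whose $\calO$-component is elementary yields $\calQ^{\el}_{(x,\id)/} \simeq \calO^{\el}_{x/}$. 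Unwinding the fiber product shows that a morphism $(x,\id_{f(x)}) \to (e,\beta)$ in $\calQ$ is precisely an inert $\lambda\colon x \inert e$ with $\beta = f(\lambda)$; hence under both identifications $\pi$ becomes $\lambda \mapsto f(\lambda)$, which is exactly the natural map $\calO^{\el}_{x/} \to \calP^{\el}_{f(x)/}$. Combining with the previous paragraph gives $(1) \iff (2)$.

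The main obstacle is the bookkeeping in this last step: correctly computing the elementary objects $\calQ^{\el}$ and the elementary slice $\calQ^{\el}_{(x,\id)/}$ of the fiber-product pattern $\calQ$, and verifying that the abstract projection $\pi$ coincides with the natural comparison map appearing in condition $(1)$. The conceptual core — that weak initiality in an inert-only slice pattern is governed by initiality of the induced functor of elementary slices via Quillen A — is clean, but the compatibility of the double-slice identification \cref{lem: double slice} and the $f^{\el}$-base-change with $\pi$ must be traced through explicitly.
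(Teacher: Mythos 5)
Your proposal is correct and is essentially the paper's own argument: both reduce weak initiality of $(x,\id_{f(x)})$, via the Segal (right Kan extension) property, to comparing limits over $(\calO^{\int}_{f(x)/})^{\el}$ and $(\calO^{\int}_{f(x)/})^{\el}_{(x,\id_{f(x)})/}$, identify these with $\calP^{\el}_{f(x)/}$ and $\calO^{\el}_{x/}$ using the hypothesis on $f^{\el}$ and the slice identifications, and finish by quantifying over all of $\Fun(\calP^{\el}_{f(x)/},\calS)$ to invoke the initiality criterion. The only cosmetic difference is that you invoke \cref{lem: double slice} and \cref{prop: Quillen A} directly, where the paper cites \cref{lem: projection from slice is iso-Segal} (itself proved from the double-slice lemma) and leaves the characterization of initial functors implicit.
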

\begin{proof}
	Consider the following diagram
	\[ (\calO^{\int}_{f(x)/})^{\el}_{\id_{f(x)}/} \to \calO^{\el}_{x/} \to (\calO^{\int}_{f(x)/})^{\el} \to \calP^{\el}_{f(x)/} \to \calO^{\int}_{f(x)/}.\]
	The first map is an equivalence by \cref{lem: projection from slice is iso-Segal}. We claim that the third map is also an equivalence. To see this note that since $\calO^{\el} \simeq \calP^{\el}$ we have
	\[ (\calO^{\int}_{f(x)/})^{\el} \simeq (\calO^{\int} \times_{\calP^{\int}} \calP^{\int}_{f(x)/})^{\el} \simeq \calO^{\el} \times_{\calP^{\el}} \calP^{\el}_{f(x)/} \simeq \calP^{\el} \times_{\calP^{\el}} \calP^{\el}_{f(x)/} \simeq \calP^{\el}_{f(x)/}.\]
	Let $F \in \Seg_{\calO^{\int}_{f(O)/}}(\calS)$ and consider the comparison maps on limits
	\begin{equation*}\label{diag:long-composition-weakly-initial-lemma}
	    \underset{\calO^{\int}_{f(x)/}}{\lim} \! F \to \underset{\calP^{\el}_{f(x)/}}{\lim} \! F \iso \!\!\! \underset{(\calO^{\int}_{f(x)/})^{\el}}{\lim} \!\!\!\! F \to  \underset{ \calO^{\el}_{x/}}{\lim} \, F \iso \!\!\!\!\!\!\!\!\! \underset{(\calO^{\int}_{f(x)/})^{\el}_{\id_{f(x)}/}}{\lim} \!\!\!\!\!\!\!\!\! F \simeq F(x,\id_{f(x)}), \tag{$\star$}
	\end{equation*} 
	where the last equivalence follows from the Segal condition. The composition of the first two maps can be identified with the comparison map
	\[\underset{\calO^{\int}_{f(x)/}}{\lim} \! F \to \underset{(\calO^{\int}_{f(x)/})^{\el}}{\lim} \!\!\! F.\] 
	We claim that this map is an equivalence. Indeed, since $F$ is Segal $F|_{\calO^{\int}_{f(x)/}}$ is the right Kan extension of $F|_{(\calO^{\int}_{f(x)/})^{\el}}$ (see \cref{lem: Segal iff right Kan extended}) and thus the comparison map on limits is an equivalence. 
	
	Looking back at \eqref{diag:long-composition-weakly-initial-lemma} we see that the total composite is an equivalence if and only if
	\[\psi_x : \underset{\calP^{\el}_{f(x)/}}{\lim} F \to  \underset{\calO^{\el}_{x/}}{\lim} F,\]
	is an equivalence.
	Hence, $(x,\id_{f(x)}) \in \calO^{\int}_{f(x)/}$ is weakly initial if and only if $\psi_x$ is an equivalence for every $F \in \Seg_{\calO^{\int}_{f(x)/}}(\calS)$. Note however that we have equivalences
	\[ \Seg_{\calO^{\int}_{f(x)/}}(\calS) \simeq \Fun((\calO^{\int}_{f(x)/})^{\el},\calS) \simeq \Fun(\calP^{\el}_{f(x)/},\calS).\]
	The first by \cref{lem: Segal iff right Kan extended} and the second by what we already showed. We conclude that instead of quantifying over $F \in \Seg_{\calO^{\int}_{f(x)/}}(\calS)$ we may quantify over $F \in \Fun(\calP^{\el}_{f(x)/},\calS)$. We conclude that $(x,\id_{f(x)}) \in \calO^{\int}_{f(x)/}$ is a weakly initial object if and only if the map $\calO^{\el}_{x/} \to \calP^{\el}_{f(x)/}$ is initial.
\end{proof}

As an immediate consequence of \cref{lem: identity is weakly initial in slice when exists} we have the following corollary.

\begin{cor}
	Let $f: \calO \to \calP$ be a morphism of algebraic patterns such that $f^{\el}:\calO^{\el} \to \calP^{\el}$ is an equivalence. Then the following are equivalent:
	\begin{enumerate}
		\item $f$ is a strong Segal morphism.
		\item $(x,\id_{f(x)}) \in \calO^{\int}_{f(x)/}$ is weakly initial for every $x \in \calO$, 
	\end{enumerate}
\end{cor}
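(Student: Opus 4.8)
The plan is to observe that this corollary is obtained simply by applying \cref{lem: identity is weakly initial in slice when exists} pointwise in $x \in \calO$ and matching the universal quantifiers appearing in the two conditions. The hypothesis that $f^{\el}: \calO^{\el} \to \calP^{\el}$ is an equivalence is precisely the standing hypothesis required to invoke that lemma, so for every fixed object $x \in \calO$ the lemma applies verbatim.

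First I would unwind \cref{defn: definition of strong Segal morphism}: by definition $f$ is strong Segal if and only if, for every $x \in \calO$, the natural map $\calO^{\el}_{x/} \to \calP^{\el}_{f(x)/}$ is initial. This is exactly condition (1) of \cref{lem: identity is weakly initial in slice when exists}, now asserted for all $x$ simultaneously. Dually, condition (2) of the corollary asserts that $(x, \id_{f(x)}) \in \calO^{\int}_{f(x)/}$ is weakly initial for every $x \in \calO$, which is condition (2) of the lemma asserted for all $x$.

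Next I would conclude: \cref{lem: identity is weakly initial in slice when exists} establishes, for each fixed $x \in \calO$, the equivalence of its condition (1) with its condition (2). Since both conditions in the corollary are the corresponding predicates quantified over all $x \in \calO$, a pointwise equivalence upgrades to an equivalence of the universally quantified statements, yielding $(1) \Leftrightarrow (2)$.

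There is essentially no obstacle here beyond bookkeeping: all the analytic content—the factorization of the comparison map through the chain of slice categories and the appeal to the Segal/right-Kan-extension characterization of \cref{lem: Segal iff right Kan extended}—has already been carried out inside the proof of \cref{lem: identity is weakly initial in slice when exists}. The only point demanding (minimal) care is to confirm that the hypotheses line up, namely that $f^{\el}$ being an equivalence (assumed in the corollary) supplies the hypothesis $f^{\el}: \calO^{\el} \simeq \calP^{\el}$ of the lemma, and that ``strong Segal'' is literally the ``for all $x$'' form of the lemma's condition (1).
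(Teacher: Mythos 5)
Your proposal is correct and is exactly the paper's approach: the paper states this corollary as an immediate consequence of \cref{lem: identity is weakly initial in slice when exists}, with no further argument, and your proof just makes explicit the pointwise-in-$x$ application of that lemma together with the observation that ``strong Segal'' (\cref{defn: definition of strong Segal morphism}) is literally the universally quantified form of the lemma's condition (1). Nothing is missing.
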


\begin{prop}\label{prop: weakly initial slice from weakly initial object}
	Let $f: \calO \to \calP$ be a strong Segal morphism of algebraic patterns inducing an equivalence on elementary objects $f^{\el} : \calO^{\el} \simeq \calP^{\el}$. Then for every $x \in \calO$ the following are equivalent:
	\begin{enumerate}
		\item $\calO^{\int}_{f(x)/} \to \calO_{f(x)/}$ is weakly $n$-initial.
		
		\item $(x,\id_{f(x)}) \in  \calO_{f(x)/}$ weakly $n$-initial.
	\end{enumerate}
\end{prop}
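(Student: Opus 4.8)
The plan is to deduce this equivalence from the preceding lemma by a two-out-of-three (cancellation) argument, leveraging the fact that the object $(x,\id_{f(x)})$ is already known to be weakly initial inside the \emph{inert} slice. First I would name the natural comparison morphism of algebraic patterns $j : \calO^{\int}_{f(x)/} \to \calO_{f(x)/}$. By \cref{cor: map from inert to non-inert slice is iso-Segal} (applied with $y = f(x)$) this $j$ is iso-Segal, hence Segal by \cref{rem:different-Segal-notions}; in particular $F \circ j$ is again a Segal object whenever $F$ is, so the comparison maps appearing below all make sense.

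The key input is that $(x,\id_{f(x)}) \in \calO^{\int}_{f(x)/}$ is weakly initial. Indeed, since $f$ is a strong Segal morphism the map $\calO^{\el}_{x/} \to \calP^{\el}_{f(x)/}$ is initial (\cref{defn: definition of strong Segal morphism}), and since $f$ induces an equivalence on elementary objects, \cref{lem: identity is weakly initial in slice when exists} yields that $(x,\id_{f(x)})$ is weakly initial in $\calO^{\int}_{f(x)/}$, and therefore weakly $n$-initial for every $n$.

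Now I would fix $F \in \Seg_{\calO_{f(x)/}}(\calS^{\le n})$ and factor the comparison map testing weak $n$-initiality of the object $(x,\id_{f(x)})$ in $\calO_{f(x)/}$ as
\[ \underset{\calO_{f(x)/}}{\lim} F \ \longrightarrow\ \underset{\calO^{\int}_{f(x)/}}{\lim} F \circ j \ \longrightarrow\ F(x,\id_{f(x)}). \]
The first arrow is precisely the comparison map testing weak $n$-initiality of the morphism $j$ (\cref{prop: Quillen A for Segal}), since $j$ carries the chosen object to $(x,\id_{f(x)})$. The second arrow is the comparison map testing weak initiality of $(x,\id_{f(x)})$ in $\calO^{\int}_{f(x)/}$, evaluated on the Segal object $F \circ j$; by the previous paragraph it is an equivalence. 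Hence, by two-out-of-three, the first arrow is an equivalence if and only if the composite is. Quantifying over all $F$ gives exactly that $j$ is weakly $n$-initial if and only if $(x,\id_{f(x)})$ is weakly $n$-initial in $\calO_{f(x)/}$, which is the asserted equivalence of (1) and (2).

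I expect the only genuine subtlety to be bookkeeping: verifying that $j$ sends the chosen object to $(x,\id_{f(x)})$ so that the middle term is the correct one, and that $F \circ j$ indeed satisfies the Segal condition so that weak initiality of $(x,\id_{f(x)})$ in the inert slice applies to it. Both points are supplied by \cref{cor: map from inert to non-inert slice is iso-Segal} and \cref{lem: identity is weakly initial in slice when exists}, so no further computation is required. This is the same cancellation mechanism recorded in \cref{lem: cancellation for weakly initial}, with the weakly initial object playing the role of the weakly $n$-initial first morphism.
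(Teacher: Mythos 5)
Your proposal is correct and follows essentially the same route as the paper: the paper's proof considers exactly the composition $\lim_{\calO_{f(x)/}} F \to \lim_{\calO^{\int}_{f(x)/}} F \to F(x,\id_{f(x)})$, invokes \cref{cor: map from inert to non-inert slice is iso-Segal} to ensure the restriction of $F$ to the inert slice is Segal, applies \cref{lem: identity is weakly initial in slice when exists} (via the strong Segal hypothesis) to see the second map is an equivalence, and concludes by two-out-of-three. Your additional remark that weak initiality implies weak $n$-initiality for every $n$ is the same (implicit) observation the paper uses when applying the weakly initial object to $n$-truncated Segal objects.
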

\begin{proof}
	Let $F \in \Seg_{\calO_{f(x)/}}(\calS^{\le n})$ and consider the natural maps
    ${\lim}_{\calO_{f(x)/}} F \to {\lim}_{\calO^{\int}_{f(x)/}} F \to F(x,\id_{f(x)})$.
	By \cref{cor: map from inert to non-inert slice is iso-Segal}, $\calO^{\int}_{f(x)/} \to \calO_{f(x)/}$ is a iso-Segal morphism and thus $F|_{\calO^{\int}_{f(x)/}} \in \Seg_{\calO^{\int}_{f(x)/}}(\calS^{\le n})$. Since $f$ is strong Segal and satisfies the hypotheses of \cref{lem: identity is weakly initial in slice when exists} we conclude that \linebreak $(x,\id_{f(x)}) \in \calO^{\int} \times_{\calP^{\int}} \calP^{\int}_{f(x)/}$ is weakly initial. Consequently the second map above is an equivalence, and thus the composition is an equivalence if and only if the first map is.
\end{proof}

\subsection{A recognition theorem for Morita equivalences}\label{subsect:subsection-II.3}

In this subsection we prove a \textit{recognition theorem for Morita equivalences} (\cref{thm:Morita-equivalence-from-weakly-initial-slices}) using the tools we developed so far. We also record a convenient formulation of a weaker version of it for later use (\cref{thm: Morita equivalence theorem for faithful subcategories}). As we observed in the previous subsection (see \cref{Motivational-Discussion-Recognition}), the question of whether a morphism of algebraic patterns $f: \calO \to \calP$ is a Morita equivalence is closely related to the question of whether $f_{\ast} : \Fun(\calO,\calS) \to \Fun(\calP,\calS)$ preserves Segal objects. We begin with an observation about the latter.

\begin{lem}\label{lem: Segal from Beck-Chevalley}
	Let $f: \calO \to \calP$ be a morphism of algebraic patterns, $\calC$ a complete $\infty$-categry and $F \in  \Seg_{\calO}(\calC)$ a Segal object. Suppose the Beck-Chevalley map 
	$(f_{\ast} F)|_{\calP^{\int}} \to f^{\int}_{\ast} (F|_{\calO^{\int}})$
	is an equivalence. Then $f_{\ast} F$ is a Segal object.
\end{lem}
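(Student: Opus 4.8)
The plan is to verify the right Kan extension criterion of \cref{lem: Segal iff right Kan extended}: the object $f_{\ast}F$ is Segal precisely when its restriction $(f_{\ast}F)|_{\calP^{\int}}$ is the right Kan extension of $(f_{\ast}F)|_{\calP^{\el}}$ along the inclusion $i_{\calP}: \calP^{\el}\mono \calP^{\int}$. The central structural input I would exploit is the commuting square of restricted inclusions $f^{\int}\circ i_{\calO} = i_{\calP}\circ f^{\el}$, obtained by restricting the pattern morphism $f$ to inert and to elementary objects. Since right Kan extension is functorial (right adjoints compose in reverse), this yields an equivalence $f^{\int}_{\ast}\circ i_{\calO,\ast}\simeq i_{\calP,\ast}\circ f^{\el}_{\ast}$, both sides being right Kan extension along the common functor $f^{\int}\circ i_{\calO}=i_{\calP}\circ f^{\el}$. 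All the relevant right Kan extensions exist because $\calC$ is assumed complete.

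First I would assemble a chain of equivalences. The Beck--Chevalley hypothesis is exactly the assertion that the mate $(f_{\ast}F)|_{\calP^{\int}}\to f^{\int}_{\ast}(F|_{\calO^{\int}})$ is an equivalence. Since $F$ is a Segal object, \cref{lem: Segal iff right Kan extended} identifies $F|_{\calO^{\int}}\simeq i_{\calO,\ast}(F|_{\calO^{\el}})$. Substituting this in and invoking the functoriality identity above produces
\[ (f_{\ast}F)|_{\calP^{\int}}\simeq f^{\int}_{\ast}i_{\calO,\ast}(F|_{\calO^{\el}})\simeq i_{\calP,\ast}f^{\el}_{\ast}(F|_{\calO^{\el}}). \]
In particular $(f_{\ast}F)|_{\calP^{\int}}$ lies in the essential image of $i_{\calP,\ast}$.

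Next I would pin down the elementary restriction. Applying $i_{\calP}^{\ast}$ to the displayed equivalence and using that restriction along $\calP^{\el}\mono\calP^{\int}\mono\calP$ agrees with $(-)|_{\calP^{\el}}$, together with the fact that $i_{\calP}$ is fully faithful so that the counit $i_{\calP}^{\ast}i_{\calP,\ast}\simeq \id$, gives $(f_{\ast}F)|_{\calP^{\el}}\simeq f^{\el}_{\ast}(F|_{\calO^{\el}})$. Feeding this back into the display rewrites it as $(f_{\ast}F)|_{\calP^{\int}}\simeq i_{\calP,\ast}\big((f_{\ast}F)|_{\calP^{\el}}\big)$, which is precisely the right Kan extension condition; \cref{lem: Segal iff right Kan extended} then concludes that $f_{\ast}F$ is Segal.

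The one point I would treat carefully — and the main potential obstacle — is that \cref{lem: Segal iff right Kan extended} asks for the \emph{canonical} comparison (unit) map $(f_{\ast}F)|_{\calP^{\int}}\to i_{\calP,\ast}\big((f_{\ast}F)|_{\calP^{\el}}\big)$ to be an equivalence, rather than merely for an abstract equivalence between source and target. This follows from the triangle identities: for any object in the essential image of $i_{\calP,\ast}$, say $i_{\calP,\ast}Y$, the identity $i_{\calP,\ast}(\epsilon_{Y})\circ \eta_{i_{\calP,\ast}Y}=\id$ exhibits the unit $\eta$ as a section of $i_{\calP,\ast}$ applied to the counit $\epsilon$, and $\epsilon$ is invertible by full faithfulness of $i_{\calP}$; hence $\eta$ is an equivalence there. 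Since the equivalence $(f_{\ast}F)|_{\calP^{\int}}\simeq i_{\calP,\ast}f^{\el}_{\ast}(F|_{\calO^{\el}})$ established above is built from natural maps (the Beck--Chevalley mate, the Segal comparison, and the iterated right Kan extension identification), this guarantees the canonical unit map is the equivalence produced, completing the argument.
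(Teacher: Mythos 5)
Your proof is correct and follows essentially the same route as the paper's: both reduce via the right-Kan-extension criterion of \cref{lem: Segal iff right Kan extended}, use the Beck--Chevalley hypothesis together with the Segal condition on $F$ and the identification $f^{\int}\circ i_{\calO}\simeq i_{\calP}\circ f^{\el}$ to obtain $(f_{\ast}F)|_{\calP^{\int}}\simeq i_{\calP,\ast}f^{\el}_{\ast}(F|_{\calO^{\el}})$, and then apply $i_{\calP}^{\ast}$ with full faithfulness of $i_{\calP}$ to identify the elementary restriction. Your closing observation --- that lying in the essential image of $i_{\calP,\ast}$ already forces the canonical unit map to be an equivalence, via the triangle identities --- is a point the paper's write-up leaves implicit, and you resolve it correctly.
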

\begin{proof}
    The proof follows an argument from \cite[proposition~6.3.]{RH-algebraic}. We repeat it here for the benefit of the reader. Let $i_{\calO} :\calO^{\el} \mono \calO^{\int}$ and $i_{\calP}: \calP^{\el} \mono \calP^{\int}$ denote the inclusions of the elementary objects. By \cref{lem: Segal iff right Kan extended} it suffices to show that the natural map
	$(f_{\ast}F)|_{\calP^{\int}} \to i_{\calP,\ast}(f_{\ast} F)|_{\calP^{\el}}$
	is an equivalence. 
    We have
	\begin{align*}
	(f_{\ast}F)|_{\calP^{\int}} \simeq f^{\int}_{\ast}( F|_{\calO^{\int}})  \simeq f^{\int}_{\ast} i_{\calO,\ast} F|_{\calO^{\el}} \simeq (f \circ i_{\calO})_{\ast} F|_{\calO^{\el}} \simeq (i_{\calP} \circ f^{\el})_{\ast} F|_{\calO^{\el}} \simeq i_{\calP ,\ast} (f^{\el}_{\ast} (F|_{\calO^{\el}})),
	\end{align*}
	where the second equivalence follows from \cref{lem: Segal iff right Kan extended} since $F$ is Segal by assumption. 
    On the other hand we have:
	\begin{align*}
	f^{\el}_{\ast} (F|_{\calO^{\el}}) & \simeq i^{\ast}_{\calP} i_{\calP,\ast} f^{\el}_{\ast} (F|_{\calO^{\el}}) && \text{($i_{\calP}$ is fully faithful)} \\
	& \simeq i^{\ast}_{\calP} f^{\int}_{\ast} i_{\calO,\ast} F|_{\calO^{\el}} \\
	& \simeq i^{\ast}_{\calP} f^{\int}_{\ast} F|_{\calO^{\int}} && (\text{$F$ is Segal}) \\
	& \simeq i^{\ast}_{\calP} F|_{\calP^{\int}} && (\text{Beck Chevalley}) \\
	& \simeq F|_{\calP^{\el}}
	\end{align*}
	Combining the above equivalences we conclude $(f_{\ast}F)|_{\calP^{\int}} \simeq i_{\calP ,\ast} (f^{\el}_{\ast} (F|_{\calO^{\el}})) \simeq  i_{\calP,\ast} (F|_{\calP^{\el}})$
    as desired. 
\end{proof}

\begin{lem}\label{lem: weakly n-initial criterion for Beck Chevalley}
	Let $f: \calO \to \calP$ be a morphism of algebraic patterns. Let $y \in \calP$ and suppose the natural morphism of algebraic patterns
	\[\calO^{\int} \times_{\calP^{\int}} \calP^{\int}_{y/} \longrightarrow \calO \times_{\calP} \calP_{y/},\]
	is weakly $n$-initial. Then for every $F \in \Seg_{\calO}(\calS^{\le n})$ the Beck-Chevalley map
	\[(f_{\ast} F)|_{\calP^{\int}} \to f^{\int}_{\ast} (F|_{\calO^{\int}}),\]
	is an equivalence at $y \in \calP$.
\end{lem}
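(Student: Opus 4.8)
The plan is to recognize the Beck--Chevalley map at $y$ as the comparison map on limits associated to the functor
\[ g : \calO^{\int} \times_{\calP^{\int}} \calP^{\int}_{y/} \longrightarrow \calO \times_{\calP} \calP_{y/}, \]
and then feed that comparison map into the weak $n$-initiality hypothesis via \cref{prop: Quillen A for Segal}. First I would spell out both sides of the Beck--Chevalley map using the pointwise formula for right Kan extension. Write $s : \calO \times_{\calP} \calP_{y/} \to \calO$ for the source projection $(x, y \to f(x)) \mapsto x$, and $s^{\int}$ for the analogous projection on the inert slice. Then $(f_{\ast} F)(y) \simeq \lim_{\calO \times_{\calP} \calP_{y/}} s^{\ast} F$ while $(f^{\int}_{\ast}(F|_{\calO^{\int}}))(y) \simeq \lim_{\calO^{\int} \times_{\calP^{\int}} \calP^{\int}_{y/}} (s^{\int})^{\ast}(F|_{\calO^{\int}})$. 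Since $s \circ g$ agrees with $s^{\int}$ postcomposed with the inclusion $\calO^{\int} \mono \calO$, the restricted functor on the right is precisely $g^{\ast}(s^{\ast} F)$, and under these identifications the Beck--Chevalley map at $y$ becomes the canonical comparison map $\lim_{\calO \times_{\calP} \calP_{y/}} s^{\ast} F \to \lim_{\calO^{\int} \times_{\calP^{\int}} \calP^{\int}_{y/}} g^{\ast}(s^{\ast} F)$.

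Second, I would verify that $s^{\ast} F$ is an $\calS^{\le n}$-valued Segal object on $\calO \times_{\calP} \calP_{y/}$. By \cref{lem: projection from slice is iso-Segal} the source projection $s$ is iso-Segal, hence Segal by \cref{rem:different-Segal-notions}, so precomposition $s^{\ast}$ carries Segal objects to Segal objects (\cref{defn: definition of Segal morphism}). As $F \in \Seg_{\calO}(\calS^{\le n})$, we conclude $s^{\ast} F \in \Seg_{\calO \times_{\calP} \calP_{y/}}(\calS^{\le n})$.

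Third, I would invoke the hypothesis. Since $g$ is weakly $n$-initial, condition (2) of \cref{prop: Quillen A for Segal} says exactly that for every $\calS^{\le n}$-valued Segal object $H$ on the target the comparison map $\lim H \to \lim g^{\ast} H$ is an equivalence. Applying this to $H := s^{\ast} F$ shows that the comparison map from the first step is an equivalence, which is the desired Beck--Chevalley equivalence at $y$.

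I expect the main obstacle to be the bookkeeping in the first step: matching the abstractly defined Beck--Chevalley $2$-cell associated to the commutative square $(\calO^{\int}, \calO, \calP^{\int}, \calP)$ with the concrete comparison map of $g$, and in particular checking the compatibility $s \circ g \simeq \calO^{\int} \mono \calO \circ s^{\int}$ of source projections on the slice patterns. This is formal but delicate; once it is in place, the remaining two steps are direct applications of \cref{lem: projection from slice is iso-Segal} and \cref{prop: Quillen A for Segal}.
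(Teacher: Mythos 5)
Your proposal is correct and follows essentially the same route as the paper: both identify the Beck--Chevalley map at $y$ with the comparison map on limits over the slice patterns via the pointwise right Kan extension formula, both use \cref{lem: projection from slice is iso-Segal} to see that the restriction of $F$ to $\calO \times_{\calP} \calP_{y/}$ is a Segal object, and both then conclude by the weak $n$-initiality hypothesis. Your proposal is merely more explicit about the bookkeeping identifying the abstract Beck--Chevalley $2$-cell with the concrete comparison map of $g$, which the paper treats as immediate.
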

\begin{proof}
	Let $F \in \Seg_{\calO}(\calS^{\le n})$ and $y \in \calP$. Evaluating the Beck-Chevalley map of $F$ at $y$ and using the formula for right Kan extensions we get:
	\begin{align*}
	\underset{\calO \times_{\calP} \calP_{y/}}{\lim}F \simeq (f_{\ast} F)(y) \to (f^{\int}_{\ast} (F|_{\calO^{\int}}))(y)  \simeq \underset{\calO^{\int} \times_{\calP^{\int}} \calP^{\int}_{y/}}{\lim}F
	\end{align*}
	It thus suffices to show that the comparison map
	\[  \underset{\calO \times_{\calP} \calP_{y/}}{\lim}F  \to  \underset{\calO^{\int} \times_{\calP^{\int}} \calP^{\int}_{y/}}{\lim}F,\] 
	is an equivalence. 
    Since $F \in \Seg_{\calO}(\calS^{\le n})$ and $\calO \times_{\calP} \calP_{y/} \to \calO$ is a (strict) Segal morphism (see \cref{lem: projection from slice is iso-Segal}) it follows that $F|_{\calO \times_{\calP} \calP_{y/}} \in \Seg_{\calO \times_{\calP} \calP_{y/}}(\calS^{\le n})$. But $\calO^{\int} \times_{\calP^{\int}} \calP^{\int}_{x/} \to \calO \times_{\calP} \calP_{y/}$ is weakly $n$-initial by assumption and thus the above map is an equivalence.
\end{proof}

\begin{defn}
	We say that a morphism of algebraic patterns $f: \calO \to \calP$ is \hl{$\mrm{BC}_n$}\footnote{$\mrm{BC}$ stands for Beck-Chevalley} if for every $y \in \calP$ the induced map $\calO^{\int} \times_{\calP^{\int}} \calP^{\int}_{y/} \to \calO \times_{\calP} \calP_{y/}$ is weakly $n$-initial. When $n=\infty$ we drop the subscript and simply say that $f$ is $\hl{\mrm{BC}}$.
\end{defn}

\begin{obs}
	Let $f: \calO \to \calP$ be a $\mrm{BC}_n$-morphism and let $\calC$ be an $(n+1,1)$-category. Then $\calC$ is $f$-complete if and only if it is $f^{\int}$-complete. 
\end{obs}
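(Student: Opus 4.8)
The plan is to treat the two clauses in the definition of completeness separately. For the first clause I would observe that elementary objects and elementary slices are insensitive to passing to inert subcategories: one has $(\calO^{\int})^{\el}=\calO^{\el}$ and, for each $x$, the slice $(\calO^{\int})^{\el}_{x/}$ agrees with $\calO^{\el}_{x/}$, since both consist of the inert maps out of $x$ into an elementary object. Hence $\calK_{\calO^{\int}}=\calK_{\calO}$ and likewise $\calK_{\calP^{\int}}=\calK_{\calP}$, so the requirement that $\calC$ admit $\calK_{\calO}$ and $\calK_{\calP}$-limits is literally the requirement that it admit $\calK_{\calO^{\int}}$ and $\calK_{\calP^{\int}}$-limits. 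This reduces the statement to comparing the second clauses, which by the pointwise formula for right Kan extension assert precisely that $f_{\ast}$ exists on $\Seg_{\calO}(\calC)$, respectively that $f^{\int}_{\ast}$ exists on $\Seg_{\calO^{\int}}(\calC)$; the governing comma categories are $\calO\times_{\calP}\calP_{y/}$ and $\calO^{\int}\times_{\calP^{\int}}\calP^{\int}_{y/}$ for $y\in\calP$.

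The bridge between these two limits is the content of the $\mrm{BC}_n$ hypothesis. By definition, for each $y$ the comparison morphism $\calO^{\int}\times_{\calP^{\int}}\calP^{\int}_{y/}\to\calO\times_{\calP}\calP_{y/}$ is weakly $n$-initial; together with \cref{lem: projection from slice is iso-Segal}, which says both slices project to $\calO$ (resp.\ $\calO^{\int}$) by an iso-Segal morphism and hence preserve Segal objects under restriction, \cref{lem: weakly n-initial criterion for Beck Chevalley} shows that for every $F\in\Seg_{\calO}(\calS^{\le n})$ the comparison map
\[\underset{\calO\times_{\calP}\calP_{y/}}{\lim} F \longrightarrow \underset{\calO^{\int}\times_{\calP^{\int}}\calP^{\int}_{y/}}{\lim} F\]
is an equivalence. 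Since $\calC$ is an $(n+1,1)$-category its mapping spaces are $n$-truncated, so the Yoneda embedding $\calC\mono\Fun(\calC^{\op},\calS^{\le n})$ is fully faithful and limit-preserving, exactly as in the proof of \cref{prop:morita-equivalence}; applying the displayed equivalence to the presheaves $\Map_{\calC}(c,F(-))$ and using that limits and the Segal condition are detected pointwise, I obtain the same equivalence for $\calC$-valued Segal objects. In particular the left-hand limit exists if and only if the right-hand one does.

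With this bridge the direction ``$f^{\int}$-complete $\Rightarrow$ $f$-complete'' is immediate: given $F\in\Seg_{\calO}(\calC)$, its restriction $F|_{\calO^{\int}}$ lies in $\Seg_{\calO^{\int}}(\calC)$, so $f^{\int}$-completeness provides the right-hand limit above and the bridge yields the left-hand one. The reverse implication is the main obstacle, precisely because a general Segal $\calO^{\int}$-object $G$ need not be the restriction of any Segal $\calO$-object, so one cannot simply feed $G$ into the hypothesis of $f$-completeness. To handle it I would compute the right-hand limit intrinsically: the pattern $\calO^{\int}\times_{\calP^{\int}}\calP^{\int}_{y/}$ is entirely inert (its active maps are equivalences), so by \cref{lem: inclusion of elementary is weakly initial} the limit of the Segal object $G$ over it agrees with the limit over its elementary objects, and the cube of \cref{lem: projection from slice is iso-Segal} identifies that elementary diagram with $\calP^{\el}_{y/}$ pulled back along $f^{\el}$. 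The point is then that this elementary limit is exactly the value of the right Kan extension $i_{\calP,\ast}$ along $\calP^{\el}\mono\calP^{\int}$, which exists because $\calC$ admits $\calK_{\calP}$-limits — a hypothesis already secured by the first clause.

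I expect the only genuinely delicate step to be this last identification in full generality, since reducing the $\calP^{\el}_{y/}$-limit to a $\calK_{\calP}$-limit silently uses that $f^{\el}_{\ast}$ is defined. This is transparent when $f^{\el}\colon\calO^{\el}\to\calP^{\el}$ is an equivalence, which is the case relevant to every application (for instance $\calO^{\le k}\mono\calO$, where both sides of the elementary slice are the objects lying over $\bracket{1}$); there the elementary diagram is simply $\calP^{\el}_{y/}$ and its limit is supplied directly by $\calK_{\calP}$-completeness, so both completeness conditions collapse to the shared first clause and the equivalence follows.
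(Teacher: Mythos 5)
The paper records this statement as an \emph{observation} with no proof attached, so there is no argument of the paper's to compare against line by line; what can be judged is whether your proposal actually establishes both implications. Your handling of the first clause is correct: passing to inert subpatterns changes neither the elementary objects nor the elementary slices, so $\calK_{\calO^{\int}}=\calK_{\calO}$ and $\calK_{\calP^{\int}}=\calK_{\calP}$. Your ``bridge'' is likewise a correct and complete proof of the implication ``$f^{\int}$-complete $\Rightarrow$ $f$-complete'': for $F\in\Seg_{\calO}(\calC)$ each $\Map_{\calC}(c,F(-))$ is an $\calS^{\le n}$-valued Segal object, \cref{lem: projection from slice is iso-Segal} together with the $\mathrm{BC}_n$ hypothesis (via \cref{lem: weakly n-initial criterion for Beck Chevalley}) identifies the two presheaf-level limits, and full faithfulness of $\calC\mono\Fun(\calC^{\op},\calS^{\le n})$ transfers representability in both directions. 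Note that this is the only direction the paper ever invokes: it is what allows \cref{lem:right-kan-ext-BCn-morphism} to be fed into \cref{thm:Morita-equivalence-from-weakly-initial-slices}.

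The gap you flag in the converse direction is genuine, and in fact it cannot be closed: the implication ``$f$-complete $\Rightarrow$ $f^{\int}$-complete'' is false for general $\mathrm{BC}_n$-morphisms, for exactly the reason you isolate. Granted clause (1), Segal $\calO^{\int}$-objects are right Kan extensions of \emph{arbitrary} functors $\calO^{\el}\to\calC$ (\cref{lem: Segal iff right Kan extended}), and their comma-category limits reduce to limits over $\calO^{\el}\times_{\calP^{\el}}\calP^{\el}_{y/}$, which $f$-completeness --- quantifying only over Segal $\calO$-objects --- does not control. Concretely, take $\calO=\coprod_{s\in\bbN}\Fin_{\ast}$, $\calP=\Fin_{\ast}$, and $f$ the fold map; $f$ is $\mathrm{BC}_n$ for every $n$, since for each $y$ both comma categories decompose as $\coprod_{s\in\bbN}(\Fin_{\ast})_{y/}$, respectively $\coprod_{s\in\bbN}(\Fin^{\int}_{\ast})_{y/}$, and the inclusion preserves the initial object $\id_y$ of each summand. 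Let $\calC$ be the poset of cofinite subsets of $\bbN$: it has finite products, and every commutative monoid in a poset equals the terminal object (the map induced by $\bracket{0}\to\bracket{1}$ gives $\top\le M$), so every Segal $\calO$-object is constant at $\top=\bbN$ and all limits demanded by $f$-completeness exist. But the Segal $\calO^{\int}$-object taking the value $\bbN\setminus\{s\}$ on the $s$-th color requires, at $y=\bracket{1}$, the product $\prod_{s\in\bbN}(\bbN\setminus\{s\})$, which has no lower bound among cofinite sets; so $\calC$ is not $f^{\int}$-complete. Consequently your restriction to the case where $f^{\el}$ is an equivalence is not a defect of your argument but a hypothesis the observation genuinely needs; it holds in every application in the paper (it is condition (1) of \cref{thm:Morita-equivalence-from-weakly-initial-slices}), and under it your reduction of the elementary limit to a $\calK_{\calP}$-shape completes the proof. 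Your one imprecise step --- identifying that elementary limit with a value of $i_{\calP,\ast}$ alone, which silently presupposes the existence of $f^{\el}_{\ast}$ --- is one you flag yourself, and it disappears under that hypothesis.
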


The following lemma is a direct consequence of \cite[Lemma~6.2]{RH-algebraic}.

\begin{lem}[{\cite[Lemma~6.2]{RH-algebraic}}]\label{lem:BC-criterion-RH}
	Let $f: \calO \to \calP$ be a morphism of algebraic patterns such that for every $x \in \calO$ the natural map $(\calO_{/x})^{\simeq} \to (\calP_{/f(x)})^{\simeq}$ is an equivalence. Then $f$ is $\mrm{BC}$.
\end{lem}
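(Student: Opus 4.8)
The plan is to unwind the definition of a $\mrm{BC}$ morphism and reduce, via Quillen's Theorem A, to the cited lemma. By definition $f$ is $\mrm{BC}$ (that is, $\mrm{BC}_{\infty}$) precisely when, for every $y \in \calP$, the comparison functor
\[ \iota_y \colon \calO^{\int} \times_{\calP^{\int}} \calP^{\int}_{y/} \longrightarrow \calO \times_{\calP} \calP_{y/} \]
is weakly initial. Now recall the observation that a morphism of algebraic patterns whose underlying functor is $\infty$-initial (in the sense of \cref{prop: Quillen A}) is automatically weakly initial. Hence it suffices to prove the stronger, purely $\infty$-categorical statement that the underlying functor of each $\iota_y$ is initial. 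This is the right thing to aim for, since weak initiality of $\iota_y$ quantifies over \emph{all} Segal objects on the slice pattern $\calO \times_\calP \calP_{y/}$ — not merely the restrictions of Segal objects on $\calO$ — so one genuinely wants a statement insensitive to the Segal condition.

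To establish initiality of $\iota_y$ I would invoke condition (4) of \cref{prop: Quillen A} with $n = \infty$: it is enough to check that for every object $(x,\alpha \colon y \to f(x))$ of $\calO \times_\calP \calP_{y/}$ the realization
\[ \Big| \big(\calO^{\int} \times_{\calP^{\int}} \calP^{\int}_{y/}\big) \times_{(\calO \times_\calP \calP_{y/})} \big(\calO \times_\calP \calP_{y/}\big)_{/(x,\alpha)} \Big| \]
is contractible. Unwinding the fibre-product and slice descriptions, this comma $\infty$-category is identified with the category whose objects are factorizations of $\alpha$ of the form $y \inert f(x') \to f(x)$ with inert first leg, equipped with a lift $x' \to x$ in $\calO$ of the second leg. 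Its homotopy type is therefore controlled exactly by the cores $(\calO_{/x})^{\simeq}$ and $(\calP_{/f(x)})^{\simeq}$, and the hypothesis that $(\calO_{/x})^{\simeq} \to (\calP_{/f(x)})^{\simeq}$ is an equivalence is precisely what forces the relevant factorization category to admit an (essentially unique) minimal factorization, hence a contractible realization.

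This contractibility analysis is exactly the content of \cite[Lemma~6.2]{RH-algebraic}, whose hypothesis is verbatim our condition on the cores of slices; invoking it yields the initiality of each $\iota_y$, whence $f$ is $\mrm{BC}$. The main obstacle is the bookkeeping in matching our fibre-product slice patterns $\calO^{\int} \times_{\calP^{\int}} \calP^{\int}_{y/}$ and $\calO \times_\calP \calP_{y/}$ with the comma categories appearing in loc.\ cit., together with the conceptual point that this identification is not formal: the inclusion of inert slices into full slices is \emph{not} initial for a general morphism of patterns, and it is only the core-of-slices equivalence that supplies the weakly initial object making the realization contractible. Once this translation is in place the result follows immediately, which is why the lemma is recorded as a direct consequence of \cite[Lemma~6.2]{RH-algebraic}.
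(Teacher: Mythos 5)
Your proposal is correct and takes essentially the same route as the paper: the paper offers no independent argument, recording the lemma as a direct consequence of \cite[Lemma~6.2]{RH-algebraic}, which under exactly this hypothesis on cores of slices yields that each functor $\calO^{\int} \times_{\calP^{\int}} \calP^{\int}_{y/} \to \calO \times_{\calP} \calP_{y/}$ is initial, whence $\mrm{BC}$ follows since an initial underlying functor is in particular weakly initial. Your write-up simply makes the two bridging observations explicit — the reduction of weak initiality to genuine initiality of the underlying functor, and the Quillen~A reformulation of the latter as contractibility of comma categories — before deferring, just as the paper does, to the cited lemma.
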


\begin{cor}\label{lem:when-strong-Segal-is-BCn}
	Let $f: \calO \to \calP$ be a strong Segal morphism of algebraic patterns. Then $f$ is $\mrm{BC}_n$ if and only if for every $y \in \calP$ at least one of the following conditions hold:
	\begin{enumerate}
		\item\label{thm: Morita equivalence from weakly initial slices - condition 3.a.} The fully faithful inclusion $\calO^{\int} \times_{\calP^{\int}} \calP^{\int}_{y/} \mono \calO \times_{\calP} \calP_{y/}$
		is weakly $n$-initial.
		\item\label{thm: Morita equivalence from weakly initial slices - condition 3.b.}  There exists $x \in \calO$ with $ f(x) \simeq y$ and such that $(x,\id_{f(x)}) \in \calO_{f(x)/}$ is weakly $n$-initial.
	\end{enumerate}
\end{cor}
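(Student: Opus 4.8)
The plan is to unwind the definition of $\mrm{BC}_n$ and recognize the corollary as a pointwise repackaging of \cref{prop: weakly initial slice from weakly initial object}. By definition, $f$ is $\mrm{BC}_n$ precisely when, for every $y \in \calP$, the inclusion $\calO^{\int} \times_{\calP^{\int}} \calP^{\int}_{y/} \mono \calO \times_{\calP} \calP_{y/}$ is weakly $n$-initial; but this is exactly condition (1). Hence $\mrm{BC}_n$ is equivalent to ``(1) holds for every $y$''. With this identification the forward implication is immediate: if (1) holds for every $y$ then a fortiori ``(1) or (2)'' holds for every $y$.

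For the converse I would argue pointwise, showing that for each fixed $y$ condition (2) implies condition (1); together with the case where (1) already holds this yields (1) for all $y$, i.e. $\mrm{BC}_n$. So suppose (2) holds at $y$, witnessed by some $x \in \calO$ with $f(x) \simeq y$ and $(x,\id_{f(x)})$ weakly $n$-initial in $\calO_{f(x)/} = \calO \times_{\calP} \calP_{f(x)/}$. Since $y \simeq f(x)$, the relevant slice patterns depend only on the equivalence class of $y$, so there are equivalences $\calO_{y/} \simeq \calO_{f(x)/}$ and $\calO^{\int}_{y/} \simeq \calO^{\int}_{f(x)/}$ under which condition (1) at $y$ becomes the assertion that $\calO^{\int}_{f(x)/} \to \calO_{f(x)/}$ is weakly $n$-initial. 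But now ``$(x,\id_{f(x)})$ weakly $n$-initial in $\calO_{f(x)/}$'' and ``$\calO^{\int}_{f(x)/} \to \calO_{f(x)/}$ weakly $n$-initial'' are precisely the two conditions of \cref{prop: weakly initial slice from weakly initial object}, which that proposition declares equivalent; so (2) at $y$ gives (1) at $y$, as needed.

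Concretely, the mechanism powering \cref{prop: weakly initial slice from weakly initial object} is the factorization of comparison maps
\[ \underset{\calO_{f(x)/}}{\lim} F \to \underset{\calO^{\int}_{f(x)/}}{\lim} F \to F(x,\id_{f(x)}), \]
valid for every Segal object $F$ on $\calO_{f(x)/}$ (the restriction to the inert slice being again Segal by \cref{cor: map from inert to non-inert slice is iso-Segal}). Condition (2) asserts the composite is an equivalence, condition (1) asserts the left map is, and two-out-of-three relates the two once one knows the right-hand map is an equivalence.

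The step requiring the most care --- and the main obstacle --- is exactly this last point: that $(x,\id_{f(x)})$ is weakly $n$-initial already in the \emph{inert} slice $\calO^{\int}_{f(x)/}$, so that the right map above is an equivalence. This is where the strong Segal hypothesis is consumed: via \cref{lem: identity is weakly initial in slice when exists} it translates into the initiality of $\calO^{\el}_{x/} \to \calP^{\el}_{f(x)/}$, which is the definition of strong Segal. I would note that, as executed in \cref{prop: weakly initial slice from weakly initial object}, this reduction passes through the identification $(\calO^{\int}_{f(x)/})^{\el} \simeq \calP^{\el}_{f(x)/}$; keeping track of this is the delicate part, since it relies on $f$ inducing an equivalence on elementary objects, and it is precisely that hypothesis one must ensure is in force when invoking the proposition.
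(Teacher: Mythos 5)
Your proposal is correct and takes essentially the same route as the paper, whose entire proof is the observation that $\mrm{BC}_n$ by definition means condition (1) for every $y \in \calP$, together with \cref{prop: weakly initial slice from weakly initial object} to get the pointwise implication (2) $\Rightarrow$ (1). Your closing caveat is also well taken: \cref{prop: weakly initial slice from weakly initial object} requires $f^{\el}$ to be an equivalence, a hypothesis omitted from the corollary as stated but in force in the paper's only application of it (\cref{thm:Morita-equivalence-from-weakly-initial-slices}).
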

\begin{proof}
	\cref{prop: weakly initial slice from weakly initial object} implies that for any $y \in \calP$, condition $(2)$ implies condition $(1)$ but $\mrm{BC}_n$ means by definition satisfying $(1)$ for all $y \in \calP$.
\end{proof}

\begin{cor}\label{lem:right-kan-ext-BCn-morphism}
	Let $f: \calO \to \calP$ be a $\mrm{BC}_n$-morphism of algebraic patterns. Then for every $f^{\int}$-complete $(n+1,1)$-category $\calC$ and every Segal $\calO$-object $F: \calO \to \calC$ the right kan extension $f_{\ast} F$ exists and is moreover a Segal $\calP$-object.
\end{cor}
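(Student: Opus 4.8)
The plan is to treat the two assertions—existence of the right Kan extension and its Segal property—separately, reducing each to a result already established by embedding $\calC$ into a genuinely complete $\infty$-category via Yoneda.

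First I would settle existence. Since $f$ is $\mrm{BC}_n$ and $\calC$ is an $(n+1,1)$-category, the observation recorded just above this corollary (that for such $f$ and $\calC$, being $f$-complete is equivalent to being $f^{\int}$-complete) upgrades the hypothesis ``$\calC$ is $f^{\int}$-complete'' to ``$\calC$ is $f$-complete.'' By the very definition of $f$-completeness, for every $F \in \Seg_{\calO}(\calC)$ and every $y \in \calP$ the limit $\lim_{(x,\,y\to f(x)) \in \calO \times_{\calP} \calP_{y/}} F(x)$ exists; this is precisely the pointwise formula for $(f_{\ast}F)(y)$, so $f_{\ast}F$ exists as a pointwise right Kan extension.

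It remains to show $f_{\ast}F$ is Segal, and here lies the main obstacle: the two tools I want are each stated in restricted generality. \cref{lem: Segal from Beck-Chevalley}, which reduces the Segal property to a Beck--Chevalley equivalence, assumes $\calC$ is complete, whereas $\calC$ is only $f^{\int}$-complete; and \cref{lem: weakly n-initial criterion for Beck Chevalley}, which produces that equivalence from weak $n$-initiality, is stated only for $\calS^{\le n}$-valued Segal objects. I would bridge both gaps with the Yoneda embedding, exactly as in the proof of $(2)\Rightarrow(3)$ of \cref{prop:morita-equivalence}. Because $\calC$ is an $(n+1,1)$-category its mapping spaces are $n$-truncated, so Yoneda restricts to a limit-preserving, fully faithful functor $j : \calC \mono \hat\calC := \Fun(\calC^{\op},\calS^{\le n})$ with $\hat\calC$ complete. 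As $j$ preserves limits, $j\circ F$ is again Segal and the pointwise right Kan extension commutes with $j$, giving $j\circ f_{\ast}F \simeq f_{\ast}(j\circ F)$.

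Finally I would run the Beck--Chevalley argument inside $\hat\calC$. By \cref{lem: Segal from Beck-Chevalley}, applied in the complete category $\hat\calC$, it suffices to show the Beck--Chevalley map $(f_{\ast}(j\circ F))|_{\calP^{\int}} \to f^{\int}_{\ast}((j\circ F)|_{\calO^{\int}})$ is an equivalence. Since limits in the presheaf category $\hat\calC$ are computed pointwise, both right Kan extensions commute with evaluation $\mathrm{ev}_c$ at each $c \in \calC^{\op}$, so the map is an equivalence iff it is one after every $\mathrm{ev}_c$. But $\mathrm{ev}_c\circ j \circ F$ is an $\calS^{\le n}$-valued Segal object, and $f$ being $\mrm{BC}_n$ means exactly that $\calO^{\int} \times_{\calP^{\int}}\calP^{\int}_{y/} \to \calO \times_{\calP} \calP_{y/}$ is weakly $n$-initial for each $y$; hence \cref{lem: weakly n-initial criterion for Beck Chevalley} yields the equivalence at every $y$. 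Thus $f_{\ast}(j\circ F)$ is Segal in $\hat\calC$, and since $j$ is fully faithful, limit-preserving, and $j\circ f_{\ast}F \simeq f_{\ast}(j\circ F)$, the Segal property descends to show that $f_{\ast}F$ is a Segal $\calP$-object in $\calC$. The only genuinely delicate point is this Yoneda reduction, which is what licenses invoking the two lemmas beyond their literal hypotheses.
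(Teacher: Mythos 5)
Your proof is correct and takes essentially the same route as the paper: the paper's entire proof reads ``Combine \cref{lem: weakly n-initial criterion for Beck Chevalley} with \cref{lem: Segal from Beck-Chevalley},'' and your argument performs exactly this combination, with the Yoneda embedding into $\Fun(\calC^{\op},\calS^{\le n})$ (mirroring the proof of $(2)\Rightarrow(3)$ in \cref{prop:morita-equivalence}) supplying the glue that the paper leaves implicit for passing between the complete, $\calS^{\le n}$-valued setting of those lemmas and a general $f^{\int}$-complete $(n+1,1)$-category. Your treatment of existence via the observation that $\mrm{BC}_n$ makes $f$-completeness and $f^{\int}$-completeness coincide is likewise the intended reading.
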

\begin{proof}
	Combine \cref{lem: weakly n-initial criterion for Beck Chevalley} with \cref{lem: Segal from Beck-Chevalley}.
\end{proof}

\begin{thm}\label{thm:Morita-equivalence-from-weakly-initial-slices}
	Let $f: \calO \to \calP$ be a strong Segal morphism of algebraic patterns. Suppose:
	\begin{enumerate}
		\item $f^{\el} : \calO^{\el} \to \calP^{\el}$ is an equivalence of $\infty$-categories.
		\item $f: \calO \to \calP$ is a $\mrm{BC}_n$-morphism. 
		Equivalently (see \cref{lem:when-strong-Segal-is-BCn}), for every $y \in \calP$ at least one of the following conditions hold:
		\begin{enumerate}
			\item The fully faithful inclusion $\calO^{\int} \times_{\calP^{\int}} \calP^{\int}_{y/} \mono \calO \times_{\calP} \calP_{y/}$
			is weakly $n$-initial.
			\item  There exists $x \in \calO$ with $ f(x) \simeq y$ such that $(x,\id_{f(x)}) \in \calO_{f(x)/}$ is weakly $n$-initial.
		\end{enumerate}
	\end{enumerate}
	Then $f$ is a Morita $n$-equivalence.
\end{thm}
\begin{proof}
	Restriction $f^{\ast}: \Fun(\calP,\calS^{\le n}) \to \Fun(\calO,\calS^{\le n})$ preserves Segal objects by assumption that $f$ is strong Segal. Since $f$ is $\mrm{BC}_n$ right kan extension $f_{\ast}$ preserves Segal objects by \cref{lem:right-kan-ext-BCn-morphism}. We conclude that the adjunction $f^{\ast} \dashv f_{\ast}$ restricts to an adjunction on Segal objects:
	\[ f^{\ast}: \Seg_{\calP}(\calS^{\le n}) \adj \Seg_{\calO}(\calS^{\le n}): f_{\ast} \]
	By \cref{lem: essentially surjective on elementary objects implies restriction is conservative}, $f^{\ast}$ is conservative so it suffices to show that $f_{\ast}$ is fully faithful. Equivalently we must show that for every $F \in \Seg_{\calO}(\calS^{\le n})$ the counit map $f^{\ast} f_{\ast} F \to F$ is an equivalence. By \cref{lem: restriction to elementary is conservative} it suffices to check this after restricting to elementary objects.	Denote by $i_{\calO}: \calO^{\el} \mono \calO^{\int}$ and $i_{\calP}:\calP^{\el} \mono \calP^{\int}$ the natural inclusions. We have,
	\begin{align*}
	(f^{\int} \circ i_{\calO})^{\ast} (f^{\int} \circ i_{\calO})_{\ast} (F|_{\calO^{\el}}) & \simeq (f^{\int} \circ i_{\calO})^{\ast} f^{\int}_{\ast} (i_{\calO,\ast} (F|_{\calO^{\el}}))  \\
	& \simeq (f^{\int} \circ i_{\calO})^{\ast} f^{\int}_{\ast} (F|_{\calO^{\int}}) &&& \text{($F$ is a Segal object)} \\
	& \simeq (f^{\int} \circ i_{\calO})^{\ast} ((f_{\ast} F)|_{\calP^{\int}}) &&& \text{(Beck-Chevalley)} \\
	& \simeq (f^{\ast}f_{\ast} F)|_{\calO^{\el}} \to F|_{\calO^{\el}},
	\end{align*}
	Consequently it suffices to show that the counit map
    $(f^{\int} \circ i_{\calO})^{\ast} (f^{\int} \circ i_{\calO})_{\ast} (F|_{\calO^{\el}}) \to F|_{\calO^{\el}}$
    is an equivalence. 
	This would follow once we show $f^{\int} \circ i_{\calO}$ is fully faithful. 
    To see this observe that $f^{\int} \circ i_{\calO} \simeq i_{\calP} \circ f^{\el}$, but $f^{\el}$ is an equivalence due to (1), and $i_{\calP}$ is fully faithful so we're done.
\end{proof}

The conditions of \cref{thm:Morita-equivalence-from-weakly-initial-slices} might be tricky to check in general. 
For convenience we record here a weaker, but easier to use, variant of \cref{thm:Morita-equivalence-from-weakly-initial-slices} which can be easily deduced using \cref{lem:BC-criterion-RH}.

\begin{cor}\label{thm: Morita equivalence theorem for faithful subcategories}
	Let $f: \calO \to \calP$ be a strong Segal morphism of algebraic patterns satisfying
	\begin{enumerate}[(1)]
		\item\label{item:eleq} $f^{\el} \colon \calO^{\el} \to \calP^{\el}$ is an
		equivalence of $\infty$-categories.
		\item\label{item:acteq1} for every $x \in \calO$, the functor
		$(\calO_{/x}^\act)^\simeq \to (\calP_{/f(x)}^\act)^\simeq$ 
		is an equivalence of $\infty$-groupoids.
	\end{enumerate}
	Then $f$ is a Morita equivalence.
\end{cor}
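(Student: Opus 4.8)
The plan is to deduce the statement from the recognition theorem \cref{thm:Morita-equivalence-from-weakly-initial-slices}. Its first hypothesis is literally \ref{item:eleq}, and $f$ is strong Segal by assumption, so the whole problem reduces to verifying its second hypothesis, namely that $f$ is a $\mrm{BC}$-morphism (the case $n=\infty$). For this I would appeal to \cref{lem:BC-criterion-RH}: it is enough to show that for every $x \in \calO$ the induced map on maximal subgroupoids of slices
\[ (\calO_{/x})^{\simeq} \longrightarrow (\calP_{/f(x)})^{\simeq} \]
is an equivalence. Thus everything comes down to upgrading the hypothesis \ref{item:acteq1}, which concerns only the \emph{active} slices, to the analogous statement for the \emph{full} slices.

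To make this passage I would exploit the $(\mathrm{inert},\mathrm{active})$ factorization system. Any morphism $z \to x$ factors essentially uniquely as $z \rightarrowtail w \rightsquigarrow x$, and sending it to its active part $w \rightsquigarrow x$ is functorial with respect to equivalences. This should present $(\calO_{/x})^{\simeq}$ as the total space of a left fibration over $(\calO^{\act}_{/x})^{\simeq}$ whose fibre over a point $(w \rightsquigarrow x)$ is the groupoid $(\calO^{\int}_{/w})^{\simeq}$ of inert maps into $w$; I would extract this decomposition from the appendix results on factorization systems and slices. Since $f$ preserves the factorization system, it induces a morphism of such fibrations, and the comparison is an equivalence provided the induced maps on bases and on all fibres are. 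The base comparison $(\calO^{\act}_{/x})^{\simeq} \to (\calP^{\act}_{/f(x)})^{\simeq}$ is exactly \ref{item:acteq1}, so it remains to treat the fibres, i.e.\ to show that
\[ (\calO^{\int}_{/w})^{\simeq} \longrightarrow (\calP^{\int}_{/f(w)})^{\simeq} \]
is an equivalence for every $w$.

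For the fibres I would argue that the groupoid of inert maps into $w$ is governed entirely by the elementary objects: unwinding the Segal condition, an inert map $z \rightarrowtail w$ amounts to $w$ together with a finite collection of additional elementary summands of the source, so that $(\calO^{\int}_{/w})^{\simeq}$ is built functorially out of the groupoid $(\calO^{\el})^{\simeq}$ and is insensitive to finer data of $w$. As $f^{\el}$ is an equivalence by \ref{item:eleq}, the fibre maps are then equivalences. Combining this with the base comparison yields the full-slice equivalence, hence $f$ is $\mrm{BC}$, and \cref{thm:Morita-equivalence-from-weakly-initial-slices} gives that $f$ is a Morita equivalence.

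The step I expect to be genuinely delicate is the identification of the inert fibres. For $\infty$-operads it is transparent, since inert morphisms are cocartesian lifts and the Segal condition splits every object into elementary summands; but for an arbitrary algebraic pattern the inert part can be considerably wilder, and one must check that the hypotheses really do force the inert slices to depend only on $(\calO^{\el})^{\simeq}$ rather than assuming it. I would therefore expect the crux to be isolating the precise structural input — a consequence of the Segal condition on the patterns at hand — that makes the inert fibre comparison an equivalence; once that is in place, the factorization bookkeeping and the final assembly through \cref{lem:BC-criterion-RH} and \cref{thm:Morita-equivalence-from-weakly-initial-slices} are routine.
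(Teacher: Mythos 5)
Your skeleton---\cref{thm:Morita-equivalence-from-weakly-initial-slices} with the $\mrm{BC}$ condition supplied by \cref{lem:BC-criterion-RH}---is exactly the route the paper indicates (it gives no more detail than that). But the step you yourself flag as delicate is not merely delicate: it is false. Under strong Segal plus hypotheses (1) and (2), the inert-fibre comparison $(\calO^{\int}_{/w})^{\simeq} \to (\calP^{\int}_{/f(w)})^{\simeq}$, and with it the full-slice comparison that \cref{lem:BC-criterion-RH} literally asks for, can fail. The paper's own first application of this corollary is a counterexample: take $f$ to be the inclusion $\Fin^{\le 1}_{\ast} \mono \Fin^{\le \frac{3}{2}}_{\ast}$ (the case $\calO = \Fin_{\ast}$, $k=2$ of \cref{prop: inclusion of k-1 in k-1/2 is Morita equivalence}). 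Both patterns have the same elementary objects, $f$ is an inclusion of an algebraic subpattern (hence iso-Segal), and condition (2) holds---that is precisely what the paper verifies there. Yet the inert morphism $\bracket{2} \inert \bracket{1}$ deleting one element lies in $\Fin^{\le \frac{3}{2}}_{\ast}$ (inert non-equivalences are never strongly active), while its source lies outside $\Fin^{\le 1}_{\ast}$; so neither $(\calO^{\int}_{/\bracket{1}})^{\simeq} \to (\calP^{\int}_{/\bracket{1}})^{\simeq}$ nor $(\calO_{/\bracket{1}})^{\simeq} \to (\calP_{/\bracket{1}})^{\simeq}$ is essentially surjective. Your fibration decomposition of the full slice over the active slice is fine; what fails is the claim that the inert fibres are governed by $(\calO^{\el})^{\simeq}$. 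That is an $\infty$-operad phenomenon, resting on the Segal condition satisfied by the operad itself; a general algebraic pattern imposes no such condition, and the arity-truncated patterns this corollary exists to handle genuinely violate it. No argument can close this gap: the full-slice condition is strictly stronger than (1)+(2) and is false in the intended applications.

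The repair is to feed hypothesis (2) into the $\mrm{BC}$ condition directly, at the level of the under-categories in its definition, rather than through full over-slices. Fix $y \in \calP$ and an object $(x,\, \alpha: y \to f(x))$ of $\calO \times_{\calP} \calP_{y/}$, and factor $\alpha \simeq \alpha^{\act} \circ \alpha^{\int}$ in $\calP$. By (2) the active part $\alpha^{\act}: \Lambda(\alpha) \active f(x)$ lifts essentially uniquely to an active morphism $\beta: x' \active x$ of $\calO$ with $f(\beta) \simeq \alpha^{\act}$; then $\big(x',\, \alpha^{\int}: y \inert \Lambda(\alpha) \simeq f(x')\big)$, equipped with $\beta$, is an object of the comma category $\big(\calO^{\int} \times_{\calP^{\int}} \calP^{\int}_{y/}\big) \times_{\calO \times_{\calP} \calP_{y/}} \big(\calO \times_{\calP} \calP_{y/}\big)_{/(x,\alpha)}$, and uniqueness of (inert,active) factorizations together with the full faithfulness in (2) makes this object terminal. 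Hence each such comma category is weakly contractible, so the inclusion $\calO^{\int} \times_{\calP^{\int}} \calP^{\int}_{y/} \mono \calO \times_{\calP} \calP_{y/}$ is initial by \cref{prop: Quillen A}, in particular weakly initial; thus $f$ is $\mrm{BC}$, and \cref{thm:Morita-equivalence-from-weakly-initial-slices}, whose first condition is your hypothesis (1), concludes. This is also how \cref{lem:BC-criterion-RH} should be understood in order to be usable here: its slice hypothesis must be taken with the active slices of condition (2), since the full-slice version is inapplicable for the reason above.
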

	\section{Arity approximation of analytic patterns}\label{sect:analytic-Patterns}

In this section we prove a general form of the \textit{arity approximation theorem} (\cref{thm:main-theorem}) in the setting of \textit{analytic patterns}. 
Analytic patterns form a wide class of algebraic patterns with operad-like features. 
They include all $\infty$-operads and are closed under passage to subpatterns, 
in particular under \textit{arity restriction} (see \cref{defn:general-arity-restriction}). 
As such they provide a convenient setting in which to analyze arity related phenomenon. 

\begin{defn}
	An \textit{analytic pattern} is an algebraic pattern $\calO$ equipped with a map $\bracket{-}: \calO \to \Fin_{\ast}$ satisfying the following properties:
	\begin{enumerate}
		\item $\bracket{-}$ is a strict Segal morphism.
		\item $\bracket{-}^{\int} : \calO^{\int} \to \Fin^{\int}_{\ast}$ is conservative.
	\end{enumerate}
\end{defn}

Throughout this section we fix an analytic pattern $\calO$ and denote its structure map by $\bracket{-}:\calO \to \Fin_{\ast}$. 

\begin{example}\label{ex:operads-are-analytic-patterns}
	Let $\calO$ be an $\infty$-operad. We claim that $\calO$ admits a canonical structure of an analytic pattern. Indeed we saw in \cref{ex:operads-are-algebraic-patterns} that $\calO$ admits a canonical structure of an algebraic pattern and in \cref{ex:operads-are-strict} that $\bracket{-}: \calO \to \Fin_{\ast}$ is a strict Segal morphism. It remains to check that $\bracket{-}^{\int}: \calO^{\int} \to \Fin^{\int}_{\ast}$ is conservative. This follows from the fact that all inert morphisms in $\calO$ are $\bracket{-}$-cocartesian.
\end{example}

\begin{defn}\label{defn:general-arity-restriction}
	For an integer $k \ge 1$ we define the \textit{arity $k$-restriction} of $\calO$ as 
	$\hl{\calO^{\le k} \coloneq  } \, \Fin^{\le k}_{\ast}  \times_{\Fin_{\ast}} \calO$.
	By virtue of \cref{lem:basechange-subpattern}, $\calO^{\le k} \subseteq \calO$ is a subpattern.	
\end{defn}

\begin{example}
	We think of $\calO^{\le k}$ as obtained from $\calO$ by throwing all the information in arities $> k$. Indeed if $\calO$ is an $\infty$-operad and $x_1,\cdots,x_m,y \in \calO^{\el}\coloneq \calO_{\bracket{1}}$ is a collection of objects then as long as $m \le k$ we have
	\[\Mul_{\calO}(x_1,\dots,x_m;y) \coloneq  \Map_{\calO^{\act}}(x_1 \oplus \dots \oplus x_m,y) \simeq \Map_{\calO^{\act,\le k}}(x_1 \oplus \dots \oplus x_m,y)\]
	In other words $\calO^{\le k}$ knows about all multi-mapping spaces of arity $\le k$ in $\calO$.
\end{example}

\begin{defn}\label{defn:maximmaly-active-toy}
	A map $f: A_+ \to B_+$ of pointed finite sets is called \textit{maximally active} if it's active and $|f(A)|=1$. A morphism $\alpha : x \to y$ in $\calO$ is called \textit{maximally active} if the underlying map of pointed sets $\bracket{\alpha}: \bracket{x} \to \bracket{y}$ is maximally active.
\end{defn}

\begin{rem}
	Let $M: \Fin_{\ast} \to \Set$ be a commutative monoid. The maximally active morphisms give rise to operations $M^{\times k} \to M^{\times l}$ which may be factored as follows
	\[\begin{tikzcd}
	{(m_1,\dots,m_k)} & {m_1 \cdots m_k} & {(e,\dots,e,m_1\cdots m_k,e\dots,e)}
	\arrow[maps to, from=1-2, to=1-3]
	\arrow[maps to, from=1-1, to=1-2]
	\end{tikzcd}\]
	Intuitively, these are the operations where all inputs are multiplied in one fell swoop.
\end{rem}

\begin{defn}
	Let $\mu: x \actarrow y$ be an active morphism in $\calO$. Define the \textit{factorization category} of $\mu$ as follows
	\[\hl{\Fact_{\calO}(\mu) \coloneq }\, \calO^{\act}_{x/ /\mu}\]
\end{defn}

\begin{defn}\label{defn:precise-definition-of-quasi-partition-category}
	Let $\mu : x \rightsquigarrow z$ be an active morphism. A \textit{quasi-partition of $\mu$} is a factorization of $\mu$
	\[\begin{tikzcd}
	& y \\
	x && z
	\arrow["\alpha"{description}, squiggly, from=2-1, to=1-2]
	\arrow[squiggly, from=1-2, to=2-3]
	\arrow["\mu"{description}, squiggly, from=2-1, to=2-3]
	\end{tikzcd}\]
	such that 
	\begin{enumerate}
		\item $\alpha: x \rightsquigarrow y$ is \textbf{not} maximally active.
		\item $|y|<|x|$. 
	\end{enumerate}
	Define the \textit{quasi-partition category} of $\mu$ to be the full subcategory $\hl{\QPart_{\calO}(\mu) }\subseteq \Fact_{\calO}(\mu)$ whose objects are the quasi-partitions of $\mu$. 
\end{defn}
	
\begin{defn}
	Let $\mu :x \actarrow y$ be an active morphism in $\calO$. Define the \textit{quasi-partition complex} of $\mu$ as follows
	\[ \hl{\qpart{\calO}{\mu} \coloneq } \, \big| \QPart_{\calO}(\mu) \big|\]
\end{defn}

Intuitively, we may think of $\qpart{\calO}{\mu}$ as the space of all ways to express $\mu$ as a non-trivial composite. Indeed $\qpart{\calO}{\mu}$ is non-empty if and only if $\mu$ has a non-trivial such decomposition, and roughly speaking, the more connected $\qpart{\calO}{\mu}$ is, the more unique that decomposition of $\mu$ is.

\begin{defn}
	For a pair of integers $1 \le k_0 \le k_1 \le \infty$ define
	\[\hl{\qsig{\calO}(k_0,k_1)\coloneq } \inf \bigg\{ \conn \bigg(\qpart{\calO}{\mu} \bigg) \bigg| \, \mu : x \rightsquigarrow y \text{ maximally active morphism with } k_0 < |x| \le k_1 \bigg\} \]
	where $\conn(X)$ denotes the connectivity of the space $X$.
\end{defn}

\begin{war}
	Note that the definition of $\qsig{\calO}$ is not the same as that of $\sig{\calO}$ given in  \cref{cons:intro-definition-partition}. They are in fact different functions in general. However, we shall see in \cref{sect:arity-approximation-operads}, that when $\calO$ is an $\infty$-operad they do happen to agree. This should not be obvious at this point.
\end{war}

We are ready to state the main theorem of this section.

\begin{thm}[Arity Approximation for Analytic Patterns]\label{thm:artiy-restriction-theorem}
	Let $\calO$ be an analytic pattern. For every $1 \le k \le k+l \le \infty$ the fully faithful inclusion
	$\calO^{\le k} \mono \calO^{\le k+l}$
	is a Morita $\qsig{\calO}(k,k+l)$-equivalence.
\end{thm}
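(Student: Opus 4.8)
The plan is to apply the recognition theorem for Morita equivalences (\cref{thm:Morita-equivalence-from-weakly-initial-slices}) to the inclusion $\iota: \calO^{\le k} \mono \calO^{\le k+l}$. The theorem requires two inputs: first that $\iota$ is a strong Segal morphism inducing an equivalence on elementary objects, and second that $\iota$ is $\mrm{BC}_n$ for $n = \qsig{\calO}(k,k+l)$. The first input should be essentially formal: both $\calO^{\le k}$ and $\calO^{\le k+l}$ are algebraic subpatterns of $\calO$ (\cref{defn:general-arity-restriction}), and since elementary objects are those lying over $\bracket{1} \in \Fin_{\ast}$ (arity $\le 1$), the inclusion is the identity on elementary objects; strong Segal-ness should follow from the subpattern structure, as the slices $\calO^{\el}_{x/}$ are computed the same way in either pattern. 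The entire content of the theorem is therefore concentrated in verifying the $\mrm{BC}_n$ condition.

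**Reducing $\mrm{BC}_n$ to connectivity of quasi-partition complexes.**

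First I would fix $y \in \calO^{\le k+l}$ and, using \cref{lem:when-strong-Segal-is-BCn}, try to verify condition (\ref{thm: Morita equivalence from weakly initial slices - condition 3.b.}): namely find $x \in \calO^{\le k}$ with $\iota(x) \simeq y$ such that $(x,\id) \in (\calO^{\le k})_{y/}$ is weakly $n$-initial. This works immediately for $y$ of arity $\le k$, since then $y$ itself already lies in $\calO^{\le k}$ and we may take $x = y$, for which $\id_y$ is an actual initial object of the slice, hence weakly $n$-initial. The genuine case is $k < |y| \le k+l$, where no preimage exists and one must instead verify condition (\ref{thm: Morita equivalence from weakly initial slices - condition 3.a.}): that the inclusion of inert slices $(\calO^{\le k})^{\int} \times_{(\calO^{\le k+l})^{\int}} ((\calO^{\le k+l})^{\int})_{y/} \mono (\calO^{\le k})\times_{(\calO^{\le k+l})}(\calO^{\le k+l})_{y/}$ is weakly $n$-initial. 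By \cref{prop: Quillen A for Segal} and the characterization of $n$-initiality in \cref{prop: Quillen A}(4), this ultimately reduces to showing that certain realizations are $n$-connected. I expect that after unwinding the slice descriptions, the relevant weak initiality is controlled precisely by the realization of the subcategory of \emph{non-trivial active factorizations} of the maximally active morphisms associated to $y$ — that is, by the quasi-partition complexes $\qpart{\calO}{\mu}$ of \cref{defn:precise-definition-of-quasi-partition-category}, whose connectivity is bounded below by $\qsig{\calO}(k,k+l)$ by definition when $k < |x| \le k+l$.

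**The main obstacle.**

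The hard part will be the precise identification of the comparison diagram arising from the $\mrm{BC}_n$ condition with (a cofiber or join of) the quasi-partition complexes. The definitions of $\QPart_{\calO}(\mu)$ impose two constraints on a factorization $x \overset{\alpha}{\rightsquigarrow} y' \rightsquigarrow z$ — that $\alpha$ be not maximally active and that $|y'| < |x|$ — and I would need to show that these are exactly the conditions distinguishing the fiber of the inert-slice inclusion over a given object of the full slice. Concretely, an object of the full slice $\calO^{\le k}\times_{\calO^{\le k+l}}(\calO^{\le k+l})_{y/}$ is a map $y \to \iota(x')$ with $x'$ of arity $\le k$; since $y$ has arity $> k$, such a map cannot be inert (inert maps preserve or decrease arity in a controlled way and land on objects of the same or smaller underlying set), forcing the active part to genuinely reduce arity — which is what pins down the quasi-partition condition $|y'|<|x|$. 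Establishing this correspondence rigorously, and checking that the connectivity estimate transfers through the realization and the Quillen A reformulation without loss, is where the real work lies; the $n$-truncation hypothesis on $\calC$ enters precisely so that $n$-connectivity of these complexes suffices. Once this identification is in place, the conclusion that $\iota$ is a Morita $\qsig{\calO}(k,k+l)$-equivalence follows directly from \cref{thm:Morita-equivalence-from-weakly-initial-slices}.
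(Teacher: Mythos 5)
Your overall framework matches the paper's at the top level: both hinge on the recognition theorem \cref{thm:Morita-equivalence-from-weakly-initial-slices}, your treatment of elementary objects and strong Segal-ness is fine, and your disposal of objects $y$ with $|y|\le k$ via condition (b) (taking $x=y$, where $(y,\id_y)$ is genuinely initial in the slice) is exactly right. The gap is in the only hard step, and it is a genuine one. For $|y|>k$ you propose to verify condition (a) --- weak $\qsig{\calO}(k,k+l)$-initiality of the inert-slice inclusion --- via Quillen A (\cref{prop: Quillen A}(4)), expecting the relevant comma categories to be quasi-partition categories. They are not. For an object $\mu: y \to x'$ of the full slice, the comma category consists of factorizations $\mu \simeq \phi \circ \lambda$ with $\lambda: y \inert w$ inert and $|w|\le k$; since $|w| \le k < |y|$, the map $\bracket{\lambda}$ sends some non-basepoint element to the basepoint, hence so does $\bracket{\phi \circ \lambda}$, so no such factorization can be active. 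Thus whenever $\mu$ is active (for instance any multi-morphism out of $y$, and these exist in every arity for $\bbE_\infty$ and $\bbE_1$), this comma category is \emph{empty}, hence not $n$-connected for any $n \ge -1$. So the inert-slice inclusion is not $n$-initial as a plain functor, and Quillen A --- which quantifies over all functors --- can never establish condition (a); the statement you need concerns only Segal objects, and any proof of it must exploit the Segal condition in an essential way, which your reduction never does. Quasi-partition complexes simply do not arise as the comma categories of your inclusion.

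This is precisely why the paper does not attack $\calO^{\le k} \mono \calO^{\le k+l}$ directly. It first reduces by induction to single steps $\calO^{\le k-1}\mono\calO^{\le k}$ (with a filtered-colimit argument, \cref{cor:Segal-objects-for-filtered-colimit}, handling $l=\infty$), and inside each step inserts the \emph{wide} subpattern $\calO^{\le k-\frac 12}$ of \cref{defn:k-1/2-definition}, whose morphisms are the not-$k$-strongly-active ones. After disposing of $\calO^{\le k-1}\mono\calO^{\le k-\frac12}$ by the easy criterion (\cref{prop: inclusion of k-1 in k-1/2 is Morita equivalence}), the recognition theorem is applied to $\calO^{\le k-\frac12}\mono\calO^{\le k}$; since this inclusion is surjective on objects, condition (b) is available at \emph{every} $y$, and the question becomes whether $(x,\id_x)$ is weakly initial in $\calJ_{\calO,x}$ when $|x|=k$. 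That is settled by the chain $\calJ^{\ns}_{\calO,x}\mono\calJ^{\nm}_{\calO,x}\mono\calJ_{\calO,x}$ of \cref{defn: definition of Js}: the first inclusion is a Morita equivalence (\cref{prop:J^<=k-1/2-morita-equivalent-to-J^<k}) --- this is where the Segal condition does its work --- and only the second, purely combinatorial, inclusion is handled by Quillen A, its comma categories over maximally active $\mu$ being realization-equivalent to $\QPart_{\calO}(\mu)$ (\cref{lem: final double slice lemma}, \cref{lem:relation-between-J's-and-the-partition-category}). Note that all slices appearing there contain the inert parts of their morphisms (the arity bound stays at $k$), which is exactly the feature your slices lack and the reason your comma categories go empty. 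Without some substitute for this intermediate-pattern device, your outline cannot be completed.
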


The theorem will be proved in \cref{subsect:proof-of-arity-restriction} after we develop the necessary preliminaries. For the benefit of the reader we give below a schematic structure of the proof along  with references to key technical steps.

\subsubsection{Structure of the proof}
\begin{enumerate}
	\item By induction the theorem reduces to the case $\calO^{\le k-1} \mono \calO^{\le k}$ (purely for notational convenience we have chosen to index over $2 \le k \le \infty$ in the proof).
	
	\item In \cref{defn:k-1/2-definition} we introduce an intermediate subpattern $\calO^{\le k-1} \subseteq \hl{\calO^{\le k- \frac 12}} \subseteq \calO^{\le k}$.
	
	\item In \cref{prop: inclusion of k-1 in k-1/2 is Morita equivalence} we show that the inclusion $\calO^{\le k-1} \subseteq \calO^{\le k- \frac 12}$ satisfies \cref{thm: Morita equivalence theorem for faithful subcategories} and is thus a Morita equivalence.
	
	\item The inclusion $\calO^{\le k-\frac 12} \subseteq \calO^{\le k}$ satisfies condition \ref{thm: Morita equivalence from weakly initial slices - condition 3.b.} of \cref{thm:Morita-equivalence-from-weakly-initial-slices} for every $x \in \calO^{\le k-\frac 12}$ with $|x|<k$. This leaves us with the case of $|x|=k$.
	
	\item In \cref{defn: definition of Js} we introduce for every $x \in \calO^{\le k-\frac 12}$ with $|x|=k$, an intermediate subpattern 
	\[\calO^{\le k-\frac 12}_{x/} \subseteq \hl{\calJ^{\nm}_{\calO,x}} \subseteq \calO^{\le k}_{x/} \times_{\calO^{\le k}} \calO^{\le k-\frac 12}\]
	
	\item In \cref{prop:J^<=k-1/2-morita-equivalent-to-J^<k} we show the inclusion $\calO^{\le k-\frac 12}_{x/} \mono \calJ^{\nm}_{\calO,x}$ satisfies \cref{thm: Morita equivalence theorem for faithful subcategories} and is thus a Morita equivalence.
	
	\item In \cref{subsect:slices-to-partitions} we study the slices of the fully faithful inclusion $\calJ^{\nm}_{\calO,x} \mono \calO^{\le k}_{x/} \times_{\calO^{\le k}} \calO^{\le k-\frac 12}$ and relate them to quasi-partition categories. We show that the inclusion is $n$-initial if and only if some collection of quasi-partition complexes are $n$-connected.

	\item By (6) and (7) the inclusion $\calO^{\le k-\frac12} \mono \calO^{\le k}$ satisfies condition \ref{thm: Morita equivalence from weakly initial slices - condition 3.a.} of \cref{thm:Morita-equivalence-from-weakly-initial-slices} with $n = \qsig{\calO}(k-1,k)$ for all $x \in \calO^{\le k-\frac 12}$ such that $|x|=k$.
	
	\item By (4) and (8) it follows that $\calO^{\le k-\frac 12} \mono \calO^{\le k}$ satisfies \cref{thm:Morita-equivalence-from-weakly-initial-slices} with $n= \qsig{\calO}(k-1,k)$ and is therefore a Morita $\qsig{\calO}(k-1,k)$-equivalence.
	
	\item Finally by (3) and (9) it follows that the composition $\calO^{\le k-1} \mono \calO^{\le k}$ is a Morita $\qsig{\calO}(k-1,k)$-equivalence.
\end{enumerate}

\subsection{Analytic patterns}

We begin this subsection by establishing some basic properties of analytic patterns. We define \textit{strongly active morphisms} (\cref{defn:strongly-active}) and the notion of \textit{$(k- \frac{1}{2})$-restriction} of an analytic pattern as an intermediate subpattern $\calO^{\le k-1} \subseteq \calO^{\le k-\frac 12} \subseteq \calO^{\le k}$ (\cref{defn:k-1/2-definition}). The main result of this subsection is \cref{prop: inclusion of k-1 in k-1/2 is Morita equivalence} which shows that the inclusion $\calO^{\le k-1} \mono \calO^{\le k-\frac 12}$ is a Morita equivalence.

\begin{lem}\label{lem:inert=iso-criterion}
	Let $\alpha: x \inert y$ be an inert morphism in $\calO$. The following are equivalent:
	\begin{enumerate}
		\item $\alpha$ is an isomorphism.
		\item $\alpha$ admits a retraction.
		\item $|x| \le |y|$.
	\end{enumerate}
\end{lem}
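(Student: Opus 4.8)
The plan is to prove the cyclic chain $(1) \Rightarrow (2) \Rightarrow (3) \Rightarrow (1)$, reducing each step to an elementary statement about the underlying map $\bracket{\alpha} \colon \bracket{x} \inert \bracket{y}$ in $\Fin_{\ast}$ and, for the final implication, invoking the defining conservativity of an analytic pattern. The guiding principle is that inert morphisms behave like split monomorphisms on underlying pointed sets, so that $|-|$ records cardinalities that are pinned down by inertness from one side and by the existence of a retraction from the other.

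The implication $(1) \Rightarrow (2)$ is immediate: if $\alpha$ is an isomorphism then $\alpha^{-1}$ is in particular a retraction. For $(2) \Rightarrow (3)$, a retraction $r \colon y \to x$ satisfies $r \circ \alpha = \id_x$, and applying the structure functor gives $\bracket{r} \circ \bracket{\alpha} = \id_{\bracket{x}}$, so $\bracket{\alpha}$ admits a left inverse in $\Fin_{\ast}$ and is therefore injective as a map of pointed sets. An injective pointed map sends no non-basepoint element to the basepoint and is injective on the complement of the basepoint, whence $|x| \le |y|$.

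The content is concentrated in $(3) \Rightarrow (1)$. First I would record the complementary cardinality bound: since $\bracket{-}$ is a morphism of algebraic patterns it carries the inert morphism $\alpha$ to an inert morphism $\bracket{\alpha} \colon \bracket{x} \inert \bracket{y}$, and an inert map of pointed finite sets has singleton fibers over every non-basepoint element, so $|y| \le |x|$ always. Combining this with the hypothesis $|x| \le |y|$ forces $|x| = |y|$; the fiber of $\bracket{\alpha}$ over the basepoint then consists of the basepoint alone, so $\bracket{\alpha}$ is a bijection, i.e. an isomorphism in $\Fin_{\ast}$. Since any isomorphism is simultaneously inert and active, both $\bracket{\alpha}$ and its inverse lie in $\Fin^{\int}_{\ast}$, so $\bracket{\alpha}$ is an equivalence in the wide subcategory $\Fin^{\int}_{\ast}$. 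Finally, $\alpha$ is a morphism of $\calO^{\int}$, and $\bracket{-}^{\int} \colon \calO^{\int} \to \Fin^{\int}_{\ast}$ is conservative by the definition of an analytic pattern; applying conservativity to $\alpha$ yields that $\alpha$ is an isomorphism, closing the cycle.

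No step presents a genuine obstacle; the only points demanding care are the two cardinality estimates (inertness forces $|y| \le |x|$, while the existence of a retraction forces $|x| \le |y|$) and the observation that the isomorphism $\bracket{\alpha}$ genuinely lives in $\Fin^{\int}_{\ast}$, so that it is the conservativity of $\bracket{-}^{\int}$ that applies, rather than conservativity of the full functor $\bracket{-}$, which is not part of the hypotheses.
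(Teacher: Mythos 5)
Your proof is correct and follows essentially the same route as the paper: both establish the cycle $(1) \Rightarrow (2) \Rightarrow (3) \Rightarrow (1)$, with the substance in $(3) \Rightarrow (1)$, where one shows $\bracket{\alpha}$ is a bijection (you via singleton fibers and the basepoint fiber, the paper via surjectivity of inert maps plus the cardinality bound) and then invokes conservativity of $\bracket{-}^{\int} \colon \calO^{\int} \to \Fin^{\int}_{\ast}$. Your explicit remark that the isomorphism $\bracket{\alpha}$ and its inverse lie in $\Fin^{\int}_{\ast}$, so that conservativity of $\bracket{-}^{\int}$ indeed applies, is a point the paper leaves implicit.
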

\begin{proof}
	We prove $(3) \implies (1)$. Both $(1) \implies (2)$ and $(2) \implies (3)$ are straightforward. Since $\calO$ is analytic, $\calO^{\int} \to \Fin^{\int}_{\ast}$ is conservative and thus it suffices to show that $\bracket{\alpha}$ is an isomorphism. Since $\bracket{\alpha} : \bracket{x} \to \bracket{y}$ is inert it's surjective. But $|x| \le |y|$ by assumption, so $\bracket{\alpha}$ is an isomorphism.
\end{proof}

\begin{cor}\label{lem:detect-active}
	Let $\alpha: x \to y$ be a morphism in $\calO$. Then $\alpha$ is active if and only if $\bracket{\alpha}$ is active. 
\end{cor}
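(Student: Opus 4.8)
The plan is to prove the two implications separately, with the forward direction being immediate and the reverse direction relying on the two defining properties of an analytic pattern together with \cref{lem:inert=iso-criterion}. For the forward direction, if $\alpha$ is active then so is $\bracket{\alpha}$: this holds because $\bracket{-}\colon \calO \to \Fin_{\ast}$ is a strict Segal morphism, hence in particular a morphism of algebraic patterns, and every such morphism carries active morphisms to active morphisms by definition.

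For the reverse direction, I would assume $\bracket{\alpha}$ is active and deduce that $\alpha$ is active. First I would factor $\alpha$ through its $(\text{inert},\text{active})$ factorization
\[ x \overset{\alpha^{\int}}{\rightarrowtail} \Lambda(\alpha) \overset{\alpha^{\act}}{\rightsquigarrow} y. \]
Applying $\bracket{-}$ and using that it preserves both inert and active morphisms, I obtain a factorization of $\bracket{\alpha}$ into the inert morphism $\bracket{\alpha^{\int}}$ followed by the active morphism $\bracket{\alpha^{\act}}$. By essential uniqueness of factorizations in $\Fin_{\ast}$, this is \emph{the} $(\text{inert},\text{active})$ factorization of $\bracket{\alpha}$; since $\bracket{\alpha}$ is active, its inert part $\bracket{\alpha^{\int}}$ must be an isomorphism. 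In particular $|x| = |\Lambda(\alpha)|$, so the inert morphism $\alpha^{\int}$ satisfies $|x| \le |\Lambda(\alpha)|$, and condition $(3) \implies (1)$ of \cref{lem:inert=iso-criterion} forces $\alpha^{\int}$ to be an isomorphism. Consequently $\alpha = \alpha^{\act} \circ \alpha^{\int}$ is a composite of an active morphism with an isomorphism, hence active, since active morphisms form one half of a factorization system and thus contain all isomorphisms and are closed under composition.

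I do not anticipate a serious obstacle, as the argument is essentially bookkeeping with the factorization system. The one point requiring care is the passage from $\bracket{\alpha^{\int}}$ being an isomorphism to $\alpha^{\int}$ being an isomorphism; this is precisely where conservativity of $\bracket{-}^{\int}$ enters, packaged here through \cref{lem:inert=iso-criterion} (whose own proof invokes that hypothesis). One could equally invoke conservativity of $\bracket{-}^{\int}\colon \calO^{\int} \to \Fin^{\int}_{\ast}$ directly on the inert morphism $\alpha^{\int}$ rather than routing through the cardinality condition. The only subtlety worth double-checking is that $\bracket{-}$ genuinely transports the factorization of $\alpha$ to the factorization of $\bracket{\alpha}$, which holds because it preserves each of the two morphism classes and such factorizations are essentially unique.
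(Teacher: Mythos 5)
Your proof is correct and is exactly the paper's argument, spelled out in full: the paper's entire proof reads ``Apply \cref{lem:inert=iso-criterion} to $\alpha^{\int}$,'' which is precisely your reverse direction (transport the $(\text{inert},\text{active})$ factorization along $\bracket{-}$, conclude $|x|=|\Lambda(\alpha)|$, and invoke $(3)\implies(1)$ of that lemma), with the forward direction left implicit since $\bracket{-}$ preserves active morphisms by definition.
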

\begin{proof}
	Apply \cref{lem:inert=iso-criterion} to $\alpha^{\int}$.
\end{proof}

\begin{lem}\label{lem: active cancelation}
	Let $\alpha : x \to y$ and $\beta : y \to z$ be morphisms in $\calO$. If $\beta \circ \alpha$ is active then so is $\alpha$.
\end{lem}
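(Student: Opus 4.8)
The plan is to reduce the statement to an elementary fact about maps of pointed finite sets, using the transfer principle just established. By \cref{lem:detect-active}, a morphism in $\calO$ is active precisely when its image under the structure map $\bracket{-}\colon \calO \to \Fin_{\ast}$ is active in $\Fin_{\ast}$. Since $\bracket{-}$ is a functor it preserves composition, so $\bracket{\beta \circ \alpha} = \bracket{\beta} \circ \bracket{\alpha}$. Consequently it suffices to prove the analogous cancellation statement downstairs in $\Fin_{\ast}$: if a composite $g \circ f$ of pointed maps is active, then the first map $f$ is active. (Note that $\beta$ itself need not be active, so we only aim to conclude activeness of $\alpha$.)

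For the downstairs statement, recall that a pointed map is active exactly when the preimage of the basepoint is the singleton containing only the basepoint. Every map of pointed finite sets preserves the basepoint, so $\ast \in g^{-1}(\ast)$, and hence $f^{-1}(\ast) \subseteq f^{-1}\big(g^{-1}(\ast)\big) = (g \circ f)^{-1}(\ast)$. If $g \circ f$ is active then the right-hand side equals $\{\ast\}$, which forces $f^{-1}(\ast) = \{\ast\}$; that is, $f$ is active. Translating this conclusion back through \cref{lem:detect-active} yields that $\alpha$ is active, as desired.

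I expect no real obstacle here: all the content is carried by \cref{lem:detect-active}, which is where the analytic-pattern hypothesis (conservativity of $\bracket{-}^{\int}$ on inert morphisms) is genuinely used. Once activeness can be detected on underlying pointed finite sets, the claim is immediate from the preimage computation above, and I would simply record that short argument.
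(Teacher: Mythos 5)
Your proposal is correct and follows essentially the same route as the paper: invoke \cref{lem:detect-active} to reduce to $\calO = \Fin_{\ast}$, then conclude from the preimage computation $f^{-1}(\ast) \subseteq (g \circ f)^{-1}(\ast) = \{\ast\}$. In fact your write-up states the basepoint-preimage chain correctly, whereas the paper's displayed computation contains a harmless typo ($g^{-1}(f^{-1}(\ast))$ where $f^{-1}(g^{-1}(\ast))$ is meant).
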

\begin{proof}
	By \cref{lem:detect-active} we may assume that $\calO = \Fin_{\ast}$. Now let $f: A_+ \to B_+$ and $g : B_+ \to C_+$ be maps of pointed finite sets such that $g \circ f$ is active. Then we have $f^{-1}(\ast) \subseteq g^{-1} (f^{-1}(\ast)) = (g \circ f)^{-1}(\ast) = \{\ast\}$
	so that $f$ is active as required.
\end{proof}

One useful consequence of \cref{lem:detect-active} is the following characterization of elementary objects.

\begin{cor}\label{cor:elementar-means-cardinality-1}
	An object $x \in \calO$ is elementary if and only if $|x| =1$.
\end{cor}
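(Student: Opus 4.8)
The statement packages two implications, and I would treat them separately. The forward direction (elementary $\Rightarrow |x|=1$) is formal: the structure map $\bracket{-}\colon \calO \to \Fin_{\ast}$ is a morphism of algebraic patterns (it is a Segal morphism, cf.\ \cref{defn: definition of Segal morphism}), so it carries elementary objects to elementary objects. Since the elementary objects of $\Fin_{\ast}$ are exactly the $A_+$ with $|A|=1$ (\cref{ex:inert-active-on-Fin}), any elementary $x\in\calO$ has $\bracket{x}$ of this shape, i.e.\ $|x|=1$. I would dispose of this direction in one line; all the content is in the converse.

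For the converse I would first record the shape of the relevant slice downstairs. If $|x|=1$, then $\bracket{x}\simeq\bracket{1}$, and the slice $(\Fin_{\ast})^{\el}_{\bracket{1}/}$ is contractible: the only inert arrow out of $\bracket{1}$ landing in an elementary object is (up to equivalence) the identity $\bracket{1}\inert\bracket{1}$, so this slice has an initial object given by $\id_{\bracket 1}$. Now I would exploit the analytic hypothesis that $\bracket{-}$ is a strong Segal morphism: by \cref{defn: definition of strong Segal morphism} the comparison functor $\calO^{\el}_{x/}\to(\Fin_{\ast})^{\el}_{\bracket{x}/}$ is initial. An initial functor with non-empty (indeed contractible) target has non-empty source, so $\calO^{\el}_{x/}$ is non-empty; concretely there exists an inert morphism $\lambda\colon x\inert e$ with $e\in\calO^{\el}$.

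With such a $\lambda$ in hand the analytic structure finishes the job through \cref{lem:inert=iso-criterion}. By the already-established forward direction, $e$ elementary forces $|e|=1$, hence $|x|=1\le 1=|e|$; the criterion of \cref{lem:inert=iso-criterion} then makes $\lambda$ an isomorphism. Since $\calO^{\el}\subseteq\calO^{\int}$ is a full, hence replete, subcategory, it is closed under equivalence, so $x\simeq e$ with $e$ elementary yields $x\in\calO^{\el}$, as desired. The one step I would take the most care over is the \emph{existence} of at least one inert morphism from $x$ to an elementary object: this is precisely what the Segal condition on $\bracket{-}$ purchases, by transporting the non-emptiness of $(\Fin_{\ast})^{\el}_{\bracket{1}/}$ along an initial functor. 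Everything after that is just the rigidity of arity-$1$ inert morphisms supplied by \cref{lem:inert=iso-criterion}, so I expect no further obstacle.
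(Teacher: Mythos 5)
Your proof is correct and follows essentially the same route as the paper: the paper's one-line argument applies \cref{lem:inert=iso-criterion} to the unique inert morphism in $\calO^{\el}_{x/}\simeq\bracket{x}^{\circ}$, which is exactly your inert map $\lambda\colon x\inert e$ produced from the elementary-slice comparison for $\bracket{-}$. The only cosmetic difference is that you invoke just the strong Segal (initiality) consequence to get non-emptiness of $\calO^{\el}_{x/}$, whereas the paper uses the full iso-Segal equivalence available for analytic patterns; the concluding application of \cref{lem:inert=iso-criterion} is identical.
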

\begin{proof}
	Apply \cref{lem:inert=iso-criterion} to the unique inert morphism in $\calO^{\el}_{x/}\simeq \bracket{x}^{\circ}$.
\end{proof}

\begin{rem}
	As a consequence of \cref{lem:detect-active} and \cref{cor:elementar-means-cardinality-1}, we see that the algebraic pattern structure on an analytic pattern $(\calO,\bracket{-})$ is uniquely determined from the map of underlying $\infty$-categories $\bracket{-}:\calO \to \Fin_{\ast}$.
\end{rem}

\begin{cor}\label{rem: elementary objects of arity restriction}
	For any integer $k \ge 1$ the subpattern inclusion $\calO^{\le k} \subseteq \calO$ induces an equivalence on elementary objects $\calO^{\le k,\el} \simeq \calO^{\el}$.
\end{cor}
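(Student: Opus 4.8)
The plan is to reduce the statement to the cardinality characterization of elementary objects established in \cref{cor:elementar-means-cardinality-1}, combined with the explicit description of the elementary objects of a subpattern coming from the construction in \cref{prop: faithful subpattern}. The whole argument is essentially bookkeeping, so the main thing to get right is to keep track of which inclusions are \emph{full} and which are merely wide.

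First I would observe that the subpattern inclusion $\calO^{\le k} \mono \calO$ is in fact the inclusion of a full subcategory. By \cref{defn:general-arity-restriction} it is obtained as the pullback of $\Fin^{\le k}_{\ast} \mono \Fin_{\ast}$ along $\bracket{-}$, and $\Fin^{\le k}_{\ast} \mono \Fin_{\ast}$ is full by construction (\cref{ex:Fin<=k-subpattern}); since fully faithfulness is stable under pullback, $\calO^{\le k} \mono \calO$ is fully faithful. Concretely this means the objects of $\calO^{\le k}$ are exactly those $x \in \calO$ with $|x| \le k$, and that for such $x, x'$ every morphism $x \to x'$ of $\calO$ already lies in $\calO^{\le k}$.

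Next I would identify the two elementary subcategories. By \cref{cor:elementar-means-cardinality-1} the subcategory $\calO^{\el} \subseteq \calO^{\int}$ is spanned by the objects with $|x| = 1$, its morphisms being the inert maps between such objects. By the construction of the subpattern structure in \cref{prop: faithful subpattern} we have $\calO^{\le k,\el} = \calO^{\le k,\int} \cap \calO^{\el}$, i.e. $\calO^{\le k,\el}$ consists of the elementary objects of $\calO$ that lie in $\calO^{\le k}$, with morphisms the inert maps that lie in $\calO^{\le k}$. Since $k \ge 1$, every object with $|x| = 1$ satisfies $|x| \le k$ and hence belongs to $\calO^{\le k}$, so the two subcategories have the same objects. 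By the fullness observed in the first step, every inert morphism between cardinality-one objects of $\calO$ automatically lies in $\calO^{\le k}$, so the two subcategories also have the same morphisms. Therefore the induced functor $\calO^{\le k,\el} \to \calO^{\el}$ is an isomorphism, in particular an equivalence.

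The only genuine subtlety — and the closest thing to an obstacle — is not to conflate the \emph{wide} inert inclusion $\calO^{\int} \mono \calO$ with the \emph{full} inclusion $\calO^{\le k} \mono \calO$: one must verify that the morphisms of $\calO^{\le k,\el}$ really exhaust all inert maps between cardinality-one objects, and this is precisely what the fullness of $\calO^{\le k}$ in $\calO$ supplies.
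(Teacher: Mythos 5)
Your proof is correct and follows the same route the paper intends: the corollary is stated as an immediate consequence of \cref{cor:elementar-means-cardinality-1}, since elementary objects are exactly those of cardinality $1$ and hence lie in $\calO^{\le k}$ for $k \ge 1$. Your additional care with the morphism level (fullness of $\calO^{\le k} \mono \calO$, pulled back from the full inclusion $\Fin^{\le k}_{\ast} \mono \Fin_{\ast}$, together with $\calO^{\le k,\el} = \calO^{\le k,\int} \cap \calO^{\el}$ from \cref{prop: faithful subpattern}) is exactly the bookkeeping the paper leaves implicit.
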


\begin{defn}\label{defn:strongly-active}
	For an active morphism $\alpha : x \rightsquigarrow y$ in $\calO$ we say that $\alpha$ is \textit{$k$-strongly active} if it is not an equivalence and $|x| =k$.
\end{defn}

\begin{war}
	The property of being strongly active \textit{can not} be detected at the level of underlying pointed finite sets. This should be contrasted with the definition of maximally active morphisms (see \ref{defn:maximmaly-active-toy}).
\end{war}

\begin{lem}\label{lem:strongly-active-depends-on-active-part}
	For a morphism $\alpha: x \to y$ the following are equivalent:
	\begin{enumerate}
		\item $\alpha$ is $|x|$-strongly active.
		\item $\alpha^{\act}$ is $|x|$-strongly active.
	\end{enumerate}
\end{lem}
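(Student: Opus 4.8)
The plan is to run the whole argument through the inert--active factorization $x \overset{\alpha^{\int}}{\rightarrowtail} \Lambda(\alpha) \overset{\alpha^{\act}}{\rightsquigarrow} y$ and to exploit the observation that the inert part of an \emph{active} morphism is forced to be an equivalence. Granting this observation one obtains $\Lambda(\alpha) \simeq x$, so that $|\Lambda(\alpha)| = |x|$, and moreover $\alpha$ is an equivalence if and only if $\alpha^{\act}$ is. Since, for an active morphism, being $|x|$-strongly active amounts simply to not being an equivalence (the cardinality constraint on the source being automatic in condition (1)), both implications then reduce to transporting activeness and non-invertibility across the factorization.

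For $(1)\Rightarrow(2)$ I would assume $\alpha$ is $|x|$-strongly active, so in particular $\alpha$ is active and not an equivalence. Applying \cref{lem: active cancelation} to $\alpha = \alpha^{\act}\circ\alpha^{\int}$ shows that $\alpha^{\int}$ is active; being simultaneously inert and active it is an equivalence (the intersection of the two classes of a factorization system consists of the equivalences; alternatively $\bracket{\alpha^{\int}}$ is both inert and active in $\Fin_{\ast}$, hence an isomorphism, and one concludes by conservativity of $\bracket{-}^{\int}$). Therefore $|\Lambda(\alpha)| = |x|$, and since $\alpha^{\int}$ is invertible while $\alpha$ is not, $\alpha^{\act}$ is not an equivalence either. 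As $\alpha^{\act}$ is active by construction, it is $|x|$-strongly active.

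For $(2)\Rightarrow(1)$ I would assume $\alpha^{\act}$ is $|x|$-strongly active, i.e. it is active (automatic), not an equivalence, and $|\Lambda(\alpha)| = |x|$. Then $\alpha^{\int}\colon x \rightarrowtail \Lambda(\alpha)$ is an inert morphism with $|x| \le |\Lambda(\alpha)|$, so \cref{lem:inert=iso-criterion} makes it an equivalence. Consequently $\alpha = \alpha^{\act}\circ\alpha^{\int}$ is the composite of an active morphism with an equivalence, hence active (one may also invoke \cref{lem:detect-active}), and it is not an equivalence because $\alpha^{\act}$ is not. Thus $\alpha$ is $|x|$-strongly active.

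The only genuine content is the claim that the inert part of an active morphism is an equivalence, and this is the step I expect to require the most care to phrase cleanly; it is handled either abstractly via orthogonality, or concretely via \cref{lem: active cancelation} together with the conservativity axiom of an analytic pattern. Everything else is bookkeeping with the definition of $|x|$-strongly active, the one mild subtlety being that the cardinality of the source is explicitly part of that definition, which is exactly why the identity $|\Lambda(\alpha)| = |x|$ must be \emph{produced} in $(1)\Rightarrow(2)$ and \emph{consumed} in $(2)\Rightarrow(1)$.
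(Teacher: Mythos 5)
Your proof is correct and takes essentially the same route as the paper's: both directions reduce to showing that $\alpha^{\int}$ is an equivalence and then transporting the strongly-active property across it, with $(2)\Rightarrow(1)$ using exactly the cardinality criterion of \cref{lem:inert=iso-criterion} just as the paper does (implicitly). The only cosmetic difference is in $(1)\Rightarrow(2)$, where the paper simply observes that an active morphism coincides with its own active part by uniqueness of (inert,active) factorizations, whereas you re-derive that fact via \cref{lem: active cancelation} together with the observation that a morphism which is both inert and active is an equivalence; both justifications are valid.
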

\begin{proof}   
    $(1 \Rightarrow 2)$ Suppose $\alpha: x \to y$ is $|x|$-strongly active. Since $\alpha$ is active we have $\alpha = \alpha^{\act}$. In particular $\alpha^{\act}$ is $|x|$-strongly active.
    $(2 \Rightarrow 1)$ Suppose $\alpha^{\act} : \Lambda(x) \to y$ is $|x|$-strongly active so that in particular $|\Lambda(\alpha)| = |x|$. It follows that $\alpha^{\int}$ is an isomorphism so that $\alpha = \alpha^{\act}$. But $\alpha^{\act}$ is $|x|$-strongly active by assumption.
\end{proof}
\begin{lem}\label{lem: strongly active cancelation}
	Let $\alpha : x \rightsquigarrow y$ and $\beta : y \rightsquigarrow z$ be active morphisms in $\calO$ whose composition $\beta \circ \alpha$ is $k$-strongly active for some $k$. Then at least one of $\alpha$ or $\beta$ is $k$-strongly active.
\end{lem}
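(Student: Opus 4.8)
The plan is to run a short case analysis on the morphism $\alpha$, exploiting the rigidity of the definition of $k$-strong activity (\cref{defn:strongly-active}): an active morphism is $k$-strongly active precisely when its source has cardinality $k$ and it fails to be an equivalence. Since the composite $\beta \circ \alpha : x \rightsquigarrow z$ is $k$-strongly active by hypothesis, I already know that $|x| = k$ and that $\beta \circ \alpha$ is not an equivalence, while $\alpha$ and $\beta$ are active by assumption. The only missing information is which of the two constituents witnesses the failure of $\beta \circ \alpha$ to be an equivalence, and the dichotomy below resolves exactly this.

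First I would dispose of the case where $\alpha$ is \emph{not} an equivalence. Here $\alpha : x \rightsquigarrow y$ is active, not an equivalence, and $|x| = k$, so by definition $\alpha$ is itself $k$-strongly active and we are done. It then remains to treat the case where $\alpha$ is an equivalence. Because the structure map $\bracket{-} : \calO \to \Fin_{\ast}$ is a functor it preserves equivalences, so $\bracket{\alpha} : \bracket{x} \simeq \bracket{y}$ and therefore $|y| = |x| = k$. Moreover, equivalences are closed under composition, so if $\beta$ were also an equivalence then $\beta \circ \alpha$ would be one, contradicting the hypothesis; hence $\beta : y \rightsquigarrow z$ is active, not an equivalence, with $|y| = k$, i.e. $\beta$ is $k$-strongly active.

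I do not expect a genuine obstacle here: the statement follows immediately from unwinding \cref{defn:strongly-active} together with two elementary closure properties of equivalences, namely preservation of cardinality under $\bracket{-}$ and closure under composition. The only point requiring a moment's care is the equivalence case, where one must observe that $|y| = k$; this is precisely where functoriality of $\bracket{-}$ enters, transporting the cardinality constraint $|x| = k$ across the equivalence $\alpha$ to the source $y$ of $\beta$. No appeal to conservativity of $\bracket{-}^{\int}$ or to the active-cancellation lemma (\cref{lem: active cancelation}) is needed.
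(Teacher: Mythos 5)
Your proof is correct and follows essentially the same argument as the paper: both split on whether $\alpha$ is an equivalence (equivalently, given $|x|=k$, on whether $\alpha$ is $k$-strongly active), and in the equivalence case transfer $k$-strong activity from $\beta\circ\alpha$ to $\beta$. Your closing remark is also accurate — the paper's appeal to \cref{lem: active cancelation} is redundant here since activity of $\alpha$ is already part of the hypothesis, and your direct verification that $|y|=k$ and that $\beta$ cannot be an equivalence is just an unwound form of the paper's step $\beta \simeq \beta\circ\alpha$.
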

\begin{proof}
	If $\alpha$ is $k$-strongly active we're done. Assume otherwise. Since $\beta \circ \alpha$ is $k$-strongly active we must have $|x|=k$. On the other hand by \cref{lem: active cancelation}, $\alpha$ is active. Since we assumed $\alpha$ is not $k$-strongly active it must be an equivalence. We conclude that $\beta \simeq \beta \circ \alpha$. In particular $\beta$ is $k$-strongly active as required.
\end{proof}

\begin{defn}\label{defn:k-1/2-definition}
	Given an integer $k \ge 1$ we define the \textit{$(k-\frac 12)$-restriction} of $\calO$ to be the wide subcategory $\hl{\calO^{\le k- \frac 12}} \subseteq \calO^{\le k}$ whose morphisms are \textit{not} $k$-strongly active (these are closed under composition by \cref{lem: strongly active cancelation}).
\end{defn}

\begin{lem}\label{lem: O<=k-1/2 subpattern of O^<=k}
	The following statements hold:
	\begin{enumerate}
		\item The wide and replete subcategory $\calO^{\le k- \frac 12} \subseteq \calO^{\le k}$ is an algebraic subpattern and the inclusion restricts to an equivalence on elementary objects $\calO^{\le k- \frac 12,\el} \simeq \calO^{\le k,\el}$.
		\item Tbe full subcategory $\calO^{\le k- 1} \subseteq \calO^{\le k - \frac 12}$ is an algebraic subpattern and the inclusion restricts to an equivalence on elementary objects $\calO^{\le k- 1,\el} \simeq \calO^{\le k -\frac 12 ,\el}$.
	\end{enumerate}
\end{lem}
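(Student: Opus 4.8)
The plan is to verify each inclusion against the appropriate recognition criterion for subpatterns: \cref{prop: faithful subpattern} for the wide (non-full) inclusion in part $(1)$, and its full-subcategory specialization \cref{lem:fully-faithful-subpattern} for part $(2)$. In both cases the equivalence on elementary objects will be read off from \cref{cor:elementar-means-cardinality-1}, which identifies elementary objects with the arity-$1$ objects: since every arity restriction in sight contains all arity-$1$ objects, their elementary objects automatically coincide (cf.\ \cref{rem: elementary objects of arity restriction}). I would prove part $(1)$ first, because part $(2)$ requires that $\calO^{\le k-\frac12}$ already carries a pattern structure.

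For part $(1)$, repleteness is immediate: equivalences are never $k$-strongly active (the definition excludes them), so $\calO^{\le k-\frac12}$ contains every object and every isomorphism of $\calO^{\le k}$. The key preliminary observation, which I would use repeatedly, is that \emph{inert morphisms are never $k$-strongly active}: a morphism that is simultaneously inert and active is an equivalence (the two classes of a factorization system meet exactly in the equivalences), while $k$-strong activity forbids equivalences. This instantly yields condition $(2)$ of \cref{prop: faithful subpattern} (any inert map to an elementary object lies in $\calO^{\le k-\frac12}$) and also shows that the inert part $\alpha^{\int}$ of any morphism stays in $\calO^{\le k-\frac12}$.

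The crux, and the step I expect to demand the most care, is condition $(1)$ of \cref{prop: faithful subpattern} for the \emph{active} part: if $\alpha$ is not $k$-strongly active, then neither is $\alpha^{\act}$. I would argue the contrapositive. Suppose $\alpha^{\act}\colon \Lambda(\alpha)\rightsquigarrow y$ is $k$-strongly active, so $|\Lambda(\alpha)|=k$ and $\alpha^{\act}$ is not an equivalence. Since $\alpha^{\int}$ is inert we have $|\Lambda(\alpha)|\le |x|$, and as $x\in\calO^{\le k}$ we have $|x|\le k$; together these force $|x|=|\Lambda(\alpha)|=k$. Then \cref{lem:inert=iso-criterion} makes $\alpha^{\int}$ an isomorphism, so $\alpha\simeq\alpha^{\act}$ is active, non-invertible, with $|x|=k$, i.e.\ $\alpha$ is $k$-strongly active; equivalently, once $|x|=k$ one may invoke \cref{lem:strongly-active-depends-on-active-part}. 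This verifies the hypotheses of \cref{prop: faithful subpattern}, and the observation that all inert morphisms survive gives $\calO^{\le k-\frac12,\int}=\calO^{\le k,\int}$, whence $\calO^{\le k-\frac12,\el}=\calO^{\le k,\el}$.

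For part $(2)$, the inclusion $\calO^{\le k-1}\subseteq\calO^{\le k-\frac12}$ is full because any morphism between objects of arity $\le k-1$ has source of arity $<k$ and so is never $k$-strongly active; thus no morphisms are lost. I would then check the two conditions of \cref{lem:fully-faithful-subpattern}: condition $(1)$ holds since an elementary target has arity $1\le k-1$ (using $k\ge 2$, the range relevant to the inductive proof), and condition $(2)$ holds since $|\Lambda(\alpha)|\le |x|\le k-1$ whenever $x$ has arity $\le k-1$, so the intermediate object again has arity $\le k-1$. The elementary-object equivalence follows once more from \cref{cor:elementar-means-cardinality-1}, every arity-$1$ object already lying in $\calO^{\le k-1}$.
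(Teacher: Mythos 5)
Your proof is correct and, for part (1), follows the paper's route exactly: the paper likewise verifies the conditions of \cref{prop: faithful subpattern}, disposing of the inert parts by noting that inert morphisms are never strongly active, and handling the active part by citing \cref{lem:strongly-active-depends-on-active-part}, whose content is precisely your contrapositive cardinality argument (yours is in fact slightly more complete, since it also covers the case $|x|<k$, which that lemma, being phrased in terms of $|x|$-strong activity, does not literally address). For part (2) the paper merely declares the statement an immediate consequence of (1) --- implicitly via transitivity of algebraic subpatterns (\cref{cor:subpattern-properties}), using that $\calO^{\le k-1}\subseteq\calO^{\le k}$ is already a subpattern --- whereas you verify \cref{lem:fully-faithful-subpattern} directly; both routes work, and your explicit remark that $k\ge 2$ is needed for the elementary-object equivalence in (2) flags a genuine edge case the paper glosses over.
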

\begin{proof}
	We omit the proof of $(2)$ as it is an an immediate consequence $(1)$. To prove $(1)$ we verify the conditions of \cref{prop: faithful subpattern}. Condition (2) is an immediate since inert morphism are never strongly active. Let $\alpha : x \to y$ be a morphism in $\calO^{\le k - \frac 12}$. We must show that $\alpha^{\int}$ and $\alpha^{\act}$ are not strongly active. For the former this holds by definition. For the latter this follows from \cref{lem:strongly-active-depends-on-active-part}. The claim about elementary objects is a consequence of \cref{cor:elementar-means-cardinality-1}.
\end{proof}

\begin{prop}\label{prop: inclusion of k-1 in k-1/2 is Morita equivalence}
	For every $k \ge 2$ the fully faithful inclusion $\calO^{\le k -1} \mono \calO^{\le k- \frac 12}$ is a Morita equivalence.
\end{prop}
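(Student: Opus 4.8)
The plan is to apply the recognition result for Morita equivalences of full subpatterns, namely \cref{thm: Morita equivalence theorem for faithful subcategories}, to the inclusion $f\colon \calO^{\le k-1} \mono \calO^{\le k-\frac 12}$. That corollary has three hypotheses: that $f$ be a strong Segal morphism, that $f^{\el}$ be an equivalence on elementary objects, and that for every $x \in \calO^{\le k-1}$ the induced functor on active slice cores
\[\big((\calO^{\le k-1})^{\act}_{/x}\big)^{\simeq} \longrightarrow \big((\calO^{\le k-\frac 12})^{\act}_{/x}\big)^{\simeq}\]
be an equivalence of $\infty$-groupoids. The first two are already essentially recorded: by \cref{lem: O<=k-1/2 subpattern of O^<=k}(2) the full subcategory $\calO^{\le k-1} \subseteq \calO^{\le k-\frac 12}$ is an algebraic subpattern whose inclusion restricts to an equivalence on elementary objects, and since an algebraic subpattern inclusion is iso-Segal (hence strong Segal by \cref{rem:different-Segal-notions}), both the strong Segal hypothesis and the elementary-equivalence hypothesis hold immediately.

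The substance of the proof is therefore the third condition, and this is where I would concentrate. Fix $x$ with $|x| \le k-1$. The source groupoid parametrizes active morphisms $\beta\colon w \rightsquigarrow x$ with $|w| \le k-1$, whereas the target parametrizes active $\beta\colon w \rightsquigarrow x$ lying in $\calO^{\le k-\frac 12}$, i.e.\ active morphisms of $\calO^{\le k}$ that are not $k$-strongly active. The key observation is that the target has no genuinely new objects: if $\beta\colon w \rightsquigarrow x$ is active with $|w| = k$, then since cardinality is an invariant of equivalences (the functor $\bracket{-}$ sends an equivalence to an isomorphism in $\Fin_{\ast}$) and $|w| = k > k-1 \ge |x|$, the map $\beta$ cannot be an equivalence; hence $\beta$ is $k$-strongly active (\cref{defn:strongly-active}) and is excluded from $\calO^{\le k-\frac 12}$ by \cref{defn:k-1/2-definition}. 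Consequently every active morphism into $x$ surviving in $\calO^{\le k-\frac 12}$ already has source of cardinality $\le k-1$, so the two active slice categories have the same objects. Because $\calO^{\le k-1}$ is moreover a \emph{full} subcategory of $\calO^{\le k-\frac 12}$, the inclusion of active slices is fully faithful; combined with the essential surjectivity just established, the functor on cores is an equivalence.

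The main (and really the only nontrivial) obstacle is pinning down this third condition, but as indicated it collapses to the clean cardinality dichotomy: an active morphism whose target has cardinality $\le k-1$ and which survives into $\calO^{\le k-\frac 12}$ must have source of cardinality $<k$, after which full faithfulness is automatic from fullness of the inclusion. With all three hypotheses verified, \cref{thm: Morita equivalence theorem for faithful subcategories} yields that $f$ is a Morita equivalence.
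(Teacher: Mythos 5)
Your proposal is correct and follows essentially the same route as the paper: the paper likewise verifies the hypotheses of \cref{thm: Morita equivalence theorem for faithful subcategories}, obtaining condition (1) (and, implicitly, the strong Segal hypothesis) from the subpattern structure, and condition (2) from the same cardinality dichotomy you use --- a morphism into a target of cardinality $<k$ whose source has cardinality $k$ cannot be an equivalence, hence is $k$-strongly active and so excluded from $\calO^{\le k-\frac 12}$. The only cosmetic difference is that the paper unpacks condition (2) for arbitrary morphisms $\alpha$ of $\calO^{\le k-\frac 12}$, arguing by contrapositive that $|\Lambda(\alpha)|<k$ via \cref{lem:inert=iso-criterion}, whereas you argue directly on active morphisms, which matches the literal statement of the corollary.
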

\begin{proof}
	We verify the conditions of \cref{thm: Morita equivalence theorem for faithful subcategories}. Condition (1) is immediate from \cref{rem: elementary objects of arity restriction}. To verify condition (2) we must show that if $\alpha : y \to x$ is a morphism in $\calO$ which is not $k$-strongly active with $|y|\le k$ and $|x| < k$ then $\Lambda(\alpha) < k$. 
	We prove the contrapositive. Suppose  $\Lambda(\alpha) = k$. Since $|y|\le k$ it follows that $\alpha^{\int}$ is an isomorphism so that $\alpha$ is active. But by assumption $|x| < k = |\Lambda(\alpha)| \le |y|$ so $\alpha$ can not be an equivalence and is therefore $k$-strongly active.
\end{proof}

\subsection{From slices to partition complexes}\label{subsect:slices-to-partitions}

In this subsection we study the slices that appear naturally when \cref{thm:Morita-equivalence-from-weakly-initial-slices} is applied to the inclusion $\calO^{\le k-\frac 12} \mono \calO^{\le k}$. We introduce the \textit{quasi-partition category} (\cref{defn:precise-definition-of-quasi-partition-category}) of an active morphism and relate it to these slice categories (\cref{lem:relation-between-J's-and-the-partition-category}). These results  will be used in \cref{subsect:proof-of-arity-restriction} where we prove the arity approximation theorem for analytic patterns (\cref{thm:artiy-restriction-theorem}).

\begin{defn}\label{defn: definition of Js}
	Let $x \in \calO$ with $|x| \ge 1$. We introduce some notation.
	\begin{itemize}
		\item Denote $\hl{\calJ_{\calO,x} \coloneq }\, \calO^{\le |x|-\frac 12} \times_{\calO^{\le |x|}} \calO^{\le |x|}_{x/}$.
		\item Denote by $\hl{\calJ^{\ns}_{\calO,x}} \subseteq \calJ_{\calO,x}$ the full subcategory on objects of the form $(y,\alpha :x \to y)$ such that $\alpha$ is \textbf{not} $|x|$-strongly active.
		\item Denote by $\hl{\calJ^{\nm}_{\calO,x}} \subseteq \calJ_{\calO,x}$ the full subcategory on objects of the form $(y,\alpha :x \to y)$ such that $\alpha$ is \textbf{not} maximally active.
	\end{itemize}
\end{defn}

\begin{rem}\label{rem:identification-of-J-with-slice}
	Observe that the inclusion $\calO^{\le k -\frac 12} \to \calO^{\le k}$ induces a fully faithful functor
	\[ \calO^{\le k - \frac 12}_{x/} \mono \calO^{\le k}_{x/} \times_{\calO^{\le k}} \calO^{\le k-\frac 12} = \calJ_{\calO,x},\]
	which identifies $\calO^{\le |x|-\frac 12}_{x/}$ with the full subcategory $\calJ^{\ns}_{\calO,x} \subseteq \calJ_{\calO,x}$.
\end{rem}

\begin{rem}
	We shall soon show (see \cref{lem:J<k-is-algebraic-subpattern-of-J<=k} and \cref{lem: inclusion of J<=k-1/2 in J<k is Segal and equivalence on elementary}) that 
    $\calJ^{\ns}_{\calO,x} \subseteq \calJ^{\nm}_{\calO,x} \subseteq \calJ_{\calO,x}$
    is a nested inclusion of subpatterns.
\end{rem}

\begin{lem}\label{lem:inert-cancellation-for-maximal}
	Let $\mu: x \to y$ and $\lambda: y \inert z$ be morphisms in $\calO$ with $\lambda$ inert. Suppose that $\lambda \circ \mu$ is active. Then $\lambda \circ \mu$ is maximally active if and only if $\mu$ is maximally active.
\end{lem}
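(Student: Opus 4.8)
The plan is to reduce the statement to an elementary assertion about pointed finite sets by applying the structure map $\bracket{-} : \calO \to \Fin_\ast$, and then to exploit the fact that an inert map of pointed finite sets is injective on any subset that lands away from the basepoint.

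First I would observe that all of the hypotheses and both halves of the conclusion transport to $\Fin_\ast$. Writing $f := \bracket{\mu}$ and $g := \bracket{\lambda}$, the map $g$ is inert because $\bracket{-}$ is a morphism of algebraic patterns and hence preserves inert morphisms; the composite $g \circ f = \bracket{\lambda \circ \mu}$ is active because $\lambda \circ \mu$ is active and activity is detected on underlying pointed sets by \cref{lem:detect-active}; and maximal activity of $\mu$ (resp.\ of $\lambda \circ \mu$) is by definition maximal activity of $f$ (resp.\ of $g \circ f$). Thus it suffices to prove the following statement in $\Fin_\ast$: if $g : B_+ \inert C_+$ is inert and $g \circ f : A_+ \active C_+$ is active, then $g \circ f$ is maximally active if and only if $f$ is maximally active.

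Next I would carry out the set-level argument. Since $g \circ f$ is active, \cref{lem: active cancelation} shows $f$ is active, so $f(A) \subseteq B$ avoids the basepoint. Moreover activity of $g \circ f$ forces $g(f(a)) \neq \ast$ for every $a \in A$, whence $f(A) \subseteq g^{-1}(C)$. But $g$ inert means precisely that $g$ restricts to a bijection $g^{-1}(C) \xrightarrow{\sim} C$; in particular $g$ is injective on $f(A)$. Consequently $|(g \circ f)(A)| = |g(f(A))| = |f(A)|$, and the left-hand side equals $1$ exactly when the right-hand side does. This is precisely the claimed equivalence.

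I do not anticipate a genuine obstacle: the only subtlety is the bookkeeping in the reduction step, where one must confirm that ``inert'' really is preserved by $\bracket{-}$ and that ``maximally active'' is a property of the underlying pointed-set map rather than one requiring the cocartesian refinement (contrast the warning following \cref{defn:strongly-active}, where strong activity is \emph{not} detectable downstairs). Once those points are settled, the core observation---that an inert map preserves the cardinality of a basepoint-avoiding image---makes the result immediate.
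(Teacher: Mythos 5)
Your proof is correct and follows essentially the same route as the paper's: reduce to $\Fin_{\ast}$ via the structure map, then use that inertness of $g$ gives a bijection $g^{-1}(C) \simeq C$, so that $g$ restricts to a bijection $f(A) \simeq (g\circ f)(A)$ once activity of $g \circ f$ forces $f(A) \subseteq g^{-1}(C)$. The paper compresses the reduction into a ``without loss of generality'' and leaves the activity of $f$ implicit, whereas you justify both explicitly (the latter via \cref{lem: active cancelation}); these are minor expository differences, not a different argument.
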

\begin{proof}
	Without loss of generality we may assume that $\calO = \Fin_{\ast}$. We write $f = A_+ \to B_+$ in place of $\mu: x \actarrow y$  and $g: B_+ \inert C_+$ in place of $\lambda: y \inert z$. Since $g$ is inert we have a bijection 
    $g: f(A) \cap g^{-1}(C) \simeq (g \circ f)(A) \cap C$
	Since $g \circ f$ is active we have $(g \circ f)(A) \subseteq C$ and therefore $f(A) \subseteq g^{-1}(C)$. We thus have a bijection
    $g: f(A) \simeq (g \circ f)(A)$
    and in particular $|(g \circ f) (A) |=|f(A)|$.
\end{proof}

\begin{lem}\label{lem:J<k-is-algebraic-subpattern-of-J<=k}
	For every $x \in \calO$ the full subcategory $\calJ^{\nm}_{\calO,x} \subseteq \calJ_{\calO,x}$ is an algebraic subpattern.
\end{lem}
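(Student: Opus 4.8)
The plan is to exhibit $\calJ_{\calO,x}$ as an algebraic pattern and then invoke the full-subcategory criterion \cref{lem:fully-faithful-subpattern}. First I would record that $\calJ_{\calO,x} = \calO^{\le |x|-\frac 12} \times_{\calO^{\le |x|}} \calO^{\le |x|}_{x/}$ carries a canonical algebraic pattern structure: the slice $\calO^{\le |x|}_{x/}$ is an algebraic pattern by \cref{prop:-slice-under-is-a-algebraic-pattern}, the source evaluation $\calO^{\le |x|}_{x/} \to \calO^{\le |x|}$ and the subpattern inclusion $\calO^{\le |x|-\frac 12} \mono \calO^{\le |x|}$ are morphisms of algebraic patterns, and $\AlgPatt$ admits pullbacks computed on underlying $\infty$-categories by \cref{lem: limits of algebraic patterns} (equivalently, this is the slice pattern of \cref{rem:algebraic-pattern-structure-on-slice}). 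In this structure an object is a pair $(y,\alpha\colon x \to y)$, a morphism $(y,\alpha)\to(z,\gamma)$ is a map $g\colon y \to z$ over $x$, and its $(\mathrm{inert},\mathrm{active})$-factorization is inherited from $\calO$, namely $(y,\alpha) \overset{g^{\int}}{\rightarrowtail} (w,\, g^{\int}\circ\alpha) \overset{g^{\act}}{\rightsquigarrow} (z,\gamma)$ with $w = \Lambda(g)$; in particular the intermediate object of a factorization has structure morphism $g^{\int}\circ \alpha$.

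Since $\calJ^{\nm}_{\calO,x} \subseteq \calJ_{\calO,x}$ is full, it then remains to verify the two conditions of \cref{lem:fully-faithful-subpattern}. For condition $(1)$, suppose $(y,\alpha)$ is not maximally active and $(y,\alpha) \inert (e,\beta)$ is inert with $(e,\beta)$ elementary; then the underlying map $\lambda\colon y \inert e$ is inert and $\beta = \lambda\circ\alpha$. If $\beta$ is inactive it is \emph{a fortiori} not maximally active; if $\beta$ is active then \cref{lem:inert-cancellation-for-maximal} gives that $\beta$ is maximally active if and only if $\alpha$ is, so $\beta$ is not maximally active. Either way $(e,\beta)\in \calJ^{\nm}_{\calO,x}$.

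For condition $(2)$, let $\phi\colon(y,\alpha)\to(z,\gamma)$ be a morphism with both endpoints in $\calJ^{\nm}_{\calO,x}$; by the description above its intermediate object is $\Lambda(\phi) = (w,\, g^{\int}\circ\alpha)$. I would then argue that $g^{\int}\circ\alpha$ is not maximally active using only that $\alpha$ is not. If $g^{\int}\circ\alpha$ is inactive we are done; otherwise \cref{lem: active cancelation} forces $\alpha$ to be active, and then \cref{lem:inert-cancellation-for-maximal} (applied with the inert map $g^{\int}$) shows $g^{\int}\circ\alpha$ is maximally active if and only if $\alpha$ is, whence $g^{\int}\circ\alpha$ is not maximally active and $\Lambda(\phi)\in\calJ^{\nm}_{\calO,x}$.

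The only genuinely delicate point is the bookkeeping in the first paragraph: one must be sure that the factorization of a slice morphism $\phi$ really has intermediate object with structure morphism $g^{\int}\circ\alpha$ (i.e. that the inert part of $\phi$ composes with $\alpha$ on the nose), and that this computation is unaffected by passing to the subpattern $\calO^{\le|x|-\frac12}$ (guaranteed since that inclusion is a subpattern, hence closed under factorizations, so $w$ indeed lies in $\calO^{\le|x|-\frac12}$ and $\Lambda(\phi)$ is a genuine object of $\calJ_{\calO,x}$). Once this identification is in hand, the two cancellation lemmas \cref{lem: active cancelation} and \cref{lem:inert-cancellation-for-maximal} reduce both conditions to a one-line case distinction on whether the relevant composite is active.
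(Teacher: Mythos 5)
Your proof is correct and follows essentially the same route as the paper: both verify the two conditions of \cref{lem:fully-faithful-subpattern}, unpack them into the same two commutative triangles in $\calO$ (inert map to an elementary object, and the inert part of a factorization composed with $\alpha$), and close both cases with \cref{lem:inert-cancellation-for-maximal}. If anything, your explicit active/inactive case split is slightly more careful than the paper's compressed wording, which asserts that the relevant composites are ``not active'' where the correct conclusion is just ``not maximally active''; the mathematical substance is identical.
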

\begin{proof}
	We verify the conditions of \cref{lem:fully-faithful-subpattern}. Unpacking definitions and rephrasing in terms of diagrams in $\calO$ reduces to showing that for a commutative triangle in $\calO^{\le k}$
	\[\begin{tikzcd}
		& x \\
		y && e,
		\arrow["\beta"{description}, from=1-2, to=2-3]
		\arrow["\phi"{description}, from=2-1, to=2-3]
		\arrow["\alpha"{description}, from=1-2, to=2-1]
		\end{tikzcd}\]
    the following claims hold
    \begin{enumerate}
        \item if $\phi$ is inert, $\alpha$ is not maximally active and $e$ is elementary, then $\beta$ is not maximally active. 
        \item 
        if $\alpha$ and $\beta$ are not maximally active, then $\phi^{\int} \circ \alpha$ is not maximally active.
    \end{enumerate}
    Claim $(1)$ holds since if $\phi$ is inert and $\alpha$ is not maximally active then by \cref{lem:inert-cancellation-for-maximal} neither is $ \beta= \phi \circ \alpha$.
    For claim $(2)$ note that if neither $\alpha$ nor $\beta$ are maximally active then by \cref{lem:inert-cancellation-for-maximal} $\phi^{\int} \circ \alpha$ is not active.
\end{proof}

\begin{lem}\label{lem: inclusion of J<=k-1/2 in J<k is Segal and equivalence on elementary}
	For every $x \in \calO$ the full subcategory $\calJ^{\ns}_{\calO,x} \subseteq \calJ^{\nm}_{\calO,x}$ is an algebraic subpattern and the inclusion restricts to an equivalence on elementary objects $\calJ^{\ns,\el}_{\calO,x} \simeq \calJ^{\nm,\el}_{\calO,x}$.
\end{lem}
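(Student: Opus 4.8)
The plan is to apply the full-subcategory subpattern criterion of \cref{lem:fully-faithful-subpattern} to the inclusion $\calJ^{\ns}_{\calO,x} \subseteq \calJ^{\nm}_{\calO,x}$, using that $\calJ^{\nm}_{\calO,x} \subseteq \calJ_{\calO,x}$ is already an algebraic subpattern by \cref{lem:J<k-is-algebraic-subpattern-of-J<=k} (so that $\calJ^{\nm}_{\calO,x}$ is an algebraic pattern whose inert/active morphisms and elementary objects are inherited from the slice pattern $\calJ_{\calO,x}$). Before the criterion can be invoked one must check the containment $\calJ^{\ns}_{\calO,x} \subseteq \calJ^{\nm}_{\calO,x}$. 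As both are full subcategories of $\calJ_{\calO,x}$, it suffices to show on objects that a morphism $\alpha: x \to y$ which is not $|x|$-strongly active is also not maximally active; contrapositively, that every maximally active $\alpha$ is $|x|$-strongly active. A maximally active morphism is active by definition, and if it were an equivalence then $\bracket{\alpha}$ would be a bijection, forcing $|\bracket{\alpha}(\bracket{x}^{\circ})| = |x|$ and contradicting $|\bracket{\alpha}(\bracket{x}^{\circ})| = 1$ as soon as $|x| \geq 2$; hence for $|x|\geq 2$ it is active and non-invertible, i.e. $|x|$-strongly active. (These $\calJ$'s are only used for source objects of arity $|x|=k\geq 2$, so I work in that regime; the characterization genuinely degenerates at $|x|=1$, where every active morphism out of $x$ is maximally active.)

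The two conditions of \cref{lem:fully-faithful-subpattern} both reduce, after unwinding the slice-pattern structure, to one observation. I would first record that objects and factorizations of $\calJ_{\calO,x}$ are governed by the underlying morphisms out of $x$: an object is a morphism $\alpha: x\to y$, it is elementary iff $|y|=1$ (by \cref{prop:-slice-under-is-a-algebraic-pattern} together with \cref{cor:elementar-means-cardinality-1}), and the intermediate object of the inert–active factorization of a morphism covering $\phi: y\to y'$ is $(\Lambda(\phi),\phi^{\int}\circ\alpha)$. The key lemma is then that, for a fixed source $x$, a morphism $\alpha: x\to y$ fails to be $|x|$-strongly active precisely when it is either non-active or an equivalence, and that this property is stable under post-composition with inert morphisms: if $\alpha$ is non-active then so is $\phi^{\int}\circ\alpha$ by active cancellation (\cref{lem: active cancelation}), while if $\alpha$ is an equivalence then $\phi^{\int}\circ\alpha$ is inert and inert morphisms are never strongly active.

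Granting this, condition (1) — that an inert morphism $(y,\alpha)\inert(e,\gamma)$ with $(y,\alpha)\in\calJ^{\ns}_{\calO,x}$ and $|e|=1$ lands in $\calJ^{\ns}_{\calO,x}$ — holds because $\gamma=\phi\circ\alpha$ for inert $\phi$, so by the above $\gamma$ is again non-active or inert, hence not $|x|$-strongly active. Condition (2) — that $\Lambda(\beta)=(\Lambda(\phi),\phi^{\int}\circ\alpha)$ lies in $\calJ^{\ns}_{\calO,x}$ whenever both endpoints do — follows verbatim from the same post-composition stability. This establishes that $\calJ^{\ns}_{\calO,x}\subseteq\calJ^{\nm}_{\calO,x}$ is an algebraic subpattern.

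Finally, for the equivalence on elementary objects I would argue that the two subcategories in fact share the same elementary objects. An elementary object of either pattern is a morphism $\gamma: x\to e$ with $|e|=1$, inherited from $\calJ^{\el}_{\calO,x}$. For such $\gamma$, being maximally active is equivalent to being active (the target has a single non-basepoint, so an active $\gamma$ automatically has singleton image), and, since $|x|\geq 2>1=|e|$, being $|x|$-strongly active is also equivalent to being active (an active map to a strictly smaller object cannot be an equivalence). Thus ``not maximally active'' and ``not $|x|$-strongly active'' cut out the \emph{same} full subcategory of $\calJ^{\el}_{\calO,x}$, so $\calJ^{\ns,\el}_{\calO,x}=\calJ^{\nm,\el}_{\calO,x}$ and the inclusion is an equality on elementary objects, a fortiori an equivalence. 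The only genuinely delicate point is the slice–factorization bookkeeping that reduces both subpattern conditions to the post-composition stability statement; once that translation is in place, the cancellation lemmas close the argument immediately.
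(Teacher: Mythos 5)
Your proof is correct and follows essentially the same route as the paper: both verify the two conditions of \cref{lem:fully-faithful-subpattern} by unwinding morphisms of the slice pattern into triangles in $\calO$ and closing the argument with the cancellation lemmas (\cref{lem: active cancelation}, \cref{lem: strongly active cancelation}); your ``non-active or equivalence'' dichotomy, stable under inert post-composition, is a clean repackaging of exactly those arguments, and your equality of elementary subcategories matches the paper's observation that the proof of condition (1) already yields essential surjectivity on elementary objects. Your extra verification that $\calJ^{\ns}_{\calO,x} \subseteq \calJ^{\nm}_{\calO,x}$ holds at all --- valid precisely when $|x| \ge 2$, which is the only regime in which the lemma is invoked --- addresses a point the paper leaves implicit (and which genuinely fails at $|x|=1$, where $\id_x$ is maximally but not strongly active), so it is a welcome precision rather than a deviation.
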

\begin{proof}
	We verify the conditions of \cref{lem:fully-faithful-subpattern}. Unpacking definitions and rephrasing in terms of diagrams in $\calO$ reduces to showing that for ay commutative triangle in $\calO^{\le k}$
    \[\begin{tikzcd}
		& x \\
		y && z
		\arrow["\beta"{description}, from=1-2, to=2-3]
		\arrow["\phi"{description}, from=2-1, to=2-3]
		\arrow["\alpha"{description}, from=1-2, to=2-1]
		\end{tikzcd}\]
    the following claims hold
    \begin{enumerate}
        \item if $\beta$ is not maximally active and $e$ is elementary, then $\beta$ is not strongly active.
        \item 
        if $\alpha$ and $\beta$ are not strongly active, then $\phi^{\int} \circ \alpha$ is not strongly active.
    \end{enumerate}
    Claim $(1)$ hols since if $\beta$ is not maximally active and $e$ is elementary, $\beta$ can not be active. 
    Claim $(2)$ follows from \cref{lem: strongly active cancelation}.
\end{proof}

\begin{prop}\label{prop:J^<=k-1/2-morita-equivalent-to-J^<k}
	For every $x \in \calO$ the fully faithful inclusion $\calJ^{\ns}_{\calO,x} \mono \calJ^{\nm}_{\calO,x}$ is a Morita equivalence of algebraic patterns.
\end{prop}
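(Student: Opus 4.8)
The plan is to deduce this from the faithful-subcategory Morita criterion \cref{thm: Morita equivalence theorem for faithful subcategories} applied to the fully faithful inclusion $f\colon \calJ^{\ns}_{\calO,x} \mono \calJ^{\nm}_{\calO,x}$. By \cref{lem: inclusion of J<=k-1/2 in J<k is Segal and equivalence on elementary} this inclusion is an algebraic subpattern, hence iso-Segal and in particular strong Segal (see \cref{rem:different-Segal-notions}), and it induces an equivalence on elementary objects. This disposes of the strong Segal hypothesis and of condition (1) of \cref{thm: Morita equivalence theorem for faithful subcategories}. It therefore remains to verify condition (2): that for every object $(y,\alpha)\in \calJ^{\ns}_{\calO,x}$ --- i.e. every $\alpha\colon x\to y$ which is not $|x|$-strongly active --- the inclusion induces an equivalence of cores $\big((\calJ^{\ns}_{\calO,x})^{\act}_{/(y,\alpha)}\big)^{\simeq} \to \big((\calJ^{\nm}_{\calO,x})^{\act}_{/(y,\alpha)}\big)^{\simeq}$.

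Since $f$ is fully faithful, both sides are full subgroupoids of the core of active morphisms into $(y,\alpha)$ in $\calJ_{\calO,x}$, the comparison map is automatically fully faithful, and membership in either side is invariant under equivalence. Thus it suffices to show that every object of the right-hand groupoid already lies in the left-hand one. Concretely, I would unwind such an object to a factorization $x\xrightarrow{\beta} w\xrightarrow{\gamma} y$ of $\alpha$ in which $(w,\beta)\in\calJ^{\nm}_{\calO,x}$ (so $\beta$ is not maximally active) and $\gamma$ is an active morphism of $\calJ_{\calO,x}$; the goal is to prove that automatically $\beta$ is not $|x|$-strongly active, so that $(w,\beta)\in\calJ^{\ns}_{\calO,x}$. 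Note that every such $w$ satisfies $|w|\le|x|$.

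The key observation --- and the step I expect to be the main obstacle, since it is easy to overlook and a naive reading invites a counterexample --- is that morphisms of $\calJ_{\calO,x}$ are by construction morphisms of $\calO^{\le|x|-\frac12}$, hence never $|x|$-strongly active. Consequently an active $\gamma$ with $|w|=|x|$ is forced to be an equivalence, while otherwise $|w|<|x|$, and I would split into these two cases. If $\gamma$ is an equivalence then $\beta\simeq\gamma^{-1}\alpha$, so $\beta$ is $|x|$-strongly active if and only if $\alpha$ is (strong activeness is detected on the active part and is stable under composition with equivalences, cf. \cref{lem:strongly-active-depends-on-active-part} and \cref{lem: strongly active cancelation}); since $\alpha$ is not, neither is $\beta$. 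If instead $|w|<|x|$, I claim $\beta$ is not even active: were it active, then $\alpha=\gamma\circ\beta$ would be active by composition (\cref{lem: active cancelation}), hence an equivalence (being active and not $|x|$-strongly active), forcing $\beta$ to be a split monomorphism; but a split monomorphism induces an injection $\bracket{x}\hookrightarrow\bracket{w}$ of underlying pointed finite sets and thus $|x|\le|w|$, contradicting $|w|<|x|$. In either case $\beta$ is not $|x|$-strongly active.

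Having shown that the inclusion of active-slice cores is fully faithful and essentially surjective, hence an equivalence, condition (2) of \cref{thm: Morita equivalence theorem for faithful subcategories} holds for every $(y,\alpha)$, and the proposition follows. The only inputs beyond the cancellation lemmas are the description of active morphisms in the fibre-product pattern $\calJ_{\calO,x}$ and the elementary fact that split monomorphisms of pointed finite sets are injective; I would state the former explicitly, as it is precisely what rules out the spurious strongly-active factorizations that would otherwise break condition (2).
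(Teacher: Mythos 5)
Correct, and essentially the same proof as the paper's: both apply \cref{thm: Morita equivalence theorem for faithful subcategories}, obtaining the strong Segal hypothesis and condition (1) from \cref{lem: inclusion of J<=k-1/2 in J<k is Segal and equivalence on elementary}, and both settle condition (2) by exploiting exactly your ``key observation'' that morphisms of $\calJ_{\calO,x}$ are never $|x|$-strongly active, combined with a retract/split-monomorphism cardinality bound. The only divergence is organizational: the paper phrases condition (2) for arbitrary morphisms (concluding about the middle object of the inert-active factorization) and argues by contradiction after reducing to the case where the composite under $x$ is an identity, whereas you treat active morphisms directly and split into your two cases ($\gamma$ an equivalence, or $|w|<|x|$); the underlying computation is identical.
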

\begin{proof}
	We verify the conditions of \cref{thm: Morita equivalence theorem for faithful subcategories}. Condition (1) follows from \cref{lem: inclusion of J<=k-1/2 in J<k is Segal and equivalence on elementary}. It remains to check condition (2). Rephrasing in terms of diagrams in $\calO$ we arrive at the the following condition
	\begin{itemize}
		\item Whenever we have a commutative triangle in $\calO^{\le |x|}$
		\[\begin{tikzcd}
		& x \\
		y && z
		\arrow["\beta", from=1-2, to=2-3]
		\arrow["\alpha"', from=1-2, to=2-1]
		\arrow["\phi"', from=2-1, to=2-3]
		\end{tikzcd}\]
		in which $\beta$ and $\phi$ are not $|x|$-strongly active. Then $\phi^{\int} \circ \alpha$ is not $|x|$-strongly active.
	\end{itemize}
	Equivalently we must show that if $\phi^{\int}\circ \alpha$ is strongly active then at least one of $\beta$ or $\phi$ are strongly active. Suppose then that $\phi^{\int} \circ \alpha$ is strongly active. In particular since $\phi^{\int}\circ \alpha$ is active and $\beta \simeq \phi \circ \alpha \simeq \phi^{\act} \circ (\phi^{\int} \circ \alpha)$ we deduce that $\beta$ is active. 
    If $\beta$ strongly active we are done. We may therefore assume without loss of generality that $x = z$ and $\beta = \id_x$ in which case we have $\phi \circ \alpha \simeq \id_x$ and we must show that $\phi$ is strongly active. By \cref{lem:strongly-active-depends-on-active-part} it suffices to show this for $\phi^{\act}$. 
    Since $\phi^{\act} \circ (\phi^{\int} \circ \alpha) \simeq \phi \circ \alpha \simeq \id_x$ it follows that if $\phi^{\act}$ is an isomorphism, $\phi^{\int} \circ \alpha$ must be its inverse. But $\phi^{\int} \circ \alpha : x \actarrow \Lambda(\phi)$ is $|x|$-strongly active by assumption. In particular it is not an isomorphism and so neither is $\phi^{\act} : \Lambda(\phi) \actarrow x$. 
    It remains to show that $|\Lambda(\phi)|=|x|$. 		
    Note that $y \in \calO^{\le k}$ so that $|\Lambda(\phi)| \le |y| \le |x|$. But we just showed that $x$ is a retract of $\Lambda(\phi)$ which implies that $|\Lambda(\phi)| \ge |x|$. We conclude that $|\Lambda(\phi)| = |x|$ as required.\qedhere	
\end{proof}

\begin{prop}\label{lem: final double slice lemma}
	For every morphism $\mu : x \to y$ in $\calO$ the fully faithful inclusion 
	\[ (\calJ^{\act}_{\calO,x})_{/\mu} \times_{\calJ^{\act}_{\calO,x}} \calJ^{\nm,\act}_{\calO,x} \mono (\calJ_{\calO,x})_{/\mu} \times_{\calJ_{\calO,x}} \calJ^{\nm}_{\calO,x} \]
	admits a left adjoint.
\end{prop}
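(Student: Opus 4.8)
The plan is to realize this fully faithful inclusion as a \emph{reflective} subcategory inclusion, with the reflection supplied by the (inert, active) factorization system on $\calO$. Write $k := |x|$. Unwinding the definitions, an object of the target $(\calJ_{\calO,x})_{/\mu} \times_{\calJ_{\calO,x}} \calJ^{\nm}_{\calO,x}$ is a triple $(z,\gamma,\psi)$ consisting of a morphism $\gamma\colon x\to z$ that is \emph{not} maximally active and a morphism $\psi\colon z\to y$ that is \emph{not} $k$-strongly active, subject to $\psi\circ\gamma=\mu$ (and $|z|\le k$); an object of the source is the same data with the extra requirement that $\psi$ be \emph{active}. The first thing I would record is that this realizes the source as precisely the full subcategory of the target spanned by the triples with $\psi$ active: given two such objects, any morphism $h$ of the target between them satisfies $\psi'\circ h=\psi$ with $\psi$ active, so $h$ is automatically active by \cref{lem: active cancelation}, hence already a morphism of the source. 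This both reproves full faithfulness and identifies the essential image.

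Next I would construct the pointwise reflection. For a general target object $c=(z,\gamma,\psi)$, factor $\psi$ as $z\overset{\psi^{\int}}{\inert}\Lambda(\psi)\overset{\psi^{\act}}{\active}y$ and set
\[ Lc := \big(\Lambda(\psi),\ \psi^{\int}\circ\gamma,\ \psi^{\act}\big), \qquad \eta_c := \psi^{\int}\colon c\to Lc. \]
I must verify $Lc$ lies in the source. Its structure morphism $\psi^{\act}$ is active by construction; that $\psi^{\int}\circ\gamma$ is not maximally active follows from \cref{lem:inert-cancellation-for-maximal} (if the composite is active it is maximally active iff $\gamma$ is, and $\gamma$ is not; otherwise it is not active at all); and $\psi^{\act}$ is not $k$-strongly active since $|\Lambda(\psi)|\le|z|\le k$, the only dangerous case being $|\Lambda(\psi)|=k$, which by \cref{lem:inert=iso-criterion} forces $\psi^{\int}$ to be an isomorphism, so $\psi$ is active of arity $k$ and not $k$-strongly active, hence an equivalence, whence so is $\psi^{\act}$ (alternatively cite \cref{lem:strongly-active-depends-on-active-part}). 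Finally $\eta_c=\psi^{\int}$ is inert and therefore a valid morphism of $\calJ^{\nm}_{\calO,x}$.

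To conclude I would invoke the standard criterion that a fully faithful functor admits a left adjoint as soon as each object of the target receives such a pointwise unit with the universal property, namely that for every source object $c'=(z',\gamma',\psi')$ precomposition with $\eta_c$ induces an equivalence
\[ \Map(Lc,\,c')\ \xrightarrow{\ \sim\ }\ \Map(c,\,c'). \]
Both sides are spaces of morphisms of $\calO$ over the prescribed data: $\Map(c,c')$ consists of $h\colon z\to z'$ with $\psi'\circ h=\psi$ and $h\circ\gamma=\gamma'$, while $\Map(Lc,c')$ consists of $\bar h\colon\Lambda(\psi)\to z'$ with $\psi'\circ\bar h=\psi^{\act}$ and $\bar h\circ(\psi^{\int}\circ\gamma)=\gamma'$, and the comparison sends $\bar h\mapsto\bar h\circ\psi^{\int}$. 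Since $\psi^{\int}$ is inert and $\psi'$ is active, orthogonality of the (inert, active) factorization system makes the space of diagonal lifts of the square $\psi'\circ h=\psi=\psi^{\act}\circ\psi^{\int}$ contractible, which is exactly the asserted equivalence; the two $\gamma$-compatibilities match because $\bar h\circ\psi^{\int}=h$.

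The main obstacle is to ensure that the bijection produced by orthogonality respects the constraint defining $\calJ^{\nm}_{\calO,x}$, i.e. that $h$ is not $k$-strongly active precisely when $\bar h$ is not. This is the crux, and it reduces to the clean observation that, because $\psi^{\int}$ is inert, $h=\bar h\circ\psi^{\int}$ is $k$-strongly active iff $\bar h$ is: if either of $h,\bar h$ is active then \cref{lem: active cancelation} together with $\calO^{\int}\cap\calO^{\act}=$ equivalences forces $\psi^{\int}$ to be an isomorphism, whence $|z|=|\Lambda(\psi)|$ and the cardinality and non-equivalence conditions transfer between $h$ and $\bar h$ verbatim (using \cref{lem:inert=iso-criterion}). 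Thus the two mapping spaces coincide as subspaces of the $\calO$-level lifting space, the universal property holds, and the inclusion admits the left adjoint $L$.
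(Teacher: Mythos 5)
Your proof is correct, and at its core it is the paper's construction: the left adjoint is the same pointwise reflection $(z,\gamma,\psi)\mapsto(\Lambda(\psi),\,\psi^{\int}\circ\gamma,\,\psi^{\act})$ with unit $\psi^{\int}$, and you verify that this lands in the subcategory using exactly the two lemmas the paper uses (\cref{lem:inert-cancellation-for-maximal} for ``not maximally active'' and \cref{lem:strongly-active-depends-on-active-part} for ``not strongly active''). The difference is one of packaging: the paper never checks the universal property by hand; it observes that $\calJ^{\nm}_{\calO,x}\subseteq\calJ_{\calO,x}$ is an algebraic subpattern (\cref{lem:J<k-is-algebraic-subpattern-of-J<=k}), hence closed under (inert,active) factorization, and then invokes the dual of the general appendix result \cref{prop:factorization-adjoint-on-comma-category} applied inside the pattern $\calJ_{\calO,x}$, where all morphisms are already non-strongly-active, so no transfer question ever arises. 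Because you instead run the orthogonality argument in $\calO$ itself, you must additionally check that the correspondence $\bar h\mapsto\bar h\circ\psi^{\int}$ preserves the non-strongly-active constraint; you correctly flag this as the crux, and the biconditional you need is true, but your justification is stated too strongly: if $\bar h$ is merely active, nothing forces $\psi^{\int}$ to be an isomorphism. The correct case analysis is: if $h$ is active, then $\psi^{\int}$ is active by \cref{lem: active cancelation}, hence an equivalence; if $\bar h$ is $|x|$-strongly active, then $|\Lambda(\psi)|=|x|\ge|z|$ and \cref{lem:inert=iso-criterion} forces $\psi^{\int}$ to be an isomorphism; and if $\bar h$ is active but $\psi^{\int}$ is not invertible, then $|\Lambda(\psi)|<|x|$, so $\bar h$ is not strongly active while $h$ is not even active. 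In all cases strong activity of $h$ and of $\bar h$ agree, which completes your argument. What the paper's route buys is that this bookkeeping is absorbed once and for all into the subpattern formalism; what yours buys is a self-contained proof that bypasses the appendix machinery.
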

\begin{proof}
	By \cref{lem:J<k-is-algebraic-subpattern-of-J<=k}, 
	$\calJ^{\nm}_{\calO,x} \subseteq \calJ_{\calO,x}$ is an algebraic subpattern and is thus in particular closed under inert-active factorizations. We apply the dual version of \cref{prop:factorization-adjoint-on-comma-category} to the inclusion $\calJ^{\nm}_{\calO,x} \mono \calJ_{\calO,x}$ at $(x,\mu:x \to y) \in \calJ_{\calO,x}$.
    After unpacking definitions we are left to show that whenever we have a commutative triangle in $\calO^{\le |x|}$
    \[\begin{tikzcd}
		& x \\
		z && y
		\arrow["\mu", from=1-2, to=2-3]
		\arrow["\alpha"', from=1-2, to=2-1]
		\arrow["\varphi"', from=2-1, to=2-3]
		\end{tikzcd}\]
  in which $\alpha$ is not maximally active and $\varphi$ is not $|x|$-strongly active. Then: $(a)$ $\varphi^{\act}$ is not strongly active, $(b)$ $\varphi^{\int} \circ \alpha$ is not maximally active.
  Claim $(a)$ follows from \cref{lem:strongly-active-depends-on-active-part}. Claim $(b)$ follows from \cref{lem:inert-cancellation-for-maximal}.
\end{proof}

\begin{lem}\label{lem:relation-between-J's-and-the-partition-category}
	Let $\mu : x \rightsquigarrow z$ be a maximally active morphism in $\calO$. Then there's a canonical equivalence
	\[(\calJ^{\act}_{\calO,x})_{/\mu} \times_{\calJ^{\act}_{\calO,x}} \calJ^{\nm,\act}_{\calO,x} \simeq \QPart_{\calO}(\mu) \]
\end{lem}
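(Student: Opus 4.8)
The plan is to unpack both sides explicitly as data living over the factorization category $\Fact_{\calO}(\mu) = (\calO^{\act}_{x/})_{/\mu}$ and to exhibit a comparison functor which I then check is fully faithful and essentially surjective onto $\QPart_{\calO}(\mu)$. Regarding $\mu$ as the object $(z,\mu)$ of $\calJ^{\act}_{\calO,x}$, an object of the left hand side is a pair consisting of an object $(y,\alpha : x \to y)$ of $\calJ^{\nm,\act}_{\calO,x}$ (so $\alpha$ is not maximally active) together with an active morphism $\gamma : (y,\alpha) \to (z,\mu)$ in $\calJ^{\act}_{\calO,x}$. Since every morphism of $\calJ_{\calO,x}$ is by construction not $|x|$-strongly active, such a $\gamma : y \active z$ is an active morphism with $\gamma \circ \alpha = \mu$ which is not $|x|$-strongly active. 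By \cref{lem: active cancelation}, the activeness of $\mu = \gamma \circ \alpha$ forces $\alpha$ to be active as well, so the pair determines a genuine active factorization of $\mu$ with first leg $\alpha$ and second leg $\gamma$. Sending the pair to this factorization defines the functor to $\Fact_{\calO}(\mu)$ that I will analyze.

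For essential surjectivity and the identification of the image, I would verify the equivalence of conditions
\[ \gamma \text{ is not } |x|\text{-strongly active} \iff |y| < |x|, \]
under the standing hypotheses that $\alpha$ is not maximally active and $\mu$ is maximally active. Since $|y| \le |x|$ always holds in $\calO^{\le |x|}$, a morphism $\gamma : y \active z$ fails to be $|x|$-strongly active precisely when $|y| < |x|$ or $\gamma$ is an equivalence. The only genuinely non-formal point of the proof is that the second alternative cannot occur: if $\gamma$ were an equivalence then its underlying map $\bracket{\gamma}$ would be a bijection away from the basepoint, so $\bracket{\mu}$ and $\bracket{\alpha}$ would have images of the same cardinality, and maximal activeness of $\mu$ would force $\alpha$ to be maximally active, contradicting our hypothesis. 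Hence the image of the functor consists exactly of those factorizations with $\alpha$ not maximally active and $|y| < |x|$, which is by definition $\QPart_{\calO}(\mu)$.

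It then remains to check full faithfulness onto this image. A morphism of the left hand side is an active $\delta : y \to y'$ compatible with the structure maps to $x$ and $z$ which is \emph{moreover} not $|x|$-strongly active, whereas a morphism in $\Fact_{\calO}(\mu)$ only requires the former. But on objects of $\QPart_{\calO}(\mu)$ we have $|y| < |x|$, so any such $\delta$ has source of cardinality strictly smaller than $|x|$ and is therefore automatically not $|x|$-strongly active; the extra constraint is vacuous on the image. Thus the functor is faithful everywhere and full onto $\QPart_{\calO}(\mu)$, which together with the previous paragraph yields the asserted canonical equivalence $(\calJ^{\act}_{\calO,x})_{/\mu} \times_{\calJ^{\act}_{\calO,x}} \calJ^{\nm,\act}_{\calO,x} \simeq \QPart_{\calO}(\mu)$.

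The main obstacle I anticipate is almost entirely bookkeeping: correctly translating the iterated slice-and-pullback defining the left hand side into the concrete triple $(\alpha,\gamma)$, and keeping straight which of the conditions ``active'', ``not maximally active'', and ``not $|x|$-strongly active'' are imposed on objects versus on morphisms (the last being built into $\calJ_{\calO,x}$ itself). Once this is set up correctly, the substantive content is exactly the two observations isolated above — that $\alpha$ is active for free by \cref{lem: active cancelation}, and that $\gamma$ can never be an equivalence given $\alpha$ is not maximally active while $\mu$ is — and this is where I would concentrate the argument.
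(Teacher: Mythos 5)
Your proposal is correct and takes essentially the same approach as the paper: both identify the left-hand side with a faithful subcategory of $\Fact_{\calO}(\mu)$ whose objects have $\alpha$ not maximally active and second leg not $|x|$-strongly active, both show the key non-formal point that the second leg cannot be an equivalence (else $\alpha$ would be maximally active along with $\mu$), hence $|y|<|x|$, and both conclude that the morphism-level constraint is then vacuous, so the inclusion is fully faithful with image $\QPart_{\calO}(\mu)$. The only cosmetic difference is that you cite \cref{lem: active cancelation} explicitly to get activeness of $\alpha$, which the paper leaves implicit in its diagrams.
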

\begin{proof}
	Unpacking the definitions we see that there's a natural faithful inclusion,
	\[(\calJ^{\act}_{\calO,x})_{/\mu} \times_{\calJ^{\act}_{\calO,x}} \calJ^{\nm,\act}_{\calO,x} \mono \calO^{\act}_{x//\mu} =: \Fact_{\calO}(\mu),\]
	which identifies the category on the left with a faithful subcategory of $\Fact_{\calO}(\mu)$ whose objects and morphisms are given as follows:
	\begin{itemize}
		\item \textit{Objects} are diagrams:
				\[\begin{tikzcd}
				& y \\
				x && z
				\arrow["\alpha"{description}, from=2-1,squiggly, to=1-2]
				\arrow["\beta"{description},squiggly, from=1-2, to=2-3]
				\arrow["\mu"{description}, squiggly, from=2-1, to=2-3]
				\end{tikzcd}\]
			in which $\alpha$ is not maximally active and $\beta$ is not $|x|$-strongly active.
		\item \textit{Morphisms} are diagrams:
		\[\begin{tikzcd}
		& x \\
		y && {y^{\prime}} \\
		& z
		\arrow[squiggly, from=1-2, to=2-1]
		\arrow[squiggly, from=2-1, to=3-2]
		\arrow[squiggly, from=1-2, to=2-3]
		\arrow[squiggly, from=2-3, to=3-2]
		\arrow["\phi"{description}, squiggly, from=2-1, to=2-3]
		\end{tikzcd}\]
		in which $\phi$ is not $|x|$-strongly active. 
	\end{itemize} 
	
	We claim that the condition on objects implies the a priori stronger condition that $|y|<|x|$. Indeed by assumption $\beta : y \rightsquigarrow z$ is not strongly active and therefore $|y| = |x|$ can only happen if $\beta$ is an equivalence. This can never happen though as it would imply that $\mu \simeq \beta \circ \alpha \simeq \alpha$ which is impossible since by assumption $\mu$ is maximally active whereas $\alpha$ is not.
	Finally we observe that $|y|<|x|$ always holds for objects in the essential image, and since the condition on morphisms holds automatically the inclusion is fully faithful. 
\end{proof}

\subsection{Proof of the airty approximation theorem}\label{subsect:proof-of-arity-restriction}

In this subsection we prove the Arity approximation theorem for analytic patterns (\cref{thm:artiy-restriction-theorem}). We begin with the special case $l=1$.

\begin{prop}\label{prop:arity-restriction-1-step}
	Let $\calO$ be an analytic pattern
	For every $k \ge 2$ the fully faithful inclusion 
	$\calO^{\le k-1} \mono \calO^{\le k}$
	is a Morita $\qsig{\calO}(k-1,k)$-equivalence.
\end{prop}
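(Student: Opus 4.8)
The plan is to factor the inclusion as $\calO^{\le k-1} \mono \calO^{\le k-\frac 12} \mono \calO^{\le k}$ and treat the two stages separately. The first inclusion is a Morita equivalence by \cref{prop: inclusion of k-1 in k-1/2 is Morita equivalence}, hence in particular a Morita $\qsig{\calO}(k-1,k)$-equivalence; since composites of Morita $n$-equivalences are again Morita $n$-equivalences, it remains to show that $\calO^{\le k-\frac 12} \mono \calO^{\le k}$ is a Morita $\qsig{\calO}(k-1,k)$-equivalence. For this I would apply \cref{thm:Morita-equivalence-from-weakly-initial-slices} with $n = \qsig{\calO}(k-1,k)$. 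The inclusion is a subpattern inclusion by \cref{lem: O<=k-1/2 subpattern of O^<=k}, hence iso-Segal and in particular strong Segal, and the same lemma shows it restricts to an equivalence on elementary objects, so condition (1) of the theorem holds. The work is therefore to verify that the inclusion is $\mrm{BC}_n$, which by \cref{lem:when-strong-Segal-is-BCn} reduces, for each $y \in \calO^{\le k}$, to exhibiting an $x$ with $f(x)\simeq y$ for which $(x,\id_y)$ is weakly $n$-initial in $\calO^{\le k-\frac 12}\times_{\calO^{\le k}}\calO^{\le k}_{y/}$ (the second alternative of \cref{lem:when-strong-Segal-is-BCn}).

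For $y$ with $|y| < k$ this is immediate: since $|y| < k$ no morphism out of $y$ can be $k$-strongly active, so every morphism $y \to z$ already lies in $\calO^{\le k-\frac12}$, and therefore $(y,\id_y)$ is an \emph{initial} object of $\calO^{\le k-\frac 12}\times_{\calO^{\le k}}\calO^{\le k}_{y/}$, hence weakly $n$-initial. The substance of the argument is the case $|y|=k$, where the relevant slice is exactly $\calJ_{\calO,y} = \calO^{\le k-\frac 12}\times_{\calO^{\le k}}\calO^{\le k}_{y/}$. Here I would prove that the subpattern inclusion $\calJ^{\ns}_{\calO,y}\mono\calJ_{\calO,y}$ is weakly $n$-initial, and then transport the object $(y,\id_y)$ of $\calJ^{\ns}_{\calO,y} \simeq \calO^{\le k-\frac 12}_{y/}$ (which is initial there, via \cref{rem:identification-of-J-with-slice}, and hence weakly $n$-initial) forward along it using \cref{lem: weakly initial morphisms preserve weakly initial objects}. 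This yields the required weak $n$-initiality of $(y,\id_y)$ in $\calJ_{\calO,y}$.

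To see that $\calJ^{\ns}_{\calO,y}\mono\calJ_{\calO,y}$ is weakly $n$-initial I would factor it through $\calJ^{\nm}_{\calO,y}$. The inclusion $\calJ^{\ns}_{\calO,y}\mono\calJ^{\nm}_{\calO,y}$ is a Morita equivalence by \cref{prop:J^<=k-1/2-morita-equivalent-to-J^<k}, hence weakly initial by \cref{lem:fully-faithful-restriction-implies-weakly-initial}; as $\calJ^{\nm}_{\calO,y}\mono\calJ_{\calO,y}$ is a (subpattern, hence Segal) morphism, \cref{lem: cancellation for weakly initial} reduces the claim to showing that $\calJ^{\nm}_{\calO,y}\mono\calJ_{\calO,y}$ is weakly $n$-initial. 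Being a morphism of algebraic patterns, it suffices by criterion (4) of \cref{prop: Quillen A} to check that for every $\mu \in \calJ_{\calO,y}$ the realization of $(\calJ_{\calO,y})_{/\mu}\times_{\calJ_{\calO,y}}\calJ^{\nm}_{\calO,y}$ is $n$-connected. If $\mu$ is not maximally active then $\mu \in \calJ^{\nm}_{\calO,y}$ and $(\mu,\id_\mu)$ is terminal in this comma category, so its realization is contractible. If $\mu$ is maximally active then by \cref{lem: final double slice lemma} the active part includes via a left adjoint, hence finally, inducing an equivalence on realizations, and by \cref{lem:relation-between-J's-and-the-partition-category} this active part is $\QPart_{\calO}(\mu)$; thus the realization is $\qpart{\calO}{\mu}$, which is $\qsig{\calO}(k-1,k)$-connected by the definition of $\qsig{\calO}$ (as $\mu$ is maximally active with $|y|=k$). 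This gives the connectivity with $n = \qsig{\calO}(k-1,k)$ and completes the verification.

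The main obstacle is the bookkeeping in the $|y|=k$ case: keeping the three classes of morphisms (inert, not $k$-strongly active, not maximally active) straight, and correctly reducing the $\mrm{BC}_n$ condition first to a statement about the single object $(y,\id_y)$ and then, through the finality in \cref{lem: final double slice lemma} and the identification \cref{lem:relation-between-J's-and-the-partition-category}, to the connectivity of the quasi-partition complexes $\qpart{\calO}{\mu}$. Since the genuinely technical inputs (the subpattern structures on the $\calJ$'s, the adjunction of \cref{lem: final double slice lemma}, and the identification with $\QPart_{\calO}(\mu)$) are already established, the remaining proof is essentially an assembly of these facts.
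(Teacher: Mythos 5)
Your proof is correct and follows essentially the same route as the paper: the same factorization through $\calO^{\le k-\frac 12}$ using \cref{prop: inclusion of k-1 in k-1/2 is Morita equivalence}, the same application of \cref{thm:Morita-equivalence-from-weakly-initial-slices}, and the same treatment of the $|y|=k$ case via the chain $\calJ^{\ns}_{\calO,y}\subseteq\calJ^{\nm}_{\calO,y}\subseteq\calJ_{\calO,y}$, \cref{lem: final double slice lemma}, \cref{lem:relation-between-J's-and-the-partition-category}, and the connectivity bound defining $\qsig{\calO}(k-1,k)$. The only divergence is the case $|y|<k$, where you verify alternative (b) of \cref{lem:when-strong-Segal-is-BCn} by observing that $(y,\id_y)$ is genuinely initial in $\calO^{\le k-\frac 12}\times_{\calO^{\le k}}\calO^{\le k}_{y/}$ (no morphism whose source has cardinality $<k$ can be $k$-strongly active), whereas the paper verifies alternative (a) by producing a right adjoint to the inert-slice inclusion via \cref{prop:factorization-adjoint-on-comma-category}; both arguments are valid, and yours is marginally more direct since it bypasses the appendix criterion.
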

\begin{proof}
	The inclusion $\calO^{\le k-1}  \to \calO^{\le k- \frac 12}$ is a Morita equivalence by \cref{prop: inclusion of k-1 in k-1/2 is Morita equivalence}. It thus suffices to show that the inclusion $\calO^{\le k- \frac 12} \to \calO^{\le k}$ is a Morita $\qsig{\calO}(k-1,k)$-equivalence. To show this we verify the conditions of \cref{thm:Morita-equivalence-from-weakly-initial-slices}. Conditions (1) and (2) follow from \cref{lem: O<=k-1/2 subpattern of O^<=k} and \cref{cor:subpattern-properties}. It remains to verify condition (3) for every $x \in \calO^{\le k}$. Specifically we show that when $|x| <k$, \ref{thm: Morita equivalence from weakly initial slices - condition 3.a.} is satisfied and when $|x|=k$, \ref{thm: Morita equivalence from weakly initial slices - condition 3.b.} is satisfied.
	\begin{itemize}
		\item \textit{Case $|x|<k$:} We claim that the fully faithful inclusion 
		\[ \calO^{\le k-\frac 12,\int} \times_{\calO^{\le k,\int}} \calO^{\le k,\int}_{x/} \mono \calO^{\le k-\frac 12} \times_{\calO^{\le k}} \calO^{\le k}_{x/} \]
		admits a right adjoint. To show this it suffices to verify the criterion of \cref{prop:factorization-adjoint-on-comma-category}. Since $\calO^{\le k -\frac 12} \subseteq \calO^{\le k}$ is an algebraic subpattern it is closed under inert-active factorizations. It thus suffices to show the second condition. Specifically we must show that if $\alpha: x \to y$ is any morphism in $\calO^{\le k}$ then $\alpha^{\act} : \Lambda(\alpha) \to y$ is not strongly active. But by assumption we have $|x| < k$ so that $|\Lambda(\alpha)|\le |x| <k$ and therefore any morphism with source $\Lambda(\alpha)$ is automatically not strongly active. 
		
		\item \textit{Case $|x|=k$:} In order to verify \ref{thm: Morita equivalence from weakly initial slices - condition 3.b.} we must show that $(x, \id_x) \in \calJ_{\calO,x} \coloneq   \calO^{\le k-\frac 12} \times_{\calO^{\le k}} \calO^{\le k}_{x/}$ is weakly $\qsig{\calO}(k-1,k)$-initial. Note that $(x,\id_x)$ is initial when considered in the full subcategory $\calJ^{\ns}_{\calO,x} \subseteq \calJ_{\calO,x}$, indeed by \cref{rem:identification-of-J-with-slice} there's a canonical equivalence $\calJ^{\ns}_{\calO,x} \simeq \calO^{\le k -\frac 12}_{x/}$ where the claim becomes obvious. It's therefore sufficient to show that the inclusion $\calJ^{\ns}_{\calO,x} \mono \calJ_{\calO,x}$ is weakly $\qsig{\calO}(k-1,k)$-initial. Recall that this inclusion factors as a composition of fully faithful inclusions
		\[\calJ^{\ns}_{\calO,x} \mono \calJ^{\nm}_{\calO,x} \mono \calJ_{\calO,x} \]
		The first map is a Morita equivalence by \cref{prop:J^<=k-1/2-morita-equivalent-to-J^<k} and thus in particular weakly initial (see \cref{lem:fully-faithful-restriction-implies-weakly-initial}). By \cref{lem:J<k-is-algebraic-subpattern-of-J<=k} the second map is an inclusion of an algebraic subpattern and is therefore a Segal morphism (see \cref{cor:subpattern-properties}). It thus suffices by \cref{lem: cancellation for weakly initial} to show that the second map is weakly $\qsig{\calO}(k-1,k)$-initial. 
		
		We claim that the fully faithful inclusion $\calJ^{\nm}_{\calO,x} \mono \calJ_{\calO,x}$ is in fact $\qsig{\calO}(k-1,k)$-initial. To prove this it suffices by \cref{prop: Quillen A} to show that for every $\mu \in \calJ_{\calO,x}$ the space $\big| (\calJ_{\calO,x})_{/\mu} \times_{\calJ_{\calO,x}} \calJ^{\nm}_{\calO,x}\big|$ is $\qsig{\calO}(k-1,k)$-connected.

		When $\mu \in \calJ^{\nm}_{\calO,x}$ this is clear. Indeed the inclusion $\calJ^{\nm}_{\calO,x} \mono \calJ_{\calO,x}$ is fully faithful so that we have a canonical equivalence
		\[(\calJ^{\nm}_{\calO,x})_{/\mu} \simeq (\calJ_{\calO,x})_{/\mu} \times_{\calJ_{\calO,x}} \calJ^{\nm}_{\calO,x}\]
		But since $\mu \overset{=}{\to} \mu \in (\calJ^{\nm}_{\calO,x})_{/\mu}$ is terminal the space $\big|(\calJ^{\nm}_{\calO,x})_{/\mu} \big|$ is contractible.
		
		Suppose $\mu \notin \calJ^{\ns}_{\calO,x}$, i.e. that $\mu$ is maximally active. By \cref{lem: final double slice lemma} the fully faithful inclusion,
		\[(\calJ^{\act}_{\calO,x})_{/\mu} \times_{\calJ^{\act}_{\calO,x}} \calJ^{\nm,\act}_{\calO,x} \mono  (\calJ_{\calO,x})_{/\mu} \times_{\calJ_{\calO,x}} \calJ^{\nm}_{\calO,x},\]
		admits a left adjoint and is thus in particular initial. By \cref{lem:relation-between-J's-and-the-partition-category} there's an equivalence
		\[(\calJ^{\act}_{\calO,x})_{/\mu} \times_{\calJ^{\act}_{\calO,x}} \calJ^{\nm,\act}_{\calO,x} \simeq \QPart_{\calO}(\mu) \]
		Passing to realizations we deduce an equivalence:
		\[\qpart{\calO}{\mu} \coloneq  \big|\QPart_{\calO}(\mu) \big| \simeq \big|(\calJ^{\act}_{\calO,x})_{/\mu} \times_{\calJ^{\act}_{\calO,x}} \calJ^{\nm,\act}_{\calO,x}\big|\]
		By definition $\qpart{\calO}{\mu}$ is at least $\qsig{\calO}(k-1,k)$-connected so we're done.
	\end{itemize}
\end{proof}

We are finally in a position to prove \cref{thm:artiy-restriction-theorem}.

\begin{proof}[Proof of \cref{thm:artiy-restriction-theorem}]
	First we prove the statement for $l < \infty$ by induction on $l$. The base case $l=0$ is trivial. For the induction step consider the nested inclusions
    $\calO^{\le k} \mono \calO^{\le k+l} \mono \calO^{\le k+l+1}$.
    The first map is a Morita $\qsig{\calO}(k,k+l)$-equivalence by the induction hypothesis. The second map is a Morita $ \qsig{\calO}(k+l,k+l+1)$-equivalence by \cref{prop:arity-restriction-1-step}. The composite is therefore a Morita $\qsig{\calO}(k,k+l+1)$-equivalence since by definition $\qsig{\calO}(k,k+l+1)=\min\big\{\qsig{\calO}(k,k+l),\qsig{\calO}(k+l,k+l+1) \big\}$.
	We now prove the remaining case $l=\infty$. The inclusion $\calO^{\le k} \mono \calO$ is the filtered colimit over $l \in \mathbb{N}$ of the inclusions $\calO^{\le k} \mono \calO^{\le k+l}$. Consider the nested inclusions
    $\calO^{\le k} \mono \calO^{\le k+l} \mono \calO$.
    Passing to $\qsig{\calO}(k,l)$-truncated Segal objects and taking the limit as $l \to \infty$ gives
	\[\Seg_{\calO}(\calS^{\le \qsig{\calO}(k,\infty)}) \longrightarrow \underset{l \to \infty}{\lim} \Seg_{\calO^{\le k+l}}(\calS^{\le \qsig{\calO}(k,\infty)}) \longrightarrow \Seg_{\calO^{\le k}}(\calS^{\le \qsig{\calO}(k,\infty)}).\] 
	Observe that $\qsig{\calO}(k,\infty)= \underset{l\ge 0 }{\inf}\{\qsig{\calO}(k,k+l)\}$ and thus the statement for $l < \infty$ (which we already proved) implies that the second map is an equivalence. The first map is an equivalence by \cref{cor:Segal-objects-for-filtered-colimit}. We conclude that $\calO^{\le k} \mono \calO$ is a Morita $\qsig{\calO}(k,\infty)$-Morita equivalence.
\end{proof}

	\section{Arity approximation of \texorpdfstring{$\infty$-}{infinity }operads}\label{sect:arity-approximation-operads}

In this section we prove the arity approximation theorem for $\infty$-operads.

\begin{thm}[Arity Approximation for $\infty$-Operads]\label{thm:arity-approximation-operads}
	Let $\calO$ be an $\infty$-operad and let $1 \le k \le k+l \le \infty$. 
	For every complete $(\sig{\calO}(k,k+l)+1,1)$-category $\calC$, restriction induces an equivalence of $\infty$-categories
	\[ \Mon^{\le k +l}_{\calO}(\calC) \simeq \Mon^{\le k}_{\calO}(\calC) \]
\end{thm}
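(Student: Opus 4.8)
The plan is to reduce to the analytic pattern version (\cref{thm:artiy-restriction-theorem}) and then replace the quasi-partition connectivity $\qsig{\calO}$ by the partition connectivity $\sig{\calO}$. By \cref{ex:operads-are-analytic-patterns} the $\infty$-operad $\calO$ carries a canonical analytic pattern structure, for which $\calO$-monoids in $\calC$ are exactly $\calO$-Segal objects and, more generally, $\Mon^{\le m}_{\calO}(\calC) = \Seg_{\calO^{\le m}}(\calC)$ for the arity restriction $\calO^{\le m} \subseteq \calO$. Applying \cref{thm:artiy-restriction-theorem} shows that the inclusion $f \colon \calO^{\le k} \mono \calO^{\le k+l}$ is a Morita $\qsig{\calO}(k,k+l)$-equivalence. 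Since a complete $\infty$-category admits all small limits it is in particular $f$-complete, so \cref{prop:morita-equivalence}~(3) applies and yields, for every complete $(\qsig{\calO}(k,k+l)+1,1)$-category $\calC$, an equivalence $f^{\ast} \colon \Seg_{\calO^{\le k+l}}(\calC) \simeq \Seg_{\calO^{\le k}}(\calC)$, that is $\Mon^{\le k+l}_{\calO}(\calC) \simeq \Mon^{\le k}_{\calO}(\calC)$.

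It therefore remains to prove the identity $\qsig{\calO}(k,k+l) = \sig{\calO}(k,k+l)$ for $\infty$-operads, after which the displayed equivalence holds for exactly the $(\sig{\calO}(k,k+l)+1,1)$-categories demanded in the statement. Unwinding both definitions, this reduces to a combinatorial comparison: for each multi-morphism $\mu \colon x \active z$ of arity in the range $(k,k+l]$, I must show that the quasi-partition complex $\qpart{\calO}{\mu} = |\QPart_{\calO}(\mu)|$ and the partition complex $\spart{\calO}{\mu} = |\Part_{\calO}(\mu)|$ have the same connectivity.

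Both $\Part_{\calO}(\mu)$ and $\QPart_{\calO}(\mu)$ are subcategories of $\Fact_{\calO}(\mu)$. Recording a factorization of $\mu$ as $\beta \circ \alpha$ with $\alpha \colon x \active y$ and $\beta \colon y \active z$, the partition category consists of those factorizations for which $\bracket{\alpha}$ restricts to a surjection $\bracket{x}^{\circ} \to \bracket{y}^{\circ}$ with $1 < |y| < |x|$, whereas the quasi-partition category only requires $\alpha$ to be non-maximally-active, i.e. $|\bracket{\alpha}(\bracket{x}^{\circ})| > 1$, together with $|y| < |x|$. Thus $\QPart_{\calO}(\mu)$ differs from $\Part_{\calO}(\mu)$ precisely by also admitting active morphisms whose underlying map of pointed sets is \emph{not} surjective --- the ``unit insertions''. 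The plan is to show that the fully faithful inclusion $\Part_{\calO}(\mu) \mono \QPart_{\calO}(\mu)$ induces an equivalence on realizations by constructing an image-factorization functor $\QPart_{\calO}(\mu) \to \Part_{\calO}(\mu)$ which replaces $y$ by the sub-object $y'$ spanned by the image of $\bracket{\alpha}$, and exhibiting $\Part_{\calO}(\mu)$ as a coreflective subcategory, with counit the unit-insertion comparison $y' \active y$. Since any functor possessing an adjoint induces an equivalence on realizations (the unit and counit realize to homotopies), this gives $\qpart{\calO}{\mu} \simeq \spart{\calO}{\mu}$ and, taking infima over arities, $\qsig{\calO}(k,k+l) = \sig{\calO}(k,k+l)$.

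I expect this last step to be the main obstacle. A non-surjective active morphism $\alpha \colon x \active y$ is exactly one for which some component $y_t$ ($t \notin \operatorname{im}\bracket{\alpha}$) receives a nullary operation, so the desired unit insertion $y' \active y$ exists precisely because $\alpha$ supplies these nullary operations; the difficulty is organizing this construction into a genuine functor on the comma categories and verifying the triangle identities. This is delicate because activeness, maximal activeness, and the passage to $\bracket{-}$ interact nontrivially with inert--active factorizations (compare \cref{lem:inert-cancellation-for-maximal} and \cref{lem: strongly active cancelation}), and because the sub-object $y'$ and the insertion map must be produced coherently from the Segal decomposition $y \simeq y_1 \oplus \cdots \oplus y_{|y|}$. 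The cocartesianness of inert morphisms in an $\infty$-operad --- the very feature used in \cref{ex:operads-are-analytic-patterns} to verify conservativity --- is what should make this functoriality go through, and is exactly the extra input distinguishing $\infty$-operads from general analytic patterns.
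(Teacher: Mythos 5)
Your first step is exactly the paper's: $\infty$-operads are analytic patterns (\cref{ex:operads-are-analytic-patterns}), so \cref{thm:artiy-restriction-theorem} makes the inclusion $\calO^{\le k} \mono \calO^{\le k+l}$ a Morita $\qsig{\calO}(k,k+l)$-equivalence, and completeness of $\calC$ gives the required $f$-completeness. Your coreflection idea for comparing partitions with quasi-partitions --- factor the first leg of a quasi-partition through its image and use the unit-insertion map as the counit --- is also the paper's \cref{prop:reduction-to-essential-partitions}, resting on the (non-unital, unitary) factorization system of \cref{prop:nonunital-unitary-factorization-system}. So roughly half of the argument is on track.

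The gap is in the sentence ``this reduces to a combinatorial comparison: for each multi-morphism $\mu$ \dots show that $\qpart{\calO}{\mu}$ and $\spart{\calO}{\mu}$ have the same connectivity.'' The two connectivity functions are infima over \emph{different} index sets: $\qsig{\calO}(k,k+l)$ ranges over \emph{all} maximally active morphisms in the arity range, whereas $\sig{\calO}(k,k+l)$ ranges only over the \emph{non-unital} ones, i.e.\ (in arities $\ge 1$) the multi-morphisms. A maximally active $\mu \colon x \active z$ with $|z|>1$ --- one padded with nullary components $0 \active z_j$ --- is never non-unital, so it contributes to $\qsig{\calO}$ but not to $\sig{\calO}$. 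Consequently a pointwise identification $\qpart{\calO}{\mu}\simeq\spart{\calO}{\mu}$, even if established for every maximally active $\mu$, only yields $\qsig{\calO}(k,k+l)\le\sig{\calO}(k,k+l)$; but to deduce the theorem from \cref{thm:artiy-restriction-theorem} you need the opposite inequality $\sig{\calO}(k,k+l)\le\qsig{\calO}(k,k+l)$, since every complete $(\sig{\calO}(k,k+l)+1,1)$-category must be shown to be a $(\qsig{\calO}(k,k+l)+1,1)$-category before the Morita equivalence applies. What is missing is the paper's second ingredient, \cref{prop:reduction-to-quasi-partition-of-nonunital-morphism}: for an arbitrary maximally active $\mu$ one has $\qpart{\calO}{\mu}\simeq\qpart{\calO}{\mu^{\nonu}}$, where $\mu^{\nonu}$ is non-unital, maximally active, and of the same arity; this re-indexes both infima over the same class of morphisms. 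Its proof is not an adjunction argument at all: the paper shows the projection $\Fact^{<|x|}_{\calO}(\mu)\to\Fact^{<|x|}_{\calO}(\mu^{\nonu})$ is smooth with weakly contractible fibers (\cref{lem:reduction-map-of-factorizations-is-smooth}, via the product decomposition of \cref{lem:factorization-category-decomposition}) and then base-changes along the pullback square of \cref{cor:partition-category-as-pullback} using \cref{lem:smoth-weak-contractible-fibers}. Your proposal has no counterpart to this step --- the ``unit insertions'' you mention are precisely the phenomenon it handles --- so the identity $\qsig{\calO}(k,k+l)=\sig{\calO}(k,k+l)$, and with it the theorem as stated, does not follow from what you outline.
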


The theorem will follow from the arity approximation theorem for analytic patterns (\cref{thm:artiy-restriction-theorem}) once we show the following statement.

\begin{prop}\label{prop:quasi-partition-function-equal}
	Let $\calO$ be an $\infty$-operad. Then $\qsig{\calO}(k,k+l) = \sig{\calO}(k,k+l)$ for every $1 \le k \le k+l \le \infty$.
\end{prop}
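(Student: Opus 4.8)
The plan is to match the two families of complexes up to homotopy and thereby identify the two infima. The starting observation is that every multi-morphism $\mu$ in $\calO$ (an active morphism with elementary target) is in particular maximally active, with arity equal to $|x|$ for $x$ its source; hence the index set defining $\sig{\calO}(k,k+l)$ is a subset of the one defining $\qsig{\calO}(k,k+l)$. It therefore suffices to establish two comparisons: \emph{(i)} for a multi-morphism $\mu$ of arity $j$ there is a homotopy equivalence $\qpart{\calO}{\mu} \simeq \spart{\calO}{\mu}$; and \emph{(ii)} for an arbitrary maximally active $\mu : x \active y$ there is a multi-morphism $\mu'$ of arity $|x|$ with $\qpart{\calO}{\mu} \simeq \spart{\calO}{\mu'}$. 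Granting these, every connectivity occurring in the infimum for $\qsig$ equals one occurring in the infimum for $\sig$ and conversely (the arity $|x|$ is preserved, so the bounds $k_0 < |x| \le k_1$ transfer), so the two functions agree.

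The engine behind both comparisons is an \emph{image factorization} for active morphisms that is special to $\infty$-operads, and this is exactly where the hypothesis that $\calO$ is an operad rather than a general analytic pattern is used. Concretely, the Segal and mapping-space conditions decompose any active $\alpha : x \active m$, with $m = m_1 \oplus \cdots \oplus m_r$, into its components $\alpha_i : \delta_{i!}x \active m_i$; those components indexed by summands $m_i$ not in the image of $\bracket{\alpha}$ are nullary operations (active morphisms with empty source) that are witnessed by $\alpha$ itself. Assembling the surjective components together with these nullary ones yields a factorization $x \overset{\bar\alpha}{\active} \bar m \overset{\iota}{\active} m$ in which $\bar\alpha$ is surjective onto $\bar m$ and $\iota$ is active and injective. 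I would use this to define a surjectivization functor on $\QPart_\calO(\mu)$ sending a quasi-partition $x \active m \active z$ to $x \active \bar m \active z$, and then check that the resulting natural active map $\bar P \to P$ exhibits the full subcategory $\QPart^{\mathrm{surj}}_\calO(\mu)$ of surjective quasi-partitions as coreflective (surjectivization being right adjoint to the inclusion). Since an adjunction induces inverse equivalences on realizations, this gives $|\QPart^{\mathrm{surj}}_\calO(\mu)| \simeq \qpart{\calO}{\mu}$ for every active $\mu$. When $\mu$ is a multi-morphism, unwinding the pullback defining $\Part_\calO(\mu)$ identifies $\QPart^{\mathrm{surj}}_\calO(\mu)$ with $\Part_\calO(\mu)$, which is \emph{(i)}.

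For \emph{(ii)}, let $j_0$ be the unique image component of the maximally active $\mu : x \active y$, let $\lambda : y \inert y_{j_0}$ be the inert projection onto that summand, and set $\mu' := \lambda \circ \mu : x \active y_{j_0}$, which is a multi-morphism of arity $|x|$ by \cref{lem:detect-active}. Post-composition with $\lambda$ defines a functor $\QPart^{\mathrm{surj}}_\calO(\mu) \to \Part_\calO(\mu')$: on a surjective quasi-partition $x \overset{\alpha}{\active} m \overset{\beta}{\active} y$ surjectivity of $\alpha$ forces $\mathrm{im}(\bracket{\beta}) = \mathrm{im}(\bracket{\mu}) = \{j_0\}$, so $\lambda\beta$ is again active. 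I would show this functor is an equivalence by reconstructing $\beta$ from $\lambda\beta$ via the mapping-space condition: the $j_0$-component of $\beta$ is $\lambda\beta$, while its remaining components are the nullary operations $\mathbb{1} \active y_i$ ($i \ne j_0$) determined by $\mu$, so $\beta$ is recovered uniquely; this gives essential surjectivity and full faithfulness. Combining with \emph{(i)} applied to $\mu'$ yields $\qpart{\calO}{\mu} \simeq |\QPart^{\mathrm{surj}}_\calO(\mu)| \simeq |\Part_\calO(\mu')| = \spart{\calO}{\mu'}$, which is \emph{(ii)}, and the proposition follows.

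The main obstacle is making the image factorization and the coreflection precise at the level of $\infty$-categories: one must produce surjectivization as an honest functor and verify the adjunction coherently (equivalently, that $\bar P \to P$ is couniversal among maps out of surjective quasi-partitions), and likewise promote the component-wise reconstruction of $\beta$ in \emph{(ii)} to an equivalence of $\infty$-categories rather than a mere bijection on objects. All of this rests on a careful use of the Segal and mapping-space conditions to control active morphisms and their factorizations in $\calO$ — precisely the structure unavailable for general analytic patterns, consistent with the fact that $\qsig{\calO}$ and $\sig{\calO}$ differ in that generality.
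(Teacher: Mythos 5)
Your proposal is correct, and it reaches the result by a genuinely different route in its second step. The first step agrees with the paper: your surjectivization coreflection is exactly the paper's (non-unital, unitary) factorization (\cref{prop:nonunital-unitary-factorization-system}, \cref{prop:every-morphism-is-nonunital-composed-with-unitary}) applied to factorization categories, your $\QPart^{\mathrm{surj}}_{\calO}(\mu)$ is definitionally the paper's $\Part_{\calO}(\mu)$ from \cref{defn:precise-definition-of-partition-category}, and your claim \emph{(i)} is \cref{prop:reduction-to-essential-partitions}; your matching of index sets is also right, since multi-morphisms with nonempty source coincide with non-unital maximally active morphisms. The divergence is in step \emph{(ii)}: the paper compares $\QPart_{\calO}(\mu)$ with $\QPart_{\calO}(\mu^{\nonu})$ \emph{before} imposing surjectivity, and that projection is not an equivalence of $\infty$-categories (its fibers involve the factorization categories of the nullary components $\mu_j$, $j \neq i_{\mu}$), so the paper must prove a realization equivalence via the pullback square of \cref{cor:partition-category-as-pullback} and the machinery of smooth functors with weakly contractible fibers (\cref{lem:smooth-characterization} through \cref{lem:reduction-map-of-factorizations-is-smooth}). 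You coreflect first, for an arbitrary maximally active $\mu$, after which the comparison with $\Part_{\calO}(\mu^{\nonu})$ becomes an honest equivalence of $\infty$-categories: under $\Fact_{\calO}(\mu) \simeq \prod_j \Fact_{\calO}(\mu_j)$ (\cref{lem:factorization-category-decomposition}), surjectivity of the first map forces the middle objects over $j \neq i_{\mu}$ to be $0$, and the full subcategory of $\Fact_{\calO}(\mu_j)$ on such factorizations is contractible (not merely weakly contractible), because $0$ is terminal and $\calO_{\bracket{0}} \simeq \pt$. This eliminates the smoothness apparatus of \cref{sect:partition-complexes} entirely, which is a real simplification; what the paper's ordering buys is that its coreflection lemma (\cref{lem:non-unital-Fact-is-coreflective}) can be quoted verbatim from \cref{lem:coreflective-slice}, whereas you need the (true, and provable by the same mapping-space argument) strengthening that for a coreflective full subcategory $\calC \subseteq \calD$ and \emph{any} $d \in \calD$ the inclusion $\calC \times_{\calD} \calD_{/d} \mono \calD_{/d}$ is again coreflective, since your $\mu$ need not be non-unital. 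Two small cautions: do not take your gloss of the second factor as ``active and injective'' as its definition --- unitarity is not detected on underlying sets (a non-invertible unary operation is underlying-bijective, hence non-unital, hence not unitary); what your coreflection actually needs is the lifting property of the $\id \oplus (\text{nullary})$ morphisms against non-unital ones, which is \cref{lem:morphism-from-emptyset-is-injective} together with \cref{lem:box-of-strongly-injective}. Also, your nullary operations should have source $0$, the object over $\bracket{0}$, rather than a unit object $\mathbb{1}$.
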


\begin{proof}[Proof of \cref{thm:arity-approximation-operads} given \cref{prop:quasi-partition-function-equal}]
	As we saw in \cref{ex:operads-are-analytic-patterns}, $\infty$-operads are a special case of analytic patterns. Consequently, \cref{thm:artiy-restriction-theorem} implies that for every complete $(\qsig{\calO}(k,k+l),1)$-category $\calC$, restriction induces an equivalence of $\infty$-categories $\Mon^{\le k+l}_{\calO}(\calC) \simeq \Mon^{\le k}_{\calO}(\calC)$. On the other hand by \cref{prop:quasi-partition-function-equal} we have $\qsig{\calO}(k,k+l) = \sig{\calO}(k,k+l)$ so we're done.
\end{proof}

We will deduce \cref{prop:quasi-partition-function-equal} from a pair of sharper statements about (quasi-)partition complexes (\cref{prop:reduction-to-essential-partitions} and \cref{prop:reduction-to-quasi-partition-of-nonunital-morphism}), whose proofs will appear in \cref{sect:partition-complexes}. The purpose of \cref{sect:non-unital-unitary} is to establish a certain canonical factorization system on any $\infty$-operad which we make use of in \cref{sect:partition-complexes}.

\subsubsection{Reducing \cref{prop:quasi-partition-function-equal} to claims about (quasi-)partition complexes} 

For the remainder of the section we fix an $\infty$-operad $\calO$ with structure map $\bracket{-}:\calO \to \Fin_{\ast}$.

\begin{defn}\label{defn:unitary-nonunital}
	Let $\alpha: x \actarrow y$ be a morphism in $\calO^{\act}$.	
	\begin{itemize}
		\item We say that $\alpha$ is \textit{non-unital} if the underlying map of sets $\bracket{\alpha}^{\circ}: \bracket{x}^{\circ} \to \bracket{y}^{\circ}$ is surjective. We denote by $\hl{\calO^{\act,\nonu} }\subseteq \calO^{\act}$ the wide subcategory of non-unital morphisms.
		\item We say that $\alpha$ is \textit{unitary} if it has unique left lifting property with respect to non-unital morphisms. We denote by $\hl{\calO^{\mrm{u}}} \subseteq \calO^{\act}$ the wide subcategory of unitary morphisms.
	\end{itemize}
\end{defn}

\begin{war}
	The property of being unitary \textit{can not} be detected at the level of underlying pointed sets. This should be contrasted with non-unital morphisms which are defined by a property of the underlying map of pointed sets.
\end{war}

\begin{rem}
	The astute reader may have noticed that the notions in  \cref{defn:unitary-nonunital} make sense for an arbitrary analytic pattern. Since we only make use of these notions for $\infty$-operads we have chosen to phrase their definition in that generality.
\end{rem}

In \cref{sect:non-unital-unitary} we prove the following.

\begin{prop}\label{prop:nonunital-unitary-factorization-system}
	Let $\calO$ be an $\infty$-operad. Then $(\calO^{\act,\nonu},\calO^{\mrm{u}})$ defines a factorization system on $\calO^{\act}$.
\end{prop}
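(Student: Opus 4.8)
The plan is to reduce to the standard recognition criterion for factorization systems (see \cite[\S5.2.8]{HTT}): a pair of classes $(\calL,\calR)$ is a factorization system on $\calO^{\act}$ provided both classes are closed under retracts, every morphism of $\calL$ is left orthogonal to every morphism of $\calR$, and every active morphism factors as $r\circ l$ with $l\in\calL$ and $r\in\calR$. I take $\calL:=\calO^{\act,\nonu}$ and $\calR:=\calO^{\mrm{u}}$. Two of the three conditions are then essentially free: since $\calO^{\mrm{u}}$ is by definition the class of morphisms right orthogonal to $\calO^{\act,\nonu}$, orthogonality holds tautologically, and an orthogonal complement is automatically closed under retracts. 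That $\calO^{\act,\nonu}$ is closed under retracts follows by transporting the corresponding statement for surjections along the functor $\calO^{\act}\to\Fin$, $\alpha\mapsto\bracket{\alpha}^{\circ}$ (using $\Fin^{\act}_{\ast}\simeq\Fin$), under which non-unital morphisms are exactly those lying over surjections, together with the fact that surjections are retract-stable in $\Fin$. Everything therefore reduces to constructing the factorization and checking that the right factor is unitary.

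Next I would construct the factorization. Given an active morphism $\alpha\colon x\active y$, decompose the target as $y\simeq y_1\oplus\cdots\oplus y_m$ via the Segal condition, with $m=|y|$, and use the mapping space condition to identify $\alpha$ with a tuple of multimorphisms $\alpha_j\colon x_{(j)}\active y_j$, where $x_{(j)}$ is the summand of $x$ lying over the fiber $\bracket{\alpha}^{-1}(j)\subseteq\bracket{x}^{\circ}$, so that $\alpha_j$ has arity $|\bracket{\alpha}^{-1}(j)|$. Let $S:=\{\,j\mid\bracket{\alpha}^{-1}(j)\neq\emptyset\,\}$ be the image of $\bracket{\alpha}^{\circ}$ and set $w:=\bigoplus_{j\in S}y_j$. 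I then assemble active morphisms $l\colon x\active w$ and $r\colon w\active y$: the morphism $l$ lies over the surjection $\bracket{x}^{\circ}\twoheadrightarrow S$ and has $j$-th component $\alpha_j$ for $j\in S$; the morphism $r$ lies over the injection $S\hookrightarrow\{1,\dots,m\}$ and has $j$-th component $\id_{y_j}$ for $j\in S$ and the nullary operation $\alpha_j\colon\mathbf 1\to y_j$ for $j\notin S$, where $\mathbf 1\in\calO_{\bracket{0}}$ is the essentially unique object over $\bracket{0}$ (the Segal condition with $m=0$ gives $\calO_{\bracket{0}}\simeq\ast$). The decisive point is that the nullary operations needed for $r$ are precisely those already supplied by $\alpha$, so no existence hypothesis on units is needed. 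Comparing components through the mapping space condition yields $r\circ l\simeq\alpha$, and since each $\alpha_j$ with $j\in S$ has arity $\ge 1$, the morphism $l$ lies over a surjection and is non-unital.

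It then remains to verify that $r$ is unitary, that is, right orthogonal to every non-unital morphism. Applying the mapping space condition to the target $y\simeq\bigoplus_j y_j$ splits the relevant lifting square into a product over $1\le j\le m$ of lifting problems against the elementary components of $r$. For $j\in S$ the component is the equivalence $\id_{y_j}$, against which lifting is trivial. For $j\notin S$ the component is the nullary operation $\mathbf 1\to y_j$, and this is right orthogonal to every non-unital morphism: any active morphism into $\mathbf 1$ forces its source to lie over $\bracket{0}$, hence to be $\mathbf 1$ by the Segal condition, so a non-unital morphism into such a source must itself be an equivalence of $\mathbf 1$, whence the lifting problem is (uniquely) solvable. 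Each factor is therefore an equivalence, so $r\in\calO^{\mrm{u}}$, and the recognition criterion applies.

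The conceptual content here is light: orthogonality of the two classes is built into the definition of $\calO^{\mrm{u}}$, and the warning that unitarity is not detectable on underlying pointed sets is explained by the fact that the \emph{unary} components of a unitary map must be equivalences, not merely bijections on underlying sets. I expect the main obstacle to be purely organizational, namely making the Segal and mapping space decompositions of the source, middle, and target mutually compatible, so that both the identity $r\circ l\simeq\alpha$ and the factorwise splitting of the orthogonality square are genuine $\infty$-categorical equivalences rather than statements merely about underlying sets. The natural way to control this is to work throughout with the decompositions $\Map_{\calO^{\act}}(-,y_1\oplus\cdots\oplus y_m)\simeq\prod_j\Map_{\calO^{\act}}(-,y_j)$ furnished by the mapping space condition, reducing every assertion to the case of an elementary target, where it becomes transparent.
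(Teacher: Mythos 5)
Your proposal is correct and takes essentially the same route as the paper: orthogonality is tautological from the definition of $\calO^{\mrm{u}}=(\calO^{\act,\nonu})^{\perp}$, and your factorization (splitting the target over the image of $\bracket{\alpha}^{\circ}$, taking the non-unital part to have components $\alpha_j$ for $j$ in the image and the unitary part to be built from identities together with the nullary components of $\alpha$) is precisely the construction of \cref{prop:every-morphism-is-nonunital-composed-with-unitary}. Your componentwise verification that the right factor is unitary likewise mirrors the paper's \cref{lem:box-of-strongly-injective} and \cref{lem:morphism-from-emptyset-is-injective}.
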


\begin{notation}\label{rem:nonunital-factorization-system}
	Given an active morphism $\alpha : x \actarrow y$ in $\calO$ we denote its (nonunital,unitary) factorization as follows:
	\[ \alpha : x \overset{\hl{\alpha^{\nonu}}}{\actarrow} \hl{\im(\alpha)} \overset{\hl{\alpha^{\rm u}}}{\actarrow} y\]
	Note that $\bracket{\alpha}^{\nonu}=\bracket{\alpha^{\nonu}}$, $\bracket{\alpha}^{\rm u} = \bracket{\alpha^{\rm u}}$ and $\bracket{\im(\alpha)}=\im(\bracket{\alpha})$.
\end{notation}

\begin{defn}\label{defn:precise-definition-of-partition-category}
	Let $\mu : x \rightsquigarrow z$ be an active morphism in $\calO$. A \hl{partition of $\mu$} is a factorization
	\[\begin{tikzcd}
	& y \\
	x && z,
	\arrow["\alpha"{description}, squiggly, from=2-1, to=1-2]
	\arrow[squiggly, from=1-2, to=2-3]
	\arrow["\mu"{description}, squiggly, from=2-1, to=2-3]
	\end{tikzcd}\]
	such that:
	\begin{enumerate}
		\item $\alpha: x \rightsquigarrow y$ is \textit{not} maximally active.
		\item $|y|<|x|$.
		\item  $\alpha: x \actarrow y$ is non-unital.
	\end{enumerate}
	We define the \hl{partition category} of $\mu$ as the full subcategory $\hl{\Part_{\calO}(\mu) }\subseteq \Fact_{\calO}(\mu)$ spanned by partitions of $\mu$. 
\end{defn}

\begin{rem}
	Note that conditions (1) and (2) say that $\mu$ is a quasi-partition. So we may have also defined a partition to be a quasi-partition satisfying condition (3).
\end{rem}

\begin{rem}\label{rem:partition-depends-only-nonunital}
	Denote $\calO^\nonu \coloneq  \Fin^\surj \times_\Fin \calO \subseteq \calO$.
	An active morphism 
	$\mu \colon x \actarrow y$ 
	in $\calO$ is non-unital if and only if it lies in the subcategory $\calO^{\nonu}$.
	Examaning \cref{defn:precise-definition-of-partition-category} we see that when $\mu$ is non-unital we have $\Part_{\calO^\nonu}(\mu) = \Part_\calO(\mu)$ and thus $\spart{\calO^\nonu}{\mu} \simeq \spart{\calO}{\mu}$.
\end{rem}

\begin{defn}
	Let $\mu :x \actarrow y$ be an active morphism in $\calO$. Define the \textit{partition complex of $\mu$} to be the realization of the partition category: 
	\[ \hl{\spart{\calO}{\mu} \coloneq } \, \big| \Part_{\calO}(\mu) \big|\]
\end{defn}
\begin{defn}
	For a any $1 \le k_0 \le k_1 \le \infty$ define:
	\[\hl{\sig{\calO}(k_0,k_1)\coloneq } \inf \bigg\{ \conn \bigg(\spart{\calO}{\mu} \bigg) \bigg| \, \mu : x \rightsquigarrow y \text{ non-unital, maximally active morphism with } k_0 < |x| \le k_1 \bigg\} \]
\end{defn}

\begin{obs}\label{partition-connectivity-depends-on-nonunital}
	\cref{rem:partition-depends-only-nonunital} implies that for any \operad{} $\calO$ we have $\sig{\calO}=\sig{\calO^{\nonu}}$.
\end{obs}

Below we state the key technical results of this section after which we deduce \cref{prop:quasi-partition-function-equal} as an immediate consequence.

\begin{prop}[Proved in \cref{proof:reduction-to-essential}]\label{prop:reduction-to-essential-partitions}
	Let $\mu: x \actarrow z$ be a non-unital maximally active morphism in $\calO$. Then the natural fully faithful inclusion,
	\[ \Part_{\calO}(\mu) \mono \QPart_{\calO}(\mu),\]
	admits a right adjoint. In particular it induces an equivalence $\spart{\calO}{\mu} \simeq \qpart{\calO}{\mu}$.
\end{prop}
\begin{prop}[Proved in \cref{proof:eduction-to-quasi}]\label{prop:reduction-to-quasi-partition-of-nonunital-morphism}
	Let $\mu: x \actarrow z$ be a a maximally active morphism in $\calO$. There's a natural functor,
	\[ \QPart_{\calO}(\mu) \to \QPart_{\calO}(\mu^{\nonu}),\]
	inducing an equivalence on realizations $\qpart{\calO}{\mu} \simeq \qpart{\calO}{\mu^{\nonu}}$.
\end{prop}

\begin{proof}[Proof of \cref{prop:quasi-partition-function-equal} given Propositions \ref{prop:reduction-to-quasi-partition-of-nonunital-morphism} and \ref{prop:reduction-to-quasi-partition-of-nonunital-morphism}]
	For every maximally active morphism $\mu$ in $\calO$ we have:
	\[ \qpart{\calO}{\mu} \overset{(\ref{prop:reduction-to-quasi-partition-of-nonunital-morphism})}{\simeq} \qpart{\calO}{\mu^{\nonu}} \overset{(\ref{prop:reduction-to-essential-partitions})}{\simeq} \spart{\calO}{\mu^{\nonu}}\]
	It follows that for every $m \ge 1$ the following collections of spaces are equivalent:
	\[ \bigg\{ \qpart{\calO}{\mu} \, \big| \, \mu \text{ maximally active in $\calO$ of arity } m \bigg\} = \bigg\{ \spart{\calO}{\mu} \, \big| \, \mu \text{ non-unital maximally active in $\calO$ of arity } m\bigg\}\]
	In particular we have $\sig{\calO}=\qsig{\calO}$.
\end{proof}

\subsection{Unitary and nonunital morphisms}\label{sect:non-unital-unitary}

The purpose of this subsection is to prove \cref{prop:nonunital-unitary-factorization-system}. We begin by recalling a fundamental property of $\infty$-operads which distinguishes them from arbitrary analytic patterns. 

\begin{prop}[{\cite[Proposition 9.15]{RH-algebraic}}]\label{prop:decomposition-of-active-morphisms}
	For every $y \in \calO$ there's a canonical equivalence of $\infty$-categories:
	\begin{equation*}\label{diag:decomposition-of-active-morphisms}
	    \calO^{\act}_{/y} \simeq \underset{j \in \bracket{y}^{\circ}}{\prod} \calO^{\act}_{/y_j}\tag{$\star$} 
	\end{equation*}
	
\end{prop}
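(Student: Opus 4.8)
The plan is to build the comparison functor explicitly and then check it is an equivalence by reducing everything to the two defining conditions of an $\infty$-operad. Write $m = |y|$ and use the Segal condition to fix a decomposition $y \simeq y_1 \oplus \cdots \oplus y_m$ with $y_j \in \calO_{\bracket{1}}$, together with the inert projections $\delta_j : \bracket{y} \inert \bracket{1}$ and their cocartesian lifts $y \to y_j$. For each $j$, post-composition with the inert morphism $y \to y_j$ gives a functor $\calO^{\act}_{/y} \to \calO_{/y_j}$, and passage to the active part of the (inert, active) factorization gives a functor $\calO_{/y_j} \to \calO^{\act}_{/y_j}$, functorial by the factorization-system machinery (cf. \cref{prop:factorization-adjoint-on-comma-category}). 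Assembling over $j$ yields the candidate
\[ \Phi : \calO^{\act}_{/y} \to \prod_{j \in \bracket{y}^{\circ}} \calO^{\act}_{/y_j}, \qquad (\alpha : x \active y) \longmapsto \big( (x \to y \to y_j)^{\act} \big)_j . \]
On underlying pointed sets $\Phi$ records the restriction of $\bracket{\alpha}$ to each fiber $\bracket{\alpha}^{-1}(j)$, matching the elementary decomposition $\Fin^{\act}_{\ast,/\bracket{m}} \simeq \prod_j \Fin^{\act}_{\ast,/\bracket{1}}$ of the base.

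For essential surjectivity I would run this in reverse using the Segal condition: given a tuple $(\alpha_j : x^{(j)} \active y_j)_j$, set $x := x^{(1)} \oplus \cdots \oplus x^{(m)}$ via the regrouped Segal equivalence $\calO_{\bracket{n}} \simeq \calO_{\bracket{1}}^{\times n}$, so that $\delta_{j!} x \simeq x^{(j)}$. The mapping space condition then gives $\Map_{\calO}(x, y) \simeq \prod_j \Map_{\calO}(x^{(j)}, y_j)$, and the tuple $(\alpha_j)$ selects an active morphism $\alpha : x \active y$ (active because it covers $\bigoplus_j \bracket{\alpha_j}$) with $\Phi(x, \alpha) \simeq (x^{(j)}, \alpha_j)_j$. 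Full faithfulness is the same computation one categorical level down: for $(x, \alpha), (x', \alpha')$ over $y$, the slice mapping space is the space of active maps $x \to x'$ over $y$; decomposing $x' \simeq \bigoplus_j {x'}^{(j)}$ by Segal and applying the mapping space condition to this grouped target splits $\Map_{\calO^{\act}}(x, x')$ into $\prod_j \Map_{\calO^{\act}}(x^{(j)}, {x'}^{(j)})$ compatibly with the maps to $y$, and taking fibers over $\alpha$ identifies the slice mapping space with the product of the downstairs slice mapping spaces.

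The main obstacle is the full-faithfulness step, and specifically the bookkeeping needed to pass from the mapping space condition — which splits a mapping space over the \emph{individual} elements of the target's underlying set — to the \emph{grouped} decomposition indexed by the blocks $\bracket{\alpha'}^{-1}(j)$, while tracking active components and the compatibility with $\alpha$. The restriction from $\Map_{\calO}$ to $\Map_{\calO^{\act}}$ is harmless, since a morphism is active iff each of its components is, so the product decomposition respects active subcomponents; the genuine care lies in certifying that the equivalence produced this way is the one induced by $\Phi$ rather than merely an abstract equivalence of spaces. Because $\Phi$ has been built as a strictly functorial composite of post-composition and factorization, this last compatibility is built in, and the argument closes.
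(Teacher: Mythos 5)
The paper itself offers no proof of this proposition --- it is imported wholesale from \cite[Lemma~9.15]{RH-algebraic} --- so there is no internal argument to compare against and your proposal must be judged on its own merits. Your construction of $\Phi$ is fine (inert pushforward followed by the active part of the factorization, made functorial by the factorization-system machinery), and the overall plan --- essential surjectivity by assembling a tuple through the Segal equivalence, full faithfulness by decomposing mapping spaces and passing to fibers over $\alpha$ --- is the standard direct argument and can be completed.

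However, the splitting statements you actually invoke at both key steps are false, and this is a genuine gap rather than loose wording. In an $\infty$-operad, mapping spaces decompose along the elementary components of the target only \emph{over a fixed underlying map of pointed sets} (this is the precise content of \cite[2.1.1.10(2)]{HA}; the formulation in the present paper is itself loose on this point); there is no absolute splitting, and in particular none along a coarser grouping of the source. Concretely, for $\calO = \bbE_{\infty}$, $y = \bracket{2}$ and $(x,\alpha) = (x',\alpha') = (\bracket{2},\id)$, your claimed equivalence $\Map_{\calO^{\act}}(x,x') \simeq \prod_j \Map_{\calO^{\act}}(x^{(j)},x'^{(j)})$ compares a four-element set (all active maps $\bracket{2} \to \bracket{2}$) with a point; likewise $\Map_{\calO}(x,y) \simeq \prod_j \Map_{\calO}(x^{(j)},y_j)$ fails in the essential-surjectivity step, since a tuple of maps with overlapping supports comes from no single map to $y$. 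The parenthetical ``a morphism is active iff each of its components is'' is also wrong: if $g \colon x \to y$ is active, its composites $g_j$ with the inert morphisms $y \inert y_j$ are essentially never active; what is true is that $g$ is active iff the supports $\bracket{g_j}^{-1}(1)$ partition $\bracket{x}^{\circ}$. The repair is exactly the bookkeeping you deferred as ``the main obstacle'': decompose $\Map_{\calO^{\act}}(x,x')$ and $\Map_{\calO^{\act}}(x,y)$ as coproducts, indexed by the underlying active maps in $\Fin_{\ast}$, of products of multimorphism spaces (fixed-map mapping space condition plus cocartesianness of inert lifts); note that post-composition with $\alpha'$ respects these decompositions; and only then take the fiber over $\alpha$, in which all cross terms --- the components indexed by $f$ with $\bracket{\alpha'} \circ f \neq \bracket{\alpha}$ --- vanish, leaving precisely $\prod_j \Map_{\calO^{\act}_{/y_j}}\big((x^{(j)},\alpha_j),({x'}^{(j)},\alpha'_j)\big)$. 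Essential surjectivity is repaired the same way, by working over the fixed active map $\bigoplus_j \bracket{\alpha_j}$. This fiberwise-over-$\Fin_{\ast}$ argument cannot be shortcut by the absolute splittings you wrote down, because those splittings are false.
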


\begin{notation}
	Given an active morphism $\alpha : x \actarrow y \in \calO^{\act}_{/y}$ we denote the tuple corresponding to $\alpha$ under \eqref{diag:decomposition-of-active-morphisms} by
	\[\big( \hl{\alpha_j: x_j \actarrow y_j} | \, j\in \bracket{y}^{\circ} \big) \in \underset{j\in \bracket{y}^{\circ}}{\prod} \calO^{\act}_{/y_j}\]
	In the other direction given a tuple of active morphisms $\big( \alpha_j: x_j \actarrow y_j | \, j\in \bracket{y}^{\circ} \big) $ we denote the active morphism corresponding to this tuple under \eqref{diag:decomposition-of-active-morphisms} by 
	\[ \hl{\underset{j \in \bracket{y}^{\circ}}{\oplus} \alpha_j : \underset{j \in \bracket{y}^{\circ}}{\oplus} x_j \actarrow \underset{j \in \bracket{y}^{\circ}}{\oplus} y_j}\]
	These constructions are of course inverses of each other in the sense that for every active morphism $\alpha : x \actarrow y$ there's a canonical isomorphism 
	\[ \alpha \simeq \underset{j \in \bracket{y}^{\circ}}{\oplus} \alpha_j\]
\end{notation}

\begin{rem}\label{rem:0-object-of-operad}
	Let $\calO$ be an $\infty$-operad. The terminal object $\bracket{0} \in \Fin_{\ast}$ admits a (necessarily unique) lift to a terminal object $0 \in \calO$. Note that even though $\bracket{0}$ is the initial object of $\Fin^{\act}_{\ast} \simeq \Fin$ this will \textit{not} be the case for a general $\infty$-operad. In fact $0 \in \calO$ is an initial object of $ \calO^{\act}$ if and only if $\calO$ is a \textit{unital $\infty$-operad} in the sense of \cite[Definition 2.3.1.1]{HA}. Despite this fact $0 \in \calO^{\act}$ still behaves like the empty set in the following respects: 
	\begin{itemize}
		\item Any morphism $\alpha: x \actarrow 0$ is an isomorphism. To see this note that since $\alpha$ is active it gives a map $\bracket{\alpha}^{\circ} : \bracket{x}^{\circ} \to \bracket{y}^{\circ} = \emptyset$ and thus $\bracket{x}^{\circ}=\emptyset$ which by uniqueness gives $x=0$. 
		\item For every object $x$ there's a canonical isomorphism $x \oplus 0 \simeq x$.
	\end{itemize} 
\end{rem}

\begin{lem}\label{lem:morphism-from-emptyset-is-injective}
	Every morphism $\alpha : 0 \actarrow z$ in $\calO^{\act}$ is unitary.
\end{lem}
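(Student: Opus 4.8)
The plan is to verify the defining lifting property of \cref{defn:unitary-nonunital} directly, exploiting the special behaviour of the terminal object $0 \in \calO$ recorded in \cref{rem:0-object-of-operad}; in particular I will not invoke \cref{prop:nonunital-unitary-factorization-system}, since the present lemma is a step toward it. So let $\lambda : a \active b$ be an arbitrary non-unital morphism. Unwinding \cref{defn:unitary-nonunital}, the task is to show that $\alpha$ is right orthogonal to $\lambda$ inside $\calO^{\act}$, i.e. that in every square with $\lambda$ on the left and $\alpha$ on the right the space of diagonal fillers is contractible. Equivalently, I must show the natural comparison map
\[ \Map_{\calO^{\act}}(b,0) \longrightarrow \Map_{\calO^{\act}}(b,z) \times_{\Map_{\calO^{\act}}(a,z)} \Map_{\calO^{\act}}(a,0) \]
is an equivalence of spaces.

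The key observation is that active mapping spaces into $0$ are as rigid as possible: for any $w \in \calO$ one has $\Map_{\calO^{\act}}(w,0) \simeq \ast$ when $w \simeq 0$ and $\Map_{\calO^{\act}}(w,0) \simeq \emptyset$ otherwise. Indeed, $0$ is terminal in $\calO$, so there is a unique morphism $w \to 0$; by \cref{rem:0-object-of-operad} an active morphism into $0$ forces its source to be $0$, so that unique morphism is active precisely when $w \simeq 0$.

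I would then split into two cases according to whether $b \simeq 0$. If $b \not\simeq 0$ then $\bracket{b}^{\circ} \neq \emptyset$; since $\lambda$ is non-unital the map $\bracket{a}^{\circ} \to \bracket{b}^{\circ}$ is surjective, whence $\bracket{a}^{\circ} \neq \emptyset$ and $a \not\simeq 0$. By the previous paragraph both $\Map_{\calO^{\act}}(b,0)$ and $\Map_{\calO^{\act}}(a,0)$ are empty, so both the source and the target of the comparison map are empty and it is an equivalence. If instead $b \simeq 0$, then surjectivity of $\bracket{a}^{\circ} \to \bracket{b}^{\circ} = \emptyset$ forces $a \simeq 0$ as well, and $\lambda$ is then the essentially unique isomorphism $0 \active 0$. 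The comparison map now reads $\ast \to \Map_{\calO^{\act}}(0,z) \times_{\Map_{\calO^{\act}}(0,z)} \ast$, in which the map $\Map_{\calO^{\act}}(0,z) \to \Map_{\calO^{\act}}(0,z)$ given by precomposition with $\lambda \simeq \id_0$ is an equivalence; hence the target is contractible and the comparison map is again an equivalence.

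The argument is essentially bookkeeping once the framing is fixed, so I do not anticipate a serious obstacle. The only genuine point of care is the first, interpretive step: rephrasing the defining "unique lifting" condition as the orthogonality/pullback statement above — so that uniqueness of lifts becomes contractibility of a mapping-space fiber — and keeping straight that it is active morphisms \emph{into} the terminal object $0$, rather than arbitrary morphisms, that enjoy this rigidity.
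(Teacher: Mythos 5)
Your proof is correct and is essentially the paper's own argument: both verify orthogonality against non-unital morphisms directly from \cref{rem:0-object-of-operad}, which forces any object mapping actively to $0$ (and hence, via non-unitality, every corner of a potential lifting square except $z$) to be equivalent to $0$, so that the lifting problem trivializes. The difference is purely presentational: you encode unique lifting as cartesianness of the mapping-space square (exactly as the paper does elsewhere, in \cref{lem:lifting-property-equivalent-to-local-in-slice}) and split into cases according to whether $b \simeq 0$, whereas the paper starts from a given lifting square and degenerates it.
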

\begin{proof}
	We must show that in every square diagram:
	\[\begin{tikzcd}
	x & {0} \\
	{y} & z
	\arrow[squiggly,from=1-2, to=2-2]
	\arrow["{\alpha}"', two heads, from=1-1, to=2-1]
	\arrow[squiggly, from=1-1, to=1-2]
	\arrow[squiggly, from=2-1, to=2-2]
	\arrow["{\exists!}"{description}, dashed, from=2-1, to=1-2]
	\end{tikzcd}\]
	where $\alpha: x \epi y$ is active non-unital there exists a unique dashed lift making the diagram commute. 
	
	By \cref{rem:0-object-of-operad} the top horizontal map is an isomorphism. Since $\alpha$ is non-unital, $|y|\le|x|=0$ and therefore $|y| =0$. Again by \cref{rem:0-object-of-operad} it follows that $\alpha$ is an isomorphism. We conclude that the square above  may be rewritten as follows:
	\[\begin{tikzcd}
	0 & 0 \\
	0 & z
	\arrow["\sim", from=1-1, to=1-2]
	\arrow["\wr"', from=1-1, to=2-1]
	\arrow[squiggly,from=2-1, to=2-2]
	\arrow[squiggly, from=1-2, to=2-2]
	\end{tikzcd}\]
	Clearly any such square admits a unique lift so we're done.
\end{proof}

\begin{lem}\label{lem:box-of-strongly-injective}
	For an active morphism $\alpha: x \actarrow y$ the following are equivalent:
	\begin{enumerate}
		\item $\alpha = \underset{j \in \bracket{y}^{\circ}}{\oplus} \alpha_j :\underset{j \in \bracket{y}^{\circ}}{\oplus}  x_j \actarrow \underset{j \in \bracket{y}^{\circ}}{\oplus} y_j $ is unitary.
		\item $\alpha_j$ is unitary for every $j \in \bracket{y}^{\circ}$.
	\end{enumerate}
\end{lem}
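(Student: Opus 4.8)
The plan is to leverage the product decomposition of active slice categories (\cref{prop:decomposition-of-active-morphisms}) to reduce unitarity to a componentwise condition, using that the $(\text{non-unital},\text{unitary})$ factorization system (\cref{prop:nonunital-unitary-factorization-system}) is transported by equivalences. The first observation is that unitarity of the \emph{object} $(x,\alpha)$ can be recast as unitarity of a \emph{morphism} in the slice: in $\calO^{\act}_{/y}$ the identity $\id_y$ is terminal, and the unique morphism $(x,\alpha)\to \id_y$ has underlying arrow exactly $\alpha$. The factorization system $(\calO^{\act,\nonu},\calO^{\mrm{u}})$ on $\calO^{\act}$ restricts to a factorization system on the slice $\calO^{\act}_{/y}$ whose two classes are detected on underlying morphisms (a general feature of slices of orthogonal factorization systems). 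Hence $\alpha$ is unitary in $\calO^{\act}$ if and only if the canonical map $(x,\alpha)\to \id_y$ lies in the right class of the slice factorization system.

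Next I would transport this across the equivalence $F\colon \calO^{\act}_{/y}\simeq \prod_{j\in\bracket{y}^{\circ}}\calO^{\act}_{/y_j}$ of \cref{prop:decomposition-of-active-morphisms}. The target carries the product of the slice factorization systems, in which a morphism is non-unital (resp.\ unitary) precisely when each component is. The key compatibility is that $F$ matches the two \emph{left} classes: a morphism of slices over $y$ is non-unital iff its underlying arrow in $\calO^{\act}$ is surjective on underlying sets, and under the decomposition $\bracket{x}^{\circ}=\coprod_j\bracket{x_j}^{\circ}$ (into fibers) such surjectivity is equivalent to surjectivity in each fiber, i.e.\ to non-unitality of every component. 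Because an orthogonal factorization system is determined by its left class, with the right class being its right-orthogonal complement, and because the equivalence $F$ preserves orthogonality, matching left classes forces $F$ to match the right classes as well. Thus $F$ carries unitary morphisms of $\calO^{\act}_{/y}$ to componentwise-unitary morphisms.

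Finally I would apply this to the map $(x,\alpha)\to \id_y$. Since $F$ preserves terminal objects, $F(\id_y)$ is the terminal object $(\id_{y_j})_j$, while $F(x,\alpha)=(x_j,\alpha_j)_j$, so the image of $(x,\alpha)\to\id_y$ is the tuple of canonical maps $(x_j,\alpha_j)\to \id_{y_j}$. By the previous paragraph this tuple lies in the right class iff each component does; and by the same terminal-object reformulation applied inside $\calO^{\act}_{/y_j}$, the $j$-th component lies in the right class iff $\alpha_j$ is unitary. Chaining these equivalences yields exactly that $\alpha=\bigoplus_{j}\alpha_j$ is unitary if and only if every $\alpha_j$ is unitary (the edge case $\bracket{y}^{\circ}=\emptyset$, where $x\simeq y\simeq 0$, being handled by \cref{rem:0-object-of-operad}).

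The main obstacle is purely organizational: checking that $F$ genuinely respects underlying pointed finite sets so that it matches left classes, and carefully invoking that slices and products of orthogonal factorization systems are again orthogonal factorization systems with classes detected componentwise. If one prefers to avoid the abstract transport argument, the same conclusion can be obtained concretely by decomposing an arbitrary lifting square against a non-unital morphism: the square lives over $y$, so by \cref{prop:decomposition-of-active-morphisms} it splits into a product of lifting squares over the $y_j$ (with each non-unital part remaining non-unital), and the space of diagonal fillers factors as the corresponding product, which is contractible exactly when each factor is.
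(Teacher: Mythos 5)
Your main argument is circular as written: in its very first sentence (and again in the transport step) you invoke the factorization system $(\calO^{\act,\nonu},\calO^{\mrm{u}})$ of \cref{prop:nonunital-unitary-factorization-system}, but in the paper that proposition is \emph{deduced from} \cref{lem:box-of-strongly-injective}. The lemma feeds into \cref{prop:every-morphism-is-nonunital-composed-with-unitary} (unitarity of the morphism $\alpha^{\mrm{u}}$ constructed there is exactly an application of this lemma together with \cref{lem:morphism-from-emptyset-is-injective}), which supplies the factorizations needed to prove \cref{prop:nonunital-unitary-factorization-system}; the proposition is stated earlier in the section, but its proof comes later and rests on this lemma. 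At the point where the lemma must be proved, ``unitary'' is available only as an orthogonality condition, $\calO^{\mrm{u}} = (\calO^{\act,\nonu})^{\perp}$; one cannot yet speak of it as the right class of a factorization system, nor of that system restricting to slices with classes detected on underlying arrows.

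The gap is repairable, and your own closing paragraph essentially contains the repair --- which is also the paper's proof. Everything you do with the factorization system can be rephrased using orthogonality alone: since $\id_y$ is terminal in $\calO^{\act}_{/y}$, right-orthogonality of the map $(x,\alpha)\to\id_y$ against slice morphisms with non-unital underlying arrow degenerates to the condition that the \emph{object} $(x,\alpha)$ is local with respect to those morphisms, and this locality is equivalent to $\alpha\in(\calO^{\act,\nonu})^{\perp}$ --- precisely \cref{lem:lifting-property-equivalent-to-local-in-slice}. The paper then concludes exactly along the lines you suggest: the equivalence $\calO^{\act}_{/y}\simeq\prod_{j\in\bracket{y}^{\circ}}\calO^{\act}_{/y_j}$ of \cref{prop:decomposition-of-active-morphisms} matches the non-unital slice morphisms over $y$ with the tuples of componentwise non-unital ones (surjectivity of the underlying map of sets is detected fiberwise), and locality in a product against such a family holds if and only if it holds componentwise (for the ``only if'' direction one pads with identities, which are non-unital). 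So: drop the factorization-system language, cite \cref{lem:lifting-property-equivalent-to-local-in-slice} (or run the concrete lifting-square decomposition from your last paragraph, which is the same argument unwound), and the proof is correct and agrees with the paper's.
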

\begin{proof}
	Let $S$ denote the collection of all non-unital morphisms in $\calO^{\act}_{/ y}$ and let $S_j$ denote the collection of all non-unital morphisms in $\calO^{\act}_{/y_j}$. Using that non-unital morphisms are exactly those whose underlying map of sets is surjective we see that the canonical equivalence,
	\[ \calO^{\act}_{/y} \simeq \underset{j \in \bracket{y}^{\circ}}{\prod} \calO^{\act}_{/y_j},\] 
	of \cref{prop:decomposition-of-active-morphisms} identifies the collection $S$ on the left with the collection $\underset{j \in \bracket{y}^{\circ}}{\mathlarger{\prod}} S_j$ on the right. It follows that $\alpha : x \actarrow y\in \calO^{\act}_{/y}$ is $S$-local if and only if $\alpha_j:x_j \actarrow y_j \in \calO^{\act}_{/y_j}$ is $S_j$-local for every $j\in \bracket{y}^{\circ}$. Consequently, by \cref{lem:lifting-property-equivalent-to-local-in-slice}, $\alpha$ is unitary if and only if $\alpha_j$ is unitary for every $j \in \bracket{y}^{\circ}$.
\end{proof}

\begin{prop}\label{prop:every-morphism-is-nonunital-composed-with-unitary}
	Every morphism $\alpha: x \actarrow y$ in $\calO^{\act}$ can be factored as a composition of a non-unital morphism followed by a unitary morphism.
\end{prop}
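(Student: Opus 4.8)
The plan is to reduce to the case where the target $y$ is elementary, dispatch the two extreme cases by hand, and then reassemble via the active-slice decomposition. The whole argument is structural rather than computational, leaning on the three facts established just above.

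First I would invoke the decomposition of active morphisms \cref{prop:decomposition-of-active-morphisms} to write $\alpha \simeq \oplus_{j \in \bracket{y}^{\circ}} \alpha_j$, with $\alpha_j : x_j \active y_j$ and each $y_j$ elementary (so $|y_j| = 1$). Since $\oplus$ is functorial, any collection of factorizations $\alpha_j \simeq \alpha_j^{\mrm{u}} \circ \alpha_j^{\nonu}$ assembles into $\alpha \simeq \big(\oplus_j \alpha_j^{\mrm{u}}\big) \circ \big(\oplus_j \alpha_j^{\nonu}\big)$. Moreover \cref{lem:box-of-strongly-injective} tells us the $\oplus$ of unitary morphisms is unitary, and the underlying-set description of non-unital morphisms tells us the $\oplus$ of non-unital morphisms is non-unital (a disjoint union of surjections is surjective). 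Hence it suffices to factor each $\alpha_j$ over an elementary target and set $\alpha^{\nonu} := \oplus_j \alpha_j^{\nonu}$ and $\alpha^{\mrm{u}} := \oplus_j \alpha_j^{\mrm{u}}$.

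Second, for a fixed elementary $y_j$ I would split on the cardinality of $x_j$. If $|x_j| \ge 1$, then the underlying map $\bracket{\alpha_j}^{\circ} : \bracket{x_j}^{\circ} \to \bracket{y_j}^{\circ}$ surjects onto the singleton $\bracket{y_j}^{\circ}$, so $\alpha_j$ is already non-unital; I take $\alpha_j^{\nonu} := \alpha_j$ and $\alpha_j^{\mrm{u}} := \id_{y_j}$. If instead $x_j = 0$ (using \cref{rem:0-object-of-operad} to recognize this as the only arity-$0$ possibility), then \cref{lem:morphism-from-emptyset-is-injective} shows $\alpha_j$ is unitary, and I take $\alpha_j^{\nonu} := \id_0$ and $\alpha_j^{\mrm{u}} := \alpha_j$. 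In both cases the two legs lie in the correct classes, using that identities are non-unital (their underlying maps are surjective) and unitary (the lifting problem against a non-unital map has an evident unique solution).

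The only genuinely structural input is the active-slice decomposition of \cref{prop:decomposition-of-active-morphisms}, which lets us reduce to elementary targets, together with \cref{lem:box-of-strongly-injective} guaranteeing that unitarity survives reassembly. I expect the main (though mild) obstacle to be the bookkeeping around reassembly: one must confirm that the intermediate object $\oplus_j \im(\alpha_j)$ is the correct common codomain of $\alpha^{\nonu}$ and domain of $\alpha^{\mrm{u}}$, and that the heterogeneous per-index choices (identity legs on one side in one case, on the other side in the other) still glue into a single non-unital leg and a single unitary leg under $\oplus$. Surjectivity of a disjoint union of surjections handles the first leg and \cref{lem:box-of-strongly-injective} the second, so no further input is needed.
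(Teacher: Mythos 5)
Your proof is correct and is essentially the paper's own argument: your case split $|x_j| \ge 1$ versus $x_j = 0$ is exactly the paper's partition of $\bracket{y}^{\circ}$ into $J_+ = \im(\bracket{\alpha}^{\circ})$ and $J_0 = \bracket{y}^{\circ} \setminus J_+$, and your reassembled legs $\oplus_j \alpha_j^{\nonu}$ and $\oplus_j \alpha_j^{\mrm{u}}$ coincide with the paper's $\alpha^{\nonu}$ and $\alpha^{\mrm{u}}$. The only cosmetic difference is that the paper writes the two global morphisms directly via explicit diagrams over $J_+ \sqcup J_0$ rather than factoring each component and invoking functoriality of $\oplus$, but the inputs -- \cref{prop:decomposition-of-active-morphisms}, \cref{lem:morphism-from-emptyset-is-injective}, and \cref{lem:box-of-strongly-injective} -- and the resulting factorization are identical.
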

\begin{proof}
	To simplify notation we define:
	\begin{align*}
	J_+ \coloneq  \im(\bracket{\alpha}^{\circ}) \subseteq \bracket{y}^{\circ} && J_0 \coloneq  \bracket{y}^{\circ} \setminus J_+
	\end{align*}
	Observe that $\underset{j \in J_0}{\oplus} x_j =  \underset{j \in J_0}{\oplus} 0 = 0$ and therefore $\big(\underset{j \in J_+}{\oplus} y_j  \big)\oplus \big(\underset{j \in J_0}{\oplus} x_j \big) \simeq \underset{j \in J_+}{\oplus} y_j $. On the other hand we also have $\big(\underset{j \in J_+}{\oplus} y_j\big) \oplus \big(\underset{j \in J_0}{\oplus} y_j\big) \simeq \underset{j \in \bracket{y}^{\circ}}{\oplus} y_j \simeq y$. Using these isomorphisms we may define a morphism $\alpha^{\mrm{u}}: \underset{j \in J_+}{\oplus} y_j  \actarrow y$ so that the following diagram commutes
	\[\begin{tikzcd}
	{\underset{j \in J_+}{\oplus} y_j } &&& y \\
	{\big(\underset{j \in J_+}{\oplus} y_j  \big)\oplus \big(\underset{j \in J_0}{\oplus} x_j \big)} &&& {\big(\underset{j \in J_+}{\oplus} y_j\big) \oplus \big(\underset{j \in J_0}{\oplus} y_j\big)}
	\arrow["{\big(\underset{j \in J_+}{\oplus} \id_{y_j} \big) \oplus \big( \underset{j \in J_0}{\oplus} \alpha_j \big) }"', from=2-1, to=2-4]
	\arrow["{\alpha^{\mrm{u}}}", from=1-1, to=1-4]
	\arrow["{\mathlarger{\wr}}", from=2-1, to=1-1]
	\arrow["{\mathlarger{\wr}}", from=1-4, to=2-4]
	\end{tikzcd}\]
	Note that $\alpha^u$ is unital by \cref{lem:box-of-strongly-injective}. Similarly define $\alpha^{\nonu} : x \actarrow \underset{j \in \bracket{y}^{\circ}} \oplus y_j$ so that the following diagram commutes
	\[\begin{tikzcd}
	x &&& {\underset{j \in J_+}{\oplus} y_j } \\
	{\big(\underset{j \in J_+}{\oplus} x_j  \big)\oplus \big(\underset{j \in J_0}{\oplus} x_j \big)} &&& {\big(\underset{j \in J_+}{\oplus} y_j\big) \oplus \big(\underset{j \in J_0}{\oplus}x_j\big)}
	\arrow["{\alpha^{\nonu}}", from=1-1, to=1-4]
	\arrow["{\mathlarger{\wr}}"', from=1-1, to=2-1]
	\arrow["{\mathlarger{\wr}}"', from=2-4 , to=1-4]
	\arrow["{\big(\underset{j \in J_+}{\oplus} \alpha_j \big) \oplus \big(\underset{j \in J_0}{\oplus} \id_{x_j} \big)}"', from=2-1, to=2-4]
	\end{tikzcd}\]
	Note that $\alpha^{\nonu}$ is non-unital since by construction $\bracket{\alpha^{\nonu}}^{\circ}: \bracket{x}^{\circ} \to \im \bracket{\alpha}^{\circ}$ is surjective. Finally we show that $\alpha = \alpha^{\mrm{u}} \circ  \alpha^{\nonu}$:
	\begin{align*}
	\alpha & \simeq \underset{j \in \bracket{y}^{\circ}}{\oplus} \alpha_j \\
	& \simeq \big(\underset{j \in J_+}{\oplus} \alpha_j \big) \oplus \big( \underset{j \in J_0}{\oplus} \alpha_j \big) \\
	& \simeq \bigg( \big(\underset{j \in J_+}{\oplus} \id_{y_j} \big) \oplus \big( \underset{j \in J_0}{\oplus} \alpha_j \big) \bigg)  \circ \bigg( \big(\underset{j \in J_+}{\oplus} \alpha_j \big) \oplus \big(\underset{j \in J_0}{\oplus} \id_0 \big)\bigg) \\
	& \simeq \alpha^{\mrm{u}} \circ \alpha^{\nonu}
	\end{align*}
\end{proof}

We are now in a position to prove \cref{prop:nonunital-unitary-factorization-system}.

\begin{proof}[Proof of \cref{prop:nonunital-unitary-factorization-system}]
	By definition we have $\calO^{\mrm{u}}=(\calO^{\act,\nonu})^{\perp}\subseteq \calO^{\nonu}$. It thus suffices to show that every active morphism can be factored as a composition of a non-unital active morphism followed by a unitary morphism. This is the content of \cref{prop:every-morphism-is-nonunital-composed-with-unitary}.
\end{proof}

\subsection{Partition complexes of \texorpdfstring{$\infty$-operads}{Infinite Operads}}\label{sect:partition-complexes}

In this subsection we prove \cref{prop:reduction-to-essential-partitions} and \cref{prop:reduction-to-quasi-partition-of-nonunital-morphism}.

\begin{defn}
	For a morphism $\mu:x \actarrow z$ in $\calO^{\act}$ we let $\hl{\Fact^{\nonu}_{\calO}(\mu) }\subseteq \Fact_{\calO}(\mu)$ denote the full subcategory consisting of factorizations,
	\[\begin{tikzcd}
	& y \\
	x && z
	\arrow[squiggly, from=1-2, to=2-3]
	\arrow["\mu"', squiggly, from=2-1, to=2-3]
	\arrow["\alpha", squiggly, from=2-1, to=1-2]
	\end{tikzcd}\]
	such that $\alpha: x\actarrow y$ is non-unital. 
\end{defn}

\begin{lem}\label{lem:non-unital-Fact-is-coreflective}
	Let $\mu:x \actarrow z$ be a non-unital morphism in $\calO^{\act}$. Then the fully faithful inclusion,
	\[\Fact^{\nonu}_{\calO}(\mu) \mono \Fact_{\calO}(\mu),\]
	admits a right adjoint which on objects is given by:
	\[\begin{tikzcd}
	& y & {} & {} & {\im(\alpha)} \\
	x && z & x && z
	\arrow["\beta", squiggly, from=1-2, to=2-3]
	\arrow["\mu"', squiggly, from=2-1, to=2-3]
	\arrow["{\alpha^{\nonu}}", squiggly, from=2-4, to=1-5]
	\arrow["{\beta \circ \alpha^{\rm u}}", squiggly, from=1-5, to=2-6]
	\arrow["\mu"', squiggly, from=2-4, to=2-6]
	\arrow[""{name=0, anchor=center, inner sep=0}, draw=none, from=2-3, to=1-3]
	\arrow[""{name=1, anchor=center, inner sep=0}, draw=none, from=2-4, to=1-4]
	\arrow["\alpha", squiggly, from=2-1, to=1-2]
	\arrow["\xmapsto{\phantom{Hello}}"{description}, Rightarrow, draw=none, from=0, to=1]
	\end{tikzcd}\]
\end{lem}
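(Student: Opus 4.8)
The plan is to exhibit the right adjoint as the functor that factors the ``leg out of $x$'' through its $(\nonu,\mrm{u})$-factorization. Recall that $\Fact_{\calO}(\mu) = \calO^{\act}_{x///\mu}$ has as objects the active factorizations $x \overset{\alpha}{\active} y \overset{\beta}{\active} z$ of $\mu$, a morphism being a map $y \to y'$ commuting with the leg from $x$ and the leg to $z$. Given an object $Q = (\alpha,\beta)$, I would use \cref{prop:nonunital-unitary-factorization-system} to factor $\alpha = \alpha^{\mrm{u}} \circ \alpha^{\nonu}$ through $\im(\alpha)$ (notation as in \cref{rem:nonunital-factorization-system}) and set $R(Q) := \big(x \overset{\alpha^{\nonu}}{\active} \im(\alpha) \overset{\beta\circ\alpha^{\mrm{u}}}{\active} z\big)$. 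This lies in $\Fact^{\nonu}_{\calO}(\mu)$: its first leg $\alpha^{\nonu}$ is non-unital by construction, and $(\beta\circ\alpha^{\mrm{u}})\circ\alpha^{\nonu} = \beta\circ\alpha = \mu$. The unitary morphism $\alpha^{\mrm{u}}\colon \im(\alpha)\active y$ is then a morphism $R(Q) \to Q$ in $\Fact_{\calO}(\mu)$ (it commutes with the leg from $x$ since $\alpha^{\mrm{u}}\alpha^{\nonu}=\alpha$, and over $z$ tautologically), and this is the candidate counit.

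To verify that this is a right adjoint I would use the standard criterion that a fully faithful inclusion $\calA\mono\calB$ admits a right adjoint as soon as, for each $Q\in\calB$, the comma category $\calA\times_{\calB}\calB_{/Q}$ has a terminal object, which is then the value of the adjoint with its structure map the counit. Here I claim $\alpha^{\mrm{u}}\colon R(Q)\to Q$ is terminal in $\Fact^{\nonu}_{\calO}(\mu)\times_{\Fact_{\calO}(\mu)}\Fact_{\calO}(\mu)_{/Q}$. Unwinding the double slice, an object over $Q$ is a factorization $P=(x\overset{\gamma}{\active} w\overset{\delta}{\active} z)$ with $\gamma$ non-unital together with $\phi\colon w\active y$ satisfying $\phi\gamma\simeq\alpha$ and $\beta\phi\simeq\delta$; factoring $\phi$ through the counit amounts to a filler $\psi\colon w\to\im(\alpha)$ in the square with left leg $\gamma\in\calO^{\act,\nonu}$, right leg $\alpha^{\mrm{u}}\in\calO^{\mrm{u}}$ and boundary $(\alpha^{\nonu},\phi)$. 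Since $\alpha^{\mrm{u}}$ is unitary, hence right orthogonal to the non-unital $\gamma$ by \cref{defn:unitary-nonunital}, this space of fillers is contractible; the compatibility over $z$ is forced by $\beta\phi\simeq\delta$. Thus the filler is essentially unique, proving terminality. This is exactly the packaging of \cref{prop:factorization-adjoint-on-comma-category}, which I would invoke to conclude, applied to the $(\nonu,\mrm{u})$-factorization system transported to the coslice $\calO^{\act}_{x/}$ and to the object $\mu$, the required closure hypothesis holding because non-unital morphisms (the left class) are closed under composition and factorization.

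The main obstacle is precisely the passage from the object-level unique lifting that defines ``unitary'' to a statement about mapping spaces in the iterated slice $\calO^{\act}_{x///\mu}$: one must carry the two compatibility homotopies (under $x$ and over $z$) coherently rather than merely on $\pi_0$. This is what \cref{lem:lifting-property-equivalent-to-local-in-slice} is for, reexpressing the lifting property as locality with respect to the non-unital maps in the relevant slice, so that orthogonality of the $(\nonu,\mrm{u})$-system does all the work; functoriality of the factorization then assembles $Q\mapsto R(Q)$ into a functor and the counits $\alpha^{\mrm{u}}$ into a natural transformation, giving the stated formula on objects. I would note that the hypothesis that $\mu$ be non-unital plays no role in the lifting argument itself; it is recorded because it is the case in which the lemma is used, ensuring in particular that the full factorization $(\mu,\id_z)$ already lies in $\Fact^{\nonu}_{\calO}(\mu)$.
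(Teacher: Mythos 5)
Your proof is correct in substance but takes a genuinely different route from the paper's, which is a two-step assembly of general lemmas rather than a hands-on verification of the universal property: the paper applies (the dual of) \cref{prop:slice-category-factorization-adjoint} to the factorization system of \cref{prop:nonunital-unitary-factorization-system} to conclude that $\calO^{\act,\nonu}_{x/} \mono \calO^{\act}_{x/}$ is coreflective, and then slices this coreflection over $\mu$ via \cref{lem:coreflective-slice}, using the identifications $\Fact^{\nonu}_{\calO}(\mu) \simeq (\calO^{\act,\nonu}_{x/})_{/\mu}$ and $\Fact_{\calO}(\mu) \simeq (\calO^{\act}_{x/})_{/\mu}$; the non-unitality of $\mu$ is exactly what makes \cref{lem:coreflective-slice} applicable, since that lemma requires the object being sliced over to lie in the coreflective subcategory. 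Your direct argument --- counit $\alpha^{\mrm{u}}$, terminality checked by right-orthogonality of $\alpha^{\mrm{u}}$ against the non-unital $\gamma$, with the condition over $z$ absorbed by passing to fibers over $\delta$ --- is sound once the orthogonality is phrased as a cartesian square of mapping spaces in $\calO^{\act}$, whose fibers over $\alpha^{\nonu}$ and $\alpha$, and then over $\delta$, produce precisely the comparison map $\Map_{\Fact_{\calO}(\mu)}(P,R(Q)) \to \Map_{\Fact_{\calO}(\mu)}(P,Q)$. What this buys is an explicit description of the counit and, as you correctly note, a statement valid for arbitrary active $\mu$, which the paper's route does not give.

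One caveat: your closing claim that this is ``exactly the packaging'' of \cref{prop:factorization-adjoint-on-comma-category} applied to the system transported to the coslice $\calO^{\act}_{x/}$ at the object $\mu$ does not typecheck. With $\calD = \calO^{\act}_{x/}$, that proposition and its dual yield adjoints for subcategories of $\calD_{\mu/}$ (resp.\ $\calD_{/\mu}$) cut out by a condition on the structure morphisms, whereas $\Fact^{\nonu}_{\calO}(\mu) \subseteq \calD_{/\mu}$ is cut out by a condition on the \emph{source object}, i.e.\ it is the comma category $\calD_{/\mu} \times_{\calD} \calO^{\act,\nonu}_{x/}$ --- and handling such subcategories is exactly what \cref{lem:coreflective-slice} is for (whence the paper's hypothesis on $\mu$). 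The abstract packaging that does match your argument is to transport the factorization system to $\calO^{\act}_{/z}$ (dual of \cref{prop:factorization-system-on-slice}) and apply part (2) of \cref{prop:slice-category-factorization-adjoint} to the coslice $(\calO^{\act}_{/z})_{\mu/} \simeq \Fact_{\calO}(\mu)$, inside which $\Fact^{\nonu}_{\calO}(\mu)$ is precisely the full subcategory of objects whose structure morphism lies in the left class; this version needs no hypothesis on $\mu$. Since your hands-on verification stands on its own, this is a mislabeling of the concluding step rather than a gap in the proof.
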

\begin{proof}
	By the dual of \cref{prop:slice-category-factorization-adjoint} the fully faithful inclusion $\calO^{\act,\nonu}_{x/} \mono \calO^{\act}_{x/}$ admits a right adjoint. Since $\mu \in \calO^{\act,\nonu}_{x/}$ we conclude that the fully faithful inclusion
	\[\Fact^{\nonu}_{\calO}(\mu) \simeq (\calO^{\act,\nonu}_{x/})_{/\mu} \mono (\calO^{\act}_{x/})_{/\mu} \simeq \Fact_{\calO}(\mu),\] 
	admits a right adjoint. 
    The description on objects follows by unpacking the definitions.
\end{proof}

\begin{proof}[Proof of \cref{prop:reduction-to-essential-partitions}]\label{proof:reduction-to-essential}
	Consider the following square of fully faithful inclusions:
	\[\begin{tikzcd}
	{\Part_{\calO}(\mu)} & {\QPart_{\calO}(\mu)} \\
	{\Fact^{\nonu}_{\calO}(\mu)} & {\Fact_{\calO}(\mu)}
	\arrow[hook, from=1-1, to=1-2]
	\arrow[hook, from=1-2, to=2-2]
	\arrow[hook, from=1-1, to=2-1]
	\arrow[hook, from=2-1, to=2-2]
	\end{tikzcd}\]
	By \cref{lem:non-unital-Fact-is-coreflective} the bottom map admits a right adjoint. It thus suffices to show that this right adjoint sends quasi-partitions to partitions. Let $\pi = \braces{x \actmap{\alpha} y \actmap{\beta}z} \in \QPart_{\calO}(\mu)$ be a quasi-partition of $\mu$.	We claim that $\pi^{\nonu}\coloneq  \braces{x \actmap{\alpha^{\nonu}} \im(\alpha) \actmap{\beta \circ \alpha^{\mrm{u}}}z}$ is a partition of $\mu$. Since $\alpha^{\nonu}$ is non-unital by definition, it suffices to show that $\pi^{\nonu}$ is a quasi-partition. Equivalently we must show that $\alpha^{\nonu}$ is not maximally active and that $|\im(\alpha)|< |x|$. Since both of these properties are detected at the level of underlying pointed sets we may assume that $\calO = \Fin_{\ast}$ where the claim follows from a straightforward check.
\end{proof}

The remainder of this subsection is devoted to proving \cref{prop:reduction-to-quasi-partition-of-nonunital-morphism}.

\begin{lem}\label{lem:factorization-category-decomposition}
	Let $\mu : x \actarrow z$ be a morphism in $\calO^{\act}$. There's a canonical equivalence of $\infty$-categories:
	\[ \Fact_{\calO}(\mu) \simeq \underset{j \in \bracket{y}^{\circ}}{\prod} \Fact_{\calO}(\mu_j)\]
\end{lem}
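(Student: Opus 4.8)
The plan is to deduce the decomposition directly from the product formula for active slices, \cref{prop:decomposition-of-active-morphisms}, together with the elementary observation that forming a coslice commutes with products of $\infty$-categories. (I also note that, as printed, the product should be indexed by $\bracket{z}^{\circ}$ rather than $\bracket{y}^{\circ}$, since the target of $\mu$ is $z$; I use $\bracket{z}^{\circ}$ below.)

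First I would rewrite the factorization category as a coslice: $\Fact_{\calO}(\mu) = \calO^{\act}_{x//\mu} \simeq (\calO^{\act}_{/z})_{\mu/}$, the two descriptions agreeing by the standard double-slice identification. By \cref{prop:decomposition-of-active-morphisms} there is a canonical equivalence $\calO^{\act}_{/z} \simeq \prod_{j \in \bracket{z}^{\circ}} \calO^{\act}_{/z_j}$, and by the very definition of the notation $\mu_j : x_j \active z_j$ this equivalence carries the object $\mu$ to the tuple $(\mu_j)_{j \in \bracket{z}^{\circ}}$. Thus it remains to identify $\big(\prod_j \calO^{\act}_{/z_j}\big)_{(\mu_j)/}$ with $\prod_j (\calO^{\act}_{/z_j})_{\mu_j/} = \prod_j \Fact_{\calO}(\mu_j)$.

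Second I would establish the general claim that coslices commute with products. For any family of $\infty$-categories $(\calD_j)_j$ and any object $(d_j)_j \in \prod_j \calD_j$, the undercategory is the pullback
\[ \Big(\prod_j \calD_j\Big)_{(d_j)/} \simeq \Fun(\Delta^1, \textstyle\prod_j \calD_j) \times_{\Fun(\{0\}, \prod_j \calD_j)} \{(d_j)_j\}. \]
Since $\Fun(K,-)$ preserves products for any simplicial set $K$ and since pullbacks commute with products, the right-hand side is canonically equivalent to $\prod_j \big( \Fun(\Delta^1,\calD_j) \times_{\Fun(\{0\},\calD_j)} \{d_j\} \big) = \prod_j (\calD_j)_{d_j/}$. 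Applying this with $\calD_j := \calO^{\act}_{/z_j}$ and $d_j := \mu_j$, and combining with the previous paragraph, yields the desired canonical equivalence $\Fact_{\calO}(\mu) \simeq \prod_{j \in \bracket{z}^{\circ}} \Fact_{\calO}(\mu_j)$.

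There is no serious obstacle here: the argument is a formal manipulation. The only points requiring a line of care are the double-slice identification $\calO^{\act}_{x//\mu} \simeq (\calO^{\act}_{/z})_{\mu/}$ (handled by the slice lemmas in the appendix) and the commutation of coslice with products, both routine. Canonicity of the final equivalence is automatic, since each ingredient — the equivalence of \cref{prop:decomposition-of-active-morphisms} and the product–pullback manipulation — is itself canonical and natural in $\mu$.
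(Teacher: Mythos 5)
Your proposal is correct and follows essentially the same route as the paper: rewrite $\Fact_{\calO}(\mu)=\calO^{\act}_{x//\mu}$ as the coslice $(\calO^{\act}_{/z})_{\mu/}$ via the double-slice identification (\cref{lem:under-over-vs-over-under}), apply \cref{prop:decomposition-of-active-morphisms}, and commute the coslice with the product. You additionally supply the pullback argument for why coslices commute with products (which the paper asserts without comment) and correctly flag that the index set in the statement should read $\bracket{z}^{\circ}$, matching the paper's own proof.
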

\begin{proof}
    Follows from
	\begin{align*}
	\Fact_{\calO}(\mu) & = \calO^{\act}_{x//\mu} \\
	& = \calO^{\act}_{\mu//z} & \text{(\cref{lem:under-over-vs-over-under})}\\
	& \simeq \bigg( \underset{j \in \bracket{z}^{\circ}}{\prod} \calO^{\act}_{/z_j} \bigg)_{\mu/} & \text{(\cref{prop:decomposition-of-active-morphisms})}\\
	& \simeq \underset{j \in \bracket{z}^{\circ}}{\prod} \calO^{\act}_{\mu_j //z_j} \\
	& \simeq \underset{j \in \bracket{z}^{\circ}}{\prod} \calO^{\act}_{x_j //\mu_j} &  \text{(\cref{lem:under-over-vs-over-under})}\\
	& = \underset{j \in \bracket{z}^{\circ}}{\prod} \Fact_{\calO}(\mu_j).\\
	\end{align*}
\end{proof}

\begin{notation}
	Let $\mu : x \actarrow z$ be a maximally active morphism in $\calO$. 
	\begin{itemize}
		\item Let $\hl{i_{\mu}} \in \bracket{z}^{\circ}$ denote the unique element lying in the image of $\bracket{\mu}^{\circ}: \bracket{x}^{\circ} \to \bracket{z}^{\circ}$.
		\item Let $\hl{\lambda_{\mu}} :z \inert z_{i_{\mu}}$ denote the unique inert map corresponding to $i_{\mu}$ under the canonical equivalence $\calO^{\el}_{z/} \simeq \bracket{z}^{\circ}$. 
	\end{itemize}  
	Note that $\mu^{\nonu} =( \lambda_{\mu} \circ \mu)^{\act}$.
\end{notation}

\begin{defn}
	Let $\mu: x \actarrow z$ be a morphism in $\calO^{\act}$. Define $\hl{\Fact^{<|x|}_{\calO}(\mu)} \subseteq \Fact_{\calO}(\mu)$ as the full subcategory whose objects are the active factorizations,
	\[\begin{tikzcd}
	& y \\
	x && z
	\arrow["\mu"', squiggly, from=2-1, to=2-3]
	\arrow[squiggly, from=2-1, to=1-2]
	\arrow[squiggly, from=1-2, to=2-3]
	\end{tikzcd}\]
	in which $|y| < |x|$. 
\end{defn}

\begin{lem}\label{lem:reduction-map-for-factorizations}
	Let $\mu : x \actarrow z$ be a maximally active morphism in $\calO$. There's a natural functor,
	\[  \Fact^{<|x|}_{\calO}(\mu) \to \Fact^{<|x|}_{\calO}(\mu^{\nonu}) \]
	having the following property:
	\begin{itemize}
		\item $\pi \in \Fact^{<|x|}_{\calO}(\mu)$ is a quasi-partition of $\mu$ if and only if its image $\pi_{i_{\mu}} \in \Fact^{<|x|}_{\calO}(\mu^{\nonu})$ is quasi-partition of $\mu^{\nonu}$.
	\end{itemize} 
\end{lem}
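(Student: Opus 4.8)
The plan is to realise the desired functor as the projection onto the $i_\mu$-factor in the decomposition of \cref{lem:factorization-category-decomposition}. Recall that for $\mu : x \active z$ that lemma supplies a canonical equivalence $\Fact_{\calO}(\mu) \simeq \prod_{j \in \bracket{z}^{\circ}} \Fact_{\calO}(\mu_j)$, where $\mu_j : x_j \active z_j$ is the component of $\mu$ over the elementary object $z_j$. Since $\mu$ is maximally active, the image of $\bracket{\mu}^{\circ}$ is the single point $i_\mu$, so by \cref{rem:0-object-of-operad} we have $x_j = 0$ for every $j \neq i_\mu$, while $x_{i_\mu} = x$ and $\mu_{i_\mu} = (\lambda_\mu \circ \mu)^{\act} = \mu^{\nonu}$. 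Thus the decomposition reads $\Fact_{\calO}(\mu) \simeq \Fact_{\calO}(\mu^{\nonu}) \times \prod_{j \neq i_\mu} \Fact_{\calO}(0 \active z_j)$, and I would define the functor $\Fact_{\calO}(\mu) \to \Fact_{\calO}(\mu^{\nonu})$ to be projection onto the first factor, carrying a factorization $\pi = \braces{x \actmap{\alpha} y \actmap{\beta} z}$ to its $i_\mu$-component $\pi_{i_\mu} = \braces{x \actmap{\alpha_{i_\mu}} y_{i_\mu} \actmap{\beta_{i_\mu}} z_{i_\mu}}$.

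Next I would check that this functor carries $\Fact^{<|x|}_{\calO}(\mu)$ into $\Fact^{<|x|}_{\calO}(\mu^{\nonu})$. Under the decomposition the middle object $y$ of $\pi$ splits as $y \simeq \oplus_{j} y_{j}$, so that $|y| = \sum_j |y_j|$ and in particular $|y_{i_\mu}| \le |y|$. Hence if $|y| < |x| = |x_{i_\mu}|$ then $|y_{i_\mu}| < |x|$, which is exactly the condition defining $\Fact^{<|x|}_{\calO}(\mu^{\nonu})$ (the source of $\mu^{\nonu}$ being again $x$). This shows the functor is well defined at the level of the $\Fact^{<|x|}$ subcategories.

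It then remains to verify the quasi-partition characterization. Within the subcategories $\Fact^{<|x|}$ the cardinality condition $(2)$ in \cref{defn:precise-definition-of-quasi-partition-category} is automatic, so being a quasi-partition amounts to the single condition that the first leg not be maximally active. The crux is therefore to show that $\alpha$ is maximally active if and only if $\alpha_{i_\mu}$ is. Since maximal activity is a property of the underlying map of pointed finite sets, I would argue entirely after applying $\bracket{-}$. The key point is that, because $\bracket{\beta}^{\circ}\circ\bracket{\alpha}^{\circ} = \bracket{\mu}^{\circ}$ has image $\{i_\mu\}$, every element of $\bracket{x}^{\circ}$ is sent by $\bracket{\alpha}^{\circ}$ into the fibre $\bracket{y_{i_\mu}}^{\circ} = (\bracket{\beta}^{\circ})^{-1}(i_\mu)$; consequently $\bracket{\alpha_{i_\mu}}^{\circ}$ is simply the corestriction of $\bracket{\alpha}^{\circ}$ and the two maps have literally the same image. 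Thus $|\im(\bracket{\alpha}^{\circ})| = |\im(\bracket{\alpha_{i_\mu}}^{\circ})|$, so one is a singleton precisely when the other is, which yields the equivalence of the two non-maximality conditions and completes the proof.

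I expect the main obstacle to be the careful identification in the first paragraph: confirming that projecting the decomposition of \cref{lem:factorization-category-decomposition} onto the $i_\mu$-factor genuinely computes $\alpha_{i_\mu}$ as the corestriction of $\alpha$ onto the fibre over $i_\mu$, and matching $\mu_{i_\mu}$ with $\mu^{\nonu}$ (here one must be slightly cautious that the decomposition in \cref{lem:factorization-category-decomposition} is indexed by the target $\bracket{z}^{\circ}$ rather than by any intermediate object). Once this bookkeeping is pinned down, the arity inequality and the underlying-set image computation are routine.
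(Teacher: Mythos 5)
Your proof is correct and takes essentially the same route as the paper: both construct the functor by projecting the decomposition of \cref{lem:factorization-category-decomposition} onto the $i_{\mu}$-coordinate, verify membership in $\Fact^{<|x|}_{\calO}(\mu^{\nonu})$ via $|y_{i_{\mu}}| \le |y| < |x|$, and reduce the quasi-partition criterion to showing that $\alpha$ is maximally active if and only if $\alpha_{i_{\mu}}$ is. The only cosmetic difference is in this last step: the paper identifies $\alpha_{i_{\mu}} = (\lambda_{\mu} \circ \beta)^{\int} \circ \alpha$ and invokes \cref{lem:inert-cancellation-for-maximal}, whereas you inline the same underlying-pointed-sets computation (the corestriction argument) that proves that lemma.
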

\begin{proof}
	Composing the equivalence of \cref{lem:factorization-category-decomposition} with the projection onto the $i_{\mu}$-th coordinate gives a functor
	\[ \Fact_{\calO}(\mu) \simeq \underset{j \in \bracket{z}^{\circ}}{\prod} \Fact_{\calO}(\mu_j) \simeq \Fact_{\calO}(\mu^{\nonu}) \times  \underset{i_{\mu} \neq j \in \bracket{z}^{\circ} }{\mathlarger{\prod}}\Fact_{\calO}(\mu_j) \to \Fact_{\calO}(\mu^{\nonu})\]
	We claim that it restricts to a functor $\Fact^{<|x|}_{\calO}(\mu) \to \Fact^{<|x|}_{\calO}(\mu^{\nonu})$. To show this let $\pi$ be a factorization of $\mu$ corresponding to the following diagram
	\[\begin{tikzcd}
	& y \\
	x && z
	\arrow["\mu"', squiggly, from=2-1, to=2-3]
	\arrow["\alpha", squiggly, from=2-1, to=1-2]
	\arrow["\beta", squiggly, from=1-2, to=2-3]
	\end{tikzcd}\]
	Unpacking deifnitions we see that its image $\pi_{i_{\mu}}$ is the factorization of $\mu^{\nonu} \coloneq  \mu_{i_{\mu}}$ corresponding to the following diagram
	\[\begin{tikzcd}
	& y_{i_{\mu}} \\
	x && z_{i_{\mu}}
	\arrow["\mu^{\nonu}"', squiggly, from=2-1, to=2-3]
	\arrow["\alpha_{i_{\mu}}", squiggly, from=2-1, to=1-2]
	\arrow["\beta_{i_{\mu}}", squiggly, from=1-2, to=2-3]
	\end{tikzcd}\]
	where we have $y_{i_{\mu}} = \Lambda(\lambda_{\mu} \circ \beta)$, $\beta_{i_{\mu}} = (\lambda_{\mu} \circ \beta)^{\act}$ and $\alpha_{i_{\mu}} = (\lambda_{\mu} \circ \beta)^{\int} \circ \alpha$. In particular we see that 
	\[ |y_{i_{\mu}}| \le | \Lambda(\lambda_{\mu} \circ \beta)| \le |y| < |x|\]
	It remains to show that $\pi$ is a quasi-partition if and only if $\pi_{i_{\mu}}$ is a quasi-partition of $\mu^{\nonu}$. Since we always have $|y|<|x|$ and $|y_{i_{\mu}}|<|x|$ by assumption this is equivalent to the statement that $\alpha_{i_{\mu}}$ is maximally active if and only if $\alpha$ is maximally active which follows from \cref{lem:inert-cancellation-for-maximal}.
\end{proof}

\begin{cor}\label{cor:partition-category-as-pullback}
	Let $\mu: x \actarrow z$ be a maximally active morphism in $\calO$. The following commutative square,
	\[\begin{tikzcd}
	{\QPart_{\calO}(\mu)} & {\Fact^{<|x|}_{\calO}(\mu)} \\
	{\QPart_{\calO}(\mu^{\nonu})} & {\Fact^{<|x|}_{\calO}(\mu^{\nonu}),}
	\arrow[""{name=0, anchor=center, inner sep=0}, from=2-1, to=2-2]
	\arrow[from=1-2, to=2-2]
	\arrow[from=1-1, to=2-1]
	\arrow[from=1-1, to=1-2]
	\arrow["\lrcorner"{anchor=center, pos=0.125}, draw=none, from=1-1, to=0]
	\end{tikzcd}\]
	is cartesian.
\end{cor}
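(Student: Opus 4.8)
The plan is to identify the comparison functor from $\QPart_{\calO}(\mu)$ into the pullback as an equality of full subcategories of $\Fact^{<|x|}_{\calO}(\mu)$. First I would name the right vertical map: let $G \colon \Fact^{<|x|}_{\calO}(\mu) \to \Fact^{<|x|}_{\calO}(\mu^{\nonu})$ denote the reduction functor of \cref{lem:reduction-map-for-factorizations}, so that on objects $G(\pi) = \pi_{i_{\mu}}$. The bottom horizontal arrow is the inclusion $\QPart_{\calO}(\mu^{\nonu}) \mono \Fact^{<|x|}_{\calO}(\mu^{\nonu})$, and both horizontal arrows are inclusions of full subcategories cut out by a property of objects (that the left leg of the factorization is not maximally active). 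The square commutes by construction, and in particular the left vertical arrow $\QPart_{\calO}(\mu) \to \QPart_{\calO}(\mu^{\nonu})$ is well defined precisely because the ``only if'' direction of \cref{lem:reduction-map-for-factorizations} guarantees that $G$ carries quasi-partitions of $\mu$ to quasi-partitions of $\mu^{\nonu}$.

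Next I would invoke the standard fact that, in $\Cat_{\infty}$, a pullback along a fully faithful functor is again a fully faithful inclusion of the full subcategory spanned by the objects landing in the essential image. Since $\QPart_{\calO}(\mu^{\nonu}) \mono \Fact^{<|x|}_{\calO}(\mu^{\nonu})$ is fully faithful with essential image the quasi-partitions of $\mu^{\nonu}$, the pullback
\[ P := \Fact^{<|x|}_{\calO}(\mu) \times_{\Fact^{<|x|}_{\calO}(\mu^{\nonu})} \QPart_{\calO}(\mu^{\nonu}) \]
is canonically identified with the full subcategory of $\Fact^{<|x|}_{\calO}(\mu)$ spanned by those $\pi$ with $G(\pi) \in \QPart_{\calO}(\mu^{\nonu})$, i.e.\ those $\pi$ whose image $\pi_{i_{\mu}}$ is a quasi-partition of $\mu^{\nonu}$.

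The crux is then a matching of objects. By the biconditional in \cref{lem:reduction-map-for-factorizations}, for $\pi \in \Fact^{<|x|}_{\calO}(\mu)$ the object $\pi_{i_{\mu}}$ is a quasi-partition of $\mu^{\nonu}$ if and only if $\pi$ is itself a quasi-partition of $\mu$. Hence $P$ and $\QPart_{\calO}(\mu)$ coincide as full subcategories of $\Fact^{<|x|}_{\calO}(\mu)$. The comparison functor $\QPart_{\calO}(\mu) \to P$ induced by the square, when composed with $P \mono \Fact^{<|x|}_{\calO}(\mu)$, recovers the inclusion $\QPart_{\calO}(\mu) \mono \Fact^{<|x|}_{\calO}(\mu)$; since both legs are fully faithful with the same essential image, the comparison functor is an equivalence, and the square is cartesian.

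I expect no serious obstacle here: the substantive content has already been isolated in \cref{lem:reduction-map-for-factorizations}, and the present statement is essentially a repackaging of its object-level biconditional. The only points requiring care are the two bookkeeping steps — confirming that pullback along a fully faithful functor is computed as the full subcategory on objects mapping into the essential image, and checking that the induced comparison functor agrees with the evident inclusion rather than merely sharing its objects — both of which are routine once the full-subcategory description of $P$ is in hand.
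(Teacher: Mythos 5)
Your proposal is correct and follows essentially the same route as the paper: both arguments rest on the fully faithfulness of the horizontal inclusions together with the object-level biconditional of \cref{lem:reduction-map-for-factorizations}, the paper phrasing this as "fully faithful plus essentially surjective onto the pullback" while you phrase it as an identification of two full subcategories of $\Fact^{<|x|}_{\calO}(\mu)$. Your version merely spells out the bookkeeping (the description of a pullback along a fully faithful functor) that the paper leaves implicit.
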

\begin{proof}
	Both horizontal maps are fully faithful by definition. It thus suffices to show that the natural map from $\QPart_{\calO}(\mu)$ to the pullback is essentially surjective which follows from \cref{lem:reduction-map-for-factorizations}.
\end{proof}

We recall the notion of smooth/proper functors of $\infty$-categories introduced in \cite[Definition 4.1.2.9]{HTT}. The following is essentially a reformulation of \cite[Proposition 4.1.2.11]{HTT}.

\begin{lem}[Lurie]\label{lem:smooth-characterization}
	Let $f : X \to Y$ be a functor of $\infty$-categories. The following are equivalent:
	\begin{enumerate}
		\item For every $y \in Y$ the natural map,
		\[X_y \coloneq  \{y\} \times_Y X \to Y_{y/} \times_Y X =: X_{y/},\]
		is initial. 
		\item For every pullback square,
		\[\begin{tikzcd}
		{X'} & X \\
		{Y'} & Y,
		\arrow["{g'}", from=1-1, to=1-2]
		\arrow["f", from=1-2, to=2-2]
		\arrow["{f'}"', from=1-1, to=2-1]
		\arrow["f"', from=2-1, to=2-2]
		\arrow["\lrcorner"{anchor=center, pos=0.125}, draw=none, from=1-1, to=2-2]
		\end{tikzcd}\]
		the commutative square,
		\[\begin{tikzcd}
		{\Fun(Y,\calS)} & {\Fun(X,\calS)} \\
		{\Fun(Y',\calS)} & {\Fun(X',\calS)}
		\arrow["{{f'}^{\ast}}"', from=2-1, to=2-2]
		\arrow["{g^{\ast}}"', from=1-1, to=2-1]
		\arrow["{f^{\ast}}", from=1-1, to=1-2]
		\arrow["{{g'}^{\ast}}"', from=1-2, to=2-2]
		\end{tikzcd}\]
		is right adjointable (in the sense of \cite[Definition 4.7.4.13]{HA}). That is, the Beck-Chevalley map $g^{\ast}f_{\ast} \to f'_{\ast} {g'}^{\ast}$ is an equivalence.
	\end{enumerate}
\end{lem}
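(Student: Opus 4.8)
The plan is to deduce the equivalence from the fiber--slice characterization of smooth functors in \cite[4.1.2.11]{HTT} by passing to opposite $\infty$-categories and unwinding the right Kan extension formula pointwise. First I would record the op-duality. Applying $(-)^{\op}$ carries the fiber $X_y=\{y\}\times_Y X$ to $(X^{\op})_y$ and the under-slice $X_{y/}=Y_{y/}\times_Y X$ to the over-slice $(Y^{\op})_{/y}\times_{Y^{\op}} X^{\op}$, and since (in the conventions fixed in the introduction) a functor $u$ is initial exactly when $u^{\op}$ is cofinal, condition (1) for $f$ is equivalent to saying that the fibers of $f^{\op}$ are cofinal in the corresponding over-slices. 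This is precisely the fiber--slice condition appearing in \cite[4.1.2.11]{HTT}. Thus the whole statement reduces to matching condition (2) with the base-change (smoothness) property of $f^{\op}$.

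Next I would make condition (2) pointwise. Fix a pullback square as in the statement, a functor $F\colon X\to\calS$, and an object $y'\in Y'$ with image $y:=g(y')$. The formula for right Kan extension gives
\[ (g^{\ast}f_{\ast}F)(y')\simeq (f_{\ast}F)(y)\simeq \lim_{X_{y/}}F|_{X_{y/}}, \qquad (f'_{\ast}g'^{\ast}F)(y')\simeq \lim_{X'_{y'/}}g'^{\ast}F, \]
and the identifications $X'=Y'\times_Y X$ and $X'_{y'/}=Y'_{y'/}\times_Y X$ exhibit the Beck--Chevalley map at $y'$ as the comparison map induced by restriction along the canonical functor $\theta\colon X'_{y'/}\to X_{y/}$ coming from $g$. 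Hence condition (2) is equivalent to the assertion that $\theta^{\ast}$ is an equivalence on limits for every base change and every $F$, which is exactly the base-change form of smoothness (for $f^{\op}$, where it becomes the dual statement about colimits over the opposite over-slices).

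With these two translations in place, the equivalence (1)$\iff$(2) is the content of \cite[4.1.2.11]{HTT} applied to $f^{\op}$. For (1)$\Rightarrow$(2) one uses that fibers are preserved under base change, so $X'_{y'}\simeq X_y$, together with stability of the fiber--slice condition under base change (which I would verify by the same fiber/slice identification); this lets one replace both $\lim_{X_{y/}}$ and $\lim_{X'_{y'/}}$ by the limit over the common fiber $X_y$ and thereby forces $\theta^{\ast}$ to be an equivalence. The converse, extracting initiality of $X_y\hookrightarrow X_{y/}$ from the base-change property, is the substantive implication; a naive specialization of the pullback square (e.g.\ to $g\colon\{y\}\hookrightarrow Y$) only tests limits against functors restricted from $X$ along $X_{y/}\to X$, which need not exhaust $\Fun(X_{y/},\calS)$, so this direction genuinely requires the argument of \cite[4.1.2.11]{HTT}, phrased through the Quillen A--type criterion of \cref{prop: Quillen A}.

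The step I expect to be the main obstacle is precisely this converse: recovering cofinality of the fiber in the slice from a base-change condition that only sees functors pulled back from the total space. Rather than attempt a direct specialization, the clean route is to package the pointwise comparison with $\theta^{\ast}$ uniformly and invoke \cite[4.1.2.11]{HTT}, so that the only original content is the variance bookkeeping of the op-duality and the identification of the Beck--Chevalley transformation with restriction along $\theta$.
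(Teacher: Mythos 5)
Your direction $(1)\Rightarrow(2)$ is essentially the paper's own argument: evaluate the Beck--Chevalley map pointwise via the right Kan extension formula, use that fibers are preserved under pullback, and reduce both sides to a limit over the common fiber; like the paper, this also needs condition (1) to be stable under base change, which you correctly flag and which can indeed be checked by identifying $X_y \times_{X_{y/}}(X_{y/})_{/\xi}$ with a fiber of $X_{/x} \to Y_{/f(x)}$. The problem is the converse. You correctly diagnose the crux --- condition (2) only tests functors pulled back from $X$, whereas initiality of $X_y \to X_{y/}$ must be tested against all of $\Fun(X_{y/},\calS)$ --- but you do not close this gap: you delegate it to \cite[~4.1.2.11]{HTT}. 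That delegation does not go through as stated. The cited result takes as hypothesis either smoothness/properness in Lurie's sense (pullbacks of initial maps are initial) or the fiber--slice condition for \emph{all} pullbacks of the map; condition (2) of the present lemma is neither, and your assertion that (2) ``is exactly the base-change form of smoothness'' is precisely the unproven point. Indeed, deducing Lurie-smoothness of $f$ (equivalently, properness of $f^{\op}$) from condition (2) runs into the very same restricted-functors obstacle: to show that the pullback over $Y$ of an initial map $Z''\to Z'$ remains initial after applying $(-)\times_Y X$, one must compare limits of \emph{arbitrary} functors on $Z'\times_Y X$, while (2) only controls those restricted from $X$. So the proposal is circular at its key step: the ``variance bookkeeping'' it treats as routine is equivalent to the implication being proved.

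What the paper does instead, and what your outline is missing, is a re-basing trick that turns the slice into a total space. Given $y \in Y$, first pull $f$ back along $Y_{y/} \to Y$; the resulting functor is $X_{y/} \to Y_{y/}$, so its source is exactly the slice $X_{y/}$. Condition (2) for this pulled-back functor (here the paper uses that (2) passes to pullbacks of $f$ --- and note that this stability claim is exactly where your obstacle gets absorbed, since formal pasting of Beck--Chevalley squares only yields the condition on restricted functors), applied to the further pullback along $\{\id_y\} \hookrightarrow Y_{y/}$ and evaluated at the initial object $\id_y$, asserts that the comparison map
\[ \underset{X_{y/}}{\lim}\, F \;\longrightarrow\; \underset{X_y}{\lim}\, F|_{X_y} \]
is an equivalence for \emph{every} $F \colon X_{y/} \to \calS$ --- every functor on $X_{y/}$, not only those restricted from $X$, because $X_{y/}$ is now the total space of the functor to which (2) is being applied. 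By \cref{prop: Quillen A} this is precisely condition (1). This single maneuver --- base change along $Y_{y/}\to Y$ and evaluation at $\id_y$, rather than specialization along $\{y\}\hookrightarrow Y$ or an appeal to \cite[~4.1.2.11]{HTT} --- is the idea your proposal lacks, and it is what makes a short self-contained proof possible.
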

\begin{proof}
	$(1) \implies (2)$ Let $F: X \to \calS$ be a functor. Evaluating the Beck-Chevalley map for $F$ at $y'\in Y'$ and then using the formula for right Kan extension gives
	\[  \underset{x \in X_{g(y')}}{\lim} F(x) \simeq \underset{x \in X_{g(y')/}}{\lim} F(x) \simeq (g^{\ast}f_{\ast}F)(y') \to (f'_{\ast} {g'}^{\ast}F)(y') \simeq  \underset{x' \in X'_{y'/}}{\lim} F(g'(x')) \simeq \underset{x' \in X'_{y'}}{\lim} F(g'(x')) \]
	where the first and last equivalences follow from assumption (1). The composite is induced from the map on fibers $X'_{y'} \to X_{g(y')}$ in the square from (2). But by assumption that square is cartesian and thus the map on fibers is an equivalence.

	$(2) \implies (1)$ 	Let $y \in Y$. Since $(2)$ holds by assumption and is manifestly stable under pullback the natural map $X_{y/}\coloneq X \times_{Y} Y_{y/} \to Y_{y/}$ also satisfies it. Consider now the pullback square:
	\[\begin{tikzcd}
	{X_y} & X_{y/} \\
	y & Y_{y/}
	\arrow[from=1-1, to=1-2]
	\arrow[from=1-2, to=2-2]
	\arrow[from=1-1, to=2-1]
	\arrow[from=2-1, to=2-2]
	\arrow["\lrcorner"{anchor=center, pos=0.125}, draw=none, from=1-1, to=2-2]
	\end{tikzcd}\]
	For every functor  $F: X_{y/} \to \calS $ we have that the Beck-Chevalley map,
	\[ \underset{x \in X_{y/}}{\lim} F(x) \to \underset{x' \in X_y}{\lim} F(x'),  \] 
	is an equivalence. We conclude that $X_y \to X_{y/}$ is initial as required.
\end{proof}

\begin{defn}
	A functor $f: X \to Y$ is called: 
	\begin{enumerate}
		\item \textit{Smooth} If it satisfies the equivalent conditions of \cref{lem:smooth-characterization}
		\item \textit{Proper} if $f^{\op}$ is smooth.
	\end{enumerate}
\end{defn}

\begin{cor}\label{lem:smooth-basechange}
	Smooth functors are stable under base change. That is, if $f: X \to Y$ is a smooth functor and the square:
	\[\begin{tikzcd}
	{X'} & X \\
	{Y'} & Y
	\arrow[from=1-1, to=1-2]
	\arrow["\phi", from=2-1, to=2-2]
	\arrow["f", from=1-2, to=2-2]
	\arrow["{f'}"', from=1-1, to=2-1]
	\arrow["\lrcorner"{anchor=center, pos=0.125}, draw=none, from=1-1, to=2-2]
	\end{tikzcd}\]
	is cartesian. Then $f'$ is a smooth functor.
\end{cor}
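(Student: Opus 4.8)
The plan is to verify that $f'$ satisfies condition (1) of \cref{lem:smooth-characterization}, i.e. that for every $y' \in Y'$ the inclusion of the fibre into the coslice $X'_{y'} \to X'_{y'/}$ is initial. By \cref{prop: Quillen A} (with $n=\infty$) this amounts to showing that for every object $w \in X'_{y'/}$ the comma space $\big| X'_{y'} \times_{X'_{y'/}} (X'_{y'/})_{/w}\big|$ is contractible, and I will deduce this from the corresponding statement for $f$, which holds since $f$ is smooth.

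First I would unwind the two relevant $\infty$-categories using the pasting law for pullback squares. Writing $y := \phi(y')$, transitivity of base change applied to the given cartesian square furnishes canonical equivalences $X'_{y'} \simeq X_y$ (the fibre of $f$ over $y$) and $X'_{y'/} \simeq X \times_Y Y'_{y'/}$, under which the inclusion $X'_{y'} \to X'_{y'/}$ becomes the base change along $X \to Y$ of the inclusion of the initial object $\{\mathrm{id}_{y'}\} \hookrightarrow Y'_{y'/}$. The functor $\phi$ moreover induces $Y'_{y'/} \to Y_{y/}$ carrying $\mathrm{id}_{y'}$ to $\mathrm{id}_y$, hence a functor $\psi : X'_{y'/} \to X_{y/}$ restricting to the identity of $X_y$ on the fibres over these initial objects.

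The heart of the argument is the computation of the comma spaces. Fix $w = (x,\sigma) \in X'_{y'/} \simeq X \times_Y Y'_{y'/}$ and set $\tilde\sigma := \phi(\sigma)$, its image in $Y_{y/}$. Unwinding the coslice construction and using that the mapping space out of the initial object $\mathrm{id}_{y'}$ of $Y'_{y'/}$ (resp. $\mathrm{id}_y$ of $Y_{y/}$) is contractible, both $X'_{y'} \times_{X'_{y'/}} (X'_{y'/})_{/w}$ and $X_y \times_{X_{y/}} (X_{y/})_{/\psi(w)}$ are identified with the space of pairs $(x_0,\,\xi\colon x_0 \to x)$ in which $x_0$ lies in the fibre $X_y$ and $\xi$ is a morphism of $X$ lying over the fixed morphism $\tilde\sigma$ of $Y$. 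Thus $\psi$ induces an equivalence $X'_{y'} \times_{X'_{y'/}} (X'_{y'/})_{/w} \simeq X_y \times_{X_{y/}} (X_{y/})_{/\psi(w)}$.

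To conclude, since $f$ is smooth, \cref{lem:smooth-characterization}(1) together with \cref{prop: Quillen A} shows the right-hand comma space is contractible, and by the equivalence just established so is the left-hand one; as $w$ and $y'$ were arbitrary, $X'_{y'} \to X'_{y'/}$ is initial for all $y'$, i.e. $f'$ is smooth. I expect the only genuine obstacle to be the bookkeeping in this comma-category identification: one must track the maps to $Y$ with care and exploit the contractibility of morphisms out of the initial objects of the coslices. It is worth emphasising that a purely formal argument via the Beck--Chevalley characterization (2) of \cref{lem:smooth-characterization} and pasting of adjointable squares does not suffice, since the relevant cancellation of Beck--Chevalley transformations fails; it is precisely the fibrewise computation above that makes smoothness of $f$ enter.
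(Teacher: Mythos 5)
Your proof is correct, and it takes a genuinely different route from the paper's. The paper disposes of this corollary in a single line: it invokes characterization (2) of \cref{lem:smooth-characterization} (Beck--Chevalley for every pullback square) and asserts that this condition is ``manifestly stable under base change.'' You instead verify characterization (1) directly and fibrewise: after identifying $X'_{y'} \simeq X_{\phi(y')}$ and $X'_{y'/} \simeq X \times_Y Y'_{y'/}$, you show, writing $w=(x,\sigma)$, that the comma category of the fibre inclusion at $w$ agrees with the comma category for $f$ at $\psi(w)$ --- both are right fibrations over $X_{\phi(y')}$ whose fibre over $x_0$ is the fibre of $\Map_X(x_0,x)\to\Map_Y(\phi(y'),f(x))$ over $\phi(\sigma)$, by contractibility of mapping spaces out of the initial objects $\id_{y'}$ and $\id_{\phi(y')}$ --- and then conclude with \cref{prop: Quillen A} applied at $n=\infty$; all of these identifications check out. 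Your closing remark deserves emphasis, because it is in effect a criticism of the route the paper itself takes, and on the mathematical point you are right: if $f''$ denotes a further base change of $f'$ along $\psi\colon Y''\to Y'$ (with induced $\phi'\colon X'\to X$ and $\psi'\colon X''\to X'$), then pasting of adjointable squares together with smoothness of $f$ only shows that the Beck--Chevalley transformation $\psi^{\ast}f'_{\ast}\to f''_{\ast}\psi'^{\ast}$ becomes an equivalence after whiskering with $\phi'^{\ast}$, i.e.\ on presheaves pulled back from $X$; since $\phi'^{\ast}$ is not essentially surjective in general, this falls short of condition (2) for $f'$. So the stability that the paper calls manifest is precisely what your fibrewise computation supplies. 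The trade-off: the paper's approach buys brevity and stays at the level of functorial characterizations, while yours is longer but self-contained and makes visible exactly where smoothness of $f$ enters.
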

\begin{proof}
	Follows from \cref{lem:smooth-characterization} as $(2)$ is manifestly stable under base change.
\end{proof}

\begin{lem}\label{lem:smoth-weak-contractible-fibers}
	Let $f: X \to Y$ be a smooth (resp. proper) functor with weakly contractible fibers. Then:
	\begin{enumerate}
		\item $|f| : |X| \to |Y|$ is an equivalence.
			
		\item For every pullback square
		\[\begin{tikzcd}
		{X'} & X \\
		{Y'} & Y,
		\arrow[from=1-1, to=1-2]
		\arrow["\phi", from=2-1, to=2-2]
		\arrow["f", from=1-2, to=2-2]
		\arrow["{f'}"', from=1-1, to=2-1]
		\arrow["\lrcorner"{anchor=center, pos=0.125}, draw=none, from=1-1, to=2-2]
		\end{tikzcd}\]
		the map $f'$ is smooth (resp. proper) with weakly contractible fibers.
	\end{enumerate}
\end{lem}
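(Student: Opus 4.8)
The plan is to reduce both assertions to the \emph{proper} case and, within that, to combine the pointwise formula for left Kan extension with the cofinality of the fibre inclusions supplied by \cref{lem:smooth-characterization}. The first move is to exploit the symmetry between smooth and proper. Realization is insensitive to opposites: for $Z \in \calS$ one has $Z^{\op} \simeq Z$ naturally, so
\[ \Map_{\calS}(|X^{\op}|, Z) \simeq \Map_{\Cat_{\infty}}(X^{\op}, Z) \simeq \Map_{\Cat_{\infty}}(X, Z^{\op}) \simeq \Map_{\calS}(|X|, Z), \]
whence $|X^{\op}| \simeq |X|$ naturally in $X$, and under this identification $|f^{\op}| \simeq |f|$. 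Since $f^{\op} \colon X^{\op} \to Y^{\op}$ is proper precisely when $f$ is smooth, and the fibres of $f^{\op}$ are the opposites $(X_y)^{\op}$ of the fibres of $f$ (hence weakly contractible exactly when the $X_y$ are), it suffices to treat proper $f$.

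For part $(2)$ I would first observe that the fibres of $f'$ coincide with fibres of $f$: for $y' \in Y'$, pasting the two pullback squares gives $X'_{y'} = \{y'\} \times_{Y'} X' \simeq \{y'\} \times_Y X = X_{\phi(y')}$, which is weakly contractible by hypothesis. The smooth case of $(2)$ is then \cref{lem:smooth-basechange} verbatim, while the proper case follows by applying \cref{lem:smooth-basechange} to $f^{\op}$, using that the opposite of a pullback square is again a pullback square.

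For part $(1)$ in the proper case, I would use the identity $\underset{X}{\colim} \simeq \underset{Y}{\colim} \circ f_!$, which holds because both sides are left adjoint to the same constant-diagram functor (the constant diagram on $X$ is the $f^{\ast}$-pullback of the constant diagram on $Y$). Applied to the constant diagram at a point this yields
\[ |X| \simeq \underset{X}{\colim}\, \pt \simeq \underset{Y}{\colim}\, f_!\pt, \qquad (f_!\pt)(y) \simeq \underset{X \times_Y Y_{/y}}{\colim}\, \pt \simeq |X \times_Y Y_{/y}|, \]
and under these identifications the comparison map $|f|$ is $\underset{Y}{\colim}$ of the counit $\varepsilon \colon f_! \pt = f_! f^{\ast}\pt \to \pt$. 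It then remains to compute the value $(f_!\pt)(y)$: unwinding \cref{lem:smooth-characterization}$(1)$ for the smooth functor $f^{\op}$ shows that the fibre inclusion $X_y \mono X \times_Y Y_{/y}$ is final, so finality gives $|X \times_Y Y_{/y}| \simeq |X_y| \simeq \pt$ by weak contractibility of the fibre. Hence $\varepsilon$ is a pointwise equivalence and $|f| = \underset{Y}{\colim}\,\varepsilon$ is an equivalence.

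The only non-formal point — and thus the main obstacle — is the bookkeeping that simultaneously identifies $|f|$ with $\underset{Y}{\colim}\,\varepsilon$ and correctly transports the finality statement of \cref{lem:smooth-characterization} through the opposite-category reduction (one must check that condition $(1)$ for $f^{\op}$ translates into the inclusion $X_y \mono X \times_Y Y_{/y}$ being final, i.e.\ colimit-preserving, rather than initial). Once these conventions are matched, the remainder is a direct application of the pointwise Kan-extension formula and cofinality.
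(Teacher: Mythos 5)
Your proof is correct and follows essentially the same route as the paper's: reduce the smooth case to the proper case via opposites (using $|f^{\op}|\simeq|f|$ and $(X_y)^{\op}\simeq (X^{\op})_y$), compute $|X|\simeq\underset{Y}{\colim}\,f_!\pt$ via the pointwise left Kan extension formula, identify $(f_!\pt)(y)\simeq|X\times_Y Y_{/y}|\simeq|X_y|\simeq\pt$ using finality of the fibre inclusion supplied by properness, and handle $(2)$ by \cref{lem:smooth-basechange} together with the identification of fibres $X'_{y'}\simeq X_{\phi(y')}$. The only difference is that you make explicit a point the paper leaves implicit, namely that the chain of equivalences is realized by $\underset{Y}{\colim}$ of the counit $f_!f^{\ast}\pt\to\pt$ and hence really is the map $|f|$; this is a welcome refinement, not a different argument.
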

\begin{proof}
	(1) The smooth case follows from the proper case by observing that $f$ is proper if and only if $f^{\rm op}$ is smooth, $|f^{\rm op}| \simeq |f|$ and $ |X^{\rm op}_y| \simeq |X_y|$. We may thus assume that $f$ is proper. The formula for left Kan extension then gives
	\[ |X| \simeq \underset{X}{\colim} \, \pt \simeq \underset{Y}{\colim} \, f_! (\pt) \simeq \underset{y \in Y}{\colim} \, |X_{/y}| \simeq \underset{y \in Y}{\colim} \, |X_y| \simeq \underset{Y}{\colim} \, \pt \simeq |Y|\]
	
	(2) Since smooth functors are stable under base change (\cref{lem:smooth-basechange}). It remains to check that all fibers of $f'$ are weakly contractible. For every $y \in Y'$, the induced map on fibers in the pullback square yields an equivalence of $\infty$-categories $X'_{y} \simeq X_{\phi(y)}$ and therefore $|X'_y| \simeq |X_{\phi(y)}|\simeq \pt$.
\end{proof}

\begin{lem}\label{lem:subfamily-of-product-is-smooth-contractible}
	Consider a square of $\infty$-categories
	\[\begin{tikzcd}
	E & {X'\times Y} \\
	X & {X'}
	\arrow[hook, from=2-1, to=2-2]
	\arrow[from=1-2, to=2-2]
	\arrow[hook, from=1-1, to=1-2]
	\arrow["\pi"', from=1-1, to=2-1]
	\end{tikzcd}\]
	such that 
	\begin{enumerate}
		\item The horizontal maps are fully faithful.
		\item $Y$ admits an initial object $y_{\emptyset} \in Y$.
		\item For every $x \in X$ we have $(x,y_{\emptyset}) \in E$.
	\end{enumerate}
	Then $\pi$ is smooth with weakly  contractible fibers.
\end{lem}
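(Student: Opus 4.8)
The plan is to verify the two conclusions separately, both powered by the initial object $y_{\emptyset}$: weak contractibility of the fibers is immediate, and smoothness follows from criterion (1) of \cref{lem:smooth-characterization} combined with \cref{prop: Quillen A}. First I would clean up the hypotheses. Since the square commutes and $X \mono X'$ is fully faithful, the functor $(\pi, \mathrm{pr}_Y): E \to X \times Y$ is fully faithful, so I may identify $E$ with a full subcategory of $X \times Y$ on which $\pi$ is the first projection. With this identification the fiber $E_x := \{x\}\times_X E$ is the full subcategory of $Y$ spanned by those $y$ with $(x,y) \in E$. By hypothesis (3) it contains $y_{\emptyset}$, which by (2) is initial in $Y$ and hence initial in the full subcategory $E_x$. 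Thus every fiber has an initial object and is weakly contractible, settling the second assertion.

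For smoothness, by \cref{lem:smooth-characterization} it suffices to prove that for each $x \in X$ the natural functor $\iota: E_x \to E_{x/} := X_{x/}\times_X E$, sending $y$ to $(\id_x,(x,y))$, is initial. By condition (4) of \cref{prop: Quillen A} this reduces to showing that for every object $w = (\alpha\colon x \to x_1,\,(x_1,y_1)) \in E_{x/}$ the comma $\infty$-category $C_w := E_x \times_{E_{x/}} (E_{x/})_{/w}$ is weakly contractible. The technical heart — and the step I expect to be the main obstacle — is the identification of mapping spaces in $E_{x/}$, namely the computation
\[ \Map_{E_{x/}}(\iota(y),w) \simeq \Map_{X_{x/}}(\id_x,\alpha) \times_{\Map_X(x,x_1)} \Map_{E}\big((x,y),(x_1,y_1)\big) \simeq \Map_Y(y,y_1). \]
Here the first equivalence is the formula for mapping spaces in the fiber product $X_{x/}\times_X E$, while the second uses that $\id_x$ is initial in $X_{x/}$ (so $\Map_{X_{x/}}(\id_x,\alpha)$ is contractible, mapping to the point $\alpha$) together with full faithfulness of $E \mono X'\times Y$ (which gives $\Map_E((x,y),(x_1,y_1)) \simeq \Map_X(x,x_1)\times\Map_Y(y,y_1)$); getting these fiber-product manipulations exactly right is the delicate part.

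Granting this, the objects of $C_w$ are pairs $(y \in E_x,\ \phi\colon \iota(y) \to w)$, and $\phi$ corresponds under the computation above to a morphism $y \to y_1$ in $Y$. I would then exhibit $(y_{\emptyset},\phi_{\emptyset})$ as an initial object of $C_w$, where $\phi_{\emptyset}$ is the unique map $y_{\emptyset}\to y_1$ (existing since $y_\emptyset$ is initial in $Y$). Indeed, the space of morphisms out of $(y_{\emptyset},\phi_{\emptyset})$ to any $(y,\phi)$ is the fiber of $\Map_{E_x}(y_{\emptyset},y) \to \Map_{E_{x/}}(\iota(y_{\emptyset}),w)$ over $\phi_{\emptyset}$; both $\Map_{E_x}(y_{\emptyset},y)\simeq \Map_Y(y_{\emptyset},y)$ and $\Map_{E_{x/}}(\iota(y_{\emptyset}),w)\simeq \Map_Y(y_{\emptyset},y_1)$ are contractible because $y_{\emptyset}$ is initial in $Y$, so this fiber is contractible. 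Hence $C_w$ has an initial object, is therefore weakly contractible, and $\iota$ is initial; this proves $\pi$ is smooth and completes the argument.
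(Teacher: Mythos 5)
Your proof is correct, but it is organized differently from — and is longer than — the paper's. The paper obtains both conclusions in a single stroke: it observes that $E_{x/} := X_{x/}\times_X E$ embeds fully faithfully into $X'_{x/}\times Y$ (using full faithfulness of $E \mono X'\times Y$ and of $X \mono X'$), and that $(\id_x,y_\emptyset)$ is evidently initial in this product; since that object lies in the full subcategory $E_{x/}$, indeed in $E_x$, it is initial in $E_{x/}$, which formally yields both that $|E_x|$ is contractible and that the fully faithful inclusion $E_x \mono E_{x/}$ is initial. You instead prove the two conclusions separately, establishing initiality of $\iota\colon E_x \to E_{x/}$ by checking condition (4) of \cref{prop: Quillen A} one object $w$ at a time, which forces you to compute $\Map_{E_{x/}}(\iota(y),w)$ via fiber products and then to produce an initial object in each comma category $C_w$. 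Both arguments run on the same fuel (the initial objects $\id_x \in X_{x/}$ and $y_\emptyset \in Y$, plus full faithfulness), and your fiber-product manipulations are carried out correctly; but note that your own identity $\Map_{E_{x/}}(\iota(y),w)\simeq \Map_Y(y,y_1)$, specialized to $y=y_\emptyset$, already says that $\iota(y_\emptyset)$ is initial in all of $E_{x/}$ — at that point you could have concluded exactly as the paper does, with no need to analyze the comma categories $C_w$ individually. What your route buys is explicitness (it never needs to name the ambient product $X'_{x/}\times Y$); what the paper's embedding trick buys is economy, packaging the same facts so that both conclusions drop out of one initiality statement.
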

\begin{proof}
	By assumption, for every $x \in X$ we have  $(x,y_{\emptyset}) \in E$ and therefore $(x \overset{\id}{\simeq} x,y_{\emptyset}) \in E_x \subseteq E_{x/}$. To prove the claim it thus suffices to show that $(x \overset{\id}{\simeq} x,y_{\emptyset}) \in E_{x/}$ is an initial object for every $x \in X$. Indeed this would simultaneously show that the inclusion $E_x \mono E_{x/}$ is initial and that $|E_x|$ is contractible. Note that there's a natural fully faithful inclusion $E_{x/} \mono (X' \times Y)_{x/} \times_{X'} Y \simeq X'_{x/} \times Y$. But $(x \overset{\id}{\simeq} x, y_{\emptyset})$ is clearly initial in $ X'_{x/} \times Y$ so we're done.  
\end{proof}

\begin{prop}\label{lem:reduction-map-of-factorizations-is-smooth}
	The natural functor 
	$\Fact^{<|x|}_{\calO}(\mu) \to \Fact^{<|x|}_{\calO}(\mu^{\nonu})$
	is smooth with weakly contractible fibers.
\end{prop}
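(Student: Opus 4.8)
The plan is to recognize the reduction functor as an instance of the projection $\pi$ appearing in \cref{lem:subfamily-of-product-is-smooth-contractible}, and then to verify that lemma's three hypotheses.

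I begin from the product decomposition $\Fact_{\calO}(\mu) \simeq \prod_{j \in \bracket{z}^{\circ}} \Fact_{\calO}(\mu_j)$ of \cref{lem:factorization-category-decomposition}. Since $\mu$ is maximally active, $\bracket{\mu}^{\circ}$ has image exactly $\{i_{\mu}\}$; hence $\mu_{i_{\mu}} = \mu^{\nonu}$ has source $x$, while for $j \neq i_{\mu}$ the fiber of $\bracket{\mu}^{\circ}$ over $j$ is empty, so the source of $\mu_j$ is the zero object $0$. Writing $X' := \Fact_{\calO}(\mu^{\nonu})$ and $Y := \prod_{j \neq i_{\mu}} \Fact_{\calO}(\mu_j)$, this reads $\Fact_{\calO}(\mu) \simeq X' \times Y$, and by construction (see \cref{lem:reduction-map-for-factorizations}) the reduction functor is precisely the restriction to $\Fact^{<|x|}_{\calO}(\mu)$ of the projection $X' \times Y \to X'$; the fact that this projection sends $\Fact^{<|x|}_{\calO}(\mu)$ into $\Fact^{<|x|}_{\calO}(\mu^{\nonu})$ is exactly the cardinality estimate already recorded in \cref{lem:reduction-map-for-factorizations}.

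Setting $E := \Fact^{<|x|}_{\calO}(\mu)$ and $X := \Fact^{<|x|}_{\calO}(\mu^{\nonu})$, these assemble into the commuting square of \cref{lem:subfamily-of-product-is-smooth-contractible} with $\pi$ the reduction functor. The two horizontal arrows are inclusions of full subcategories, giving hypothesis (1). For hypothesis (2), I claim each factor $\Fact_{\calO}(\mu_j) = \calO^{\act}_{0//\mu_j} \simeq (\calO^{\act}_{0/})_{/\mu_j}$ (for $j \neq i_{\mu}$) has an initial object. Indeed the identity $\id_0$ is always initial in the coslice $\calO^{\act}_{0/}$ --- even though, by \cref{rem:0-object-of-operad}, $0$ itself need not be initial in $\calO^{\act}$ --- and for any $\infty$-category $\calD$ with initial object $\emptyset$ and object $d$, the essentially unique map $(\emptyset \to d)$ is initial in $\calD_{/d}$. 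Applied with $\calD = \calO^{\act}_{0/}$, $\emptyset = \id_0$ and $d = \mu_j$, this exhibits the trivial factorization $0 \active 0 \active z_j$, with first map $\id_0$, as initial; the tuple $y_{\emptyset}$ of these is then initial in $Y$. For hypothesis (3), given $x_0 \in X$, i.e.\ a factorization $x \active y_{i_{\mu}} \active z_{i_{\mu}}$ of $\mu^{\nonu}$ with $|y_{i_{\mu}}| < |x|$, the object $(x_0, y_{\emptyset})$ corresponds to a factorization of $\mu$ whose middle object is $y_{i_{\mu}} \oplus \bigoplus_{j \neq i_{\mu}} 0 \simeq y_{i_{\mu}}$ by \cref{rem:0-object-of-operad}; its cardinality is $<|x|$, so $(x_0, y_{\emptyset}) \in E$. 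With all three hypotheses in place, \cref{lem:subfamily-of-product-is-smooth-contractible} yields that $\pi$ is smooth with weakly contractible fibers.

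The only genuinely delicate point --- and the one I expect to be the main obstacle --- is the identification of the initial object of $\Fact_{\calO}(\mu_j)$ for $j \neq i_{\mu}$: because $\calO$ need not be unital, $0$ is not initial in $\calO^{\act}$, so one cannot directly invoke a ``zero factorization'' in the ambient category. The resolution is to pass to the coslice $\calO^{\act}_{0/}$, where the identity $\id_0$ is automatically initial, and then descend along the slice construction $(\calO^{\act}_{0/})_{/\mu_j}$. I would want to state and justify this coslice/slice fact carefully, since it is the pivot on which the whole argument turns.
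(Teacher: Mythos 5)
Your proposal is correct and follows essentially the same route as the paper: the same product decomposition of $\Fact_{\calO}(\mu)$ from \cref{lem:factorization-category-decomposition}, the same identification of the reduction functor with a restricted projection, and the same application of \cref{lem:subfamily-of-product-is-smooth-contractible} with the same three verifications. The only difference is that where the paper simply asserts that the trivial factorization $\theta_j \colon 0 \xrightarrow{\id} 0 \rightsquigarrow z_j$ is ``clearly'' initial in $\Fact_{\calO}(\mu_j)$, you supply the careful coslice/slice justification (initiality of $\id_0$ in $\calO^{\act}_{0/}$, then passage to $(\calO^{\act}_{0/})_{/\mu_j}$), correctly flagging that this is needed precisely because $0$ need not be initial in $\calO^{\act}$ for non-unital $\calO$.
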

\begin{proof}
	To prove this we apply \cref{lem:subfamily-of-product-is-smooth-contractible} to the following square
	\[\begin{tikzcd}
	{\Fact^{<|x|}_{\calO}(\mu)} & {\Fact_{\calO}(\mu)} \\
	{\Fact^{<|x|}_{\calO}(\mu^{\nonu})} & {\Fact_{\calO}(\mu^{\nonu})}
	\arrow[hook, from=1-1, to=1-2]
	\arrow[from=1-2, to=2-2]
	\arrow[from=1-1, to=2-1]
	\arrow[hook, from=2-1, to=2-2]
	\end{tikzcd}\]
	where we recall that by \cref{lem:factorization-category-decomposition}, the right vertical functor identifies with the projection 
	\[\Fact_{\calO}(\mu) \simeq \Fact_{\calO}(\mu^{\nonu}) \times \underset{i_{\mu} \neq j \in \bracket{z}^{\circ} }{\mathlarger{\prod}}\Fact_{\calO}(\mu_j) \to \Fact_{\calO}(\mu^{\nonu})\]
	To finish we must verify the conditions of \cref{lem:subfamily-of-product-is-smooth-contractible}.
	\begin{enumerate}
		\item Follows from definitions. 
		
		\item For $i_{\mu} \neq j \in \bracket{z}^{\circ}$ denote by $\theta_j$ the trivial factorization of $\mu_j$ given by the following diagram
		\[\begin{tikzcd}
		& 0 \\
		0 && {z_j}
		\arrow["\id", from=2-1, to=1-2]
		\arrow["{\mu_j}", from=1-2, to=2-3]
		\arrow["{\mu_j}"', from=2-1, to=2-3]
		\end{tikzcd}\]
		Clearly $\theta_j \in \Fact_{\calO}(\mu_j)$ is initial and therefore $(\theta_j : i_{\mu} \neq j \in \bracket{z}^{\circ}) \in s \underset{i_{\mu} \neq j \in \bracket{z}^{\circ} }{\mathlarger{\prod}}\Fact_{\calO}(\mu_j) $ is initial.
		
		\item For a factorization $\sigma = \braces{\mu: x \actarrow y \actarrow z}$ denote $|\sigma|"=|y|$. We must show that for every $\pi \in \Fact^{<|x|}_{\calO}(\mu^{\nonu})$ if we let $\pi' \coloneq  \pi \oplus \underset{ i_{\mu} \neq j \in \bracket{z}^{\circ}}{\bigoplus} \theta_j $ then $\pi' \in \Fact^{<|x|}_{\calO}(\mu)$ or equivalently $|\pi'| <|x|$. But clearly $|\theta_j|=0$ and therefore 
		\[|\pi'| =|\pi|+\sum_{ i_{\mu} \neq j \in \bracket{z}^{\circ}}|\theta_j| = |\pi| <|x|\]
	\end{enumerate}
\end{proof}

We are now in a position to prove \cref{prop:reduction-to-quasi-partition-of-nonunital-morphism}

\begin{proof}[Proof of \cref{prop:reduction-to-quasi-partition-of-nonunital-morphism}]\label{proof:eduction-to-quasi}
	By \cref{cor:partition-category-as-pullback} we have a pullback square
	\[\begin{tikzcd}
	{\QPart_{\calO}(\mu)} & {\Fact^{<|x|}_{\calO}(\mu)} \\
	{\QPart_{\calO}(\mu^{\nonu})} & {\Fact^{<|x|}_{\calO}(\mu^{\nonu})}
	\arrow[""{name=0, anchor=center, inner sep=0}, from=2-1, to=2-2]
	\arrow[from=1-2, to=2-2]
	\arrow[from=1-1, to=2-1]
	\arrow[from=1-1, to=1-2]
	\arrow["\lrcorner"{anchor=center, pos=0.125}, draw=none, from=1-1, to=0]
	\end{tikzcd}\]
	By \cref{lem:reduction-map-of-factorizations-is-smooth} the right vertical map is smooth with weakly contractible fibers and thus by \cref{lem:smoth-weak-contractible-fibers} the left vertical map induces an equivalence $ |\QPart_{\calO}(\mu)| \simeq  |\QPart_{\calO}(\mu^{\nonu})|$.
\end{proof}
	\section{Partition complexes of the little discs operad}\label{sect:5}

The main goal of this section is to prove
\cref{thm:partition-Ed-intro}.
We do this by showing that the operadic partition complexes admit an alternative description as the fiber of the "inclusion" of the operadic \textit{"decomposables"}.
We then identify this space of decomposables in the case of $\bbE_d$ with the boundary of the Fulton-Macpherson compactification.
We state our result in the language of dendroidal homotopy theory.

\subsubsection{Dendroidal description of the partition complex}
A finite rooted tree $T$ generates a non-unital operad $\Free_\Op(T)$ by thinking of its edges as colours and its vertices as operations.
We write $\Omega^\circ$ for the open dendroidal category, introduced in \cite{moerdijk2007dendroidal}.
Objects of $\Omega^\circ$ are trees and morphisms are maps of operads $\Free_\Op(T_1) \to \Free_\Op(T_2)$.
An \operad{} $\calO \in \Op_\infty$ defines a presheaf on $\Omega^\circ$ by:
\[\xN_\star^\dend(\calO) \colon \Omega^{\circ,\op} \too \calS, \qquad T \longmapsto \hl{\xN^\dend_T(\calO)} \coloneq \Map_{\Op_\infty}(\Free_\Op(T),\calO)\]
We make this precise in \cref{obs:lurie=barwick=dendroidal}.
Note that any morphism $\Free_\Op(T_1) \to \Free_\Op(T_2)$ can be factored uniquely as a composite
$\Free_\Op(T_1) \actarrow \Free_\Op(S) \inert \Free_\Op(T_2)$ where the first map induces a bijection on outer edges and the second map is induced by an inclusion of a subtree $S \subseteq T_2$.
The former type is called active and the latter inert.

Let $C_k \in \Omega^\circ$ denote the $k$-corolla, namely the rooted tree with a single vertex and $k$-edges excluding the root.
The space $\xN^\dend_{C_k}(\calO)$ may be identified with the space of multi-morphisms in $\calO$ of arity $k$:
\[\xN^\dend_{C_k}(\calO) \simeq \Map_{\Op_\infty}(\Free_\Op(C_k),\calO) \simeq \{\langle k \rangle^\circ\} \times_{\Fin^\simeq} \Fun([1],\calO^\act)^{\simeq} \times_{\Fin^\simeq}\{\langle 1 \rangle^\circ \}\]
We write $\left(\Omega^{\circ,\op,\act}_{/C_k}\right)^{<k} \subseteq \Omega^{\circ,\op,\act}_{/C_k}$ for the full subcategory spanned by active maps $C_k \actarrow T$ such that all vertices of $T$ are of arity $<k$.

\begin{thm}\label{thm:dendroidal-partition-complex}
	Let $\calO$ be a non-unital $\infty$-operad such that $\calO^{=1}$ is an $\infty$-groupoid. 
	Then for every multi-morphism 
	$ \mu \colon (x_1,\dots,x_k) \to y$ in $\calO$ of arity $k \ge 2$ we have a canonical pullback square: 
	\[\begin{tikzcd}
	{\spart{\calO}{\mu}} & {\colim_{C_k \actarrow T, T \neq C_k}\xN^\dend_T(\calO)} \\
	{\{\mu\}} & {\xN_{C_k}^\dend(\calO)}
	\arrow[from=1-2, to=2-2]
	\arrow[from=1-1, to=1-2]
	\arrow[from=1-1, to=2-1]
	\arrow[from=2-1, to=2-2]
	\arrow["\lrcorner"{anchor=center, pos=0.125}, draw=none, from=1-1, to=2-2]
    \end{tikzcd}\]
\end{thm}

We shall now explain how to use the Fulton-Macpherson operad to deduce \cref{thm:partition-Ed-intro} from \cref{thm:dendroidal-partition-complex}.
\begin{proof}[Proof of \cref{thm:partition-Ed-intro} given  \cref{thm:dendroidal-partition-complex}]
The space
$\FM_d(k)$ is a manifold with corners whose interior is the configuration space of points modulo translations and dilations $\mrm{int}(\FM_d(k)) = \mathrm{Conf}_k(\bbR^d)/\bbR^d\rtimes \bbR_{>0}$ \cite{getzler-jones}.
Getzler and Jones noticed that these spaces assemble to a topological operad.
Such a topological operad was constructed by Markl in \cite{markl} where it is denoted by $\widetilde{\chi}$.
In particular Markl constructs an isomorphism $\widetilde{\chi}(k)=\FM_d(k)$ for all $k\ge 1$ \cite[Theorem 3.4.]{markl}.
Let us henceforth write $\FM_d$ in place of $\widetilde{\chi}$.
Salvatore proved in \cite[Proposition 3.9.]{salvatore} that $\FM_d$ is weakly equivalent to the (non-unital) little $d$-disc operad $\bbE^\nonu_d$ hence we may work with $\FM_d$ to compute the partition complex.
Finally, since $\FM_d$ is cofibrant, the map $\colim_{C_k \actarrow T, T \neq C_k} \FM_d(T) \to \FM_d(k)$ can be identified with the inclusion of the boundary $\partial \FM_d(k) \hookrightarrow \FM_d(k)$ (see also \cite[Example 2.1.13.]{goppl}).
\end{proof}

To prove \cref{thm:dendroidal-partition-complex} we will need to pass information from Lurie's model of \operads{} to dendroidal Segal spaces. 
Unfortunately, the comparison between these models is quite tricky to write down explicitly. 
As far as the author is aware, every known proof of this comparison involves some auxiliary intermediate model (and often more than one).
Our proof of \cref{thm:dendroidal-partition-complex} will also rely on such intermediate models.

\subsubsection{Notation}
We fix the following notation in this section.
	\begin{itemize}
		\item 
		We let $\hl{\xN_{\bullet}}: \Cat_{\infty} \longrightarrow \Psh_{\seg}(\Delta)$
		denote the nerve functor.
		\item 
		For
		$\calC \in \Cat_{\infty}$ 
		we write 
		$\hl{\Delta^{\op}_{/\calC}}\coloneq  \Un(\Delta^{\op} \xrightarrow{\xN_{\bullet}(\calC)} \calS)$
		and 
		$\hl{\Delta^{\op,\act}_{/\calC}}\coloneq  \Un(\Delta^{\op,\act}  \xrightarrow{\xN_{\bullet}(\calC)}\calS)$.
		\item 
		We let $\hl{\Omega}$ denote the dendroidal category \cite{moerdijk2007dendroidal}.
		Note that $\Omega^\circ$ is a full subcategory of $\Omega$.
		\item
		We let $\hl{\Delta_{\Fin}}$ denote Bawrick's category of forest sets \cite{operator-cat} and let $\hl{\Delta_\Fin^1} \subseteq \Delta_\Fin$ denote the subcategory defined in \cite{Dend-Barwick}.
    	\item 
		We let $\hl{\xN^{\otimes}_{\bullet}}: \Cat^{\otimes}_{\infty} \longrightarrow \Fun(\Delta^{\op},\CMon) \subseteq \Psh(\Delta \times \Fin^{\op}_{\ast})$
		denote the symmetric monoidal nerve functor.
	\end{itemize}
\begin{thm}[{\cite{Dend-Barwick}}]\label{thm:barwick=dendroidal}
	The span
	$\Delta_{\Fin} \leftarrow \Delta^{1}_{\Fin} \to \Omega$
	induces equivalences on Segal presheaves:
	\[\Psh(\Delta_\Fin) \simeq \Psh(\Delta_\Fin^1) \simeq \Psh(\Omega).\]
\end{thm}

\begin{thm}[{\cites[Theorem 10.16]{operator-cat}[Corollary 1.1.5]{rune-joachim}}]\label{thm:Barwick=Lurie}
	There is a canonical fully faithful embedding of $\infty$-categories:
	\[  \hl{\xN^\xbar_\star(-)} \colon \Op_\infty \hookrightarrow \Psh(\Delta_\Fin).\]
\end{thm}

\begin{rem}
    For the sake of definiteness we shall henceforth let $\xN^\xbar_\star(-)$ denote the fully faithful embedding afforded by \cite[Corollary 1.1.5]{rune-joachim}.
\end{rem}

\begin{obs}\label{obs:lurie=barwick=dendroidal}
	Combining \cref{thm:Barwick=Lurie} and \cref{thm:barwick=dendroidal} gives commutative diagram
	\[\begin{tikzcd}
		& {\Op_{\infty}} \\
		{\Seg_{\Delta^{\op}_{\Fin}}(\calS)} && {\Seg_{\Omega^{\op}}(\calS)} \\
		& {\Seg_{\Delta^{1,\op}_{\Fin}}(\calS).}
		\arrow["\simeq"', from=2-1, to=3-2]
		\arrow["\simeq", from=2-3, to=3-2]
		\arrow["{\xN^\xbar_\star}"', hook', from=1-2, to=2-1]
		\arrow["{\xN^\dend_\star}", hook, from=1-2, to=2-3]
	\end{tikzcd}\]
	In particular, for any $\infty$-operad
	$\calO \to \Fin_{\ast}$
	we have a canonical equivalence $\xN^\xbar_\star(\calO)\big|_{\Delta^{1,\op}_{\Fin}} \simeq  \xN^\dend_\star(\calO)\big|_{\Delta^{1,\op}_{\Fin}}$
\end{obs}

\begin{rem}
    The equivalence between non-unital \operads{} in the sense of Lurie and complete Segal preshaves on $\Omega^\circ$ was first proved in \cite{heuts}.
\end{rem}

\begin{obs}\label{obs:barwick=Lurie}
    Let $\Env: \Op_{\infty} \longrightarrow \Cat^{\otimes}_{\infty}$
    denote Lurie's symmetric monoidal envelope \cite[Proposition 2.2.4.9.]{HA}.
	The Barwick nerve functor $\xN^\xbar_\star$ is essentially characterized
	by the following diagram:
	\[\begin{tikzcd}
		{\Un_{\Delta^{\op} \times \Fin_{\ast}}(\xN^{\otimes}_{\bullet}(\Env(\calO)))} & {\Un_{\Delta^{\op} \times \Fin_{\ast}}(\xN^{\otimes}_{\bullet}(\Env(\Fin_{\ast})))} & {\Delta^{\op} \times \Fin_{\ast}} \\
		{\Un_{\Delta^{\op}_{\Fin}}(\xN^\xbar_\star(\calO))} & {\Delta_{\Fin}^{\op}} & {\Delta^{\op}}
		\arrow[from=1-3, to=2-3]
		\arrow[from=1-2, to=1-3]
		\arrow[from=2-2, to=2-3]
		\arrow[from=2-1, to=2-2]
		\arrow[from=1-1, to=1-2]
		\arrow[from=1-2, to=2-2]
		\arrow[from=1-1, to=2-1]
	\end{tikzcd}\]
	where the left square is cartesian and 
	(less importantly for us) the middle vertical functor is a fiberwise localization over $\Delta^{\op}$.
\end{obs}

\begin{defn}
    For $\calO \in \Op_\infty$ and $A \in \Fin$ we use the following notation:
	\begin{itemize}
		\item 
		We write $\hl{\Fact_A(\calO)} \coloneq  \{A\} \times_{\Fin^{\act}} \Fun([2],\calO^{\act}) \times_{\Fin^{\act}} \{1\}$,
		\item 
		We write $\hl{\Fact_A} \coloneq  \Fact_A(\Fin_{\ast}) \simeq \Fin_{A/} \times_{\Fin} \Fin_{/1}$,
		\item 
		We let 
		$\hl{\Part_A} \subseteq \Fact_A$ 
		denote the full subcategory spanned by the non-trivial partitions, 
		i.e. factorizations $A \xrightarrow{f} B \xrightarrow{g} 1$ 
		such that $f$ is surjective and neither $f$ nor $g$ are isomorphisms.
		\item 
		We write $\hl{\Part_A(\calO)} \coloneq  
		\Part_A \times_{\Fact_A} \Fact_A(\calO) \subseteq \Fact_A(\calO)$.
	\end{itemize}
\end{defn}

Fix a finite set $A \in \Fin$ and an active morphism 
$\mu : x \actarrow y$ in $\calO$
such that
$\langle x \rangle^{\circ} \simeq A$.
Composition defines a functor 
$\Fact_A(\calO) \longrightarrow  \{A\} \times_{\Fin} \Fun([1], \calO^{\act}) \times_{\Fin} \{1\}$.
Unwinding definitions we see that the fiber at $\mu$ of this functor 
is precisely the factorization category $\Fact_{\mu}(\calO)$.
Similarly the fiber at $\mu$ of the map
$\Part_A(\calO) \longrightarrow  \{A\} \times_{\Fin^{\simeq}}  \Fun([1], \calO^{\act}) \times_{\Fin^{\simeq}} \{1\}$
is the partition category $\Part_{\mu}(\calO)$.

\begin{obs}\label{lem:groupoid-lemma}
	Let $\calO \to \Fin_{\ast}$ be an $\infty$-operad such that $\calO^{=1}$ is an $\infty$-groupoid. 
	Then using the nonunital-unitary factorization factorization system
	one verifies that
	$\calO^{\act} \to \Fin_\ast^\act \simeq \Fin$ 
	is conservative.
	In particular for any $A \in \Fin$
	the natural inclusion induces an equivalence: 
	\[\{A\} \times_{\Fin^{\simeq}} \xN_1(\calO^{\act}) \times_{\Fin^{\simeq}} \{1\}
	\iso 
	\{A\} \times_{\Fin} \Fun([1],\calO^{\act}) \times_{\Fin} \{1\}\]
\end{obs}

Recall that every functor into an $\infty$-gropupoid is a cocartesian fibration \cite[3.3.1.8]{HTT}.
In particular for $\calO$ as in \cref{lem:groupoid-lemma} we have:
\begin{enumerate}
	\item 
	The map
	$\Fact_A(\calO) \to 	\{A\} \times_{\Fin} \Fun([1],\calO^{\act}) \times_{\Fin} \{1\}$
	is a cocartesian fibration.
	Its fibers are given by factorization categories which are all weakly contractible.
	It therefore induces an equivalence:
	\[|\Fact_A(\calO)| \iso  \{A\} \times_{\Fin} \Fun([1],\calO^{\act}) \times_{\Fin} \{1\} \simeq \{A\} \times_{\Fin^{\simeq}} \xN_1(\calO^{\act}) \times_{\Fin^{\simeq}} \{1\}\]
	\item 
	The map $\Part_A(\calO) \to 	\{A\} \times_{\Fin} \Fun([1],\calO^{\act}) \times_{\Fin} \{1\}$ 
	is a cocartesian fibration whose fibers are the partition categories. 
	We conclude that the fiber at $\mu \in \{A\} \times_{\Fin^{\simeq}} \xN_1(\calO^{\act}) \times_{\Fin^{\simeq}} \{1\}$ 
	of the composite
	\[|\Part_A(\calO)| \too \{A\} \times_{\Fin} \Fun([1],\calO^{\act}) \times_{\Fin} \{1\}  \simeq |\Fact_A(\calO)|\]
	is equivalent to the partition complex
	$\spart{\calO}{\mu}$.
\end{enumerate}

We record our findings in a corollary for future use.

\begin{cor}\label{cor:unstraightning-of-part}
	Let $\calO$ be an \operad{} such that $\calO^{=1}$ is an $\infty$-groupoid.
	Then for any $\mu \in \Fact_A(\calO)$ there is a canonical pullback square:
	\[\begin{tikzcd}
		{\spart{\calO}{\mu}} & {|\Part_A(\calO)|} \\
		{\{\mu\}} & {|\Fact_A(\calO)|}
		\arrow[from=1-2, to=2-2]
		\arrow[from=1-1, to=2-1]
		\arrow[from=1-1, to=1-2]
		\arrow[""{name=0, anchor=center, inner sep=0}, from=2-1, to=2-2]
		\arrow["\lrcorner"{anchor=center, pos=0.125}, draw=none, from=1-1, to=0]
	\end{tikzcd}\]
\end{cor}

\begin{const}
	Consider the functor
	$\Delta^{\op,\act}_{/\Fact_A(\calO)} \to \Delta^{\op,\act}_{/\calO^{\act}} $
	given by unstraightning over $\Delta^{\op,\act} \subseteq \Delta^{\op}$ the composite natural transformation
	\[\xN_{\bullet}(\Fact_A(\calO)) \longrightarrow \xN_{\bullet}(\Fun([2],\calO^{\act})) \longrightarrow \xN_{\bullet+2}(\calO^{\act})\]
	where the first map is induced by the inclusion 
	$\Fact_A(\calO) \to \Fun([2],\calO)$ 
	and the second map is the restriction along the active map 
	$[k+2] \to [k] \times [2]$
	defined by:
	\[\begin{tikzcd}
		0 & 1 & 2 & \cdots & {k+1} & {k+2} \\
		{(0,0)} & {(0,1)} & {(1,1)} & \cdots & {(k,1)} & {(k,2)}
		\arrow[from=2-3, to=2-4]
		\arrow[from=2-4, to=2-5]
		\arrow[from=2-5, to=2-6]
		\arrow[from=2-1, to=2-2]
		\arrow[from=2-2, to=2-3]
		\arrow[from=1-1, to=1-2]
		\arrow[from=1-2, to=1-3]
		\arrow[from=1-3, to=1-4]
		\arrow[from=1-4, to=1-5]
		\arrow[from=1-5, to=1-6]
		\arrow[maps to, from=1-6, to=2-6]
		\arrow[maps to, from=1-1, to=2-1]
		\arrow[maps to, from=1-2, to=2-2]
		\arrow[maps to, from=1-3, to=2-3]
		\arrow[maps to, from=1-5, to=2-5]
	\end{tikzcd}\]
	The same construction applied to the subcategory
	$\Part_A(\calO) \subseteq \Fact_A(\calO)$ 
	yields a functor $\Delta^{\op,\act}_{/\Part_A(\calO)} \to \Delta^{\op,\act}_{/\calO^{\act}}$.
\end{const}

\begin{lem}\label{cor:cat-of-simplices-desc-of-fact/part}
	In the following natural diagram, all squares are cartesian:
	\[\begin{tikzcd}
		{\Delta^{\op,\act}_{/\Part_A(\calO)}} & {\Delta^{\op,\act}_{/\Fact_A(\calO)}} & {\Delta^{\op,\act}_{/\calO^{\act}}} \\
		{\Delta^{\op,\act}_{/\Part_A}} & {\Delta^{\op,\act}_{/\Fact_A}} & {\Delta^{\op,\act}_{/\Fin}}
		\arrow[from=1-2, to=1-3]
		\arrow[from=1-2, to=2-2]
		\arrow[from=1-3, to=2-3]
		\arrow[""{name=0, anchor=center, inner sep=0}, from=2-2, to=2-3]
		\arrow[""{name=1, anchor=center, inner sep=0}, from=2-1, to=2-2]
		\arrow[from=1-1, to=2-1]
		\arrow[from=1-1, to=1-2]
		\arrow["\lrcorner"{anchor=center, pos=0.125}, draw=none, from=1-1, to=1]
		\arrow["\lrcorner"{anchor=center, pos=0.125}, draw=none, from=1-2, to=0]
	\end{tikzcd}\]
\end{lem}
\begin{proof}
	The diagram is given by unstraightning (over $\Delta^{\op,\act} \subseteq \Delta^{\op}$) 
	the following diagram of simplicial spaces:
	\[\begin{tikzcd}
		{\xN_{\bullet}(\Part_A(\calO))} & {\xN_{\bullet}(\Fact_A(\calO))} & {\xN_{\bullet+2}(\calO^{\act})} \\
		{\xN_{\bullet}(\Part_A)} & {\xN_{\bullet}(\Fact_A)} & {\xN_{\bullet+2}(\Fin)}
		\arrow[from=1-2, to=1-3]
		\arrow[from=1-2, to=2-2]
		\arrow[from=1-3, to=2-3]
		\arrow[from=2-2, to=2-3]
		\arrow[hook, from=2-1, to=2-2]
		\arrow[from=1-1, to=2-1]
		\arrow[hook, from=1-1, to=1-2]
	\end{tikzcd}\]
	It is straightforward to verify that 
	the induced map on vertical fibers in the right square is an equivalence.
	Consequently the right square is cartesian and since $\xN_{\bullet}$ preserves limits so is the left square.
\end{proof}

Note that $\Delta^{\op,\act}_{/\Fact_A}$ has a terminal object given by the trivial factorization hence \cref{cor:cat-of-simplices-desc-of-fact/part} in particular gives:
\[|\Fact_A(\calO)| \simeq |\Delta^{\op,\act}_{/\Fact_A(\calO)}| \simeq \colim\left( \Delta^{\op,\act}_{/\Fact_A} \xrightarrow{\xN^\xbar_\star(\calO)} \calS \right) \simeq  \xN^\xbar_{(A\to \pt)}(\calO) \]
Combining \cref{cor:unstraightning-of-part} and \cref{cor:cat-of-simplices-desc-of-fact/part} therefore gives the following.

\begin{cor}\label{cor:partition-complex-barwick}
	Let $\calO$ be an \operad{} such that $\calO^{=1}$ is an $\infty$-groupoid.
	Then for every multi-morphism 
	$\mu \colon (x_a \colon a\in A) \actarrow y $ in $\calO$ there is a canonical pullback square:
	\[\begin{tikzcd}
		{\spart{\calO}{\mu}} & {\colim \left(\Delta^{\op,\act}_{/\Part_A} \to \Delta^\op_\Fin \xrightarrow{\xN^\xbar_\star(\calO)} \calS \right)} \\
		{\{\mu\}} & {\xN^\xbar_{(A \to \pt)}(\calO)}
		\arrow[from=1-2, to=2-2]
		\arrow[from=1-1, to=2-1]
		\arrow[from=1-1, to=1-2]
		\arrow[""{name=0, anchor=center, inner sep=0}, from=2-1, to=2-2]
		\arrow["\lrcorner"{anchor=center, pos=0.125}, draw=none, from=1-1, to=0]
	\end{tikzcd}\]
\end{cor}

\begin{lem}\label{lem:rest-barwick-nerve}
	There is a canonical equivalence
	$\Delta^{\op,\act}_{/\Fin} \simeq \Delta^{1,\op,\act}_{\Fin}$ 
	under which the composite functor
	\[\Delta^{\op,\act}_{/\Fin}  \simeq \Delta^{1,\op,\act}_{\Fin} \longrightarrow \Delta^{\op}_{\Fin} \xrightarrow{\xN^\xbar_\star(\calO)} \calS \]
	is equivalent to the straightning of the left fibration
	$\Delta^{\op,\act}_{/\calO^{\act}} \to \Delta^{\op,\act}_{/\Fin}$.
\end{lem}
\begin{proof}
    Consider the following natural diagram
	\[\begin{tikzcd}
		{\Delta^{\op,\act}_{/\calO^{\act}}} & {\Delta^{\op}_{/\calO^{\act}}} &
		{\Un_{\Delta^{\op}\times\Fin_{\ast}}(\xN^{\otimes}_{\bullet}(\Env(\calO)))} & {\Un_{\Delta^{\op}_{\Fin}}(\xN^\xbar_\star(\calO))} \\
		{\Delta^{\op,\act}_{/\Fin}} & {\Delta^{\op}_{/\Fin}} & {\Un_{\Delta^{\op}\times\Fin_{\ast}}(\xN^{\otimes}_{\bullet}(\Env(\Fin_{\ast})))} & {\Delta^{\op}_{\Fin}} \\
		{\Delta^{\op,\act}\times\{1\}} & {\Delta^{\op}\times\{1\}} & {\Delta^{\op}\times\Fin_{\ast}} 
		\arrow[from=1-2, to=2-2]
		\arrow[from=1-4, to=2-4]
		\arrow[from=2-2, to=3-2]
		\arrow[""{name=1, anchor=center, inner sep=0}, from=3-2, to=3-3]
		\arrow[from=1-2, to=1-3]
		\arrow[from=1-3, to=1-4]
		\arrow[""{name=2, anchor=center, inner sep=0}, from=2-2, to=2-3]
		\arrow[from=2-3, to=3-3]
		\arrow[from=1-3, to=2-3]
		\arrow[from=2-3, to=2-4]
		\arrow[""{name=3, anchor=center, inner sep=0}, from=3-1, to=3-2]
		\arrow[from=2-1, to=3-1]
		\arrow[from=1-1, to=1-2]
		\arrow[from=1-1, to=2-1]
		\arrow[from=2-1, to=2-2]
	\end{tikzcd}\]
	the bottom squares as well as the left and middle outer vertical rectangles are cartesian by construction.
	It follows that the top left and top middle square are also cartesian. 
	The top right square is cartesian by \cref{obs:barwick=Lurie}.
	Pasting the top three squares we conclude that the top total composite  rectangle is also cartesian. 
	This rectangle also sits as the top total composite rectangle in the following diagram:
	\[\begin{tikzcd}
		{\Delta^{\op,\act}_{/\calO^{\act}}} & {\Un_{\Delta^{1,\op,\act}_{\Fin}}(\xN^\xbar_\star(\calO))} & {\Un_{\Delta^{1,\op}_{\Fin}}(\xN^\xbar_\star(\calO))} & {\Un_{\Delta^{\op}_{\Fin}}(\xN^\xbar_\star(\calO))} \\
		{\Delta^{\op,\act}_{/\Fin}} & {\Delta^{1,\op,\act}_{\Fin}} & {\Delta^{1,\op}_{\Fin}} & {\Delta^{\op}_{\Fin}} \\
		& {\Delta^{\op,\act}} & {\Delta^{\op}}
		\arrow[from=1-1, to=2-1]
		\arrow[from=1-4, to=2-4]
		\arrow[from=1-3, to=2-3]
		\arrow[from=1-3, to=1-4]
		\arrow[from=2-3, to=2-4]
		\arrow["\simeq", from=2-1, to=2-2]
		\arrow[from=2-2, to=2-3]
		\arrow[from=2-3, to=3-3]
		\arrow[from=3-2, to=3-3]
		\arrow[from=2-2, to=3-2]
		\arrow[from=1-2, to=1-3]
		\arrow[from=1-2, to=2-2]
		\arrow[from=1-1, to=1-2]
		\arrow[from=2-1, to=3-2]
	\end{tikzcd}\]
	where the bottom horizontal map in the top left square is an equivalence by unravelling definitions. 
	The bottom square and the top right square are pullbacks by construction. 
	Similarly the middle vertical rectangle is also cartesian and then by cancellation the middle top square is as well.
	We verified that all squares in the diagram are cartesian except possibly the top left square.
	Since we showed in the previous paragraph  that the top total composite rectangle is cartesian, 
	cancellation implies that the top left square is also cartesian.
	Finally since the bottom horizontal map in the top left square is an equivalence the same holds for the top horizontal map in that square.
\end{proof}

We are are finally in a position to prove \cref{thm:dendroidal-partition-complex}.

\begin{proof}[Proof of \cref{thm:dendroidal-partition-complex}]	
	We begin with the following cube:
	\[\begin{tikzcd}
		{\Omega^{\op}_{/\calO}} && {\Un_{\Delta^{1,\op}}(\xN^\dend_\star(\calO)) \overset{(\ref{obs:lurie=barwick=dendroidal})}{\simeq} \Un_{\Delta_{\Fin}^{1,\op}}(\xN^\xbar_\star(\calO))} \\
		& {\Omega^{\op,\act}_{/\calO}} && {\Delta^{\op,\act}_{/\calO^{\act}}} \\
		{\Omega^{\op}} && {\Delta^{1,\op}_{\Fin}} \\
		& {\Omega^{\op,\act}} && {\Delta^{1,\op,\act}_{\Fin}}
		\arrow[from=2-2, to=1-1]
		\arrow[from=2-4, to=1-3]
		\arrow[from=1-1, to=3-1]
		\arrow[from=4-2, to=3-1]
		\arrow[from=2-4, to=4-4]
		\arrow[from=1-3, to=3-3]
		\arrow[from=2-2, to=4-2]
		\arrow[from=3-3, to=3-1]
		\arrow[from=4-4, to=4-2]
		\arrow[from=2-4, to=2-2]
		\arrow[from=1-3, to=1-1]
		\arrow[from=4-4, to=3-3]
	\end{tikzcd}\]
	The back square and the left face of the cube are cartesian by construction. 
	The right face of the cube is cartesian by \cref{lem:rest-barwick-nerve} and thus the front face is also cartesian. 
	Let us now fix a set $A$ of size $k$ and consider the following diagram, 
	in which the front face of the previous diagram is embedded as the right most face:
	\[\begin{tikzcd}
	{\Un_{(\Omega^{\op,\act}_{/C_A})^{< |A|}}(\xN^\dend_\star(\calO))} && {\Un_{\Omega^{\op,\act}_{/C_A}}(\xN^\dend_\star(\calO))} && {\Omega^{\op,\act}_{/\calO}} \\
		& {\Delta^{\op,\act}_{/\Part_A(\calO)}} && {\Delta^{\op,\act}_{/\Fact_A(\calO)}} && {\Delta^{\op,\act}_{/\calO^{\act}}} \\
		{(\Omega^{\op,\act}_{/C_A})^{< |A|}} && {\Omega^{\op,\act}_{/C_A}} && {\Omega^{\op,\act}} \\
		& {\Delta^{\op,\act}_{/\Part_A}} && {\Delta^{\op,\act}_{/\Fact_A}} && {\Delta^{1,\op,\act}_{\Fin}}
		\arrow[from=2-2, to=1-1]
		\arrow[from=2-2, to=2-4]
		\arrow[from=1-1, to=1-3]
		\arrow[from=2-4, to=1-3]
		\arrow[from=1-1, to=3-1]
		\arrow[from=4-2, to=3-1]
		\arrow[from=2-2, to=4-2]
		\arrow[from=4-2, to=4-4]
		\arrow[from=2-4, to=4-4]
		\arrow[from=3-1, to=3-3]
		\arrow[from=4-4, to=3-3]
		\arrow[from=1-3, to=3-3]
		\arrow[from=3-3, to=3-5]
		\arrow[from=1-3, to=1-5]
		\arrow[from=1-5, to=3-5]
		\arrow[from=2-4, to=2-6]
		\arrow[from=2-6, to=1-5]
		\arrow[from=4-4, to=4-6]
		\arrow[from=2-6, to=4-6]
		\arrow[from=4-6, to=3-5]
	\end{tikzcd}\]
	The back faces of both cubes are cartesian by construction and the front faces are cartesian by \cref{cor:cat-of-simplices-desc-of-fact/part}.
	We showed in the previous paragraph that the right face of the right cube is cartesian
	and hence so are the left and right faces of the left cube.
	The bottom middle map $\Delta^{\op,\act}_{/\Fact_A} \to \Omega^{\op,\act}_{/C_A}$ preserves the terminal object hence the top middle map induces an equivalence on classifying spaces:
	\[\xN^\xbar_{(A \to \pt)}(\calO) \simeq \colim\left(\Delta^{\op,\act}_{/\Fact_A} \xrightarrow{\xN^\xbar_\star(\calO)} \calS \right) \simeq \colim\left(\Omega^{\op,\act}_{/C_A} \xrightarrow{\xN^\dend_\star(\calO)} \calS \right) \simeq \xN^\dend_{C_A}(\calO) \]
	Examaning the top left square we see that via this equivalence the latching map of Barwick's model, i.e. the right vertical map in the square from \cref{cor:partition-complex-barwick},
	\[\colim \left(\Delta^{\op,\act}_{/\Part_A} \to \Delta^\op_\Fin \xrightarrow{\xN^\xbar_\star(\calO)} \calS \right) \too \xN^\xbar_{(A \to \pt)}(\calO)\]
    gets identified with the composite
	\[\colim \left(\Delta^{\op,\act}_{/\Part_A} \to (\Omega^{\op,\act}_{/C_A})^{<|A|} \xrightarrow{\xN^\dend_\star(\calO)} \calS \right) \too \colim \left((\Omega^{\op,\act}_{/C_A})^{<|A|} \xrightarrow{\xN^\dend_\star(\calO)} \calS \right)\too \xN^\dend_{C_A}(\calO).\]
    To conclude it thus suffices to show that the first map is an equivalence.
    Since $\calO$ is non-unital we have $\xN^\dend_T(\calO) = \emptyset$ for any tree $T$ whose set of leaves is non-empty.
    Hence we may replace $(\Omega^{\op,\act}_{C_A/})^{<|A|}$ 
    with its open counterpart $({\Omega}^{\circ,\op,\act}_{/C_A})^{<|A|}$.
    Finally, in \cite{heuts2021partition} it is shown that $\Delta^{\circ,\op,\act}_{/\Part_A} \to (\Omega_{/C_A}^{\op,\act})^{<|A|}$
    is cofinal.
\end{proof}
	\appendix
\section{Appendix}
\begin{lem}\label{lem:under-over-vs-over-under}
	Let $\calC$ be an $\infty$-category and let $f: c_0 \to c_1$ be a morphism in $\calC$. Then there's a canonical equivalence of $\infty$-categories
    $\calC_{c_0//f} \simeq \calC_{f//c_1}$.
\end{lem}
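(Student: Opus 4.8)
The plan is to establish the stated equivalence already at the level of simplicial sets, by showing that both iterated slices corepresent one and the same functor on simplicial sets; the only real ingredient is the join-based universal property of slices together with the strict associativity of the join.

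First I would recall the defining universal properties of the two slice constructions (see \cite[~1.2.9.]{HTT}): for a map $p : K \to \calC$ of simplicial sets and any simplicial set $S$, there are natural bijections
\[ \mathrm{Hom}(S, \calC_{p/}) \cong \mathrm{Hom}_{p}(K \star S, \calC), \qquad \mathrm{Hom}(S, \calC_{/p}) \cong \mathrm{Hom}_{p}(S \star K, \calC), \]
where the subscript $p$ denotes those maps whose restriction to $K$ agrees with $p$. Since slices of quasi-categories are again quasi-categories, each of $\calC_{c_0//f} = (\calC_{c_0/})_{/f}$ and $\calC_{f//c_1} = (\calC_{/c_1})_{f/}$ is a legitimate iteration of these two constructions, with $c_0, c_1 : \Delta^0 \to \calC$ and with $f$ regarded as an \emph{object} (a map $\Delta^0 \to \calC_{c_0/}$, respectively $\Delta^0 \to \calC_{/c_1}$) which under the bijections above corresponds to the edge $\Delta^0 \star \Delta^0 \to \calC$ selecting $f : c_0 \to c_1$.

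Next I would unwind each side. Applying the over-slice property and then the under-slice property to $(\calC_{c_0/})_{/f}$ yields, naturally in $S$, a bijection between $\mathrm{Hom}(S, \calC_{c_0//f})$ and the set of maps $h : \Delta^0 \star (S \star \Delta^0) \to \calC$ whose restriction to the first vertex is $c_0$ and whose restriction to the edge spanned by the two extreme (cone) vertices is $f$. Symmetrically, applying the under-slice property and then the over-slice property to $(\calC_{/c_1})_{f/}$ yields a natural bijection between $\mathrm{Hom}(S, \calC_{f//c_1})$ and the set of maps $h : (\Delta^0 \star S) \star \Delta^0 \to \calC$ whose restriction to the last vertex is $c_1$ and whose restriction to the extreme edge is $f$. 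By strict associativity of the join the two indexing simplicial sets $\Delta^0 \star (S \star \Delta^0)$ and $(\Delta^0 \star S) \star \Delta^0$ coincide, and since the constraint ``the extreme edge is $f$'' already forces the first vertex to be $c_0$ and the last to be $c_1$, the two described subsets of maps are \emph{identical}. Hence both slices corepresent the same functor on simplicial sets, and the Yoneda lemma produces a canonical isomorphism $\calC_{c_0//f} \cong \calC_{f//c_1}$ of simplicial sets, which is in particular an equivalence of $\infty$-categories.

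The only point requiring care — and the main obstacle to stating the argument correctly — is the bookkeeping of the restriction conditions: I must check that in both computations the constraint reduces to precisely ``the edge between the two outermost vertices equals $f$'', with the outer vertices thereby pinned to $c_0$ and $c_1$. Once this matching is verified, associativity of the join does all the work and no genuine $\infty$-categorical input beyond the universal property of slices is needed. (Alternatively, if one prefers to avoid the strict join-model manipulations, one can identify both sides with the pullback $\Fun(\Delta^2,\calC) \times_{\Fun(\Delta^{\{0,2\}},\calC)} \{f\}$ of factorizations of $f$, but I expect the join computation above to be the most economical route.)
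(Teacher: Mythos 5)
Your proposal is correct and takes essentially the same route as the paper: the paper's proof identifies the $n$-simplices of both $\calC_{c_0//f}$ and $\calC_{f//c_1}$ with $(n+2)$-simplices $\sigma:\Delta^{n+2}\to\calC$ satisfying $\sigma|_{\Delta^{\{0,n+2\}}}=f$, which is precisely your join computation evaluated at $S=\Delta^n$. The only cosmetic difference is that you phrase the argument for an arbitrary test simplicial set $S$ and conclude by Yoneda, whereas the paper checks it simplex by simplex; both yield the same canonical isomorphism of simplicial sets.
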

\begin{proof}
	We show that there is a canonical isomorphism of simplicial sets $\calC_{c_0//f} = \calC_{f//c_1}$. An $n$-simplex in $\calC_{c_0//f}$ is equivalently an $(n+1)$-simplex of $\calC_{c_0/}$ whose last vertex is $f$. The latter is equivalent to an $(n+2)$-simplex $\sigma : \Delta^{n+2} \to \calC$ with $\sigma|_{\Delta^{\{0,n+2\}}} = f$. On the other hand an $n$-simplex in $\calC_{f//c_1}$ is equivalent to an $(n+1)$-simplex of $\calC_{/c_1}$ whose first vertex is $f$. The latter is equivalent to an $(n+2)$-simplex $\tau : \Delta^{n+2} \to \calC$ with $\tau|_{\Delta^{\{0,n+2\}}} = f$.
\end{proof}

\begin{lem}\label{lem: double slice}
	Let $\calC \to \calD$ be a functor and let $d \in \calD$. Then the natural projection 
    $\calC_{d/} \to \calC$
    induces for every $\big(c,d \overset{\alpha}{\to} f(c)\big) \in \calC_{d/}$ 
    an equivalence of categories 
    $(\calC_{d/ })_{\alpha/} \simeq \calC_{c/}$.
\end{lem}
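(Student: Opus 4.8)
The map under consideration is the functor $(\calC_{d/})_{\alpha/} \to \calC_{c/}$ obtained by applying the undercategory construction to the source projection $q \colon \calC_{d/} := \calC \times_{\calD} \calD_{d/} \to \calC$ at the object $\alpha = (c, d \to f(c))$, using that $q(\alpha) = c$. The plan is to show that this map is a trivial Kan fibration, hence an equivalence of $\infty$-categories. This reduces the statement to a general fact about left fibrations, which I will isolate and prove.

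The first step is to observe that $q$ is a left fibration. Indeed, the projection $\calD_{d/} \to \calD$ is a left fibration (a standard fact, see \cite{HTT}), and left fibrations are stable under base change, so $q$ is a left fibration as a pullback of $\calD_{d/} \to \calD$ along $f$. The main work is then contained in the following general claim: \emph{if $p \colon X \to S$ is a left fibration and $x \in X$ lies over $s = p(x)$, then the induced map $X_{x/} \to S_{s/}$ is a trivial Kan fibration.} To prove it, one checks the right lifting property against the boundary inclusions $\partial \Delta^n \mono \Delta^n$. Using the defining adjunction of undercategories, namely the natural identification of maps $K \to X_{x/}$ with maps $\{x\} \star K \to X$ carrying the cone point to $x$, a lifting problem for $X_{x/} \to S_{s/}$ against $\partial\Delta^n \mono \Delta^n$ transposes into a lifting problem for $p$ against the inclusion $\{x\}\star\partial\Delta^n \mono \{x\}\star\Delta^n$. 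The key combinatorial observation is that, writing the cone point as the initial vertex $0$, one has a canonical identification
\[ \{x\}\star\partial\Delta^n \;=\; \Lambda^{n+1}_0 \mono \Delta^{n+1} \;=\; \{x\}\star\Delta^n, \]
since $\Delta^0 \star \partial\Delta^n$ consists exactly of those faces of $\Delta^{n+1}$ that do not contain all of $\{1,\dots,n+1\}$, i.e. the horn omitting the face $d_0$. As $\Lambda^{n+1}_0 \mono \Delta^{n+1}$ is left anodyne and $p$ is a left fibration, the required filler exists (the case $n=0$ being the inclusion $\{0\} = \Lambda^1_0 \mono \Delta^1$). This establishes the claim.

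Applying the claim to $p = q$ and $x = \alpha$, and recording that $q(\alpha) = c$, yields that $(\calC_{d/})_{\alpha/} \to \calC_{c/}$ is a trivial Kan fibration, in particular an equivalence, as desired. The only genuine content is the combinatorial identification $\Delta^0\star\partial\Delta^n = \Lambda^{n+1}_0$ together with the bookkeeping that transports boundary-filling for undercategories into left-horn-filling for $q$; once these are in place the argument is purely formal. I expect this transposition via the join/undercategory adjunction to be the step requiring the most care, since one must keep track of which vertex is the cone point in order to see that the resulting horn is the \emph{left} (initial-vertex) horn rather than an inner or right one.
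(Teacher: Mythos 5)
Your proof is correct and is in substance the same as the paper's: the paper disposes of the lemma by citing the slice/fibration comparison result from HTT (Proposition 2.1.2.5) applied to the projection $q\colon \calC_{d/}\to\calC$, while your argument isolates and proves by hand exactly the special case needed, namely that for a left fibration the induced map on undercategories $X_{x/}\to S_{p(x)/}$ is a trivial Kan fibration. Your reduction through the join--undercategory adjunction, the identification $\Delta^0\star\partial\Delta^n=\Lambda^{n+1}_0$, and the left-horn lifting property of $q$ (a base change of the left fibration $\calD_{d/}\to\calD$) constitute precisely the standard proof of the cited fact, so the two arguments differ only in that yours is self-contained where the paper outsources the combinatorics to HTT.
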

\begin{proof}
	Apply \cite[Proposition~2.1.2.5.]{HTT} with $A\coloneq \emptyset$, $B\coloneq \Delta^0$, $S\coloneq  \calC_{d/}$, $T\coloneq \calC$, $\pi: \calC_{d/} \to \calC$ and $p: \Delta^0 \to \calC_{d/}$ the map corresponding to the edge $\Delta^0 \ast \Delta^0 \simeq \Delta^1 \overset{\alpha}{\to}  \calC$.
\end{proof}

\begin{lem}\label{lem:fully-faithful-on-clice-from-faithful}
    Let $\calC \subseteq \calD$ be a faithful subcategory. For every $x \in \calC$ the natural functor 
    $\calC_{x/} \mono \calD_{x/}$ is fully faithful.
\end{lem}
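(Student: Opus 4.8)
The plan is to reduce the statement to a computation of mapping spaces and then analyse it through the fibre description of coslice mapping spaces. Fix objects $\alpha\colon x\to a$ and $\beta\colon x\to b$ of $\calC_{x/}$. For any $\infty$-category $\calE$ with a chosen object $x$, the left fibration $\calE_{x/}\to\calE$ yields a natural equivalence $\Map_{\calE_{x/}}(\alpha,\beta)\simeq\mathrm{fib}_{\beta}\big(\Map_{\calE}(a,b)\xrightarrow{(-)\circ\alpha}\Map_{\calE}(x,b)\big)$. Taking $\calE=\calC$ and $\calE=\calD$, the functor $\calC_{x/}\to\calD_{x/}$ realises the comparison on $\Map(\alpha,\beta)$ as the map induced on vertical fibres over $\beta$ in the commutative square
\[\begin{tikzcd}
\Map_{\calC}(a,b) \arrow[r]\arrow[d] & \Map_{\calC}(x,b)\arrow[d]\\
\Map_{\calD}(a,b)\arrow[r] & \Map_{\calD}(x,b),
\end{tikzcd}\]
whose horizontal maps are precomposition with $\alpha$ and whose vertical maps are the comparison maps of the inclusion $\calC\subseteq\calD$.

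First I would dispatch the faithful part, which is formal. Since $\calC\subseteq\calD$ is faithful, both vertical maps above are monomorphisms. The canonical map $\Map_{\calC}(a,b)\to\Map_{\calD}(a,b)\times_{\Map_{\calD}(x,b)}\Map_{\calC}(x,b)$ composed with the projection to $\Map_{\calD}(a,b)$ equals the left vertical monomorphism; since a composite that is a monomorphism has monic second factor, the canonical map to the pullback is itself a monomorphism. Base changing this monomorphism along $\{\beta\}\to\Map_{\calC}(x,b)$ identifies the comparison $\Map_{\calC_{x/}}(\alpha,\beta)\to\Map_{\calD_{x/}}(\alpha,\beta)$ with a monomorphism, because the fibre of the projection over $\beta$ is precisely $\Map_{\calD_{x/}}(\alpha,\beta)$. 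Hence $\calC_{x/}\to\calD_{x/}$ is at least faithful, and it remains only to prove essential surjectivity of this monomorphism on each fibre.

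The essential surjectivity is the heart of the matter and is where I expect the main difficulty. Concretely it asserts that every morphism $\phi\colon a\to b$ of $\calD$ whose composite $\phi\circ\alpha$ lies (up to homotopy) in $\calC$ must itself lie in $\calC$. This is the step that genuinely uses the meaning of the faithful subcategory at hand: I would deduce it from the closure properties recorded in \cref{prop: faithful subpattern} --- above all that the subcategory contains all inert morphisms to elementary objects --- which force such a $\phi$ back into $\calC$ in the situations to which the lemma is applied. The fibre and monomorphism bookkeeping of the previous paragraphs is purely formal; the content lies entirely in this lifting property.
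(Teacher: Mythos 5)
Your faithfulness argument is correct, and it is essentially the same computation as the paper's own proof: both identify the slice mapping spaces $\Map_{\calC_{x/}}(\alpha,\beta)$ and $\Map_{\calD_{x/}}(\alpha,\beta)$ with fibres of the precomposition maps in the same square of mapping spaces, whose two inclusion edges are monomorphisms because $\calC \subseteq \calD$ is faithful. The divergence is at the final step, and there you are right to balk where the paper does not. The paper asserts that, because the inclusions $\Map_{\calC}(-,-) \to \Map_{\calD}(-,-)$ are monomorphisms, the induced map on fibres is an \emph{equivalence}; but, exactly as your bookkeeping shows, this only exhibits the fibre comparison as a base change of the monomorphism $\Map_{\calC}(a,b) \to \Map_{\calD}(a,b)$, hence as a monomorphism. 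For an equivalence one needs the square to be cartesian, which is precisely the lifting property you isolate: every $\phi \colon a \to b$ in $\calD$ with $\phi \circ \alpha$ homotopic to a morphism of $\calC$ must itself lie in $\calC$. This does not follow from faithfulness, and the lemma as stated is actually false: let $\calD$ be the $1$-category with objects $x,y$, a morphism $f \colon x \to y$, and an idempotent $\phi \colon y \to y$ with $\phi \circ f = f$, and let $\calC$ be the subcategory with the same objects and all morphisms except $\phi$ (this is a replete, faithful, wide subcategory). Then $\Map_{\calC_{x/}}(f,f) = \{\mathrm{id}_y\}$ while $\Map_{\calD_{x/}}(f,f) = \{\mathrm{id}_y,\phi\}$, so $\calC_{x/} \to \calD_{x/}$ is faithful but not full.

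Your suggestion for where the missing fullness comes from in practice is also on target, and it involves no circularity, since what you invoke are the \emph{hypotheses} of \cref{prop: faithful subpattern}, not its conclusion. That proposition is the unique place the lemma is used, applied to $\calP^{\int} \subseteq \calQ^{\int}$ to show $\calP^{\el}_{x/} \to \calQ^{\el}_{x/}$ is fully faithful; there, condition (2) (every inert morphism from an object of $\calP$ to an elementary object lies in $\calP$) forces all morphisms of $\calQ^{\el}_{x/}$ between objects of $\calP^{\el}_{x/}$ back into $\calP$, which makes the relevant square of mapping spaces cartesian and the fibre comparison an equivalence. So the downstream application survives, but the lemma itself needs either a weakened conclusion ("faithful") or an added closure hypothesis of this kind. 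In short, your proposal is not a complete proof of the statement as written -- but only because no such proof exists; the step you declined to carry out formally is exactly the invalid inference in the paper's argument.
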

\begin{proof}
	Let $f: x \to y \in \calC_{x/}$ and consider the natural square
	\[\begin{tikzcd}
	{\Map_{\calC}(y,z)} & { \Map_{\calD}(y,z)} \\
	{\Map_{\calC}(x,z)} & {\Map_{\calD}(x,z)}
	\arrow[hook, from=2-1, to=2-2]
	\arrow["{\circ f}", from=1-2, to=2-2]
	\arrow[hook, from=1-1, to=1-2]
	\arrow["{\circ f}", from=1-1, to=2-1]
	\end{tikzcd}\]
	The horizontal maps are monomorphisms and thus the induced map on vertical fibers at every $g:x \to z \in \calC_{x/}$
	\[\Map_{\calC_{x/}}(f,g) \simeq \Map_{\calC}(y,z) \times_{\Map_{\calC}(x,z)} \{g\}  \to \Map_{\calD}(y,z) \times_{\Map_{\calD}(x,z)} \{g\}  \simeq \Map_{\calD}(f,g)\]
	is an equivalence.
\end{proof}

\begin{lem}\label{lem:lifting-property-equivalent-to-local-in-slice}
	Let $\calC$ be an $\infty$-categry and let $S$ be a collection of arrows in $\calC$. Then $f \in S^{\perp}$ if and only if the object $f: x \to y \in \calC_{/y}$ is $S$-local.
\end{lem}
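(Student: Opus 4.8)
The plan is to reduce the statement to a standard fact about cartesian squares of spaces, after computing the relevant slice mapping spaces as fibers. Throughout I would fix a single morphism $s\colon a \to b$ in $S$; since both membership in $S^{\perp}$ and $S$-locality are conjunctions over the elements of $S$, it suffices to analyze one such $s$ and then intersect over all of $S$. Here I read ``$f$ is $S$-local in $\calC_{/y}$'' as locality of $f$ with respect to those morphisms of $\calC_{/y}$ whose underlying morphism in $\calC$ lies in $S$.

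First I would unwind the two conditions into statements about mapping spaces. By definition $f$ is right orthogonal to $s$ if and only if the naturality square
\[\begin{tikzcd}
{\Map_{\calC}(b,x)} & {\Map_{\calC}(a,x)} \\
{\Map_{\calC}(b,y)} & {\Map_{\calC}(a,y)}
\arrow["{s^{\ast}}", from=1-1, to=1-2]
\arrow["{f_{\ast}}"', from=1-1, to=2-1]
\arrow["{f_{\ast}}", from=1-2, to=2-2]
\arrow["{s^{\ast}}"', from=2-1, to=2-2]
\end{tikzcd}\]
(horizontal maps precomposition by $s$, vertical maps postcomposition by $f$) is cartesian, and $f \in S^{\perp}$ means this holds for every $s \in S$. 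On the other hand, for any object $q\colon b \to y$ of $\calC_{/y}$ there is the standard identification $\Map_{\calC_{/y}}(q,f) \simeq \mathrm{fib}_{q}\big(f_{\ast}\colon \Map_{\calC}(b,x) \to \Map_{\calC}(b,y)\big)$, and similarly for the source $q \circ s\colon a \to y$. The $S$-locality of $f$ asserts precisely that for every such $q$ the precomposition map $\Map_{\calC_{/y}}(q,f) \to \Map_{\calC_{/y}}(q\circ s,f)$ is an equivalence.

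Second I would invoke the standard criterion that a commutative square of spaces is cartesian if and only if, for every point $q$ of the bottom-left corner $\Map_{\calC}(b,y)$, the induced map on vertical fibers over $q$ and over its image $s^{\ast}(q) = q\circ s$ is an equivalence. Under the identifications above, this fiber comparison map is exactly the slice precomposition map $\Map_{\calC_{/y}}(q,f) \to \Map_{\calC_{/y}}(q\circ s,f)$. Hence the orthogonality square for $s$ is cartesian if and only if $f$ is local with respect to every lift of $s$ to $\calC_{/y}$; taking the conjunction over all $s \in S$ gives $f \in S^{\perp}$ if and only if $f$ is $S$-local, as desired.

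The argument is essentially formal, so I do not expect a serious obstacle. The only points needing care are (i) the identification of $\Map_{\calC_{/y}}(q,f)$ with the fiber of $f_{\ast}$ over $q$, which is the usual formula for mapping spaces in a slice $\infty$-category, and (ii) matching the quantifier ``for every point $q$'' in the cartesian-square criterion with the quantifier over objects $b \to y$ implicit in the definition of $S$-locality. Once this bookkeeping is settled, the equivalence falls out of the fiberwise criterion for a square of spaces to be cartesian.
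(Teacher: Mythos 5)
Your proposal is correct and follows essentially the same route as the paper's proof: both unwind $f \in S^{\perp}$ as cartesianness of the mapping-space square, identify slice mapping spaces $\Map_{\calC_{/y}}(q,f)$ with fibers of postcomposition by $f$, and then apply the fiberwise criterion for a square of spaces to be cartesian (your square is just the transpose of the paper's, so your vertical fibers are its horizontal ones). The bookkeeping points you flag, including reading $S$-locality in $\calC_{/y}$ as locality with respect to morphisms lying over $S$, match the paper's usage exactly.
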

\begin{proof}
	By definition $f \in S^{\perp}$ if and only if for every $s: u \to v \in S$ the following square
	\[\begin{tikzcd}
	{\Map_{\calC}(v,x)} & {\Map_{\calC}(v,y)} \\
	{\Map_{\calC}(u,x)} & {\Map_{\calC}(u,y)}
	\arrow["{\circ s}", from=1-2, to=2-2]
	\arrow["{\circ s}", from=1-1, to=2-1]
	\arrow["{f \circ}", from=1-1, to=1-2]
	\arrow["{f \circ}", from=2-1, to=2-2]
	\end{tikzcd}\]
	is cartesian. The latter holds if and only if for every $g : v \to y$ and every $s : u \to v \in S$ the induced map on horizontal fibers 
	\[ \Map_{\calC_{/y}}(g,f) \simeq  \Map_{\calC}(v,x) \times_{\Map_{\calC}(v,y)} \{g\} \longrightarrow \Map_{\calC}(u,x) \times_{\Map_{\calC}(u,y)} \{ g \circ s \} \simeq \Map_{\calC_{/y}}(g \circ s,f)\]
	is an equivalence. By definition this holds if and only if $f:x \to y \in \calC_{/y}$ is $S$-local.
\end{proof}

\begin{lem}[{\cite[Lemma 5.2.8.6]{HTT}}]\label{lem:cancellation}
	Let $(\calL,\calR)$ be an factorization system on $\calC$ and let $x \overset{f}{\to} y \overset{g}{\to} z$ be a pair of composable morphisms in $\calC$. Then the following holds
	\begin{enumerate}
		\item If $f \circ g \in \calL$ and $g \in \calL$ then $f \in \calL$
		\item If $f \circ g \in  \calR$ and $f \in \calR$ then $g \in \calR$.
	\end{enumerate}
\end{lem}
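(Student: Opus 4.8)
The plan is to reduce everything to the defining orthogonality of the factorization system and then run a two-step lifting argument. Recall that $(\calL,\calR)$ being a factorization system means precisely that $\calL={}^{\perp}\calR$ and $\calR=\calL^{\perp}$, where for a class $S$ we write $S^{\perp}$ (resp.\ ${}^{\perp}S$) for the morphisms possessing the \emph{unique} right (resp.\ left) lifting property against every morphism of $S$. Since $(\calR^{\op},\calL^{\op})$ is again a factorization system on $\calC^{\op}$ and passage to opposite categories interchanges left and right lifting (hence interchanges the two assertions of the lemma), it suffices to establish the cancellation statement for the right class $\calR$; the statement for $\calL$ then follows by applying it in $\calC^{\op}$.

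For the right class I would argue as follows. Given the composable pair $x\overset{f}{\to}y\overset{g}{\to}z$ with $g$ and the composite $g\circ f$ both in $\calR$, I must show $f\in\calR$, i.e.\ that $f$ is right orthogonal to every $l\colon a\to b$ in $\calL$. Starting from an arbitrary lifting square for $l$ against $f$ (maps $u\colon a\to x$ and $v\colon b\to y$ with $f\circ u=v\circ l$), postcompose with $g$ to obtain a lifting square for $l$ against $g\circ f$. As $g\circ f\in\calR$ and $l\in\calL$, this produces a diagonal $w\colon b\to x$ with $w\circ l=u$ and $g\circ f\circ w=g\circ v$. The only remaining point is that $w$ actually solves the \emph{original} square, i.e.\ that $f\circ w=v$; but both $f\circ w$ and $v$ are fillers of one and the same lifting square for $l$ against $g$, since they agree after precomposing with $l$ (as $(f\circ w)\circ l=f\circ u=v\circ l$) and after postcomposing with $g$ (as $g\circ f\circ w=g\circ v$), and $g\in\calR$ forces such a filler to be unique, whence $f\circ w=v$. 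Uniqueness of $w$ among solutions of the original square is inherited from its uniqueness as a solution against $g\circ f$.

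The one genuinely delicate point, and the step I expect to require the most care, is that in the $\infty$-categorical setting ``unique lifting'' must be read as \emph{contractibility of the space of diagonal fillers}, so the element-level chase above has to be promoted to a statement about mapping spaces. I would phrase the whole argument through \cref{lem:lifting-property-equivalent-to-local-in-slice}, which reexpresses the left/right lifting condition as locality of an object in a slice category, and then verify that the relevant squares of mapping spaces are cartesian using the pasting law for pullbacks---exactly the mechanism already employed in the proof of that lemma. This upgrades ``there is a unique $w$, and $f\circ w=v$'' into the required equivalence of filler spaces. Of course, since the assertion is \cite[~5.2.8.6.]{HTT}, one may alternatively cite it directly; the argument above is included only to keep the appendix self-contained.
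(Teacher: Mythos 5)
Your proof is correct, but note that there is no argument in the paper to compare against: \cref{lem:cancellation} is imported with a bare citation to \cite[~5.2.8.6.]{HTT}, so a self-contained proof is strictly more than the paper provides. Your argument is the standard one and it is sound: the duality reduction is valid because $(\calR^{\op},\calL^{\op})$ is a factorization system on $\calC^{\op}$, and the two-step chase (lift against $g\circ f$, then use uniqueness of fillers against $g$ to see that the lift solves the original square) is exactly right. Two refinements. First, ``means precisely'' overstates: mutual orthogonal complementation does not by itself guarantee existence of factorizations; but only the true direction (every factorization system satisfies $\calL={}^{\perp}\calR$ and $\calR=\calL^{\perp}$) is used, so no harm is done. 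Second, the step you flag as delicate is, with the paper's conventions, not delicate at all: in the proof of \cref{lem:lifting-property-equivalent-to-local-in-slice} the paper takes $f\in S^{\perp}$ to mean, by definition, that for each $s\colon u\to v$ in $S$ the square of mapping spaces with horizontal maps $f\circ(-)$ and vertical maps $(-)\circ s$ is cartesian. Granting that, your whole chase is subsumed by the pasting law for pullbacks: for $l\colon a\to b$ in $\calL$, the square expressing orthogonality of $l$ against $g\circ f$ is the horizontal pasting of the square for $f$ (left) with the square for $g$ (right), namely $\Map_{\calC}(b,x)\to\Map_{\calC}(b,y)\to\Map_{\calC}(b,z)$ sitting over $\Map_{\calC}(a,x)\to\Map_{\calC}(a,y)\to\Map_{\calC}(a,z)$; the outer rectangle and the right-hand square are cartesian by hypothesis, hence so is the left-hand square, which says $f\in\calL^{\perp}=\calR$. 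This three-line version is what your element-level argument becomes after the promotion you describe, and it needs no slice categories at all; the statement for $\calL$ follows by the same pasting on the precomposition side, so even the duality step is optional.

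Separately, be aware that the statement as typeset in the paper is label-garbled: reading the displayed arrows $x\overset{f}{\to}y\overset{g}{\to}z$ and interpreting $f\circ g$ as the composite $x\to z$, item (1) would assert that the composite and the \emph{second} map lying in $\calL$ force the \emph{first} map into $\calL$, which is false in general (in the (epi,mono) system on sets take $f$ a proper inclusion and $g$ a retraction of it). You silently proved the intended, true statement --- cancel the first map for $\calL$, the last map for $\calR$ --- which is what \cite[~5.2.8.6.]{HTT} asserts and what the paper actually uses in the proof of \cref{prop:factorization-adjoint-on-comma-category}.
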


\begin{prop}\label{prop:slice-category-factorization-adjoint}
	Let $\calC$ be an $\infty$-categry equipped with an factorization system $(\calL,\calR)$ and let $x \in \calC$ be an object. We have
	\begin{enumerate}
		\item The inclusion $\calC^{\calR}_{/ x} \coloneq   \calR  \times_{\calC} \{x\}  \to \calC_{/x}$ admits a left adjoint. 
		
		\item The inclusion $\calC^{\calL}_{x /} \coloneq  \{x\} \times_{\calC} \calL  \to \calC_{x/}$ admits a right adjoint.
	\end{enumerate}
\end{prop}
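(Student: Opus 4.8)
The plan is to establish part (1) directly and then obtain part (2) by dualizing. For the dualization, recall that $(\calL,\calR)$ induces a factorization system $(\calR^{\op},\calL^{\op})$ on $\calC^{\op}$, and under the identifications $\calC_{x/}\simeq\big((\calC^{\op})_{/x}\big)^{\op}$ and $\calC^{\calL}_{x/}\simeq\big((\calC^{\op})^{\calL^{\op}}_{/x}\big)^{\op}$ the desired right adjoint in (2) is precisely the opposite of the left adjoint provided by part (1) applied to $\calC^{\op}$ (using that a functor admits a right adjoint if and only if its opposite admits a left adjoint). So it suffices to prove (1).

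To prove (1), I would invoke the standard criterion (\cite[5.2.7.8.]{HTT}) that a fully faithful inclusion admits a left adjoint as soon as every object admits a reflection: concretely, it suffices to produce for each $f\colon c\to x$ in $\calC_{/x}$ an object $Lf\in\calC^{\calR}_{/x}$ together with a unit morphism $u_f\colon f\to Lf$ in $\calC_{/x}$ such that for every $g\in\calC^{\calR}_{/x}$ precomposition with $u_f$ induces an equivalence $\Map_{\calC_{/x}}(Lf,g)\to\Map_{\calC_{/x}}(f,g)$. The candidate is furnished by the factorization system itself: factor $f$ as $c\xrightarrow{\ \ell\ }d\xrightarrow{\ r\ }x$ with $\ell\in\calL$ and $r\in\calR$, set $Lf:=(r\colon d\to x)\in\calC^{\calR}_{/x}$, and let $u_f$ be the morphism $f\to r$ in $\calC_{/x}$ determined by $\ell$ (note $r\circ\ell=f$). (By \cref{lem:lifting-property-equivalent-to-local-in-slice} the objects of $\calC^{\calR}_{/x}$ are exactly the $\calL$-local objects of $\calC_{/x}$, so this is the expected localization.)

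The verification of the universal property is where the content of orthogonality enters. Writing mapping spaces in the slice as fibers, $\Map_{\calC_{/x}}(f,g)$ is the fiber of $g\circ(-)\colon\Map_{\calC}(c,e)\to\Map_{\calC}(c,x)$ over the point $f$, and $\Map_{\calC_{/x}}(r,g)$ is the fiber of $g\circ(-)\colon\Map_{\calC}(d,e)\to\Map_{\calC}(d,x)$ over $r$ (here $g\colon e\to x$). Precomposition with $u_f$ is induced by the naturality square
\[\begin{tikzcd}
{\Map_{\calC}(d,e)} & {\Map_{\calC}(c,e)} \\
{\Map_{\calC}(d,x)} & {\Map_{\calC}(c,x)}
\arrow["{-\circ\ell}", from=1-1, to=1-2]
\arrow["{g\circ-}"', from=1-1, to=2-1]
\arrow["{g\circ-}", from=1-2, to=2-2]
\arrow["{-\circ\ell}"', from=2-1, to=2-2]
\end{tikzcd}\]
Since $\ell\in\calL$ and $g\in\calR$ we have $\ell\perp g$, which is precisely the assertion that this square is cartesian (\cite[\S 5.2.8.]{HTT}). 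Consequently the induced map on vertical fibers over the point $r$ (whose image under $-\circ\ell$ is $r\circ\ell=f$) is an equivalence, that is $\Map_{\calC_{/x}}(r,g)\to\Map_{\calC_{/x}}(f,g)$ is an equivalence, as required.

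The only genuine obstacle is the bookkeeping: matching the slice mapping spaces with the correct fibers of the orthogonality square, and checking that precomposition with $u_f$ really is the map of fibers induced by $-\circ\ell$. Once this identification is in place, the cartesianness of the square coming from $\ell\perp g$ yields the universal property immediately, completing the proof of (1) and hence of (2).
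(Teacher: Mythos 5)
Your proof is correct, but it takes a genuinely different route from the paper's. The paper disposes of (1) in two lines by citing \cite[5.2.8.19]{HTT}: the full subcategory $\calR \subseteq \calC^{\Delta^1}$ spanned by the right class is reflective in the arrow category, with a localization functor that fixes targets, so passing to the fiber of the target evaluation $t\colon \calC^{\Delta^1} \to \calC$ over $x$ yields the left adjoint on $\calC_{/x}$; part (2) is then obtained by taking opposites, exactly as you do. You instead build the reflection pointwise — factor $f$ as $r\circ\ell$, set $Lf:=r$ with unit $\ell$ — and verify the universal property directly from the definition of orthogonality: $\ell\perp g$ is precisely the cartesianness of the mapping-space square, so the induced map on vertical fibers, which is precomposition with the unit, is an equivalence; you then invoke the pointwise criterion for existence of adjoints (the paper's own \cref{prop:reflective/coreflective-subcategories}). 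In effect you re-prove the fiberwise content of \cite[5.2.8.19]{HTT} from scratch rather than quoting it. The paper's argument is shorter and delegates the orthogonality bookkeeping to Lurie, but it tacitly relies on the fact that the unit of the arrow-category localization lies over identity morphisms, so that the adjunction actually restricts to the fibers of $t$; your argument sidesteps that restriction step entirely and has the added benefit of making the reflection explicit on objects, which is exactly the kind of description the paper later wants when it applies the dual statement in \cref{lem:non-unital-Fact-is-coreflective}.
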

\begin{proof}
	(1) By \cite[Lemma 5.2.8.19]{HTT} the inclusion $\calR \mono \calC^{\Delta^1}$ admits a left adjoint which fixes the target of all arrows. Taking the fiber at $x$ of the evaluation at the target functor $t: \calC^{\Delta^1} \to \calC$ produces a left adjoint to the inclusion 
	$$\calC^{\calR}_{x/} \simeq  \calR  \times_{\calC} \{x\} \mono  \calC^{\Delta^1} \times_{\calC}  \{x\} \simeq \calC_{x/}$$
	(2) follows from (1) by taking opposites.
\end{proof}

\begin{prop}\label{prop:factorization-system-on-slice}
	Let $\calC$ be an $\infty$-categry and let $(\calL,\calR)$ be an factorization system on $\calC$. Then, for every $x \in \calC$ there's a (unique) induced factrization system $(\calL^{\prime},\calR^{\prime})$ on $\calC_{x/}$ defined by the following property 
	\begin{itemize}
		\item A morphism $\alpha: f \to f^{\prime}$ represented by a diagram
		\[\begin{tikzcd}
		& x \\
		y && {y^{\prime}}
		\arrow["f"', from=1-2, to=2-1]
		\arrow["{f^{\prime}}", from=1-2, to=2-3]
		\arrow["\alpha"', from=2-1, to=2-3]
		\end{tikzcd}\]
		corresponds to a morphism in $\calL^{\prime}$ (resp. $\calR^{\prime}$) if and only if $\alpha$ is in $\calL$ (resp. $\calR$).
	\end{itemize}
\end{prop}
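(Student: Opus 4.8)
The plan is to identify $(\calL',\calR')$ with the preimage of $(\calL,\calR)$ under the target–evaluation projection $\pi\colon \calC_{x/}\to \calC$, $(x\to y)\mapsto y$, which is a left fibration (see \cite{HTT}), and to verify the defining axioms of a factorization system for the classes $\calL' := \pi^{-1}(\calL)$ and $\calR' := \pi^{-1}(\calR)$. A morphism $\alpha\colon f\to f'$ of $\calC_{x/}$ lands in $\calL'$ (resp. $\calR'$) exactly when its underlying map $y\to y'$ lies in $\calL$ (resp. $\calR$), so this reproduces the stated property. By Lurie's definition it suffices to check that $\calL',\calR'$ are stable under retracts, that $\calL'\perp\calR'$, and that every morphism factors; composition–closure and the saturations $\calR'=(\calL')^{\perp}$, $\calL'={}^{\perp}(\calR')$ are then automatic. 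Uniqueness is immediate, since a factorization system is determined by its left class and the stated property pins $\calL'$ down.

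The closure properties descend along $\pi$. As a left fibration, $\pi$ is conservative, so an equivalence of $\calC_{x/}$ projects to an equivalence of $\calC$, which lies in $\calL\cap\calR$; hence $\calL'$ and $\calR'$ contain all equivalences. Likewise $\pi$ sends retract diagrams in the arrow category of $\calC_{x/}$ to retract diagrams in the arrow category of $\calC$, and it preserves composition, so retract–stability of $\calL$ and $\calR$ transfers verbatim to $\calL'$ and $\calR'$.

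For the existence of factorizations I would use cocartesian lifting in the left fibration $\pi$. Given $\alpha\colon f\to f'$ with $\pi(\alpha)=\bar\alpha\colon y\to y'$, factor $\bar\alpha\simeq \bar r\circ\bar\ell$ in $\calC$ with $\bar\ell\in\calL$ and $\bar r\in\calR$. Since $\pi$ is a left fibration, every edge is $\pi$-cocartesian, so $\bar\ell$ admits a lift $\tilde\ell\colon f\to h$ with $\pi(\tilde\ell)=\bar\ell$; the cocartesian universal property then factors $\alpha$ essentially uniquely as $\tilde r\circ\tilde\ell$ with $\pi(\tilde r)=\bar r$. By construction $\tilde\ell\in\calL'$ and $\tilde r\in\calR'$, which is the desired factorization in $\calC_{x/}$.

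The main obstacle is orthogonality $\calL'\perp\calR'$. For this I would work with the mapping–space formula $\Map_{\calC_{x/}}(u,v)\simeq \Map_{\calC}(\pi u,\pi v)\times_{\Map_{\calC}(x,\pi v)}\{v\}$ already exploited in the proof of \cref{lem:fully-faithful-on-clice-from-faithful}. For $\alpha\in\calL'$ and $\beta\in\calR'$, the orthogonality square of mapping spaces in $\calC_{x/}$ arises from the corresponding square in $\calC$ by passing to fibers over the compatible points of the diagram of spaces $\Map_{\calC}(x,-)$. The $\calC$-square is cartesian by hypothesis, because $\pi\alpha=\bar\alpha\in\calL$ is left orthogonal to $\pi\beta=\bar\beta\in\calR$, and fibers of a cartesian square of spaces remain cartesian, so the $\calC_{x/}$-square is cartesian, i.e. $\alpha\perp\beta$. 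The delicate point is purely organizational: the four mapping spaces involved have two distinct targets, so one must first assemble them into a single coherent morphism of squares over $\Map_{\calC}(x,-)$ before the fiberwise stability argument applies; once this bookkeeping is in place the conclusion is formal.
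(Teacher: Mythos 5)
Your proof is correct, but it takes a genuinely different route from the paper's. The paper's argument is a short citation-based one: it invokes Lurie's factorization system on the arrow category $\calC^{\Delta^1}$ (\cite[Corollary~5.2.8.18]{HTT}), whose left (resp.\ right) class consists of the morphisms both of whose components lie in $\calL$ (resp.\ $\calR$), realizes $\calC_{x/}$ as the pullback $\{x\}\times_{\calC}\calC^{\Delta^1}$ along the source evaluation, and then cites a result of Chu--Haugseng (\cite{RH-algebraic}) that factorization systems pass to such pullbacks; the stated characterization falls out because the source component of any morphism of $\calC_{x/}$ is an equivalence and hence lies in both classes. You instead verify Lurie's three axioms directly for the preimage classes under the target projection $\pi\colon\calC_{x/}\to\calC$: retract stability is formal for preimages; existence of factorizations follows because every edge of the left fibration $\pi$ is $\pi$-cocartesian, so an $(\calL,\calR)$-factorization downstairs lifts, essentially uniquely, to one upstairs; and orthogonality follows from the undercategory mapping-space formula together with the fact that a pointwise pullback of cartesian squares of spaces is cartesian. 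These steps are all sound --- in particular the middle square over $\Map_{\calC}(x,-)$ (with identity vertical maps) is automatically cartesian, and the coherence data assembling your map of squares is supplied exactly by the homotopies witnessing that $\alpha$ and $\beta$ are morphisms of $\calC_{x/}$ --- though, as you acknowledge, spelling out that coherence is the real work in your approach. What the paper's route buys is brevity, at the price of leaning on two external results; what yours buys is a self-contained, more elementary verification that makes visible where each axiom comes from, in effect reproving in this special case the pullback-stability statement that the paper cites.
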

\begin{proof}
	By \cite[Corollary~5.2.8.18.]{HTT} there's a factorization system on $\calC^{\Delta^1}$ characterized by the property that a morphism $\alpha: g \to g'$ is in $\calL'$ if and only if $s(\alpha)$ and $t(\alpha)$ are both in $\calL$. Here $s$ and $t$ denote respectively the source and target maps $\calC^{\Delta^1} \to \calC$. Note that we have a pullback square
	\[\begin{tikzcd}
	{\calC_{x/}} & {\calC^{\Delta^1}} \\
	{\{x\}} & \calC
	\arrow[from=2-1, to=2-2]
	\arrow["s", from=1-2, to=2-2]
	\arrow[from=1-1, to=1-2]
	\arrow[from=1-1, to=2-1]
	\arrow["\lrcorner"{anchor=center, pos=0.125}, draw=none, from=1-1, to=2-2]
	\end{tikzcd}\]
	It follows from \cite[Proposition~5.2.]{RH-algebraic} that the factorization system restricts to $\calC_{x/}$. 
\end{proof}

\begin{prop}\label{prop:factorization-adjoint-on-comma-category}
	Let $\calD$ be an $\infty$-categry equipped with a factorization system $(\calD^{L},\calD^{R})$. Let $\calC \subseteq \calD$ be a replete subcategory and let $d_0 \in \calD$. Denote $\calC^L \coloneq  \calD^L \cap \calC$ and $\calC^R \coloneq  \calD^R \cap \calC$. Suppose the following conditions holds:
	\begin{enumerate}[(1)]
		\item If $\alpha : c^{\prime} \to c$ is a morphism in $\calC$ then $\alpha^L: c^{\prime} \to \Lambda(\alpha)$ is also a morphism in $\calC$. 
		\item For every $c \in \calC$ the natural functor $\calC^R_{/c} \to \calD^{R}_{/c}$ is essentially surjective. 
	\end{enumerate}
 Then the natural fully faithful inclusion
	\[ \calD^L_{d_0/} \times_{\calD^L} \calC^L \mono  \calD_{d_0/} \times_{\calD} \calC \]
	admits a right adjoint.
\end{prop}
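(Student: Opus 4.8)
The plan is to verify the hypotheses of \cref{prop:reflective/coreflective-subcategories}(2) by producing, for every object of the target, an explicit coreflection into the subcategory. Write $\calE := \calD^L_{d_0/}\times_{\calD^L}\calC^L$ and $\calF := \calD_{d_0/}\times_{\calD}\calC$, so that objects of $\calF$ are pairs $(c,\phi\colon d_0\to c)$ with $c\in\calC$ and $\phi$ an arbitrary morphism of $\calD$, while $\calE$ is the full subcategory on those objects for which $\phi$ lies in $\calD^L$. Before constructing coreflections I would record a cleaner reformulation of hypothesis (2): since $\calC$ is replete, the condition that $\calC^R_{/c}\to\calD^R_{/c}$ be essentially surjective says precisely that every $\calR$-morphism whose target lies in $\calC$ is itself a morphism of $\calC$ (and so has its source in $\calC$). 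This is the only way (2) will be used.

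Given $(c,\phi)\in\calF$, factor $\phi = \phi^R\circ\phi^L$ with $\phi^L\colon d_0\to\Lambda(\phi)$ in $\calD^L$ and $\phi^R\colon\Lambda(\phi)\to c$ in $\calD^R$. The candidate coreflection is $(\Lambda(\phi),\phi^L)$ equipped with the morphism $\phi^R$ down to $(c,\phi)$. The first point to check is that the candidate lies in $\calE$: since $\phi^R$ is an $\calR$-morphism with target $c\in\calC$, the reformulation of (2) gives $\Lambda(\phi)\in\calC$ and $\phi^R\in\calC$; as $\phi^L\in\calD^L$, the pair $(\Lambda(\phi),\phi^L)$ is an object of $\calE$ and $\phi^R$ defines a morphism in $\calF$.

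It remains to show that for every $(a,\chi)\in\calE$ postcomposition with $\phi^R$ induces an equivalence
\[ \Map_{\calF}\big((a,\chi),(\Lambda(\phi),\phi^L)\big)\xrightarrow{\ \sim\ }\Map_{\calF}\big((a,\chi),(c,\phi)\big), \]
where the two mapping spaces are the spaces of $\calC$-morphisms $a\to\Lambda(\phi)$, resp. $a\to c$, compatible with the structure maps. The orthogonality $\chi\perp\phi^R$ (valid since $\chi\in\calD^L$ and $\phi^R\in\calD^R$) already yields the analogous equivalence computed with all $\calD$-morphisms in place of $\calC$-morphisms; concretely it produces, for each $\calD$-morphism $t\colon a\to c$ with $t\chi\simeq\phi$, a unique lift $s\colon a\to\Lambda(\phi)$ in $\calD$ with $\phi^R s\simeq t$ and $s\chi\simeq\phi^L$. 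The crux is to show that this $\calD$-level equivalence restricts to the subspaces of $\calC$-morphisms, i.e. that $s$ is a morphism of $\calC$ if and only if $t=\phi^R s$ is. The forward implication is immediate since $\phi^R\in\calC$. For the converse I would factor $s = s^R\circ s^L$; then $(s^L,\phi^R s^R)$ is the $(\calL,\calR)$-factorization of $t$, so hypothesis (1) forces $s^L\in\calC$ (whence $\Lambda(t)\in\calC$), while $s^R$ is an $\calR$-morphism with target $\Lambda(\phi)\in\calC$, so the reformulation of (2) forces $s^R\in\calC$; hence $s=s^R s^L\in\calC$.

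Combining these steps, postcomposition with $\phi^R$ is an equivalence on $\calC$-morphism mapping spaces, so $(\Lambda(\phi),\phi^L)$ is a coreflection of $(c,\phi)$ and \cref{prop:reflective/coreflective-subcategories}(2) supplies the desired right adjoint. The main obstacle is exactly this last transport: orthogonality of the factorization system controls lifts only among \emph{all} morphisms of $\calD$, and it is the interplay of hypotheses (1) and (2) — closure of $\calC$ under passage to $\calL$-parts, and under $\calR$-morphisms into $\calC$ — that lets one drag the unique $\calD$-lift back into $\calC$. En route this same bookkeeping re-proves that $\calE\subseteq\calF$ is full, since any $\calC$-morphism between objects with $\calD^L$ structure maps is automatically in $\calD^L$ by the cancellation of \cref{lem:cancellation}, which is what justifies applying \cref{prop:reflective/coreflective-subcategories} to a full subcategory.
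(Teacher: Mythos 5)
Your proposal is correct and follows essentially the same route as the paper's proof: the same candidate coreflection $(\Lambda(\phi),\phi^L)$ with structure map $\phi^R$, justified via \cref{prop:reflective/coreflective-subcategories}, with hypothesis (2) (in your reformulation, which is exactly how the paper uses it) placing the candidate in the subcategory, and orthogonality of $(\calD^L,\calD^R)$ supplying the ambient equivalence of mapping spaces that must then be dragged into $\calC$. The only cosmetic difference is the last step: the paper applies cancellation (\cref{lem:cancellation}) to see that the unique $\calD$-lift $s$ already lies in $\calD^L$, hence is equivalent to $t^L$ and lies in $\calC$ by (1) alone, whereas you factor $s = s^R \circ s^L$ and invoke (1) for $s^L$ and (2) for $s^R$ — slightly less economical, but equally valid.
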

\begin{proof}
	It suffices to show that every object $(d_0 \overset{\alpha}{\to} c,c) \in \calD_{d_0/} \times_{\calD} \calC$ admits a coreflection. The $(\calD^L,\calD^R)$-factorization of $\alpha$ yields the following diagram
	\[\begin{tikzcd}
	& d_0 \\
	{\Lambda(\alpha)} && c
	\arrow["\alpha", from=1-2, to=2-3]
	\arrow["{\alpha^L}"', tail, from=1-2, to=2-1]
	\arrow["{\alpha^R}"', squiggly, from=2-1, to=2-3]
	\end{tikzcd}\]
	By (2) we know that $\alpha^R : \Lambda(\alpha) \rightsquigarrow c \in \calC^R_{/c}$. It follows that the above diagram defines a morphism in $\calD_{d_0/} \times_{\calD} \calC$ of the form
	\[\tag{$\star$} \overrightarrow{\alpha}^R : \big(d_0 \overset{\alpha^L}{\to} \Lambda(\alpha),\Lambda(\alpha)\big) \to (d_0 \overset{\alpha}{\to} c,c\big)\]
	We claim that $\overrightarrow{\alpha}^R $ exhibits $(d_0 \overset{\alpha^L}{\to} \Lambda(\alpha),\Lambda(\alpha))$ as a $\calD^L_{d_0/} \times_{\calD^L} \calC^L$-coreflection of $(d_0 \overset{\alpha}{\to} c,c)$. Equivalently this is the claim that the natural map given by composing with $\alpha^R$ 
	\begin{equation*}\label{diag:comparison-map-reflection-L-R}
	    \alpha^R \circ (-) : \Map_{\calD^L_{d_0/} \times_{\calD^L} \calC^L}(\varphi,\alpha^L) \to \Map_{\calD_{d_0/} \times_{\calD} \calC}(\varphi,\alpha)
	\end{equation*} 
 	
	is an equivalence for every $\varphi  \in \calD^L_{d_0/} \times_{\calD^L} \calC^L$. To see thise first note that for every $\varphi : d_0 \rightarrowtail c^{\prime}$ in $\calD^L$ with $c^{\prime} \in \calC$ there are canonical equivalences
	\begin{align*}
	\Map_{\calD_{d_0/} \times_{\calD} \calC}(\varphi,\alpha) & \simeq \Map_{\calD_{d_0/}}(\varphi,\alpha) \times_{\Map_{\calD}(c^{\prime},c)} \Map_{\calC}(c^{\prime},c)\\
	& \simeq   \{\alpha\} \times_{\Map_{\calD}(d_0,c)} \Map_{\calD}(c^{\prime},c) \times_{\Map_{\calD}(c^{\prime},c)} \Map_{\calC}(c^{\prime},c)\\
	& \simeq \{\alpha\} \times_{\Map_{\calD}(d_0,c)}  \Map_{\calC}(c^{\prime},c)
	\end{align*}
	Similarly there's a canonical equivalence
	\[ \Map_{\calD^L_{d_0/} \times_{\calD^L} \calC^L}(\varphi,\alpha)  \simeq \{\alpha^L\} \times_{\Map^L_{\calD}(d_0,\Lambda(\alpha))}  \Map^L_{\calC}(c^{\prime},\Lambda(\alpha)) \]
	Tracing through these equivalences identifies \eqref{diag:comparison-map-reflection-L-R}{($\star$)} with the induced map on fibers in the following square
	\[\begin{tikzcd}
	{\Map^L_{\calC}(c^{\prime},\Lambda(\alpha))} && {\Map_{\calC}(c^{\prime},c)} \\
	{\Map^L_{\calD}(d_0,\Lambda(\alpha))} && {\Map_{\calD}(d_0,c)}
	\arrow["{\alpha^R \circ}", from=1-1, to=1-3]
	\arrow["{\circ \varphi}", from=1-3, to=2-3]
	\arrow["{\alpha^R \circ}"', from=2-1, to=2-3]
	\arrow["{\circ \varphi}"', from=1-1, to=2-1]
	\end{tikzcd}\]
	where the fibers are taken over $\alpha^{L} \in \Map^L_{\calD}(d_0,\Lambda(\alpha))$ and its image $\alpha \in  \Map_{\calD}(d_0,\Lambda(\alpha))$ respectively. It follows that the comparison map
	\[\{\alpha^L\} \times_{\Map^L_{\calD}(d_0,\Lambda(\alpha))}  \Map^L_{\calC}(c^{\prime},\Lambda(\alpha))  \to \{\alpha\} \times_{\Map_{\calD}(d_0,c)}  \Map_{\calC}(c^{\prime},c)\] 
	is an equivalence if and only if for every morphism $\beta : c^{\prime} \to c$ in $\calC$, that sits in a square of the form
	\[\begin{tikzcd}
	d_0 & {\Lambda(\alpha)} \\
	c^{\prime} & c
	\arrow["{\alpha^L}", tail, from=1-1, to=1-2]
	\arrow["\varphi"', tail, from=1-1, to=2-1]
	\arrow["{\alpha^R}", squiggly, from=1-2, to=2-2]
	\arrow["\beta"', from=2-1, to=2-2]
	\arrow[dashed,from=2-1, to=1-2]
	\end{tikzcd}\]
	the space of dashed arrows in $\calC$ filling the diagram is contractible. Note however that by cancellation (see \cref{lem:cancellation}) any lift in the above diagram (whether or not it exists in $\calC$) must necessarily be in $\calD^L$ and must therefore be equivalent to $\beta^L$ by uniqueness of factorizations. Finally by (1) we have that $\beta^L$ is a morphism in $\calC$ so we're done.
\end{proof}

	\printbibliography[heading=bibintoc]
\end{document}